\definecolor{cof}{RGB}{219,144,71}
\definecolor{pur}{RGB}{186,146,162}
\definecolor{greeo}{RGB}{91,173,69}
\definecolor{greet}{RGB}{52,111,72}
\begin{document}

\newtheorem{theorem}{Theorem}[section]
\newtheorem{corollary}[theorem]{Corollary}
\newtheorem{proposition}[theorem]{Proposition}
\newtheorem{lemma}[theorem]{Lemma}

\theoremstyle{definition}
\newtheorem{remark}[theorem]{Remark}
\newtheorem{definition}[theorem]{Definition}
\newtheorem{example}[theorem]{Example}
\newtheorem{conjecture}[theorem]{Conjecture}

\newtheorem{claim}[theorem]{Claim}

\newcommand{\FFock}{\mathcal{F}}
\newcommand{\kil}{\mathsf{k}}
\newcommand{\Hil}{\mathsf{H}}
\newcommand{\hil}{\mathsf{h}}
\newcommand{\Kil}{\mathsf{K}}
\newcommand{\Real}{\mathbb{R}}
\newcommand{\Rplus}{\Real_+}

\newcommand{\bC}{{\mathbb{C}}}
\newcommand{\bD}{{\mathbb{D}}}
\newcommand{\bN}{{\mathbb{N}}}
\newcommand{\bQ}{{\mathbb{Q}}}
\newcommand{\bR}{{\mathbb{R}}}
\newcommand{\bT}{{\mathbb{T}}}
\newcommand{\bX}{{\mathbb{X}}}
\newcommand{\bZ}{{\mathbb{Z}}}
\newcommand{\bH}{{\mathbb{H}}}
\newcommand{\BH}{{\B(\H)}}
\newcommand{\bsl}{\setminus}
\newcommand{\ca}{\mathrm{C}^*}
\newcommand{\cstar}{\mathrm{C}^*}
\newcommand{\cenv}{\mathrm{C}^*_{\text{env}}}
\newcommand{\rip}{\rangle}
\newcommand{\ol}{\overline}
\newcommand{\td}{\widetilde}
\newcommand{\wh}{\widehat}
\newcommand{\sot}{\textsc{sot}}
\newcommand{\wot}{\textsc{wot}}
\newcommand{\wotclos}[1]{\ol{#1}^{\textsc{wot}}}
 \newcommand{\A}{{\mathcal{A}}}
 \newcommand{\B}{{\mathcal{B}}}
 \newcommand{\C}{{\mathcal{C}}}
 \newcommand{\D}{{\mathcal{D}}}
 \newcommand{\E}{{\mathcal{E}}}
 \newcommand{\F}{{\mathcal{F}}}
 \newcommand{\G}{{\mathcal{G}}}
\renewcommand{\H}{{\mathcal{H}}}
 \newcommand{\I}{{\mathcal{I}}}
 \newcommand{\J}{{\mathcal{J}}}
 \newcommand{\K}{{\mathcal{K}}}
\renewcommand{\L}{{\mathcal{L}}}
 \newcommand{\M}{{\mathcal{M}}}
 \newcommand{\N}{{\mathcal{N}}}
\renewcommand{\O}{{\mathcal{O}}}
\renewcommand{\P}{{\mathcal{P}}}
 \newcommand{\Q}{{\mathcal{Q}}}
 \newcommand{\R}{{\mathcal{R}}}
\renewcommand{\S}{{\mathcal{S}}}
 \newcommand{\T}{{\mathcal{T}}}
 \newcommand{\U}{{\mathcal{U}}}
 \newcommand{\V}{{\mathcal{V}}}
 \newcommand{\W}{{\mathcal{W}}}
 \newcommand{\X}{{\mathcal{X}}}
 \newcommand{\Y}{{\mathcal{Y}}}
 \newcommand{\Z}{{\mathcal{Z}}}

\newcommand{\supp}{\operatorname{supp}}
\newcommand{\conv}{\operatorname{conv}}
\newcommand{\cone}{\operatorname{cone}}
\newcommand{\vspan}{\operatorname{span}}
\newcommand{\proj}{\operatorname{proj}}
\newcommand{\sgn}{\operatorname{sgn}}
\newcommand{\rank}{\operatorname{rank}}
\newcommand{\Isom}{\operatorname{Isom}}
\newcommand{\qIsom}{\operatorname{q-Isom}}
\newcommand{\Cknet}{{\mathcal{C}_{\text{knet}}}}
\newcommand{\Ckag}{{\mathcal{C}_{\text{kag}}}}
\newcommand{\rind}{\operatorname{r-ind}}
\newcommand{\lind}{\operatorname{r-ind}}
\newcommand{\ind}{\operatorname{ind}}
\newcommand{\coker}{\operatorname{coker}}
\newcommand{\ran}{\operatorname{ran}}
\newcommand{\Aut}{\operatorname{Aut}}
\newcommand{\Hom}{\operatorname{Hom}}
\newcommand{\GL}{\operatorname{GL}}
\newcommand{\tr}{\operatorname{tr}}
\newcommand{\comp}{\operatorname{comp}}

\newcommand{\te}{\tilde{e}}
\newcommand{\tf}{\tilde{f}}
\newcommand{\tu}{\tilde{u}}
\newcommand{\tv}{\tilde{v}}
\newcommand{\ta}{\tilde{a}}
\newcommand{\tb}{\tilde{b}}
\newcommand{\tc}{\tilde{c}}
\newcommand{\tx}{\tilde{x}}
\newcommand{\ty}{\tilde{y}}
\newcommand{\tz}{\tilde{z}}
\newcommand{\tw}{\tilde{w}}

\newcommand{\eqnwithbr}[2]{%
\refstepcounter{equation}
\begin{trivlist}
\item[]#1 \hfill $\displaystyle #2$ \hfill (\theequation)
\end{trivlist}}

\setcounter{tocdepth}{1}

\title[Rigidity of symmetric frameworks in normed spaces]{Rigidity of symmetric frameworks in normed spaces}

\author[D. Kitson]{Derek Kitson}
\email{derek.kitson@mic.ul.ie}
\address{Dept.\ Math.\ Comp.  \\ Mary Immaculate College\\ Thurles\\ Co.~Tipperary\\ Ireland.}
\address{Dept.\ Math.\ Stats.\\ Lancaster University\\Lancaster, LA1 4YF \\U.K.}
\thanks{D.K. supported by the Engineering and Physical Sciences Research Council [grant numbers EP/P01108X/1 and EP/S00940X/1].}

\author[A. Nixon]{Anthony Nixon}
\author[B. Schulze]{Bernd Schulze}
\email{a.nixon@lancaster.ac.uk, b.schulze@lancaster.ac.uk}
\address{Dept.\ Math.\ Stats.\\ Lancaster University\\ Lancaster LA1 4YF \\U.K. }

\subjclass[2010]{52C25, 20C35, 05C50}
\keywords{bar-joint framework,  infinitesimal rigidity, gain graphs, matroids, normed spaces.}

\begin{abstract}
We develop a combinatorial rigidity theory for symmetric bar-joint frameworks in a general finite dimensional normed space. In the case of rotational symmetry, matroidal Maxwell-type sparsity counts are identified for a large class of $d$-dimensional normed spaces (including all $\ell^p$ spaces with $p\not=2$). Complete combinatorial characterisations are obtained for half-turn rotation in the $\ell^1$ and $\ell^\infty$-plane. As a key tool, a new Henneberg-type inductive construction is developed for the matroidal class of $(2,2,0)$-gain-tight graphs. 
\end{abstract}

\maketitle
\tableofcontents


\section{Introduction}

The determination of rigidity and flexibility of bar-joint frameworks consisting of rigid bars connected at their ends by idealised joints is a highly active research area in discrete geometry with a long and rich history dating back to considerations of linkages, trusses and polyhedral structures by Maxwell, Cauchy and Euler, among others. Since bar-joint frameworks are suitable models for a variety of both man-made and natural structures (buildings, linkages, molecules, crystals, etc.), rigidity theory has a broad range of modern practical applications in fields such as engineering, robotics, CAD and materials science. (See \cite{HoDCG, HoGCP, W1} e.g.). This transfer of knowledge between fundamental and applied researchers is one of the motivations for exploring constraint systems in new geometric contexts, such as the normed spaces considered in this article (see also \cite{dam-lib,FHJV} for related problems). Another strong motivation comes from the potential for developing combinatorial Laman-type characterisations (\cite{Lamanbib}) of rigid graphs in any dimension, due to the amenability of the matroidal sparsity counts arising in some of these contexts.  

In this article, we consider first-order rigidity for bar-joint frameworks with a finite abelian symmetry group, developing both a general linear theory as well as sharp combinatorial results in the case of half-turn rotational symmetry in the $\ell^1$ and $\ell^\infty$-plane. This complements and extends work of Schulze \cite{BS2}, Jord\'{a}n,  Kaszanitzky and Tanigawa \cite{jkt}, Malestein and Theran  \cite{MaTh}, and Schulze and Tanigawa \cite{schtan} on symmetric frameworks in Euclidean space, and work of Kitson and Power \cite{kit-pow} and Kitson and Schulze \cite{kit-sch,kit-sch2} on infinitesimal rigidity in normed spaces.

In Sect.~\ref{Sect:SymmetricNormedSpaces}, we introduce the natural notion of a {\em framework complex} and develop several key tools for analysing frameworks with a finite abelian symmetry group acting freely on the vertex set. These include a decomposition theorem for the framework complex (which incorporates a block decomposition for the rigidity operator) and counting criteria for the accompanying group-labelled quotient graph (called a {\em gain graph}). For a large class of $d$-dimensional normed spaces, with $d\geq 2$, this leads to the identification of $(d,d,m)$-gain-tight gain graphs, with $m\in\{0,1,2,d-2\}$, as the underlying structure graphs for phase-symmetrically isostatic frameworks with rotational symmetry (see Corollary~\ref{cor:neccounts}). In contrast to Euclidean contexts, these classes of graphs are matroidal for all dimensions $d$, and are computationally accessible through associated pebble-game algorithms \cite{LS}. 

In Sect.~\ref{Sect:Inductive}, a new inductive construction is obtained for the class of $(2,2,0)$-gain-tight gain graphs (Theorem \ref{thm:recurse}). Previous recursive characterisations of $(2,2,m)$-gain-tight graphs, with $m\in\{1,2\}$, can be found in \cite{NS}. The construction presented here is necessarily more involved due to a step change in the possible minimum degree when $m=0$. Recursive constructions of classes of graphs are of fundamental importance in rigidity theory, occurring for example in Laman's landmark characterisation of rigidity in the Euclidean plane \cite{Lamanbib}. Of particular relevance are previous characterisations of classes of gain graphs \cite{jkt,NS} and characterisations where graph simplicity is required to be preserved \cite{NO14,NOP12}.

In Sect.~\ref{Sect:Gridlike}, geometric and combinatorial characterisations are obtained for the rigidity of two-dimensional frameworks with half-turn rotational symmetry in the $\ell^1$ and $\ell^\infty$-plane.
The geometric results (Theorem \ref{thm:C2gridgeom}) use an edge-colouring technique which expresses the gain graph of a phase-symmetrically isostatic framework as an edge-disjoint union of either two unbalanced spanning map graphs (defined in Sect.~\ref{Sect:FrameworkColours}), or two spanning trees. Combinatorial characterisations  are then obtained for graphs which admit a placement as a phase-symmetrically isostatic framework with half-turn rotational symmetry (Theorems \ref{thm:geom} and \ref{thm:geom2}) by combining these geometric results with the construction scheme from Sect.~\ref{Sect:Inductive}.  
The analogous problem for frameworks with reflectional symmetry requires different methods and was settled in \cite{kit-sch2}. 

\section{Symmetric frameworks and gain sparsity}
\label{Sect:SymmetricNormedSpaces}
The aim of this section is to derive necessary gain-graph counting conditions for  symmetrically isostatic bar-joint frameworks in normed spaces.  Throughout this article, $X$ denotes a finite dimensional real vector space with a norm $\|\cdot\|$ and dimension $d\geq 2$. The group of linear isometries of $X$ is denoted $\Isom(X,\|\cdot\|)$, or simply $\Isom (X)$. The complexification $\bC \otimes_\bR X$ is denoted $X_\bC$ and, for convenience, elementary tensor products of the form $\lambda\otimes x$ are denoted by $\lambda x$. 
Also,  $\Gamma$ will denote a finite abelian group with identity element $1$ and $\hat{\Gamma}$ will denote the dual group of characters $\chi:\Gamma\to \{z\in\bC:|z|=1\}$. 

\subsection{Bar-joint frameworks}
Let $G=(V,E)$ be a finite simple undirected graph and let $p=(p_v)_{v\in V}\in X^{V}$. If the components of $p$ are distinct vectors in $X$ then the pair $(G,p)$ is called a {\em bar-joint framework} in $X$. If $H$ is a subgraph of $G$ and $p_H=(p_v)_{v\in V(H)}$ then the pair $(H,p_H)$ is called a {\em subframework} of $(G,p)$.  Define,
\[f_G:X^{V}\to \bR^{E}, \,\,\,\,\,\,\,\,\, (x_v)_{v\in V}\mapsto (\|x_v-x_w\|)_{vw\in E}.\]
If $f_G$ is differentiable at $p$ then the bar-joint framework $(G,p)$ is said to be {\em well-positioned} in $X$.

\begin{lemma}{\cite[Proposition 6]{kit-sch}}
\label{lem:differential}
If $(G,p)$ is well-positioned in $X$ then the differential of $f_{G}$ at $p$ satisfies, \[df_{G}(p):X^V\to \bR^E, \,\,\,\,\,
(u_v)_{v\in V} \mapsto (\varphi_{v,w}(u_v-u_w))_{vw\in E},\] 
where, for each edge $vw\in E$, $\varphi_{v,w}:X\to \bR$ is the linear functional,
\[\varphi_{v,w}(x) = \|p_v-p_w\|\,\left(\lim_{t\to 0} \, \frac{1}{t}\left(\|p_v-p_w+tx\|-\|p_v-p_w\|\right)\right), \,\,\,\forall\,x\in X.\]
\end{lemma}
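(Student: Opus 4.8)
The plan is to exploit the product structure of $f_G$, reducing to the differentiation of a single edge length, and then to apply the chain rule. Write $f_G=(F_{vw})_{vw\in E}$ where, for a fixed edge $vw$, the \emph{edge-length function} is $F_{vw}\colon X^V\to\bR$, $(x_u)_{u\in V}\mapsto\|x_v-x_w\|$. Since a map into a finite product $\bR^E$ is Fr\'echet differentiable at a point exactly when each component is, with differential assembled componentwise, it suffices to prove that each $F_{vw}$ is differentiable at $p$ and that $dF_{vw}(p)(u)=\varphi_{v,w}(u_v-u_w)$; the displayed formula for $df_G(p)$ is then immediate. Factor $F_{vw}=N\circ L_{vw}$, where $L_{vw}\colon X^V\to X$, $(x_u)_u\mapsto x_v-x_w$, is linear and surjective, and $N\colon X\to\bR$, $x\mapsto\|x\|$. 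Because $(G,p)$ is a bar-joint framework its components are pairwise distinct, so $a:=p_v-p_w\neq 0$; this keeps us away from the origin, the only point where a norm must fail to be differentiable.

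The first substantive step is to transfer differentiability across the linear map $L_{vw}$. Choosing an affine right inverse $\sigma\colon X\to X^V$ of $L_{vw}$ with $\sigma(a)=p$ (available since $L_{vw}$ is surjective and $L_{vw}(p)=a$), we get $N=F_{vw}\circ\sigma$, so differentiability of $F_{vw}$ at $p$ yields differentiability of $N$ at $a$; conversely the ordinary chain rule, using $dL_{vw}(p)=L_{vw}$, gives
\[
dF_{vw}(p)(u)=DN(a)\bigl(L_{vw}(u)\bigr)=DN(a)(u_v-u_w)\qquad(u\in X^V).
\]
It remains to identify the bounded linear functional $DN(a)$ with the expression defining $\varphi_{v,w}$.

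Here one uses that Fr\'echet differentiability of $N$ at $a$ forces every directional derivative to exist and to coincide with $DN(a)$ applied to that direction, i.e.\ $DN(a)(x)=\lim_{t\to0}\tfrac1t\bigl(\|a+tx\|-\|a\|\bigr)$ for all $x$; in particular $x\mapsto\lim_{t\to0}\tfrac1t(\|a+tx\|-\|a\|)$ is automatically linear, which supplies the linearity asserted for $\varphi_{v,w}$. Substituting $a=p_v-p_w$ and reassembling over $E$ gives $df_G(p)$ in the stated form. The factor $\|p_v-p_w\|$ in $\varphi_{v,w}$ reflects that the rigidity operator is customarily normalised by edge lengths (equivalently, that the relevant length map is the squared edge length $N=\tfrac12\|\cdot\|^2$, for which the same argument applies after writing $\tfrac12(\|a+tx\|^2-\|a\|^2)=\tfrac12(\|a+tx\|-\|a\|)(\|a+tx\|+\|a\|)$ and letting $t\to0$ with $\|a+tx\|\to\|a\|$, producing exactly the displayed prefactor).

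The main obstacle is the transfer of differentiability through $L_{vw}$ together with ensuring the directional-derivative limit both exists and is linear: a general norm is differentiable at a nonzero point only when that point is a smooth point of the unit ball, so one cannot simply invoke smoothness of $N$ on $X\setminus\{0\}$. The hypothesis that $f_G$ is differentiable at $p$ is precisely what circumvents this — it forces each $F_{vw}$, hence (via the affine section $\sigma$) each $N$ at $a=p_v-p_w$, to be differentiable — after which the computation is just the chain rule and bookkeeping over the edge set.
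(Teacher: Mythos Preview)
The paper does not prove this lemma; it is quoted with a citation to \cite[Proposition~6]{kit-sch} and used thereafter as a black box. So there is no paper-side argument to compare against, and your proposal must stand on its own.

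Your decomposition $F_{vw}=N\circ L_{vw}$ and the use of an affine section $\sigma$ to push differentiability of $f_G$ at $p$ back to differentiability of the norm $N$ at $a=p_v-p_w$ are correct and are the natural route. The chain rule then gives $dF_{vw}(p)(u)=DN(a)(u_v-u_w)$ with $DN(a)(x)=\lim_{t\to0}\tfrac1t(\|a+tx\|-\|a\|)$, and linearity of this directional derivative follows from Fr\'echet differentiability exactly as you say.

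The one point to flag is the prefactor $\|p_v-p_w\|$. With $f_G$ defined via plain norms as in this paper, the $vw$-component of $df_G(p)$ is $DN(a)(u_v-u_w)$, \emph{without} that factor; so $\varphi_{v,w}$ as displayed is not literally that component. You have correctly diagnosed this: the factor is what arises from differentiating $\tfrac12\|\cdot\|^2$ instead, via the identity you give. The two operators differ by the invertible diagonal scaling $\operatorname{diag}(\|p_v-p_w\|)_{vw\in E}$ on $\bR^E$, so kernel, image, and every exactness statement used later in the paper are unaffected. Your argument is therefore sound; it would be cleaner to state up front that the displayed $\varphi_{v,w}$ is the $vw$-component of the differential of the half-squared edge-length map (equivalently, of $df_G(p)$ post-composed with that diagonal scaling), rather than relegating this to a parenthetical aside at the end.
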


A {\em rigid motion} of $X$ is a family of continuous paths $\{\alpha_x:[-1,1]\to X\}_{x\in X}$
such that $\alpha_x(0)=x$ and $\|\alpha_x(t)-\alpha_y(t)\| =\|x-y\|$ for all pairs $x,y\in X$ and all $t\in [-1,1]$.
An {\em infinitesimal rigid motion} of $X$ is a vector field $\eta:X\to X$ with the property that $\eta(x)=\alpha'_x(0)$ for all $x\in X$, for some rigid motion $\{\alpha_x\}_{x\in X}$.  
The collection of all infinitesimal rigid motions of $X$ is a vector subspace of $X^X$, denoted $\T(X)$. 

Let $(G,p)$ be a bar-joint framework in $X$ and define $\rho_{(G,p)}: \T(X)\to X^V$, $\eta\mapsto(\eta(p_v))_{v\in V}$.
Note that if $(G,p)$ is well-positioned, then $df_G(p)\circ \rho_{(G,p)}=0$ (see \cite[Lemma 2.1]{kit-pow}).
The {\em framework complex} for $(G,p)$, denoted $\comp(G,p)$, is the chain complex,
\begin{eqnarray}
\label{chaincomplex1}
\begin{CD}
0 @>>> \T(X) @>\rho_{(G,p)}>> X^V @>df_G(p)>> \bR^E @>>> 0. \\
\end{CD}
\end{eqnarray}
The kernel of $df_G(p)$, denoted $\F(G,p)$, is referred to as the space of {\em infinitesimal flexes} of $(G,p)$, while the image of $\rho_{(G,p)}$, denoted $\T(G,p)$, is referred to as the space of {\em trivial} infinitesimal flexes of $(G,p)$.

\begin{definition}
A well-positioned bar-joint framework $(G,p)$ in a normed space $X$ is, 
\begin{enumerate}[(a)]
\item \emph{full} if the framework  complex $\comp(G,p)$ is exact at $\T(X)$.
\item \emph{infinitesimally rigid} if the framework  complex $\comp(G,p)$ is exact at $X^V$.
\item \emph{independent} if the framework  complex $\comp(G,p)$ is exact at $\bR^E$.
\end{enumerate}
\end{definition}
 
A  well-positioned  bar-joint framework is {\em isostatic} if it is both  infinitesimally rigid and  independent. Note that $\comp(G,p)$ is a short exact sequence if and only if $(G,p)$ is both full and isostatic. 

\subsection{Symmetric graphs}
A {\em $\Gamma$-symmetric graph} is a pair $(G,\theta)$ where $G$ is a finite simple undirected graph with automorphism group $\Aut(G)$ and $\theta:\Gamma\to\textrm{Aut}(G)$ is a group homomorphism. It is assumed throughout this article that $\theta$ acts freely on the vertex set of $G$. Thus $v\not=\theta(\gamma)v$ for all $v\in V$ and for all $\gamma\in \Gamma$ with $\gamma\not=1$. For convenience, we suppress $\theta$ and denote $\theta(\gamma)$ by  $\gamma$ for each group element $\gamma\in\Gamma$. Also, for each edge $e=vw\in E$ we denote by $\gamma e$, or $\gamma(vw)$, the edge in $E$ which joins the vertices $\gamma v$ and $\gamma w$. 

\begin{proposition} 
\label{lem:decomp}
Let $(G,\theta)$ be a $\Gamma$-symmetric graph and let $\tau:\Gamma\to\Isom(X)$ be a group representation.
\begin{enumerate}[(i)]
\item
$(X_\bC)^{V}=\bigoplus_{\chi\in\hat{\Gamma}} X_{\chi}$ where,
\[X_\chi = \{x=(x_v)_{v\in V}\in (X_\bC)^{V}: x_{\gamma v} = \chi(\gamma)\tau(\gamma)x_v,\,\,\forall\,\,\gamma\in\Gamma,\,\,\forall\,\,v\in V\}.\]

\item $\bC^{E}= \bigoplus_{\chi\in\hat{\Gamma}} Y_\chi$ where, 
\[Y_\chi = \{y=(y_e)_{e\in E}\in \bC^{E}: y_{\gamma e}=\chi(\gamma)y_e,\,\,\forall\,\,\gamma\in\Gamma,\,\,\forall\,\,e\in E\}.\]
\end{enumerate}
\end{proposition}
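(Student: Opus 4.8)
The plan is to recognise each of the two decompositions as the isotypic decomposition of a suitable finite-dimensional complex representation of the finite abelian group $\Gamma$, so that the proposition reduces to the elementary fact that every such representation splits as the direct sum of its character eigenspaces. Complexifying $X$ is what makes this possible, since the characters $\chi\in\hat\Gamma$ are not in general real-valued; this is why $(X_\bC)^V$, rather than $X^V$, appears in (i).

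For part (i), I would define $\sigma\colon\Gamma\to\GL\big((X_\bC)^V\big)$ by $(\sigma(\gamma)x)_v=\tau(\gamma)^{-1}x_{\gamma v}$ for $x=(x_v)_{v\in V}$. The first point to verify is that $\sigma$ is a genuine group homomorphism: expanding $\sigma(\gamma_1)\sigma(\gamma_2)$ and using that $\Gamma$ is abelian — so that $\gamma_1\gamma_2=\gamma_2\gamma_1$ and, since $\tau$ is a homomorphism, $\tau(\gamma_1)$ and $\tau(\gamma_2)$ commute — yields $\sigma(\gamma_1)\sigma(\gamma_2)=\sigma(\gamma_1\gamma_2)$. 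Next, for each $\chi\in\hat\Gamma$ set $E_\chi=\tfrac1{|\Gamma|}\sum_{\gamma\in\Gamma}\overline{\chi(\gamma)}\,\sigma(\gamma)$. Using the character orthogonality relations $\tfrac1{|\Gamma|}\sum_{\chi\in\hat\Gamma}\overline{\chi(\gamma)}=\delta_{\gamma,1}$ and $\tfrac1{|\Gamma|}\sum_{\gamma\in\Gamma}\chi(\gamma)\overline{\chi'(\gamma)}=\delta_{\chi,\chi'}$ one checks in the standard way that $\sigma(\gamma_0)E_\chi=\chi(\gamma_0)E_\chi$, that the $E_\chi$ are idempotents with $E_\chi E_{\chi'}=0$ for $\chi\ne\chi'$, and that $\sum_{\chi\in\hat\Gamma}E_\chi=I$. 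Hence $(X_\bC)^V=\bigoplus_{\chi\in\hat\Gamma}\ran E_\chi$, and it only remains to identify $\ran E_\chi$ with $X_\chi$. Since $E_\chi$ is a projection, $\ran E_\chi=\{x: E_\chi x=x\}=\{x:\sigma(\gamma)x=\chi(\gamma)x\text{ for all }\gamma\in\Gamma\}$, and $\sigma(\gamma)x=\chi(\gamma)x$ unwinds coordinatewise to $\tau(\gamma)^{-1}x_{\gamma v}=\chi(\gamma)x_v$, i.e. exactly the defining relation $x_{\gamma v}=\chi(\gamma)\tau(\gamma)x_v$ of $X_\chi$.

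Part (ii) follows the same template with the untwisted permutation representation $\pi\colon\Gamma\to\GL(\bC^E)$ given by $(\pi(\gamma)y)_e=y_{\gamma e}$; again $\pi$ is a homomorphism precisely because $\Gamma$ is abelian (there is no twist here, so this is the only place commutativity is used). Setting $F_\chi=\tfrac1{|\Gamma|}\sum_{\gamma\in\Gamma}\overline{\chi(\gamma)}\,\pi(\gamma)$ and repeating the character-orthogonality computation gives $\bC^E=\bigoplus_{\chi\in\hat\Gamma}\ran F_\chi$ with $\ran F_\chi=\{y:\pi(\gamma)y=\chi(\gamma)y\ \forall\gamma\}=\{y:y_{\gamma e}=\chi(\gamma)y_e\ \forall\gamma,e\}=Y_\chi$. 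Note that, unlike the action on $V$, the $\Gamma$-action on $E$ need not be free — an edge may be fixed setwise when it joins $v$ and $\gamma v$ for an involution $\gamma$ — but this is irrelevant: the argument only uses that each $\gamma$ acts as a bijection of $E$.

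The whole proof is essentially bookkeeping, and the only genuine point to watch is the first step of each part, namely confirming that the twisted assignment $\sigma$ and the permutation assignment $\pi$ really are representations of $\Gamma$, since that is exactly where commutativity of $\Gamma$ (and the homomorphism property of $\tau$) is used; everything afterwards is the standard isotypic decomposition over $\bC$. One could instead bypass the projection operators for (i) by fixing a transversal $v_1,\dots,v_k$ for the $\Gamma$-orbits on $V$ (freeness of $\theta$ makes each orbit a faithful copy of $\Gamma$) and decomposing $(X_\bC)^V\cong\bigoplus_i(X_\bC)^{\Gamma v_i}$ orbit by orbit, but the projection approach is cleaner and, as just noted, handles (ii) uniformly despite the possibly non-free action there.
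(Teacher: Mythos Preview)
Your argument is correct and uses the same underlying mechanism as the paper: the character-weighted averages
\[
x_{\chi,v}=\frac{1}{|\Gamma|}\sum_{\gamma\in\Gamma}\overline{\chi(\gamma)}\,\tau(\gamma^{-1})x_{\gamma v},
\qquad
y_{\chi,e}=\frac{1}{|\Gamma|}\sum_{\gamma\in\Gamma}\overline{\chi(\gamma)}\,y_{\gamma e},
\]
which are exactly your operators $E_\chi$ and $F_\chi$ written out componentwise. The paper then uses the identity $\sum_{\chi\in\hat\Gamma}\chi(\gamma)=|\Gamma|\,\delta_{\gamma,1}$ to show that $x=\sum_\chi x_\chi$, just as you do.

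The one genuine difference is in the proof of directness. You derive it from the projection orthogonality $E_\chi E_{\chi'}=\delta_{\chi,\chi'}E_\chi$ via the second orthogonality relation $\sum_\gamma\chi(\gamma)\overline{\chi'(\gamma)}=|\Gamma|\,\delta_{\chi,\chi'}$; this is the standard isotypic-decomposition argument and is very clean. The paper instead argues by hand: it takes a maximal $S\subseteq\hat\Gamma$ for which $\bigoplus_{\chi\in S}X_\chi$ is direct, assumes some $\tilde\chi\notin S$, and shows that any $x\in X_{\tilde\chi}\cap\bigoplus_{\chi\in S}X_\chi$ must vanish by comparing the two expressions for $x_v$, contradicting maximality. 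Your packaging is more conceptual (it names the representation $\sigma$ explicitly and invokes the general machinery), while the paper's is more self-contained but slightly longer. Both reach the same conclusion with the same essential input, namely the character orthogonality relations for the finite abelian group $\Gamma$.
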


\proof
Each $x=(x_v)_{v\in V}\in (X_\bC)^{V}$ may be expressed 
as a sum $x=\sum_{\chi\in\hat{\Gamma}} x_\chi$ where $x_\chi=(x_{\chi,v})_{v\in V}\in (X_\bC)^V$ has components, 
\[x_{\chi,v} = \frac{1}{|\Gamma|}\left(\sum_{\gamma\in\Gamma} 
\overline{\chi(\gamma)}\tau(\gamma^{-1})x_{\gamma v}\right).\]
Similarly, each 
$y=(y_e)_{e\in E}\in \bC^{E}$ may be expressed as
a sum $\sum_{\chi\in\hat{\Gamma}} y_\chi$ where 
$y_\chi=(y_{\chi,e})_{e\in E}\in \bC^{E}$ has components,
\[y_{\chi,e} = \frac{1}{|\Gamma|}\left(\sum_{\gamma\in\Gamma} \overline{\chi(\gamma)}y_{\gamma e}\right).\]
We use here the standard identity,
\[\sum_{\chi\in\hat{\Gamma}} \chi(\gamma) = \left\{\begin{array}{ll}
|\Gamma| & \mbox{ if } \gamma=1,\\
0 & \mbox{ otherwise.}
\end{array}\right.\]
Note that $x_\chi\in X_\chi$ and $y_\chi\in Y_\chi$ for each $\chi\in\hat{\Gamma}$. 

Let $S\subseteq \hat{\Gamma}$ be a maximal subset which gives rise to a direct sum $\oplus_{\chi\in S} X_\chi$ and suppose there exists $\tilde{\chi}\in \hat{\Gamma}\backslash S$. If $x\in X_{\tilde{\chi}}\cap (\oplus_{\chi\in S} X_\chi)$ then
$x_v=\overline{\tilde{\chi}(\gamma)}\tau(\gamma^{-1})x_{\gamma v}$ for all $v\in V$ and all $\gamma\in \Gamma$. Moreover, $x=\sum_{\chi\in S} z_\chi$ for some unique $z_\chi\in X_\chi$.
It follows that, for all $v\in V$ and all $\gamma\in \Gamma$, 
\begin{eqnarray*}
x_v &=& \overline{\tilde{\chi}(\gamma)}\tau(\gamma^{-1})\left(\sum_{\chi\in S}  z_{\chi,\gamma v}\right)\\
&=&\overline{\tilde{\chi}(\gamma)}\tau(\gamma^{-1})\left(\sum_{\chi\in S}  \chi(\gamma)\tau(\gamma)z_{\chi,v}\right)\\
&=&\overline{\tilde{\chi}(\gamma)}\left(\sum_{\chi\in S}  \chi(\gamma)z_{\chi,v}\right).
\end{eqnarray*}
Thus $x = \overline{\tilde{\chi}(\gamma)}(\sum_{\chi\in S}  \chi(\gamma)z_{\chi})$ for all $\gamma\in \Gamma$.
Since the sum $x=\sum_{\chi\in S} z_\chi$ is direct, if $x\not=0$ then $\tilde{\chi} = \chi$ for some $\chi\in S$. This is a contradiction and so $X_{\tilde{\chi}}\cap (\oplus_{\chi\in S} X_\chi)=\{0\}$. However, this contradicts the maximality of $S$ and so it follows that $S=\hat{\Gamma}$.  This establishes the direct sum $\oplus_{\chi\in \hat{\Gamma}} X_\chi$ and a similar argument can be applied for $\oplus_{\chi\in \hat{\Gamma}} Y_\chi$.
\endproof

\subsection{Symmetric frameworks}
A {\em $\Gamma$-symmetric bar-joint framework} is a tuple $\G=(G,p,\theta,\tau)$ where $(G,p)$ is a bar-joint framework, $(G,\theta)$ is a $\Gamma$-symmetric graph and
$\tau:\Gamma\rightarrow \Isom(X)$ is a group representation which satisfies $\tau(\gamma) (p_v)=p_{\gamma v}$ for all $\gamma\in \Gamma$ and all  $v\in V$.

\begin{lemma}
\label{lem:SupportFunctionals}
Let $\G=(G,p,\theta,\tau)$ be a well-positioned and $\Gamma$-symmetric bar-joint framework in $X$ and let $vw\in E$.
Then $\varphi_{\gamma v,\gamma w}=\varphi_{v,w}\circ \tau(\gamma^{-1})$  for all $\gamma\in \Gamma$.
\end{lemma}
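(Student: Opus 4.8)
The plan is to prove the identity $\varphi_{\gamma v,\gamma w}=\varphi_{v,w}\circ\tau(\gamma^{-1})$ by going back to the explicit formula for the support functionals given in Lemma~\ref{lem:differential} and exploiting two facts: first, that $\tau(\gamma)$ is a linear isometry of $X$, and second, that the symmetry of the framework forces $p_{\gamma v}-p_{\gamma w}=\tau(\gamma)(p_v-p_w)$. The latter is immediate from the defining condition $\tau(\gamma)p_v=p_{\gamma v}$ together with linearity of $\tau(\gamma)$.

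First I would fix $\gamma\in\Gamma$ and $x\in X$, and write out $\varphi_{\gamma v,\gamma w}(x)$ using the formula
\[
\varphi_{\gamma v,\gamma w}(x)=\|p_{\gamma v}-p_{\gamma w}\|\left(\lim_{t\to 0}\frac{1}{t}\left(\|p_{\gamma v}-p_{\gamma w}+tx\|-\|p_{\gamma v}-p_{\gamma w}\|\right)\right).
\]
Then I would substitute $p_{\gamma v}-p_{\gamma w}=\tau(\gamma)(p_v-p_w)$ throughout, and similarly write $x=\tau(\gamma)\tau(\gamma^{-1})x=\tau(\gamma)(\tau(\gamma^{-1})x)$, so that inside each norm we have an expression of the form $\tau(\gamma)\big((p_v-p_w)+t\,\tau(\gamma^{-1})x\big)$ (respectively $\tau(\gamma)(p_v-p_w)$). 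Since $\tau(\gamma)$ is an isometry, each such norm equals $\|(p_v-p_w)+t\,\tau(\gamma^{-1})x\|$ (respectively $\|p_v-p_w\|$). Making these replacements and passing the (now $\tau$-free) limit through, the right-hand side becomes exactly
\[
\|p_v-p_w\|\left(\lim_{t\to 0}\frac{1}{t}\left(\|p_v-p_w+t\,\tau(\gamma^{-1})x\|-\|p_v-p_w\|\right)\right)=\varphi_{v,w}\big(\tau(\gamma^{-1})x\big),
\]
which is the claim. One should remark that the limit on the left exists precisely because it exists on the right, i.e.\ because $\G$ (hence the edge $vw$) is well-positioned; strictly the well-positionedness of $\G$ is symmetric in the sense that if $f_G$ is differentiable at $p$ then so is the relevant directional limit for every edge in the orbit of $vw$, so no extra hypothesis is needed.

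The argument is essentially a one-line manipulation, so I do not expect a genuine obstacle. The only point requiring a small amount of care is the interchange of $\tau(\gamma)$ with the scalar $t$ and with the sum inside the norm — this is just linearity of $\tau(\gamma)$ — and the observation that $\tau(\gamma^{-1})$ rather than $\tau(\gamma)$ appears on the right, which comes from having to undo the isometry applied to the direction vector $x$. I would present the computation as a short displayed chain of equalities, invoking Lemma~\ref{lem:differential} for the formula, the isometry property of $\tau(\gamma)$, and the framework symmetry condition $\tau(\gamma)p_v=p_{\gamma v}$.
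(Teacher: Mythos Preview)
Your proposal is correct and follows essentially the same route as the paper's proof: both write out $\varphi_{\gamma v,\gamma w}(x)$ via Lemma~\ref{lem:differential}, substitute $p_{\gamma v}-p_{\gamma w}=\tau(\gamma)(p_v-p_w)$ and $x=\tau(\gamma)\tau(\gamma^{-1})x$, and then strip off $\tau(\gamma)$ from the norms using the isometry property to recover $\varphi_{v,w}(\tau(\gamma^{-1})x)$.
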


\proof
Let $p_0=p_v-p_w$. Then $\tau(\gamma)(p_0)=p_{\gamma v} - p_{\gamma w}$ and so for each $x\in X$,
\begin{eqnarray*}
\varphi_{\gamma v,\gamma w}(x)
&=& \|\tau(\gamma)(p_0)\|\,\left(\lim_{t\to 0}\,\frac{1}{t}\left(\|\tau(\gamma)(p_0+t\,\tau(\gamma^{-1})x)\|-\|\tau(\gamma)p_0\|\right)\right) \\
&=& \|p_0\|\,\left(\lim_{t\to 0}\,\frac{1}{t}\left(\|p_0+t\,\tau(\gamma^{-1})x\|-\|p_0\|\right) \right)= \varphi_{v,w}(\tau(\gamma^{-1})x).
\end{eqnarray*}
\endproof

In the following, the same symbol will be used to denote a real affine transformation $T:Y\to Z$ between two real linear spaces $Y$ and $Z$ and its complex extension $T:Y_\bC\to Z_\bC$. In particular, we consider the complex linear functionals $\varphi_{v,w}:X_\bC\to\bC$, the complex linear transformations $\tau(\gamma):X_\bC\to X_\bC$ and the complex differential $df_G(p):(X_\bC)^V \to \bC^E$ associated to a   bar-joint framework $(G,p)$ in $X$.

\begin{proposition} 
\label{prop:block}
Let $\G=(G,p,\theta,\tau)$ be a well-positioned and $\Gamma$-symmetric bar-joint framework in $X$.
With respect to the direct sum decompositions obtained in Proposition \ref{lem:decomp},
\[(X_\bC)^{V}=\bigoplus_{\chi\in\hat{\Gamma}} X_{\chi} \,\,\,\,  
\mbox{ and } \,\,\,\,\bC^{E}= \bigoplus_{\chi\in\hat{\Gamma}} Y_\chi,\] 
the (complex) differential $df_G(p)$ may be expressed as a direct sum of linear transformations,
\[df_G(p)=\bigoplus_{\chi\in\hat{\Gamma}} R_\chi(\G),\]
where $R_\chi(\G):X_{\chi}\to Y_{\chi}$ for each character $\chi\in\hat{\Gamma}$.
\end{proposition}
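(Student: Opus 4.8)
The plan is to show that $df_G(p)$ respects the two direct sum decompositions, i.e.\ that $df_G(p)(X_\chi)\subseteq Y_\chi$ for every $\chi\in\hat\Gamma$; once this is established, the restriction $R_\chi(\G):=df_G(p)|_{X_\chi}$ maps into $Y_\chi$, and since $df_G(p)$ is linear and $(X_\bC)^V=\bigoplus_\chi X_\chi$, the operator is automatically the direct sum $\bigoplus_\chi R_\chi(\G)$ of these restrictions. So the entire content of the proposition is the single inclusion $df_G(p)(X_\chi)\subseteq Y_\chi$.

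\textbf{Main computation.} Fix $\chi\in\hat\Gamma$ and take $u=(u_v)_{v\in V}\in X_\chi$, so that $u_{\gamma v}=\chi(\gamma)\tau(\gamma)u_v$ for all $\gamma\in\Gamma$, $v\in V$. Write $y=df_G(p)(u)$; by Lemma~\ref{lem:differential} (in its complex-extended form) we have $y_e=\varphi_{v,w}(u_v-u_w)$ for $e=vw\in E$. The goal is to verify $y_{\gamma e}=\chi(\gamma)y_e$ for all $\gamma\in\Gamma$ and $e\in E$, which is exactly the defining condition for $y\in Y_\chi$. Fix $\gamma$ and $e=vw$; then $\gamma e=(\gamma v)(\gamma w)$, so
\[
y_{\gamma e}=\varphi_{\gamma v,\gamma w}(u_{\gamma v}-u_{\gamma w}).
\]
Now apply Lemma~\ref{lem:SupportFunctionals}, which gives $\varphi_{\gamma v,\gamma w}=\varphi_{v,w}\circ\tau(\gamma^{-1})$ (this identity passes to the complex extensions since both sides are complex-linear and agree on the real subspace), together with the membership relation for $u$:
\[
y_{\gamma e}=\varphi_{v,w}\bigl(\tau(\gamma^{-1})(u_{\gamma v}-u_{\gamma w})\bigr)
=\varphi_{v,w}\bigl(\tau(\gamma^{-1})(\chi(\gamma)\tau(\gamma)u_v-\chi(\gamma)\tau(\gamma)u_w)\bigr).
\]
Pulling the scalar $\chi(\gamma)$ out and using $\tau(\gamma^{-1})\tau(\gamma)=\mathrm{id}$ on $X_\bC$ collapses this to $\chi(\gamma)\,\varphi_{v,w}(u_v-u_w)=\chi(\gamma)y_e$, as required. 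Hence $y\in Y_\chi$.

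\textbf{Assembling the conclusion.} Define $R_\chi(\G):X_\chi\to Y_\chi$ to be the restriction of $df_G(p)$ to the summand $X_\chi$, which by the above is well-defined. Given an arbitrary $x\in(X_\bC)^V$, decompose $x=\sum_{\chi\in\hat\Gamma}x_\chi$ with $x_\chi\in X_\chi$ as in Proposition~\ref{lem:decomp}; then by linearity $df_G(p)(x)=\sum_\chi df_G(p)(x_\chi)=\sum_\chi R_\chi(\G)(x_\chi)$, and since the $Y_\chi$ form a direct sum this exhibits $df_G(p)$ as $\bigoplus_{\chi\in\hat\Gamma}R_\chi(\G)$ with respect to the two decompositions. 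The only mild subtlety—not really an obstacle—is being careful that the identities of Lemmas~\ref{lem:differential} and \ref{lem:SupportFunctionals}, originally stated for the real objects, transfer verbatim to the complexifications; this is immediate because in each case both sides are complex-linear (or complex-affine) maps agreeing on the real form $X\subseteq X_\bC$, and such maps coincide. I expect no genuinely hard step here: the proposition is essentially a bookkeeping consequence of equivariance (Lemma~\ref{lem:SupportFunctionals}) combined with the definition of the isotypic subspaces.
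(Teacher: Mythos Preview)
Your proof is correct and follows essentially the same approach as the paper: both arguments establish the inclusion $df_G(p)(X_\chi)\subseteq Y_\chi$ by combining the formula from Lemma~\ref{lem:differential} with the equivariance identity $\varphi_{\gamma v,\gamma w}=\varphi_{v,w}\circ\tau(\gamma^{-1})$ from Lemma~\ref{lem:SupportFunctionals}, and then observe that the block decomposition follows. Your version is slightly more explicit about the complexification and the final assembly of the direct sum, but the underlying computation is the same.
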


\proof
Let $vw\in E$. If $(x_v)_{v\in V}\in X_\chi$ then, using Lemma \ref{lem:SupportFunctionals}, 
\[\varphi_{v,w}(x_v-x_w) = \varphi_{\gamma v,\gamma w}(\tau(\gamma)(x_v-x_w))
=\varphi_{\gamma v,\gamma w}(\overline{\chi(\gamma)}(x_{\gamma v}-x_{\gamma w}))
=\overline{\chi(\gamma)}\varphi_{\gamma v,\gamma w}(x_{\gamma v}-x_{\gamma w}),\]
for each $\chi\in\hat{\Gamma}$ and each $\gamma\in \Gamma$. 
Thus, by Lemma \ref{lem:differential}, $df_G(p)(X_\chi)\subseteq Y_\chi$ and the result follows.
\endproof

\subsection{Infinitesimal rigid motions}
Denote by $\T(X;\bC)$ the complex vector space spanned by vector fields $\eta_\bC:X\to X_\bC$, $x\mapsto 1\otimes\eta(x)$ where $\eta\in\T(X)$. For convenience, $\eta_\bC$ will simply be denoted $\eta$.

\begin{proposition} 
\label{lem:decomp2}
Let $\tau:\Gamma\to\Isom(X)$ be a group representation.
Then,
$\T(X;\bC)=\bigoplus_{\chi\in\hat{\Gamma}} \T_{\chi}(X)$ where,
\[\T_\chi(X) = \{\eta\in \T(X;\bC): \eta(\tau(\gamma)x) = \chi(\gamma)\tau(\gamma)\eta(x),\,\,\forall\,\,\gamma\in\Gamma,\,\,\forall\,\,x\in X\}.\]
\end{proposition}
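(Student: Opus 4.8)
The plan is to mimic exactly the decomposition argument already used in Proposition \ref{lem:decomp}, only now applied to the space $\T(X;\bC)$ rather than to $(X_\bC)^V$ or $\bC^E$. The essential structural input is that $\tau$ induces a linear action of $\Gamma$ on $\T(X;\bC)$: for $\gamma\in\Gamma$ and $\eta\in\T(X)$, the vector field $x\mapsto \tau(\gamma)\,\eta(\tau(\gamma^{-1})x)$ is again an infinitesimal rigid motion of $X$. Indeed, if $\eta=\alpha'_\cdot(0)$ for a rigid motion $\{\alpha_x\}_{x\in X}$, then $\{\tau(\gamma)\circ\alpha_{\tau(\gamma^{-1})x}\}_{x\in X}$ is a rigid motion of $X$ (each path is continuous, starts at $x$, and preserves distances since $\tau(\gamma)$ is a linear isometry), and its derivative at $0$ is precisely the claimed vector field. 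Write $(\gamma\cdot\eta)(x):=\tau(\gamma)\,\eta(\tau(\gamma^{-1})x)$; one checks $\gamma_1\cdot(\gamma_2\cdot\eta)=(\gamma_1\gamma_2)\cdot\eta$, so this is a genuine representation of $\Gamma$ on the complex vector space $\T(X;\bC)$.

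First I would record that $\T_\chi(X)$ as defined is exactly the $\chi$-isotypic subspace for this action: the condition $\eta(\tau(\gamma)x)=\chi(\gamma)\tau(\gamma)\eta(x)$ for all $x,\gamma$ is equivalent to $\tau(\gamma^{-1})\eta(\tau(\gamma)x)=\chi(\gamma)\eta(x)$, i.e.\ to $\gamma^{-1}\cdot\eta=\chi(\gamma)\eta$ for all $\gamma$, equivalently $\gamma\cdot\eta=\overline{\chi(\gamma)}\eta=\chi(\gamma^{-1})\eta$ for all $\gamma$. Then I would invoke the standard averaging/projection idempotents: for each $\chi\in\hat\Gamma$ define
\[
P_\chi\eta \;=\; \frac{1}{|\Gamma|}\sum_{\gamma\in\Gamma}\overline{\chi(\gamma)}\,(\gamma\cdot\eta)
\;=\; \frac{1}{|\Gamma|}\sum_{\gamma\in\Gamma}\overline{\chi(\gamma)}\,\tau(\gamma)\,\eta(\tau(\gamma^{-1})\,\cdot\,).
\]
Using the character orthogonality identity already quoted in the proof of Proposition \ref{lem:decomp}, one verifies $P_\chi\eta\in\T_\chi(X)$, that $\sum_{\chi\in\hat\Gamma}P_\chi\eta=\eta$, and that the $P_\chi$ are orthogonal idempotents ($P_\chi P_{\chi'}=\delta_{\chi,\chi'}P_\chi$); equivalently, $\T_\chi(X)\cap\sum_{\chi'\neq\chi}\T_{\chi'}(X)=\{0\}$. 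This gives the internal direct sum $\T(X;\bC)=\bigoplus_{\chi\in\hat\Gamma}\T_\chi(X)$. In fact the maximality argument from Proposition \ref{lem:decomp} transfers verbatim, so I would just say "the argument of Proposition \ref{lem:decomp} applies mutatis mutandis, with $(X_\bC)^V$ replaced by $\T(X;\bC)$ and the diagonal $\Gamma$-action on components replaced by the action $\eta\mapsto\gamma\cdot\eta$."

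The one point that genuinely needs care — the main obstacle — is verifying that $\T(X;\bC)$ really is $\Gamma$-invariant under $\eta\mapsto\gamma\cdot\eta$, i.e.\ that conjugating an infinitesimal rigid motion by a linear isometry $\tau(\gamma)$ yields another infinitesimal rigid motion, and that the averaged vector field $P_\chi\eta$ still lies in $\T(X;\bC)$ (not merely in some larger space of vector fields). The first is the rigid-motion computation sketched above; the second follows because $\T(X;\bC)$ is a complex subspace closed under the action, so any finite linear combination of elements $\gamma\cdot\eta$ stays inside it. Once invariance is in hand, everything else is the routine representation-theoretic bookkeeping identical to Proposition \ref{lem:decomp}, so I would keep that part terse and refer back to that proof rather than reproducing it.
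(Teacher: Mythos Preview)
Your proposal is correct and follows essentially the same averaging-plus-directness argument as the paper's proof, which likewise writes $\eta=\sum_{\chi}\eta_\chi$ via the same character-weighted sum and then repeats the maximality argument from Proposition~\ref{lem:decomp}. If anything you are more careful than the paper in explicitly justifying that $\T(X;\bC)$ is $\Gamma$-invariant (so that the averaged vector fields stay in $\T(X;\bC)$); note only that your projection formula should carry $\chi(\gamma)$ rather than $\overline{\chi(\gamma)}$ to land in $\T_\chi(X)$ as you defined it, matching the paper's $\eta_\chi(x)=\frac{1}{|\Gamma|}\sum_\gamma\overline{\chi(\gamma)}\,\tau(\gamma^{-1})\eta(\tau(\gamma)x)$.
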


\proof
Applying an argument similar to Lemma \ref{lem:decomp}, each $\eta\in \T(X;\bC)$ may be expressed as a sum $\eta=\sum_{\chi\in\hat{\Gamma}} \eta_\chi$ where $\eta_\chi:X\to X_\bC$ is the vector field, 
\[\eta_{\chi}(x) = \frac{1}{|\Gamma|}\left(\sum_{\gamma\in\Gamma} 
\overline{\chi(\gamma)}\tau(\gamma^{-1})\eta(\tau(\gamma)x)\right).\]
Note that $\eta_\chi\in \T_{\chi}(X)$  for each $\chi\in\hat{\Gamma}$.

Let $S\subseteq\hat{\Gamma}$ be a maximal subset which gives rise to a direct sum $\oplus_{\chi\in S} \T_{\chi}(X)$ and suppose there exists $\tilde{\chi}\in\hat{\Gamma}\backslash S$. If $\eta\in \T_{\tilde{\chi}}(X)\cap(\oplus_{\chi\in S} \T_{\chi}(X))$ then
$\eta(x) = \overline{\tilde{\chi}(\gamma)}\tau(\gamma^{-1})\eta(\tau(\gamma)x)$ for all $x\in X$ and all $\gamma\in\Gamma$. Moreover, $\eta = \sum_{\chi\in S} \delta_\chi$ for some unique $\delta_\chi\in\T_\chi(X)$. Thus, for all $x\in X$ and all $\gamma\in\Gamma$,
\begin{eqnarray*}
\eta(x) &=& \overline{\tilde{\chi}(\gamma)}\tau(\gamma^{-1})\left(\sum_{\chi\in S}\delta_\chi(\tau(\gamma)x)\right) \\
&=& \overline{\tilde{\chi}(\gamma)}\tau(\gamma^{-1})\left(\sum_{\chi\in S}\chi(\gamma)\tau(\gamma)\delta_\chi(x)\right) \\
&=&  \overline{\tilde{\chi}(\gamma)}\left(\sum_{\chi\in S}\chi(\gamma)\delta_\chi(x) \right).
\end{eqnarray*}
Thus $\eta = \overline{\tilde{\chi}(\gamma)}\left(\sum_{\chi\in S}\chi(\gamma)\delta_\chi \right)$ for all $\gamma\in\Gamma$. Since the sum $\eta = \sum_{\chi\in S} \delta_\chi$ is direct, if $\eta\not=0$ then it follows that $\tilde{\chi}=\chi$ for some $\chi\in S$. This is a contradiction and so $\T_{\tilde{\chi}}(X)\cap(\oplus_{\chi\in S} \T_{\chi}(X))=\{0\}$.
However, this contradicts the maximality of $S$ and so $S=\hat{\Gamma}$.
\endproof

We now consider the complex restriction map $\rho_{(G,p)}: \T(X;\bC)\to (X_\bC)^V$, $\eta\mapsto (\eta(p_v))_{v\in V}$ for a given bar-joint framework $(G,p)$ in $X$.

\begin{proposition} 
\label{prop:block2}
Let $\G=(G,p,\theta,\tau)$ be a well-positioned and $\Gamma$-symmetric bar-joint framework in $X$.
With respect to the direct sum decompositions obtained in Propositions \ref{lem:decomp2} and \ref{lem:decomp},
\[ \T(X;\bC)=\bigoplus_{\chi\in\hat{\Gamma}} \T_{\chi}(X), \,\,\,\,  
\mbox{ and, }\,\,\,\,(X_\bC)^{V}=\bigoplus_{\chi\in\hat{\Gamma}} X_{\chi},\] 
the (complex) restriction map $\rho_{(G,p)}$ may be expressed as a direct sum of linear transformations,
\[\rho_{(G,p)}=\bigoplus_{\chi\in\hat{\Gamma}} \rho_\chi(\G),\]
where $\rho_\chi(\G):\T_\chi(X)\to X_{\chi}$ for each character $\chi\in\hat{\Gamma}$.
\end{proposition}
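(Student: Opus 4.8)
The plan is to reduce everything to the single inclusion $\rho_{(G,p)}(\T_\chi(X))\subseteq X_\chi$ for each $\chi\in\hat\Gamma$. Granting this, the asserted block form $\rho_{(G,p)}=\bigoplus_{\chi\in\hat{\Gamma}}\rho_\chi(\G)$, with $\rho_\chi(\G):=\rho_{(G,p)}|_{\T_\chi(X)}:\T_\chi(X)\to X_\chi$, is then immediate: the domain splits as $\T(X;\bC)=\bigoplus_\chi \T_\chi(X)$ by Proposition \ref{lem:decomp2}, the codomain splits as $(X_\bC)^V=\bigoplus_\chi X_\chi$ by Proposition \ref{lem:decomp}, and $\rho_{(G,p)}$ is $\bC$-linear (being the evaluation-at-$p$ map $\eta\mapsto(\eta(p_v))_{v\in V}$, extended complex-linearly), so it carries each summand of the domain into the matching summand of the codomain.

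To prove the inclusion I would fix $\chi\in\hat\Gamma$ and $\eta\in\T_\chi(X)$, and set $x=(x_v)_{v\in V}:=\rho_{(G,p)}(\eta)=(\eta(p_v))_{v\in V}\in (X_\bC)^V$. By the description of $X_\chi$ in Proposition \ref{lem:decomp}, it suffices to check that $x_{\gamma v}=\chi(\gamma)\tau(\gamma)x_v$ for all $\gamma\in\Gamma$ and all $v\in V$, i.e.\ that $\eta(p_{\gamma v})=\chi(\gamma)\tau(\gamma)\eta(p_v)$. This is where the symmetry hypothesis on the framework is used: since $\G=(G,p,\theta,\tau)$ is $\Gamma$-symmetric we have $p_{\gamma v}=\tau(\gamma)(p_v)$, hence $\eta(p_{\gamma v})=\eta(\tau(\gamma)p_v)$, and then the defining equivariance property of elements of $\T_\chi(X)$, namely $\eta(\tau(\gamma)y)=\chi(\gamma)\tau(\gamma)\eta(y)$ for all $\gamma\in\Gamma$ and $y\in X$, applied with $y=p_v$, yields $\eta(p_{\gamma v})=\chi(\gamma)\tau(\gamma)\eta(p_v)$, exactly as required. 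Thus $\rho_{(G,p)}(\eta)\in X_\chi$.

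I do not expect any real obstacle: the argument is a direct substitution chaining the definition of $\T_\chi(X)$ with the compatibility relation $\tau(\gamma)p_v=p_{\gamma v}$ that is built into the notion of a $\Gamma$-symmetric bar-joint framework. The only minor points to record are that $\rho_{(G,p)}$ is genuinely $\bC$-linear on $\T(X;\bC)$ (so the decomposition of the domain transports through it) and that, consequently, each restriction $\rho_\chi(\G):\T_\chi(X)\to X_\chi$ is a well-defined linear transformation; both are clear from the construction.
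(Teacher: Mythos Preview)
Your proof is correct and follows essentially the same approach as the paper: both reduce to the inclusion $\rho_{(G,p)}(\T_\chi(X))\subseteq X_\chi$, established via the chain $\eta(p_{\gamma v})=\eta(\tau(\gamma)p_v)=\chi(\gamma)\tau(\gamma)\eta(p_v)$ using the $\Gamma$-symmetry of the framework and the definition of $\T_\chi(X)$. Your write-up is more explicit about why the inclusion suffices for the block decomposition, but the substance is identical.
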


\proof
Let $\chi\in\hat{\Gamma}$. If $\eta\in \T_{\chi}(X)$ then, for each $\gamma\in \Gamma$ and all $v\in V$, 
\[\eta(p_{\gamma v}) = \eta(\tau(\gamma)p_v) 
=\chi(\gamma)\tau(\gamma)\eta(p_v).\]
Thus, $\rho_{(G,p)}(\T_{\chi}(X))\subseteq X_{\chi}$ and the result follows.
\endproof

\subsection{Decomposing the framework complex}
Denote by $\comp_\bC(G,p)$ the {\em complexified} framework complex for a bar-joint framework $(G,p)$,
\begin{eqnarray}
\label{chaincomplex2}
\begin{CD}
0 @>>> \T(X;\bC) @>\rho_{(G,p)}>> (X_\bC)^V @>df_G(p)>> \bC^E @>>> 0. \\
\end{CD}
\end{eqnarray}
If $\G=(G,p,\theta,\tau)$ is a well-positioned and $\Gamma$-symmetric bar-joint framework in $X$ then, recalling the decompositions $df_G(p)=\bigoplus_{\chi\in \hat{\Gamma}} R_\chi(\G)$ and $\rho_{(G,p)} = \bigoplus_{\chi\in \hat{\Gamma}} \rho_\chi(\G)$ from Propositions \ref{prop:block} and \ref{prop:block2}, we have $R_\chi(\G)\circ\rho_\chi(\G)=0$ for all $\chi\in \hat{\Gamma}$.
The {\em $\chi$-symmetric framework complex} for $\G$, denoted $\comp_\chi(\G)$, is the chain complex,
\begin{eqnarray}
\label{chaincomplex3}
\begin{CD}
0 @>>> \T_\chi(X) @>\rho_\chi(\G)>> X_\chi @>R_\chi(\G)>> Y_\chi @>>> 0. \\
\end{CD}
\end{eqnarray}
The kernel of $R_\chi(\G)$, denoted $\F_\chi(\G)$, is referred to as the space of {\em $\chi$-symmetric infinitesimal flexes} of $\G$.
The image of $\rho_\chi$, denoted $\T_\chi(\G)$, is referred to as the space of {\em trivial} $\chi$-symmetric infinitesimal flexes of $\G$.

\begin{theorem}
Let $\G=(G,p,\theta,\tau)$ be a well-positioned and $\Gamma$-symmetric bar-joint framework in $X$. Then,
\[\comp_\bC(G,p) = \bigoplus_{\chi\in\hat{\Gamma}} \,\comp_\chi(\G).\]
\end{theorem}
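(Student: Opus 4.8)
The plan is to assemble the desired isomorphism of chain complexes from the three ingredient decompositions already established, and then verify that these decompositions are compatible with the two connecting maps. Concretely, Proposition \ref{lem:decomp2} gives $\T(X;\bC)=\bigoplus_{\chi\in\hat\Gamma}\T_\chi(X)$, Proposition \ref{lem:decomp} gives $(X_\bC)^V=\bigoplus_{\chi\in\hat\Gamma}X_\chi$ and $\bC^E=\bigoplus_{\chi\in\hat\Gamma}Y_\chi$, and Propositions \ref{prop:block} and \ref{prop:block2} show that, under these decompositions, $df_G(p)=\bigoplus_\chi R_\chi(\G)$ and $\rho_{(G,p)}=\bigoplus_\chi\rho_\chi(\G)$ with $R_\chi(\G)\colon X_\chi\to Y_\chi$ and $\rho_\chi(\G)\colon\T_\chi(X)\to X_\chi$. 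So the three spaces sitting in degrees $2,1,0$ of the complexified complex $\comp_\bC(G,p)$ each split as the direct sum over $\chi$ of the corresponding space in $\comp_\chi(\G)$, and each of the two differentials splits as the direct sum of the corresponding restricted maps.

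First I would recall explicitly that $R_\chi(\G)\circ\rho_\chi(\G)=0$ for every $\chi$: this is noted just before the statement and follows because $df_G(p)\circ\rho_{(G,p)}=0$ on the complexified complex (the complex extension of the identity $df_G(p)\circ\rho_{(G,p)}=0$ from \cite[Lemma 2.1]{kit-pow}), and this composite, being block-diagonal with respect to the $\chi$-decomposition, vanishes block by block. Hence each $\comp_\chi(\G)$ really is a chain complex, so the right-hand side $\bigoplus_{\chi\in\hat\Gamma}\comp_\chi(\G)$ is a well-defined chain complex whose degree-$k$ term is the direct sum over $\chi$ of the degree-$k$ terms of the $\comp_\chi(\G)$, with differential the direct sum of the differentials.

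Next I would write down the candidate isomorphism: in each degree it is the identity map, using the identifications $\T(X;\bC)=\bigoplus_\chi\T_\chi(X)$, $(X_\bC)^V=\bigoplus_\chi X_\chi$, $\bC^E=\bigoplus_\chi Y_\chi$ from the cited propositions. To check this is a chain map one must verify that it commutes with the differentials, i.e.\ that $\rho_{(G,p)}$ agrees with $\bigoplus_\chi\rho_\chi(\G)$ and $df_G(p)$ agrees with $\bigoplus_\chi R_\chi(\G)$ after transporting along the identifications — but that is precisely the content of Propositions \ref{prop:block2} and \ref{prop:block}, so nothing new is required. Since the degree-wise maps are bijections (they are literally the direct-sum decomposition isomorphisms), the chain map is an isomorphism of chain complexes, which is the assertion. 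I do not expect any genuine obstacle here: the theorem is essentially a bookkeeping consequence of the four preceding propositions, and the only point worth stating carefully is that ``direct sum of chain complexes'' is meaningful because each summand is a complex, which in turn rests on the block-diagonality of the relation $df_G(p)\circ\rho_{(G,p)}=0$.
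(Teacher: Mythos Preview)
Your proposal is correct and follows exactly the paper's approach: the paper's proof is simply the one-line assertion that the result follows from Propositions \ref{prop:block}, \ref{lem:decomp2} and \ref{prop:block2}, and you have spelled out precisely the bookkeeping that this citation encodes. Your additional care in noting that each $\comp_\chi(\G)$ is genuinely a complex (via block-diagonality of $df_G(p)\circ\rho_{(G,p)}=0$) is a welcome clarification of what the paper leaves implicit.
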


\proof
The result follows from Propositions \ref{prop:block}, \ref{lem:decomp2} and \ref{prop:block2}.
\endproof

\begin{definition}
A well-positioned and $\Gamma$-symmetric bar-joint framework $\G=(G,p,\theta,\tau)$ in a normed space $X$ is said to be,
\begin{enumerate}[(a)]
\item \emph{$\chi$-symmetrically full} if $\comp_\chi(\G)$ is exact at $\T_\chi(X)$.
\item {\em $\chi$-symmetrically infinitesimally rigid} if $\comp_\chi(\G)$ is exact at $X_\chi$.
\item \emph{$\chi$-symmetrically  independent} if $\comp_\chi(\G)$ is exact at $Y_\chi$.
\end{enumerate}
\end{definition}

A $\Gamma$-symmetric bar-joint framework is {\em $\chi$-symmetrically isostatic} if it is both $\chi$-symmetrically infinitesimally rigid and $\chi$-symmetrically independent.

\begin{corollary}
\label{lem:SymRigid}
Let $\G=(G,p,\theta,\tau)$ be a well-positioned and $\Gamma$-symmetric bar-joint framework in $X$.
The following statements are equivalent.
\begin{enumerate}[(i)]
\item $(G,p)$ is full (respectively, infinitesimally rigid, independent or isostatic).
\item $\G$ is $\chi$-symmetrically full (respectively, $\chi$-symmetrically infinitesimally rigid, $\chi$-symmetrically independent or $\chi$-symmetrically isostatic) for each $\chi\in \hat{\Gamma}$.
\end{enumerate}
\end{corollary}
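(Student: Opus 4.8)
The plan is to deduce the corollary directly from the decomposition theorem $\comp_\bC(G,p) = \bigoplus_{\chi\in\hat{\Gamma}} \comp_\chi(\G)$ together with two elementary facts: first, that a bar-joint framework property for $(G,p)$ over $\bR$ is equivalent to the corresponding property for the complexified complex $\comp_\bC(G,p)$; and second, that a direct sum of chain complexes is exact at a given spot precisely when each summand is exact at the corresponding spot.

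First I would record the complexification step. Since $\rho_{(G,p)}$ and $df_G(p)$ are real linear maps, their kernels and images tensor up: $\ker\bigl(df_G(p)_\bC\bigr) = \bC\otimes_\bR \ker df_G(p)$, $\ran\bigl(\rho_{(G,p)}\bigr)_\bC = \bC\otimes_\bR \ran \rho_{(G,p)}$, and likewise for $df_G(p)\circ \rho_{(G,p)}$; hence $\comp_\bC(G,p)$ is exact at $\T(X;\bC)$ (resp.\ at $(X_\bC)^V$, at $\bC^E$) if and only if $\comp(G,p)$ is exact at $\T(X)$ (resp.\ at $X^V$, at $\bR^E$). This gives the equivalence of (i) with the analogous exactness statements for $\comp_\bC(G,p)$, and "isostatic" is just the conjunction of the two middle conditions.

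Next I would invoke the theorem: $\comp_\bC(G,p) = \bigoplus_{\chi\in\hat\Gamma} \comp_\chi(\G)$, meaning the $\bR^E$-term, $X^V$-term and $\T(X;\bC)$-term each split as the direct sums of Propositions \ref{lem:decomp}, \ref{prop:block}, \ref{lem:decomp2} and \ref{prop:block2}, compatibly with the maps $\rho_{(G,p)} = \bigoplus_\chi \rho_\chi(\G)$ and $df_G(p) = \bigoplus_\chi R_\chi(\G)$. For a block-diagonal map $\bigoplus_\chi T_\chi$ between direct sums one has $\ker(\bigoplus_\chi T_\chi) = \bigoplus_\chi \ker T_\chi$ and $\ran(\bigoplus_\chi T_\chi) = \bigoplus_\chi \ran T_\chi$; consequently the homology at each spot of $\comp_\bC(G,p)$ is the direct sum over $\chi\in\hat\Gamma$ of the homologies of $\comp_\chi(\G)$ at the corresponding spot. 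Thus $\comp_\bC(G,p)$ is exact at a given spot if and only if every summand $\comp_\chi(\G)$ is exact at that spot. Combining this with the complexification step from the previous paragraph yields the equivalence of (i) and (ii) for "full", "infinitesimally rigid" and "independent", and the "isostatic" case follows by taking the conjunction. I do not anticipate a real obstacle here; the only point requiring a little care is the bookkeeping that the decomposition in the theorem is genuinely a decomposition of chain complexes (i.e.\ respects the differentials), so that homology commutes with the direct sum — but this is exactly what Propositions \ref{prop:block} and \ref{prop:block2} supply, since they state that $R_\chi(\G)$ and $\rho_\chi(\G)$ are the restrictions of $df_G(p)$ and $\rho_{(G,p)}$ to the summands indexed by $\chi$.
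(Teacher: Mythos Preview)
Your proposal is correct and matches the paper's approach exactly: the paper states this corollary immediately after the decomposition theorem $\comp_\bC(G,p) = \bigoplus_{\chi\in\hat{\Gamma}} \comp_\chi(\G)$ without giving any proof, treating it as a direct consequence. You have simply spelled out the two routine ingredients the paper leaves implicit --- that exactness of the real chain complex is equivalent to exactness of its complexification, and that a direct sum of chain complexes is exact at a spot if and only if every summand is --- and both are handled correctly.
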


Let $\G=(G,p,\theta,\tau)$ be a  $\Gamma$-symmetric bar-joint framework in $X$. A {\em $\Gamma$-symmetric subframework} of $\G$ is a $\Gamma$-symmetric framework $\H=(H,p_H,\theta_H,\tau_H)$ where $(H,p_H)$ is a subframework of $(G,p)$, $\theta_H:\Gamma\to \Aut(H)$ is the group homomorphism induced by $\theta$ and $\tau_H=\tau$.

\begin{lemma} \label{lem:blocksurjSym}
Let $\G=(G,p,\theta,\tau)$ be a well-positioned and $\Gamma$-symmetric bar-joint framework in $X$ and let $\chi\in\hat{\Gamma}$.
If $\G$ is $\chi$-symmetrically independent then every $\Gamma$-symmetric subframework of $\G$ is $\chi$-symmetrically independent.
\end{lemma}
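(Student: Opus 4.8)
The plan is to argue by restriction of the rigidity operator (equivalently, of its complexified $\chi$-block) to the coordinates indexed by the subframework, using that independence at $Y_\chi$ means exactly surjectivity of $R_\chi(\G)$. First I would observe that, since $\theta$ acts freely on $V(G)$ and $\theta_H$ is the induced homomorphism, the subframework $\H=(H,p_H,\theta_H,\tau)$ is itself a well-positioned $\Gamma$-symmetric framework, so the $\chi$-symmetric framework complex $\comp_\chi(\H)$ and the operator $R_\chi(\H):X_\chi(H)\to Y_\chi(H)$ are defined; here $X_\chi(H)\subseteq (X_\bC)^{V(H)}$ and $Y_\chi(H)\subseteq\bC^{E(H)}$ denote the analogues of $X_\chi$, $Y_\chi$ for $H$. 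The key step is to show $R_\chi(\H)$ is surjective. By Corollary~\ref{lem:SymRigid}, $\G$ being $\chi$-symmetrically independent is equivalent to $(G,p)$ being independent in the ordinary (non-symmetric) sense, i.e.\ $df_G(p):X^V\to\bR^E$ is surjective; and likewise $\H$ is $\chi$-symmetrically independent for all $\chi$ if and only if $df_H(p_H):X^{V(H)}\to\bR^{E(H)}$ is surjective. So it suffices to prove the non-symmetric statement: if $df_G(p)$ is surjective then $df_H(p_H)$ is surjective.

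For that non-symmetric reduction, the point is that $df_H(p_H)$ is (up to identifying coordinates) a restriction of $df_G(p)$: by Lemma~\ref{lem:differential}, $df_G(p)$ sends $(u_v)_{v\in V}$ to $(\varphi_{v,w}(u_v-u_w))_{vw\in E}$, and the functionals $\varphi_{v,w}$ depend only on $p_v-p_w$, hence are the same whether computed in $(G,p)$ or in $(H,p_H)$ for an edge $vw\in E(H)$. Concretely, let $\iota:X^{V(H)}\hookrightarrow X^V$ be the inclusion that extends a vector on $V(H)$ by zero, and let $\pi:\bR^E\to\bR^{E(H)}$ be the coordinate projection onto $E(H)$; then one checks directly from the formula that $\pi\circ df_G(p)\circ\iota = df_H(p_H)$. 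Given $z\in\bR^{E(H)}$, extend it by zero to $\tilde z\in\bR^E$, pick $u\in X^V$ with $df_G(p)(u)=\tilde z$ by surjectivity, and then $df_H(p_H)$ applied to $u|_{V(H)}$ has $E(H)$-coordinates equal to $\pi(df_G(p)(u)) = \pi(\tilde z)=z$; this shows $df_H(p_H)$ is onto. (Alternatively, and perhaps more in the spirit of the paper, one can run this argument directly at the level of the $\chi$-blocks: for $y\in Y_\chi(H)$, extend $y$ by zero on the $\Gamma$-orbits of $E(G)\setminus E(H)$ to an element $\tilde y\in Y_\chi$—this extension lies in $Y_\chi$ precisely because $E(H)$ is $\Gamma$-invariant, as $\theta$ restricts to $H$—use surjectivity of $R_\chi(\G)$ to find a preimage $x\in X_\chi$, and restrict $x$ to the $V(H)$-coordinates; the $\Gamma$-symmetry constraint defining $X_\chi$ is inherited coordinatewise on the $\Gamma$-invariant set $V(H)$, so $x|_{V(H)}\in X_\chi(H)$, and $R_\chi(\H)(x|_{V(H)}) = y$ since $R_\chi$ acts edgewise via the same functionals $\varphi_{v,w}$.)

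I expect the only genuinely delicate point to be bookkeeping: making precise that $V(H)$ and $E(H)$ are $\Gamma$-invariant (immediate since $\theta_H$ is the induced action and a subgraph closed under the ambient automorphisms is what "$\Gamma$-symmetric subframework" means), that $Y_\chi(H)$ is canonically the image of $Y_\chi$ under the coordinate projection $\bC^E\to\bC^{E(H)}$ together with the fact that the "extend by zero" map $\bC^{E(H)}\to\bC^E$ carries $Y_\chi(H)$ back into $Y_\chi$, and similarly for $X_\chi(H)$ versus $X_\chi$ — none of which requires computation, only care with indices. Once these identifications are in place, surjectivity of $R_\chi(\H)$ is a one-line diagram chase from surjectivity of $R_\chi(\G)$, and "$\chi$-symmetrically independent" is by definition exactness of $\comp_\chi(\H)$ at $Y_\chi(H)$, i.e.\ precisely this surjectivity. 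I would present the block-level version as the main argument, since it keeps everything inside the $\chi$-graded framework the section has set up and avoids invoking Corollary~\ref{lem:SymRigid} as a detour.
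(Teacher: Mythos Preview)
Your block-level argument (the ``alternative'' you say you would present as the main proof) is correct and is precisely what the paper does: it writes the direct sum $X_\chi = X_\chi^{\H}\oplus X_\chi^{\G\setminus\H}$ and $Y_\chi = Y_\chi^{\H}\oplus Y_\chi^{\G\setminus\H}$, observes that with respect to these splittings $R_\chi(\G)$ has the block-upper-triangular form
\[
R_\chi(\G)=\begin{pmatrix} R_\chi(\H) & 0\\ C & D\end{pmatrix}
\]
(since every edge of $H$ has both endpoints in $V(H)$), and reads off that surjectivity of $R_\chi(\G)$ forces surjectivity of $R_\chi(\H)$. Your extend-by-zero / restrict phrasing is exactly this matrix statement unpacked.

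However, the ``detour'' you sketch first is not merely inelegant --- it is wrong. Corollary~\ref{lem:SymRigid} says $(G,p)$ is independent if and only if $\G$ is $\chi$-symmetrically independent for \emph{every} $\chi\in\hat\Gamma$; it does \emph{not} say that $\chi$-symmetric independence for a single $\chi$ implies ordinary independence of $(G,p)$. So you cannot pass from the hypothesis (one fixed $\chi$) to surjectivity of $df_G(p)$ and back again. Fortunately you do not need this route, and you already identified the correct one.
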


\proof
Let $\H=(H,p_H,\theta_H,\tau_H)$ be a $\Gamma$-symmetric subframework of $\G$ and consider the direct sum decompositions,
\[X_\bC^{V(G)} = X_\bC^{V(H)}\oplus X_\bC^{V(G)\backslash V(H)}, \,\,\,
\mbox{ and, }\,\,\,
\bC^{E(G)} = \bC^{E(H)}\oplus \bC^{E(G)\backslash E(H)}\]
Note that $X_\chi  = X_\chi^{\H}\oplus X_\chi^{\G\backslash \H}$ where $X_\chi^{\H} = X_\bC^{V(H)}\cap X_\chi$ and $X_\chi^{\G\backslash\H} = X_\bC^{V(G)\backslash V(H)}\cap X_\chi$. Similarly, $Y_\chi = Y_\chi^{\H}\oplus Y_\chi^{\G\backslash\H}$ where $Y_\chi^{\H} = \bC^{E(H)}\cap Y_\chi$ and $Y_\chi^{\G\backslash\H} = \bC^{E(G)\backslash E(H)}\cap Y_\chi$.
With respect to these decompositions,
$R_\chi(\G)$ admits a block decomposition of the form,
\[R_\chi(\G) = \left(\begin{array}{cc}
R_\chi(\H) & 0 \\
C & D
\end{array}
\right).\]
Thus, if $R_\chi(\G)$ is surjective then so too is $R_\chi(\H)$.
\endproof

\subsection{Quotient graphs} \label{subsec:quotientgraphs}
Let $(G,\theta)$ be a $\Gamma$-symmetric graph and suppose $\theta$ acts freely on the vertices and edges of $G$.
The orbit of a vertex $v\in V$ (respectively an edge $e\in E$) is denoted by $[v]$ (respectively $[e]$).
Thus $[v]=\{\gamma v:\gamma\in \Gamma\}$ and 
$[e]=\{\gamma e:\gamma\in \Gamma\}$. The collection of all vertex orbits (respectively, edge orbits) is denoted $V_0$ (respectively, $E_0$).
The {\em quotient graph} $G_0=G/\Gamma$ is a multigraph with vertex set $V_0$, edge set $E_0$ and incidence relation satisfying $[e] = [v][w]$ if some (equivalently, every) edge in $[e]$ is incident with a vertex in $[v]$ and a vertex in $[w]$.

\begin{proposition}
\label{prop:counts}
Let $\G=(G,p,\theta,\tau)$ be a well-positioned and $\Gamma$-symmetric bar-joint framework in $X$. Let $\chi\in\hat{\Gamma}$ and suppose $\G$ is $\chi$-symmetrically full.  
\begin{enumerate}[(i)]
\item If $\G$ is $\chi$-symmetrically infinitesimally rigid then,
\[|E_0| \geq (\dim_\bR X)|V_0| - \dim_\bC \T_\chi(X).\]
\item If $\G$ is $\chi$-symmetrically independent then,
\[|E_0| \leq (\dim_\bR X)|V_0| - \dim_\bC \T_\chi(X).\]
\item
If $\G$ is $\chi$-symmetrically isostatic then, 
\[|E_0| = (\dim_\bR X)|V_0| - \dim_\bC \T_\chi(X).\]
\end{enumerate}
\end{proposition}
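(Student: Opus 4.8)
The plan is to read off the inequalities directly from the chain complex $\comp_\chi(\G)$ by counting dimensions. The key numerical facts are $\dim_\bC X_\chi = (\dim_\bR X)|V_0|$ and $\dim_\bC Y_\chi = |E_0|$; these follow from Proposition \ref{lem:decomp} together with the freeness of the action, since a choice of one representative vertex from each orbit in $V_0$ gives a $\bC$-basis for $X_\chi$ after tensoring with a basis of $X_\bC$ (the components at the remaining vertices of each orbit are then forced by the relation $x_{\gamma v} = \chi(\gamma)\tau(\gamma)x_v$), and similarly one representative edge per orbit gives a basis of $Y_\chi$. I would state and prove these two dimension counts first, as a short lemma or inline computation.

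With these in hand, the three parts are immediate from the rank-nullity theorem applied to $R_\chi(\G)\colon X_\chi \to Y_\chi$. Write $r = \rank R_\chi(\G)$. For (i), $\chi$-symmetric infinitesimal rigidity means $\comp_\chi(\G)$ is exact at $X_\chi$, i.e. $\ker R_\chi(\G) = \ran \rho_\chi(\G)$, so $\dim_\bC \ker R_\chi(\G) = \dim_\bC \ran \rho_\chi(\G) \le \dim_\bC \T_\chi(X)$; then $|E_0| = \dim_\bC Y_\chi \ge r = \dim_\bC X_\chi - \dim_\bC \ker R_\chi(\G) \ge (\dim_\bR X)|V_0| - \dim_\bC \T_\chi(X)$. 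For (ii), $\chi$-symmetric independence means exactness at $Y_\chi$, i.e. $R_\chi(\G)$ is surjective, so $r = |E_0|$; combined with $R_\chi(\G)\circ\rho_\chi(\G) = 0$ we get $\ran\rho_\chi(\G) \subseteq \ker R_\chi(\G)$, hence $\dim_\bC \ker R_\chi(\G) \ge \dim_\bC \ran \rho_\chi(\G)$. Here the hypothesis that $\G$ is $\chi$-symmetrically full is used: exactness at $\T_\chi(X)$ says $\rho_\chi(\G)$ is injective, so $\dim_\bC \ran \rho_\chi(\G) = \dim_\bC \T_\chi(X)$. Then $|E_0| = r = \dim_\bC X_\chi - \dim_\bC \ker R_\chi(\G) \le (\dim_\bR X)|V_0| - \dim_\bC \T_\chi(X)$. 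Part (iii) follows by combining (i) and (ii), since $\chi$-symmetrically isostatic means both hypotheses hold.

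The only genuine point requiring care — the "main obstacle", though it is minor — is the identification of the dimensions of $X_\chi$ and $Y_\chi$; one must check that the map sending $x \in X_\chi$ to its tuple of components over a transversal of $V_0$ is a linear isomorphism onto $(X_\bC)^{V_0}$, which uses precisely that $\theta$ acts freely so that each orbit $[v]$ meets the transversal in exactly one vertex and the relation propagates the value without inconsistency (for $Y_\chi$ one should note that freeness on edges, assumed in this subsection, is what makes the analogous map well defined). I would also remark that the full hypothesis is genuinely needed only in parts (ii) and (iii), to pass from $\dim_\bC\ran\rho_\chi(\G) \le \dim_\bC\T_\chi(X)$ to equality; in (i) the inequality direction that is available for free already suffices. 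Everything else is a one-line application of rank-nullity, so the write-up should be short.
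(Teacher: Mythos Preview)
Your proposal is correct and follows essentially the same route as the paper: both arguments identify $\dim_\bC X_\chi = (\dim_\bR X)|V_0|$ and $\dim_\bC Y_\chi = |E_0|$, then apply rank--nullity to $R_\chi(\G)$ together with the relevant exactness conditions of $\comp_\chi(\G)$. Your write-up is in fact more explicit about how the dimension identifications are obtained (via a transversal for the free action), and your observation that fullness is not strictly needed for part (i) is a valid refinement --- the paper invokes the equality $\dim_\bC \T_\chi(\G) = \dim_\bC \T_\chi(X)$ there, whereas you correctly note that the automatic inequality $\dim_\bC \operatorname{ran}\rho_\chi(\G) \le \dim_\bC \T_\chi(X)$ already suffices.
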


\proof
Applying Proposition \ref{prop:block},
$$
|E_0| = \dim_\bC Y_\chi \geq \rank R_\chi  = \dim_\bC X_\chi - \dim_\bC \ker R_\chi 
= (\dim_\bC X_\bC)|V_0| - \dim_\bC \F_\chi(\G).
$$
If $(i)$ holds then $\F_\chi(\G) = \T_\chi(\G)$ and $\dim_\bC\T_\chi(\G) = \dim_\bC\T_\chi(X)$. If $(ii)$ holds then $\dim_\bC Y_\chi = \rank R_\chi$ and $\dim_\bC \F_\chi(\G)\geq \dim_\bC \T_\chi(\G)= \dim_\bC\T_\chi(X)$.
If $(iii)$ holds then the result follows from $(i)$ and $(ii)$.
\endproof

\subsection{Norms with a minimal space of infinitesimal rigid motions}
The space $\T(X)$ of infinitesimal rigid motions of a normed space $X$ is {\em minimal} if given any $\eta\in\T(X)$ there exists $x_0\in X$ such that $\eta(x)=x_0$ for all $x\in X$.
This class includes all $\ell^p$-spaces, with $p\not=2$, and all normed spaces with a polyhedral unit ball (see \cite[Lemma 2.5]{kit-pow}). If $\dim_\bR X=2$, then this class includes all norms not derived from an inner product. 
In the following, the identity map on $X$ is denoted $I_X$, or simply $I$.

\begin{lemma}
\label{lem:dimension}
Let $\tau:\Gamma\to\Isom(X)$ be a group representation and let $\chi\in\hat{\Gamma}$. If $\T(X)$ is minimal then,
\[\dim_\bC \T_\chi(X)=\dim_\bC \left(\bigcap_{\gamma\in\Gamma} \ker(\tau(\gamma)-\overline{\chi(\gamma)}I)\right).\] 
\end{lemma}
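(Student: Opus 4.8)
The plan is to identify $\T_\chi(X)$ explicitly when $\T(X)$ is minimal and then compute its complex dimension directly. Since $\T(X)$ is minimal, every $\eta\in\T(X)$ is a constant vector field, so we may identify $\T(X)$ with $X$ via $\eta\mapsto\eta(0)$, and correspondingly $\T(X;\bC)$ with $X_\bC$ via $\eta\mapsto x_0$ where $\eta(x)=x_0$ for all $x$. (Here one should first check that minimality of $\T(X)$ passes to the complexification, i.e.\ every element of $\T(X;\bC)$ is a constant $X_\bC$-valued vector field; this is immediate since $\T(X;\bC)$ is by definition the complex span of the constant fields coming from $\T(X)$.) Under this identification I would rewrite the defining condition of $\T_\chi(X)$.

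First I would substitute a constant field $\eta\equiv x_0$ into the condition $\eta(\tau(\gamma)x)=\chi(\gamma)\tau(\gamma)\eta(x)$ appearing in Proposition~\ref{lem:decomp2}. The left-hand side is $x_0$ and the right-hand side is $\chi(\gamma)\tau(\gamma)x_0$, both independent of $x$, so the condition becomes simply $x_0=\chi(\gamma)\tau(\gamma)x_0$ for all $\gamma\in\Gamma$. Rearranging, and using that $\chi(\gamma)$ is a unimodular scalar so that $\chi(\gamma)\tau(\gamma)x_0=x_0$ is equivalent to $\tau(\gamma)x_0=\overline{\chi(\gamma)}x_0$, this says exactly $x_0\in\ker(\tau(\gamma)-\overline{\chi(\gamma)}I)$ for every $\gamma\in\Gamma$, i.e.\ $x_0\in\bigcap_{\gamma\in\Gamma}\ker(\tau(\gamma)-\overline{\chi(\gamma)}I)$ (intersection taken inside $X_\bC$, with $\tau(\gamma)$ denoting the complex extension). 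Thus the identification $\eta\mapsto x_0$ restricts to a linear isomorphism from $\T_\chi(X)$ onto $\bigcap_{\gamma\in\Gamma}\ker(\tau(\gamma)-\overline{\chi(\gamma)}I)$, and taking $\dim_\bC$ of both sides gives the claimed equality.

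I do not expect any serious obstacle here: the argument is essentially a one-line unwinding of definitions once minimality is invoked. The only point requiring a moment's care is the bookkeeping around complexification --- making sure that ``minimal'' genuinely forces elements of $\T(X;\bC)$ to be constant fields (so that the ``constant field'' normal form is legitimate before restricting to $\T_\chi(X)$), and that the kernels $\ker(\tau(\gamma)-\overline{\chi(\gamma)}I)$ are understood as subspaces of $X_\bC$ since $\overline{\chi(\gamma)}$ need not be real. Neither is a genuine difficulty, but I would state the complexified minimality observation explicitly as the first sentence of the proof so that the subsequent identification is clean.
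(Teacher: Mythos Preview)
Your proposal is correct and follows essentially the same approach as the paper's proof: both use minimality to identify $\T(X;\bC)$ with $X_\bC$ via constant vector fields, then read off that the defining condition for $\T_\chi(X)$ becomes $x_0=\chi(\gamma)\tau(\gamma)x_0$, equivalently $x_0\in\ker(\tau(\gamma)-\overline{\chi(\gamma)}I)$ for each $\gamma$. If anything, your version is slightly more careful than the paper's in spelling out the complexification step (that general elements of $\T(X;\bC)$, not just those of the form $\eta_\bC$ with $\eta\in\T(X)$, are constant fields), which the paper leaves implicit.
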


\proof
Let $\eta\in \T(X)$. Since $\T(X)$ is minimal, there exists $x_0\in X$ such that $\eta(x)=x_0$ for all $x\in X$.
Note in particular that $x_0 = \eta(\tau(\gamma)(x_0))$ for each $\gamma\in \Gamma$. Thus $\eta_\bC \in \T_\chi(X)$ if and only if $x_0 = \chi(\gamma)\tau(\gamma)(x_0)$ for each $\gamma\in \Gamma$.
The result now follows.
\endproof

Let $\omega = e^{2\pi i/n}$, where $n\in\bN$ and $n\geq 2$, and consider the multiplicative cyclic group $\bZ_n=\{\omega^k:k=0,1,\ldots,n-1\}$. 
Recall that the dual group for $\bZ_n$ consists of characters 
$\chi_0,\chi_1,\ldots,\chi_{n-1}$ where $\chi_j(\omega)=\omega^{j}$ for each $j=0,1,\ldots,n-1$. 

\begin{lemma}
\label{lem:dimensionCyclic}
Let $\tau:\bZ_n\to\Isom(X)$ be a group representation  where $n\geq 2$. If $\T(X)$ is minimal then, for each $j=0,1,\ldots,n-1$,
\[\dim_\bC \T_{\chi_j}(X) = \dim_\bC \ker(\tau(\omega)-\overline{\omega}^jI).\]
\end{lemma}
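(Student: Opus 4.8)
The plan is to derive this as a direct specialisation of Lemma \ref{lem:dimension} to the cyclic group $\bZ_n$, using the fact that $\bZ_n$ is generated by the single element $\omega$. Recall that Lemma \ref{lem:dimension} asserts, for a general finite abelian $\Gamma$ and a character $\chi$, that
\[
\dim_\bC \T_\chi(X) = \dim_\bC\left(\bigcap_{\gamma\in\Gamma}\ker(\tau(\gamma)-\overline{\chi(\gamma)}I)\right),
\]
and the only input this uses is minimality of $\T(X)$, which is a hypothesis here as well. So the entire content is to show that for $\Gamma = \bZ_n$ and $\chi = \chi_j$, the intersection over all group elements collapses to the single kernel $\ker(\tau(\omega)-\overline{\omega}^jI)$.

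First I would observe that $\chi_j(\omega^k) = \omega^{jk} = \chi_j(\omega)^k$, since $\chi_j$ is a homomorphism, and likewise $\tau(\omega^k) = \tau(\omega)^k$. Therefore $\ker(\tau(\omega)-\overline{\omega}^jI) \subseteq \ker(\tau(\omega)^k - \overline{\omega}^{jk}I)$ for every $k$: if $\tau(\omega)x = \overline{\omega}^j x$ then an immediate induction gives $\tau(\omega)^k x = \overline{\omega}^{jk}x$, i.e. $\tau(\omega^k)x = \overline{\chi_j(\omega^k)}x$. Hence
\[
\ker(\tau(\omega)-\overline{\omega}^jI) \subseteq \bigcap_{k=0}^{n-1}\ker(\tau(\omega^k)-\overline{\chi_j(\omega^k)}I).
\]
The reverse inclusion is trivial, since the right-hand side intersection includes the term $k=1$, which is exactly $\ker(\tau(\omega)-\overline{\omega}^jI)$. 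This gives equality of the two subspaces, hence equality of their complex dimensions, and combined with Lemma \ref{lem:dimension} applied to $\Gamma=\bZ_n$ and $\chi=\chi_j$ the claimed formula follows.

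I do not anticipate a genuine obstacle here; the statement is essentially a bookkeeping corollary of the preceding lemma together with the observation that an intersection of kernels over a cyclic group is controlled by the kernel associated to a generator. The one point requiring a line of care is confirming that minimality of $\T(X)$ is indeed the hypothesis transmitted to Lemma \ref{lem:dimension} (it is, verbatim), and that the indexing of the dual group $\{\chi_0,\dots,\chi_{n-1}\}$ with $\chi_j(\omega)=\omega^j$ is used consistently so that the character value $\overline{\chi_j(\omega)} = \overline{\omega}^j$ matches the eigenvalue appearing in the stated kernel. With these conventions fixed the proof is a two-inclusion argument followed by an appeal to Lemma \ref{lem:dimension}.
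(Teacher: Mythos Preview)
Your proof is correct and follows essentially the same approach as the paper: apply Lemma~\ref{lem:dimension}, then observe that the intersection over all powers $\omega^k$ collapses to the single kernel for the generator $\omega$ via the inclusion $\ker(\tau(\omega)-\overline{\omega}^jI)\subseteq\ker(\tau(\omega^k)-\overline{\omega}^{jk}I)$. The paper states this inclusion without the inductive justification and omits mention of the trivial reverse inclusion, but the argument is the same.
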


\proof
Let $j\in\{0,1,\ldots,n-1\}$. 
Note that $\ker(\tau(\omega)-\overline{\omega}^j I)\subseteq \ker(\tau(\omega^k)-\overline{\omega}^{jk}I)$ for $k=0,1,\ldots,n-1$.  Thus, by Lemma \ref{lem:dimension},  
\[\dim_\bC \T_{\chi_j}(X)
= \dim_\bC \left(\bigcap_{k=0}^{n-1} \ker(\tau(\omega^k)-\overline{\omega}^{jk} I)\right)
= \dim_\bC \ker(\tau(\omega)-\overline{\omega}^jI).\]
\endproof

In the following, an {\em $n$-fold rotation} ($n \geq 2$) of a two-dimensional real vector space $X$ is a linear operator $S:X\to X$ with matrix  $\tiny{\left(\begin{array}{cc} \cos(2\pi /n) &  -\sin(2\pi /n) \\  \sin(2\pi /n) &  \cos(2\pi /n)\end{array}\right)}$
with respect to some basis for $X$.
If $\dim X\geq 3$ then a linear operator $T:X\to X$ is an {\em $n$-fold rotation} if there exists a direct sum decomposition $X=Y\oplus Z$, where $Y$ is a two-dimensional subspace of $X$, with respect to which $T=S\oplus I_Z$, $S$ is an $n$-fold rotation of $Y$ and $I_Z$ is the identity operator on $Z$.

\begin{lemma}
\label{lem:dimensionCn}
Let $\tau:\bZ_n\to\Isom(X)$ be a group representation where $\tau(\omega)$ is an $n$-fold rotation of $X$  and $n\geq 2$.
Suppose, in addition, that $\T(X)$ is minimal. 

\begin{enumerate}[(i)]
\item
If $n=2$ then,
\[\dim_\bC \T_{\chi_j}(X)=
\left\{\begin{array}{ll}
\dim_\bR X-2 & \mbox{ if } j=0, \\
2 & \mbox{ if } j=1.
\end{array}\right.\]
\item 
If $n\geq 3$ then,
\[\dim_\bC \T_{\chi_j}(X)=
\left\{\begin{array}{ll}
\dim_\bR X-2 & \mbox{ if } j=0, \\
1 & \mbox{ if } j\in\{1,n-1\},  \\
0 & \mbox{ otherwise.}
\end{array}\right.\]
\end{enumerate}
\end{lemma}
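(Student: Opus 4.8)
The plan is to apply Lemma~\ref{lem:dimensionCyclic}, which reduces the computation of $\dim_\bC \T_{\chi_j}(X)$ to the single eigenvalue computation $\dim_\bC \ker(\tau(\omega)-\overline{\omega}^j I)$ on $X_\bC$, where $\tau(\omega)$ is an $n$-fold rotation. So the whole proof becomes a concrete linear-algebra exercise: diagonalise the complexification of an $n$-fold rotation and read off the multiplicities of its eigenvalues.

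First I would fix the direct sum decomposition $X = Y \oplus Z$ from the definition of an $n$-fold rotation, so that $\tau(\omega) = S \oplus I_Z$ with $S$ the planar rotation by $2\pi/n$ on the two-dimensional space $Y$ and $I_Z$ the identity on $Z$. Complexifying, $X_\bC = Y_\bC \oplus Z_\bC$ and $\tau(\omega) = S_\bC \oplus I_{Z_\bC}$. The operator $S_\bC$ has matrix $\tiny{\left(\begin{array}{cc}\cos(2\pi/n) & -\sin(2\pi/n)\\ \sin(2\pi/n) & \cos(2\pi/n)\end{array}\right)}$, whose eigenvalues are $e^{2\pi i/n} = \omega$ and $e^{-2\pi i/n} = \overline{\omega}$, each with a one-dimensional eigenspace in $Y_\bC$. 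Meanwhile $I_{Z_\bC}$ contributes the eigenvalue $1$ with multiplicity $\dim_\bC Z_\bC = \dim_\bR X - 2$. Hence, for each $j$, $\dim_\bC\ker(\tau(\omega)-\overline{\omega}^j I)$ equals the number of times $\overline{\omega}^j$ occurs among these eigenvalues, counted with the appropriate multiplicity.

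Next I would simply enumerate the cases. Write $\overline{\omega}^j = e^{-2\pi i j/n}$ and compare it to the eigenvalues $1$, $\omega = e^{2\pi i/n}$ and $\overline{\omega} = e^{-2\pi i/n}$. When $j = 0$ we get $\overline{\omega}^0 = 1$, which matches the $I_{Z_\bC}$-eigenvalue (multiplicity $\dim_\bR X - 2$) and, when $n = 2$, also coincides with $\omega = \overline\omega = -1$, picking up the extra two dimensions from $Y_\bC$; this gives $\dim_\bR X - 2 + 2 = \dim_\bR X$... wait, one must be careful: for $n=2$, $\chi_1(\omega) = \omega = -1$ so $\overline{\chi_1(\omega)} = -1$, and for $j=1$ the eigenvalue $-1$ occurs exactly on $Y_\bC$ with multiplicity $2$, while for $j=0$ the relevant eigenvalue is $\overline{\omega}^0 = 1$, which is \emph{not} an eigenvalue of $S_\bC$ when $n=2$ (there $S_\bC$ has only the eigenvalue $-1$), so $j=0$ gives exactly $\dim_\bR X - 2$ from $Z_\bC$. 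For $n \geq 3$: $j=0$ gives the eigenvalue $1$, which is not among $\{\omega,\overline\omega\}$ (since $n\geq 3$ forces $\omega \neq 1 \neq \overline\omega$), so the multiplicity is $\dim_\bR X - 2$; $j = 1$ gives $\overline{\omega}^1 = \overline\omega$, matching one eigenvalue of $S_\bC$ with multiplicity $1$; $j = n-1$ gives $\overline{\omega}^{\,n-1} = \overline{\omega}^{-1} = \omega$, again multiplicity $1$; and any other $j \in \{2,\dots,n-2\}$ yields $\overline{\omega}^j \notin \{1,\omega,\overline\omega\}$ (here one uses $n \geq 3$ together with $2 \leq j \leq n-2$ to rule out $\overline{\omega}^j = \omega$, i.e. $j \equiv -1 \pmod n$, and $\overline{\omega}^j = \overline\omega$, i.e. $j \equiv 1 \pmod n$, and $\overline{\omega}^j = 1$, i.e. $j \equiv 0 \pmod n$), so the multiplicity is $0$.

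The one point requiring genuine care — and what I expect to be the main obstacle, modest though it is — is the eigenvalue bookkeeping when $n=2$, where the rotation part $S_\bC$ degenerates to $-I_{Y_\bC}$ and so $Y_\bC$ stops contributing a pair of distinct eigenvalues and instead contributes the single eigenvalue $-1$ with multiplicity $2$; this is exactly why the $j=1$ value jumps to $2$ rather than staying at $1$, and why case (i) must be separated from case (ii). Once that distinction is handled cleanly, both formulae drop out by collecting the multiplicities above and invoking Lemma~\ref{lem:dimensionCyclic}.
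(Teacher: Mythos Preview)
Your approach is correct and essentially identical to the paper's: both invoke Lemma~\ref{lem:dimensionCyclic}, split $X_\bC = Y_\bC \oplus Z_\bC$ along the rotation decomposition $\tau(\omega) = S \oplus I_Z$, and then read off the kernel dimensions of $\tau(\omega) - \overline{\omega}^j I$ by tracking the eigenvalues $\omega,\overline{\omega}$ of $S$ on $Y_\bC$ and the eigenvalue $1$ on $Z_\bC$, with the $n=2$ case isolated because $S = -I_Y$ collapses the two $Y_\bC$-eigenvalues. The only difference is presentational: the paper writes out $\tau(\omega)-\overline{\omega}^j I$ as an explicit direct sum and computes each kernel piece directly, whereas you phrase it as an eigenvalue-multiplicity count, but these amount to the same computation.
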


\proof
Write $X=Y\oplus Z$ and $\tau(\omega)=S\oplus I_Z$ where $\dim Y=2$ and $S$ is an $n$-fold rotation of $Y$.
Then $X_\bC = Y_\bC\oplus Z_\bC$.
Note that $\tau(\omega)-\overline{\chi_0(\omega)}I
= (S-I_Y)\oplus 0$.
Also, $S-I_Y$ is invertible and so, by Lemma \ref{lem:dimensionCyclic},  
\[\dim_\bC \T_{\chi_0}(X)=
\dim_\bC \ker((S-I_Y)\oplus 0)=\dim_\bR X-2.\]
Now let $j\in\{1,\ldots,n-1\}$. 
If $n=2$, then $\omega =-1$ and $S=-I_Y$. Note that $\tau(\omega)-\overline{\omega} I
=0\oplus 2I_Z$ and so, by Lemma \ref{lem:dimensionCyclic},
\[\dim_\bC \T_{\chi_1}(X)=
\dim_\bC \ker(0\oplus 2I_Z) = \dim_\bR Y = 2.\]
If $n\geq 3$, then $S$ has eigenvalues of multiplicity $1$ at $\omega$ and $\overline{\omega}$. Note that $\tau(\omega)-\overline{\omega}^j I
=(S-\overline{\omega}^jI_Y)\oplus (1-\overline{\omega}^j)I_Z$ and so, by Lemma \ref{lem:dimensionCyclic},
\[\dim_\bC \T_{\chi_j}(X) = \dim_\bC \ker(\tau(\omega)-\overline{\omega}^jI) 
= \dim_\bC \ker(S-\overline{\omega}^jI_Y)
=\left\{\begin{array}{ll}
1 & \mbox{ if } j\in\{1,n-1\},  \\
0 & \mbox{ otherwise.}
\end{array}\right.\]
\endproof

\subsection{Gain graphs} \label{subsec:gaingraphs}
Let $(G,\theta)$ be a  $\Gamma$-symmetric graph and fix an orientation on the edges of the quotient graph $G_0 = (V_0,E_0)$.
For each vertex orbit $[v]\in V_0$, choose a representative vertex $\tilde{v}\in[v]$ and denote the set of all such representatives by $\tilde{V}_0$.
For each directed edge $[e]=([v],[w])$  in the directed multigraph $G_0$ there exists a unique $\gamma\in\Gamma$, referred to as the {\em gain} on $[e]$, such that $\tilde{v}(\gamma\tilde{w})\in [e]$.
This gain assignment $\psi:E_0\to \Gamma$, $[e]\mapsto \psi_{[e]}$, is well-defined and the pair $(G_0,\psi)$ is referred to as a {\em (quotient) gain graph} for $(G,\theta)$.
The graph $G$ is also called the \emph{covering graph} of $(G_0,\psi)$. 

Note that a gain assignment $\psi$ is dependent on the choice of representative vertices $\tilde{V}_0$ and also on the choice of orientation for each edge of $G_0$. We may {\em switch} the gain assignment on the directed multigraph $G_0$ by choosing a different set of vertex orbit representatives. We regard two gain assignments on the directed multigraph $G_0$ as equivalent if one can be obtained from the other by such a switching operation. Note that if the orientation of an edge $[e]$ in $G_0$ is reversed then the induced gain $\psi_{[e]}$ is replaced with $\psi_{[e]}^{-1}$.

In general, we refer to a group-labelled directed multigraph $(G_0,\psi)$ with $\psi:E_0\to \Gamma$ as a \emph{$\Gamma$-gain graph} if it is a quotient gain graph for a $\Gamma$-symmetric graph $(G,\theta)$. Note that, since $G$ is assumed to be simple, $(G_0,\psi)$ has no parallel edges with the same gain when oriented in the same direction and no loops with a trivial gain. We regard two $\Gamma$-gain graphs as equivalent if they are derived from the same $\Gamma$-symmetric graph $(G,\theta)$. 
 For more on  gain graphs we refer the reader to \cite{jkt,zas}. 

\begin{example}
Figure \ref{fig:gain_cov_graphs} illustrates several examples of $\bZ_2$-symmetric graphs together with accompanying quotient gain graphs. These gain graphs will form base graphs for the inductive construction presented in Section \ref{Sect:Inductive}. Note that in the case of $\bZ_2$-symmetric graphs, gain assignments are independent of the chosen edge orientation. Thus edge orientations have been omitted from Figure \ref{fig:gain_cov_graphs}. In each gain graph, the indicated gains are determined by the set of representative vertices, labelled  by $p$, in its covering graph. Note that each covering graph is presented as a two-dimensional bar-joint framework with half-turn rotational symmetry. Moreover, it can be shown that these bar-joint frameworks are $\chi_0$-symmetrically isostatic with respect to the $\ell^\infty$ norm. The reasons for this, and the significance of the edge colourings, are explained in Section \ref{Sect:FrameworkColours}.
\end{example}

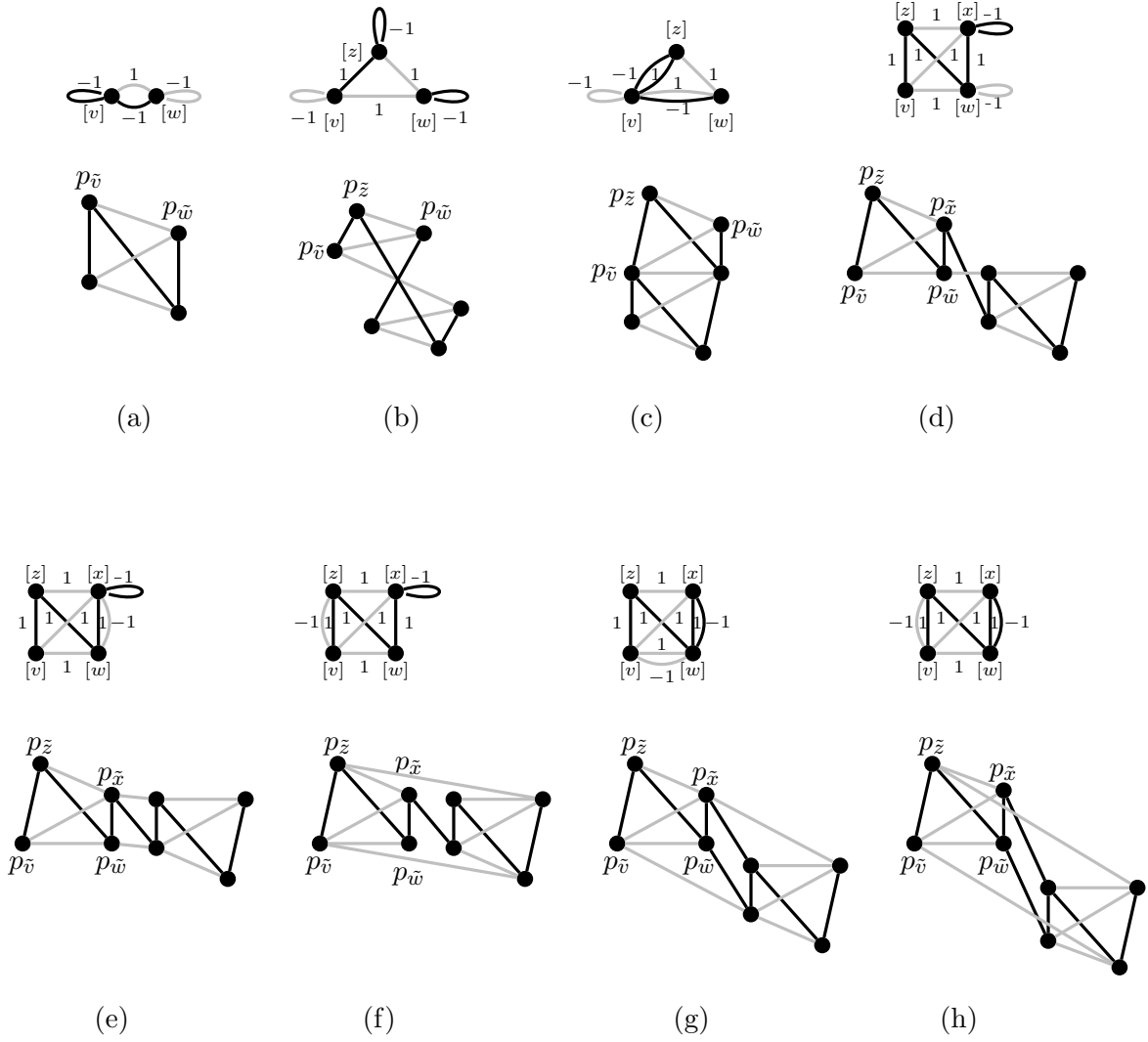
\begin{figure}[htp]
\begin{center}
\begin{tikzpicture}[very thick,scale=0.6]
\tikzstyle{every node}=[circle, draw=black, fill=black, inner sep=0pt, minimum width=5pt];

\node (p1) at (-1,0) {};
\node (p2) at (0,0) {};

\node [draw=white, fill=white] (a) at (-1.4,-0.4)  {\tiny $[v]$};
\node [draw=white, fill=white] (a) at (0.4,-0.4)  {\tiny $[w]$};

\node [draw=white, fill=white] (a) at (-0.5,0.5)  {\tiny $1$};
\node [draw=white, fill=white] (a) at (-0.5,-0.5)  {\tiny $-1$};
\node [draw=white, fill=white] (a) at (-1.5,0.3)  {\tiny $-1$};
\node [draw=white, fill=white] (a) at (0.5,0.3)  {\tiny $-1$};

\path(p1) edge [bend right=40] (p2);
\path(p1) edge [bend left=40, lightgray] (p2);
\path(p1) edge [loop left, >=stealth,shorten >=1pt,looseness=30] (p1);
\path(p2) edge [lightgray,loop right, >=stealth,shorten >=1pt,looseness=30] (p2);

\node [draw=white, fill=white] (a) at (-1.5,-1.9)  { $p_{\tv}$};
\node [draw=white, fill=white] (a) at (0.5,-2.6)  { $p_{\tw}$};

\node (p1) at (-1.5,-4.2) {};
\node (p2) at (0.5,-4.9) {};
\node (p3) at (0.5,-3.1) {};
\node (p4) at (-1.5,-2.4) {};

\draw[lightgray](p1)--(p2);
\draw(p3)--(p2);
\draw[lightgray](p3)--(p4);
\draw(p1)--(p4);
\draw(p4)--(p2);
\draw[lightgray](p1)--(p3);

\node [draw=white, fill=white] (a) at (-0.5,-7.3)  {(a)};


\node (pa) at (4,0) {};
\node (pb) at (6,0) {};
\node (pc) at (5,1) {};

\node [draw=white, fill=white] (a) at (4,-0.6)  {\tiny $[v]$};
\node [draw=white, fill=white] (a) at (6,-0.6)  {\tiny $[w]$};
\node [draw=white, fill=white] (a) at (4.4,1)  {\tiny $[z]$};

\node [draw=white, fill=white] (a) at (5,-0.3)  {\tiny $1$};
\node [draw=white, fill=white] (a) at (4.2,0.5)  {\tiny $1$};
\node [draw=white, fill=white] (a) at (5.8,0.5)  {\tiny $1$};
\node [draw=white, fill=white] (a) at (6.7,-0.5)  {\tiny $-1$};
\node [draw=white, fill=white] (a) at (3.3,-0.5)  {\tiny $-1$};
\node [draw=white, fill=white] (a) at (5.5,1.5)  {\tiny $-1$};

\path(pa) edge [lightgray,loop left, >=stealth,shorten >=1pt,looseness=30] (pa);
\path(pb) edge [loop right, >=stealth,shorten >=1pt,looseness=30] (pb);
\path(pc) edge [loop above, >=stealth,shorten >=1pt,looseness=30] (pc);
\draw[lightgray](pa)--(pb);
\draw[lightgray](pc)--(pb);
\draw(pa)--(pc);

\node [draw=white, fill=white, yshift=-3cm] (a) at (3.5,1.5)  { $p_{\tv}$};
\node [draw=white, fill=white,yshift=-3cm] (a) at (6.3,2.4)  { $p_{\tw}$};
\node [draw=white, fill=white,yshift=-3cm] (a) at (4.5,2.9)  { $p_{\tz}$};

\node[yshift=-3cm] (pa) at (4,1.5) {};
\node[yshift=-3cm] (pb) at (6,1.9) {};
\node[yshift=-3cm] (pc) at (4.5,2.4) {};
\node[xshift=0.5cm,yshift=-3cm] (paa) at (6,0.2) {};
\node[xshift=0.5cm,yshift=-3cm] (pbb) at (4,-0.2) {};
\node[xshift=0.5cm,yshift=-3cm] (pcc) at (5.5,-0.7) {};

\draw[lightgray](pa)--(pb);
\draw[lightgray](pb)--(pc);
\draw(pc)--(pa);
\draw[lightgray](paa)--(pbb);
\draw[lightgray](pbb)--(pcc);
\draw(paa)--(pcc);
\draw[lightgray](pa)--(paa);
\draw(pb)--(pbb);
\draw(pc)--(pcc);

\node [draw=white, fill=white] (a) at (5.5,-7.3)  {(b)};


\node [draw=white, fill=white] (a) at (11.7,0.3)  {\tiny $1$};
\node [draw=white, fill=white] (a) at (11.7,-0.3)  {\tiny $-1$};
\node [draw=white, fill=white] (a) at (10.5,0.5)  {\tiny $-1$};
\node [draw=white, fill=white] (a) at (11.2,0.5)  {\tiny $1$};
\node [draw=white, fill=white] (a) at (12.5,0.5)  {\tiny $1$};
\node [draw=white, fill=white] (a) at (9.5,0.3)  {\tiny $-1$};

\node [draw=white, fill=white, xshift=4cm] (a) at (4,-0.6)  {\tiny $[v]$};
\node [draw=white, fill=white, xshift=4cm] (a) at (6,-0.6)  {\tiny $[w]$};
\node [draw=white, fill=white, xshift=4cm] (a) at (5,1.5)  {\tiny $[z]$};

\node[xshift=4cm] (qa) at (4,0) {};
\node[xshift=4cm] (qb) at (6,0) {};
\node[xshift=4cm] (qc) at (5,1) {};
\path(qa) edge [bend left=10, lightgray] (qb);
\path(qa) edge [bend right=10] (qb);
\draw[lightgray](qb)-- (qc);
\path(qc) edge [bend left=20] (qa);
\path(qc) edge [bend right=20] (qa);
\path(qa) edge [lightgray,loop left, >=stealth,shorten >=1pt,looseness=30] (qa);

\node [draw=white, fill=white,xshift=7cm,yshift=-2.4cm] (a) at (-1.6,0)  { $p_{\tv}$};
\node [draw=white, fill=white,xshift=7cm,yshift=-2.4cm] (a) at (1.6,1)  { $p_{\tw}$};
\node [draw=white, fill=white,xshift=7cm,yshift=-2.4cm] (a) at (-1.2,1.7)  { $p_{\tz}$};

\node[xshift=7cm,yshift=-2.4cm] (qaa) at (-1,0) {};
\node[xshift=7cm,yshift=-2.4cm] (qbb) at (1,0) {};
\node[xshift=7cm,yshift=-2.4cm] (qcc) at (-0.6,1.8) {};
\node[xshift=7cm,yshift=-2.4cm] (qdd) at (1,1.1) {};
\node[xshift=7cm,yshift=-2.4cm] (qee) at (0.6,-1.8) {};
\node[xshift=7cm,yshift=-2.4cm] (qff) at (-1,-1.1) {};

\draw[lightgray](qaa)--(qbb);
\draw(qcc)--(qbb);
\draw[lightgray](qcc)--(qdd);
\draw[lightgray](qaa)--(qdd);
\draw(qaa)--(qcc);
\draw(qbb)--(qdd);
\draw(qaa)--(qee);
\draw[lightgray](qbb)--(qff);
\draw[lightgray](qee)--(qff);
\draw(qaa)--(qff);
\draw(qbb)--(qee);

\node [draw=white, fill=white] (a) at (11,-7.3)  {(c)};


\node [draw=white, fill=white,xshift=10.5cm,yshift=0.5cm] (a) at (0,-1)  {\tiny $1$};
\node [draw=white, fill=white,xshift=10.5cm,yshift=0.5cm] (a) at (1,0)  {\tiny $1$};
\node [draw=white, fill=white,xshift=10.5cm,yshift=0.5cm] (a) at (-1,0)  {\tiny $1$};
\node [draw=white, fill=white,xshift=10.5cm,yshift=0.5cm] (a) at (0,1)  {\tiny $1$};
\node [draw=white, fill=white,xshift=10.5cm,yshift=0.5cm] (a) at (0.4,0.1)  {\tiny $1$};
\node [draw=white, fill=white,xshift=10.5cm,yshift=0.5cm] (a) at (-0.4,0.1)  {\tiny $1$};
\node [draw=white, fill=white,xshift=10.5cm,yshift=0.5cm] (a) at (1.2,1)  {\tiny $-1$};
\node [draw=white, fill=white,xshift=10.5cm,yshift=0.5cm] (a) at (1.2,-1)  {\tiny $-1$};

\node [draw=white, fill=white, xshift=10.5cm,yshift=0.5cm] (a) at (-0.7,-1.1)  {\tiny $[v]$};
\node [draw=white, fill=white, xshift=10.5cm,yshift=0.5cm] (a) at (0.7,-1.1)  {\tiny $[w]$};
\node [draw=white, fill=white, xshift=10.5cm,yshift=0.5cm] (a) at (0.7,1.1)  {\tiny $[x]$};
\node [draw=white, fill=white, xshift=10.5cm,yshift=0.5cm] (a) at (-0.7,1.1)  {\tiny $[z]$};

\node[xshift=10.5cm,yshift=0.5cm] (pp1) at (-0.7,-0.7) {};
\node[xshift=10.5cm,yshift=0.5cm] (pp2) at (0.7,-0.7) {};
\node[xshift=10.5cm,yshift=0.5cm] (pp3) at (0.7,0.7) {};
\node[xshift=10.5cm,yshift=0.5cm] (pp4) at (-0.7,0.7) {};

\draw[lightgray](pp1)--(pp2);
\draw(pp3)--(pp2);
\draw[lightgray](pp3)--(pp4);
\draw(pp1)--(pp4);
\draw(pp4)--(pp2);
\draw[lightgray](pp1)--(pp3);
\path(pp2) edge [lightgray,loop right, >=stealth,shorten >=1pt,looseness=30] (pp2);
\path(pp3) edge [loop right, >=stealth,shorten >=1pt,looseness=30] (pp3);

\node [draw=white, fill=white,xshift=10cm,yshift=-2.4cm] (a) at (-1,-0.5)  { $p_{\tv}$};
\node [draw=white, fill=white,xshift=10cm,yshift=-2.4cm] (a) at (1,-0.5)  { $p_{\tw}$};
\node [draw=white, fill=white,xshift=10cm,yshift=-2.4cm] (a) at (-0.6,2.2)  { $p_{\tz}$};
\node [draw=white, fill=white,xshift=10cm,yshift=-2.4cm] (a) at (1,1.5)  { $p_{\tx}$};

\node[xshift=10cm,yshift=-2.4cm] (qaa) at (-1,0) {};
\node[xshift=10cm,yshift=-2.4cm] (qbb) at (1,0) {};
\node[xshift=10cm,yshift=-2.4cm] (qcc) at (-0.6,1.8) {};
\node[xshift=10cm,yshift=-2.4cm] (qdd) at (1,1.1) {};
\draw[lightgray](qaa)--(qbb);
\draw(qcc)--(qbb);
\draw[lightgray](qcc)--(qdd);
\draw[lightgray](qaa)--(qdd);
\draw(qaa)--(qcc);
\draw(qbb)--(qdd);

\node[xshift=10cm,yshift=-2.4cm] (qaaa) at (4,0) {};
\node[xshift=10cm,yshift=-2.4cm] (qbbb) at (2,0) {};
\node[xshift=10cm,yshift=-2.4cm] (qccc) at (3.6,-1.8) {};
\node[xshift=10cm,yshift=-2.4cm] (qddd) at (2,-1.1) {};
\draw[lightgray](qaaa)--(qbbb);
\draw(qccc)--(qbbb);
\draw[lightgray](qccc)--(qddd);
\draw[lightgray](qaaa)--(qddd);
\draw(qaaa)--(qccc);
\draw(qbbb)--(qddd);

\draw(qdd)--(qddd);
\draw[lightgray](qbb)--(qbbb);

\node [draw=white, fill=white] (a) at (17.5,-7.3)  {(d)};

\end{tikzpicture}
\vspace{1.5cm}

\begin{tikzpicture}[very thick,scale=0.6]
\tikzstyle{every node}=[circle, draw=black, fill=black, inner sep=0pt, minimum width=5pt];

\node [draw=white, fill=white] (a) at (0,-1)  {\tiny $1$};
\node [draw=white, fill=white] (a) at (0.8,0)  {\tiny $1$};
\node [draw=white, fill=white] (a) at (-1,0)  {\tiny $1$};
\node [draw=white, fill=white] (a) at (0,1)  {\tiny $1$};
\node [draw=white, fill=white] (a) at (0.4,0.1)  {\tiny $1$};
\node [draw=white, fill=white] (a) at (-0.4,0.1)  {\tiny $1$};
\node [draw=white, fill=white] (a) at (1.25,0)  {\tiny $-1$};
\node [draw=white, fill=white] (a) at (1.2,1)  {\tiny $-1$};

\node [draw=white, fill=white] (a) at (-0.7,-1.1)  {\tiny $[v]$};
\node [draw=white, fill=white] (a) at (0.7,-1.1)  {\tiny $[w]$};
\node [draw=white, fill=white] (a) at (0.7,1.1)  {\tiny $[x]$};
\node [draw=white, fill=white] (a) at (-0.7,1.1)  {\tiny $[z]$};

\node (p1) at (-0.7,-0.7) {};
\node (p2) at (0.7,-0.7) {};
\node (p3) at (0.7,0.7) {};
\node (p4) at (-0.7,0.7) {};

\draw[lightgray](p1)--(p2);
\draw(p3)--(p2);
\draw[lightgray](p3)--(p4);
\draw(p1)--(p4);
\draw(p4)--(p2);
\draw[lightgray](p1)--(p3);
\path(p3) edge [loop right, >=stealth,shorten >=1pt,looseness=30] (p3);
\path(p2) edge [bend right=30,lightgray] (p3);

\node [draw=white, fill=white,yshift=-3cm] (a) at (-1,-0.5)  { $p_{\tv}$};
\node [draw=white, fill=white,yshift=-3cm] (a) at (1,-0.5)  { $p_{\tw}$};
\node [draw=white, fill=white,yshift=-3cm] (a) at (-0.6,2.2)  { $p_{\tz}$};
\node [draw=white, fill=white,yshift=-3cm] (a) at (1,1.5)  { $p_{\tx}$};

\node[yshift=-3cm] (qaa) at (-1,0) {};
\node[yshift=-3cm] (qbb) at (1,0) {};
\node[yshift=-3cm] (qcc) at (-0.6,1.8) {};
\node[yshift=-3cm] (qdd) at (1,1.1) {};
\draw[lightgray](qaa)--(qbb);
\draw(qcc)--(qbb);
\draw[lightgray](qcc)--(qdd);
\draw[lightgray](qaa)--(qdd);
\draw(qaa)--(qcc);
\draw(qbb)--(qdd);

\node[yshift=-3cm] (qaaa) at (4,1) {};
\node[yshift=-3cm] (qbbb) at (2,1) {};
\node[yshift=-3cm] (qccc) at (3.6,-0.8) {};
\node[yshift=-3cm] (qddd) at (2,-0.1) {};
\draw[lightgray](qaaa)--(qbbb);
\draw(qccc)--(qbbb);
\draw[lightgray](qccc)--(qddd);
\draw[lightgray](qaaa)--(qddd);
\draw(qaaa)--(qccc);
\draw(qbbb)--(qddd);

\draw(qdd)--(qddd);
\draw[lightgray](qbb)--(qddd);
\draw[lightgray](qdd)--(qbbb);

\node [draw=white, fill=white] (a) at (1,-9)  {(e)};


\node [draw=white, fill=white, xshift=4cm] (a) at (0,-1)  {\tiny $1$};
\node [draw=white, fill=white, xshift=4cm] (a) at (1,0)  {\tiny $1$};
\node [draw=white, fill=white, xshift=4cm] (a) at (-0.8,0)  {\tiny $1$};
\node [draw=white, fill=white, xshift=4cm] (a) at (0,1)  {\tiny $1$};
\node [draw=white, fill=white, xshift=4cm] (a) at (0.4,0.1)  {\tiny $1$};
\node [draw=white, fill=white, xshift=4cm] (a) at (-0.4,0.1)  {\tiny $1$};
\node [draw=white, fill=white, xshift=4cm] (a) at (-1.3,0)  {\tiny $-1$};
\node [draw=white, fill=white, xshift=4cm] (a) at (1.2,1)  {\tiny $-1$};

\node [draw=white, fill=white,xshift=4cm] (a) at (-0.7,-1.1)  {\tiny $[v]$};
\node [draw=white, fill=white,xshift=4cm] (a) at (0.7,-1.1)  {\tiny $[w]$};
\node [draw=white, fill=white,xshift=4cm] (a) at (0.7,1.1)  {\tiny $[x]$};
\node [draw=white, fill=white,xshift=4cm] (a) at (-0.7,1.1)  {\tiny $[z]$};

\node[xshift=4cm] (p1) at (-0.7,-0.7) {};
\node[xshift=4cm] (p2) at (0.7,-0.7) {};
\node[xshift=4cm] (p3) at (0.7,0.7) {};
\node[xshift=4cm] (p4) at (-0.7,0.7) {};

\draw[lightgray](p1)--(p2);
\draw(p3)--(p2);
\draw[lightgray](p3)--(p4);
\draw(p1)--(p4);
\draw(p4)--(p2);
\draw[lightgray](p1)--(p3);
\path(p3) edge [loop right, >=stealth,shorten >=1pt,looseness=30] (p3);
\path(p1) edge [bend left=30,lightgray] (p4);

\node [draw=white, fill=white,xshift=4cm,yshift=-3cm] (a) at (-1,-0.5)  { $p_{\tv}$};
\node [draw=white, fill=white,xshift=4cm,yshift=-3cm] (a) at (1,-0.75)  { $p_{\tw}$};
\node [draw=white, fill=white,xshift=4cm,yshift=-3cm] (a) at (-0.6,2.2)  { $p_{\tz}$};
\node [draw=white, fill=white,xshift=4cm,yshift=-3cm] (a) at (1,1.75)  { $p_{\tx}$};

\node[xshift=4cm,yshift=-3cm] (qaa) at (-1,0) {};
\node[xshift=4cm,yshift=-3cm] (qbb) at (1,0) {};
\node[xshift=4cm,yshift=-3cm] (qcc) at (-0.6,1.8) {};
\node[xshift=4cm,yshift=-3cm] (qdd) at (1,1.1) {};
\draw[lightgray](qaa)--(qbb);
\draw(qcc)--(qbb);
\draw[lightgray](qcc)--(qdd);
\draw[lightgray](qaa)--(qdd);
\draw(qaa)--(qcc);
\draw(qbb)--(qdd);

\node[xshift=4cm,yshift=-3cm] (qaaa) at (4,1) {};
\node[xshift=4cm,yshift=-3cm] (qbbb) at (2,1) {};
\node[xshift=4cm,yshift=-3cm] (qccc) at (3.6,-0.8) {};
\node[xshift=4cm,yshift=-3cm] (qddd) at (2,-0.1) {};
\draw[lightgray](qaaa)--(qbbb);
\draw(qccc)--(qbbb);
\draw[lightgray](qccc)--(qddd);
\draw[lightgray](qaaa)--(qddd);
\draw(qaaa)--(qccc);
\draw(qbbb)--(qddd);

\draw(qdd)--(qddd);
\draw[lightgray](qaa)--(qccc);
\draw[lightgray](qcc)--(qaaa);

\node [draw=white, fill=white] (a) at (7,-9)  {(f)};


\node [draw=white, fill=white, xshift=8cm] (a) at (0,-0.5)  {\tiny $1$};
\node [draw=white, fill=white, xshift=8cm] (a) at (0.8,0)  {\tiny $1$};
\node [draw=white, fill=white, xshift=8cm] (a) at (-1,0)  {\tiny $1$};
\node [draw=white, fill=white, xshift=8cm] (a) at (0,1)  {\tiny $1$};
\node [draw=white, fill=white, xshift=8cm] (a) at (0.4,0.1)  {\tiny $1$};
\node [draw=white, fill=white, xshift=8cm] (a) at (-0.4,0.1)  {\tiny $1$};
\node [draw=white, fill=white, xshift=8cm] (a) at (1.25,0)  {\tiny $-1$};
\node [draw=white, fill=white, xshift=8cm] (a) at (0,-1.25)  {\tiny $-1$};

\node [draw=white, fill=white,xshift=8cm] (a) at (-0.7,-1.1)  {\tiny $[v]$};
\node [draw=white, fill=white,xshift=8cm] (a) at (0.7,-1.1)  {\tiny $[w]$};
\node [draw=white, fill=white,xshift=8cm] (a) at (0.7,1.1)  {\tiny $[x]$};
\node [draw=white, fill=white,xshift=8cm] (a) at (-0.7,1.1)  {\tiny $[z]$};

\node[xshift=8cm] (p1) at (-0.7,-0.7) {};
\node[xshift=8cm] (p2) at (0.7,-0.7) {};
\node[xshift=8cm] (p3) at (0.7,0.7) {};
\node[xshift=8cm] (p4) at (-0.7,0.7) {};

\draw[lightgray](p1)--(p2);
\draw(p3)--(p2);
\draw[lightgray](p3)--(p4);
\draw(p1)--(p4);
\draw(p4)--(p2);
\draw[lightgray](p1)--(p3);
\path(p1) edge [bend right=30,lightgray] (p2);
\path(p2) edge [bend right=30] (p3);

\node [draw=white, fill=white,xshift=8cm,yshift=-3cm] (a) at (-1,-0.5)  { $p_{\tv}$};
\node [draw=white, fill=white,xshift=8cm,yshift=-3cm] (a) at (0.85,-0.5)  { $p_{\tw}$};
\node [draw=white, fill=white,xshift=8cm,yshift=-3cm] (a) at (-0.6,2.2)  { $p_{\tz}$};
\node [draw=white, fill=white,xshift=8cm,yshift=-3cm] (a) at (1,1.5)  { $p_{\tx}$};

\node[xshift=8cm,yshift=-3cm] (qaa) at (-1,0) {};
\node[xshift=8cm,yshift=-3cm] (qbb) at (1,0) {};
\node[xshift=8cm,yshift=-3cm] (qcc) at (-0.6,1.8) {};
\node[xshift=8cm,yshift=-3cm] (qdd) at (1,1.1) {};
\draw[lightgray](qaa)--(qbb);
\draw(qcc)--(qbb);
\draw[lightgray](qcc)--(qdd);
\draw[lightgray](qaa)--(qdd);
\draw(qaa)--(qcc);
\draw(qbb)--(qdd);

\node[xshift=8cm,yshift=-3.9cm] (qaaa) at (4,1) {};
\node[xshift=8cm,yshift=-3.9cm] (qbbb) at (2,1) {};
\node[xshift=8cm,yshift=-3.9cm] (qccc) at (3.6,-0.8) {};
\node[xshift=8cm,yshift=-3.9cm] (qddd) at (2,-0.1) {};
\draw[lightgray](qaaa)--(qbbb);
\draw(qccc)--(qbbb);
\draw[lightgray](qccc)--(qddd);
\draw[lightgray](qaaa)--(qddd);
\draw(qaaa)--(qccc);
\draw(qbbb)--(qddd);

\draw(qdd)--(qbbb);
\draw(qbb)--(qddd);
\draw[lightgray](qaa)--(qddd);
\draw[lightgray](qdd)--(qaaa);

\node [draw=white, fill=white] (a) at (14,-9)  {(g)};


\node [draw=white, fill=white, xshift=12cm] (a) at (0,-1)  {\tiny $1$};
\node [draw=white, fill=white, xshift=12cm] (a) at (0.8,0)  {\tiny $1$};
\node [draw=white, fill=white, xshift=12cm] (a) at (-0.8,0)  {\tiny $1$};
\node [draw=white, fill=white, xshift=12cm] (a) at (0,1)  {\tiny $1$};
\node [draw=white, fill=white, xshift=12cm] (a) at (0.4,0.1)  {\tiny $1$};
\node [draw=white, fill=white, xshift=12cm] (a) at (-0.4,0.1)  {\tiny $1$};
\node [draw=white, fill=white, xshift=12cm] (a) at (-1.3,0)  {\tiny $-1$};
\node [draw=white, fill=white, xshift=12cm] (a) at (1.3,0)  {\tiny $-1$};

\node [draw=white, fill=white,xshift=12cm] (a) at (-0.7,-1.1)  {\tiny $[v]$};
\node [draw=white, fill=white,xshift=12cm] (a) at (0.7,-1.1)  {\tiny $[w]$};
\node [draw=white, fill=white,xshift=12cm] (a) at (0.7,1.1)  {\tiny $[x]$};
\node [draw=white, fill=white,xshift=12cm] (a) at (-0.7,1.1)  {\tiny $[z]$};

\node[xshift=12cm] (p1) at (-0.7,-0.7) {};
\node[xshift=12cm] (p2) at (0.7,-0.7) {};
\node[xshift=12cm] (p3) at (0.7,0.7) {};
\node[xshift=12cm] (p4) at (-0.7,0.7) {};

\draw[lightgray](p1)--(p2);
\draw(p3)--(p2);
\draw[lightgray](p3)--(p4);
\draw(p1)--(p4);
\draw(p4)--(p2);
\draw[lightgray](p1)--(p3);
\path(p1) edge [bend left=30,lightgray] (p4);
\path(p2) edge [bend right=30] (p3);

\node [draw=white, fill=white,xshift=12cm,yshift=-3cm] (a) at (-1,-0.5)  { $p_{\tv}$};
\node [draw=white, fill=white,xshift=12cm,yshift=-3cm] (a) at (0.8,-0.5)  { $p_{\tw}$};
\node [draw=white, fill=white,xshift=12cm,yshift=-3cm] (a) at (-0.6,2.2)  { $p_{\tz}$};
\node [draw=white, fill=white,xshift=12cm,yshift=-3cm] (a) at (1,1.6)  { $p_{\tx}$};

\node[xshift=12cm,yshift=-3cm] (qaa) at (-1,0) {};
\node[xshift=12cm,yshift=-3cm] (qbb) at (1,0) {};
\node[xshift=12cm,yshift=-3cm] (qcc) at (-0.6,1.8) {};
\node[xshift=12cm,yshift=-3cm] (qdd) at (1,1.2) {};
\draw[lightgray](qaa)--(qbb);
\draw(qcc)--(qbb);
\draw[lightgray](qcc)--(qdd);
\draw[lightgray](qaa)--(qdd);
\draw(qaa)--(qcc);
\draw(qbb)--(qdd);

\node[xshift=12cm,yshift=-4.2cm] (qaaa) at (4,1) {};
\node[xshift=12cm,yshift=-4.2cm] (qbbb) at (2,1) {};
\node[xshift=12cm,yshift=-4.2cm] (qccc) at (3.6,-0.8) {};
\node[xshift=12cm,yshift=-4.2cm] (qddd) at (2,-0.2) {};
\draw[lightgray](qaaa)--(qbbb);
\draw(qccc)--(qbbb);
\draw[lightgray](qccc)--(qddd);
\draw[lightgray](qaaa)--(qddd);
\draw(qaaa)--(qccc);
\draw(qbbb)--(qddd);

\draw(qbb)--(qddd);
\draw(qdd)--(qbbb);
\draw[lightgray](qaa)--(qccc);
\draw[lightgray](qcc)--(qaaa);

\node [draw=white, fill=white] (a) at (20,-9)  {(h)};


\end{tikzpicture}
\end{center}
\vspace{-0.2cm}
\caption{Examples of $\mathbb{Z}_2$-gain graphs and their covering graphs. These are precisely the base $\mathbb{Z}_2$-gain graphs for the $(2,2,0)$-gain-tight inductive construction described in Section~\ref{Sect:Inductive}.
The bottom rows illustrate $\chi_0$-symmetrically isostatic realisations for the $\ell^\infty$-plane under half-turn rotational symmetry. The monochrome subgraphs induced by these realisations (described in Section~\ref{Sect:Gridlike}) are indicated in black and grey.}
\label{fig:gain_cov_graphs}
\end{figure}

The {\em gain} of a path of directed edges $F=[v_1],[e_1],[v_2], \ldots, [e_k],[v_k]$ in a gain graph $(G_0,\psi)$ (where $[v_1]$ may be equal to $[v_k]$) is defined as the product $\psi(F) =\Pi_{i=1}^k \,\psi([e_i])^{\textrm{sign}([e_i])}$, where $\textrm{sign}([e_i])=1$ if $[e_i]$ is directed from $[v_i]$ to $[v_{i+1}]$ and $\textrm{sign}([e_i])=-1$ if $[e_i]$ is directed from $[v_{i+1}]$ to $[v_i]$.
A set of edges $F$ is {\em balanced} if it does not contain a cycle of edges, or,  has the property that every cycle of edges in $F$ has gain $1$. A subgraph of $G_0$ is {\em balanced} in $(G_0,\psi)$  if its edge set is balanced; otherwise, the subgraph is {\em unbalanced}. 

\begin{lemma}[\cite{jkt,zas}] \label{switch}
Let $G_0$ be a quotient graph and fix an orientation on the edges of $G_0$.  If a subgraph $H_0$ is balanced for some gain assignment on the directed   quotient graph $G_0$ then,
\begin{enumerate}[(i)]
\item $H_0$ is balanced for every equivalent gain assignment on the directed quotient graph $G_0$.
\item
there exists an equivalent gain assignment $\psi$ on the directed  quotient graph $G_0$ which satisfies $\psi([e])=1$ for all $[e]\in E(H_0)$.
\end{enumerate}
\end{lemma}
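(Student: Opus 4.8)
The plan is to deduce both parts from one structural fact (namely, that because $\Gamma$ is abelian the gain of a cycle is unaffected by switching) together with a standard spanning-tree trivialisation for part (ii). I would begin by recording the mechanics of a switch: replacing the representative $\tilde v$ of a vertex orbit $[v]$ by $\gamma_0\tilde v$ leaves every gain of $G_0$ unchanged except that the gain of an edge with tail $[v]$ is left-multiplied by $\gamma_0$ and the gain of an edge with head $[v]$ is right-multiplied by $\gamma_0^{-1}$; conversely, every gain assignment equivalent to $\psi$ arises by prescribing one element $\sigma([v])\in\Gamma$ per vertex orbit, these choices being independent. Since the connected components of $H_0$ are pairwise vertex-disjoint and a switch performed only at vertices of one component leaves the gains of edges inside the other components untouched, I would reduce at once to the case of connected $H_0$.

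For part (i), given a switch $\sigma$ carrying $\psi$ to $\psi'$ and a cycle $C=[v_1],[e_1],[v_2],\dots,[e_k],[v_1]$ of $H_0$, I would expand $\psi'(C)=\prod_{i=1}^{k}\psi'([e_i])^{\operatorname{sign}([e_i])}$, observing that the contribution of the $i$-th edge equals $\sigma([v_i])\,\psi([e_i])^{\operatorname{sign}([e_i])}\,\sigma([v_{i+1}])^{-1}$ regardless of the edge's stored orientation. The interior factors then cancel telescopically, leaving $\psi'(C)=\sigma([v_1])\,\psi(C)\,\sigma([v_1])^{-1}=\psi(C)$, the last step using that $\Gamma$ is abelian. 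Since changing the base vertex or the traversal direction of a cycle only conjugates or inverts its gain, the statement ``the gain is $1$'' is independent of these choices; hence every cycle of $H_0$ has trivial gain for $\psi'$ whenever it does for $\psi$, which is part (i).

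For part (ii), I would fix a spanning tree $T$ of the connected graph $H_0$, root it at a vertex $[r]$, and process the vertices of $T$ in order of non-decreasing distance from $[r]$: on reaching a non-root vertex $[v]$ with tree-parent edge $[e]$, no tree edge incident with $[v]$ has yet received its final gain (the edge $[e]$ is handled at this step, and the tree edges from $[v]$ to its children are handled later), so a suitable switch at $[v]$ makes the current gain of $[e]$ trivial without disturbing the tree edges finalised earlier. After all vertices of $T$ have been processed I obtain an equivalent gain assignment $\psi'$ with $\psi'([e])=1$ for every $[e]\in E(T)$. For an edge $[e]$ of $H_0$ not in $T$, the unique cycle $C_{[e]}$ contained in $T+[e]$ is balanced for $\psi$ by hypothesis, so $\psi'(C_{[e]})=\psi(C_{[e]})=1$ by part (i); since every edge of $C_{[e]}$ other than $[e]$ is a tree edge and hence has trivial $\psi'$-gain, $\psi'(C_{[e]})$ equals $\psi'([e])^{\pm1}$, forcing $\psi'([e])=1$. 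Thus $\psi'([e])=1$ for all $[e]\in E(H_0)$, which is part (ii).

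The calculations are routine, and I expect the only mild obstacle to be notational: keeping the left/right placement of the switching elements and the signs $\operatorname{sign}([e_i])$ consistent in the telescoping product of part (i), and confirming that the independent per-vertex switches used in part (ii) genuinely assemble into a single admissible choice of vertex-orbit representatives. There is no conceptual difficulty beyond this bookkeeping.
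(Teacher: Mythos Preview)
The paper does not supply its own proof of this lemma; it is stated with a citation to \cite{jkt,zas} and used as a black box thereafter. Your argument is correct and is essentially the standard one from Zaslavsky's theory of gain graphs: the telescoping computation in part (i) showing that cycle gains are conjugation-invariant (hence invariant, since $\Gamma$ is abelian) under switching, followed in part (ii) by the spanning-tree trivialisation together with the fundamental-cycle trick to force the non-tree edges to have trivial gain as well. Nothing more is needed here.
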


\subsection{A special case}
\label{s:specialcase}
Let $\G=(G,p,\theta,\tau)$ be a well-positioned and $\Gamma$-symmetric bar-joint framework in $X$ and suppose the $\Gamma$-symmetric graph $(G,\theta)$ has an associated gain graph which is balanced.  By Lemma \ref{switch}, there exists a choice of vertex orbit representatives $\tilde{V}_0$ such that the induced gain assignment satisfies $\psi([e])=1$ for all $[e]\in E(G_0)$. It follows that $G_0$ is a simple graph. Consider the well-positioned bar-joint framework $(G_0,\tilde{p})$ in $X$ where $\tilde{p}_{[v]}=p_{\tilde{v}}$ for each vertex orbit $[v]\in V_0$ and vertex orbit representative $\tilde{v}\in \tilde{V}_0$ with $\tilde{v}\in[v]$. The following lemma shows the relationship between the differential $df_{G_0}(\tilde{p})$ and the components of the block decomposition of $df_G(p)$ described in Proposition \ref{prop:block}.

\begin{lemma}
\label{lem:commutativediagram}
Let $\chi\in\hat{\Gamma}$ and
define a pair of linear transformations,
\[S_\chi(\G):(X_\bC)^{V_0}\to X_\chi, \,\,\,\,\,\, 
(x_{[v]})_{[v]\in V_0}\mapsto 
( \chi(\gamma) \tau(\gamma)x_{[v]})_{v\in V}\]
where $v=\gamma \tilde{v}$ for some unique $\tilde{v}\in\tilde{V}_0$ and some unique $\gamma\in\Gamma$,
and,
\[T_\chi(\G):\bC^{E_0}\to Y_\chi, \,\,\,\,\,\, 
(x_{[e]})_{[e]\in E_0}\mapsto 
(\chi(\gamma) x_{[e]})_{e\in E}\]
where $e=\gamma(\tilde{v}\tilde{w})$ for some unique $\tilde{v},\tilde{w}\in \tilde{V}_0$ and some unique $\gamma\in\Gamma$.

Then the following diagram commutes.
\[
\begin{tikzcd}
(X_\bC)^{V_0} \arrow{r}{df_{G_0}(\tilde{p})} \arrow[swap]{d}{S_\chi(\G)} & \bC^{E_0} \arrow{d}{T_\chi(\G)}  \\
X_\chi  \arrow{r}{R_\chi(\G)} & Y_\chi
\end{tikzcd}
\]
In particular, $df_{G_0}(\tilde{p})$ and $R_\chi(\G)$ are similar linear transformations.
\end{lemma}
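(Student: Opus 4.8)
The plan is to verify directly that the square commutes by chasing a basis vector of $(X_\bC)^{V_0}$ through both composites and comparing components, and then to deduce the similarity statement from the fact that $S_\chi(\G)$ and $T_\chi(\G)$ are invertible. I would first record these two facts about $S_\chi(\G)$ and $T_\chi(\G)$: each is well-defined because the balanced gain assignment has been switched so that $\psi([e])=1$ for every $[e]\in E(G_0)$, which forces $G_0$ to be simple and, crucially, makes the assignment $v\mapsto(\gamma,\tilde v)$ unambiguous; each is a linear bijection, with inverse obtained by the averaging/restriction formula (restrict $x\in X_\chi$ to the representatives, i.e. $(x_v)_{v\in V}\mapsto(x_{\tilde v})_{[v]\in V_0}$, and similarly on edges). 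The surjectivity onto $X_\chi$ (resp.\ $Y_\chi$) follows from the defining relations of $X_\chi$ in Proposition~\ref{lem:decomp}: any $x\in X_\chi$ satisfies $x_{\gamma\tilde v}=\chi(\gamma)\tau(\gamma)x_{\tilde v}$, which is exactly the image of $(x_{\tilde v})_{[v]\in V_0}$ under $S_\chi(\G)$.

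For the commutativity itself, I would take $(x_{[v]})_{[v]\in V_0}\in (X_\bC)^{V_0}$ and fix an edge $e\in E$ lying over $[e]=[\tilde v\tilde w]\in E_0$, say $e=\gamma(\tilde v\tilde w)=(\gamma\tilde v)(\gamma\tilde w)$. Going right-then-down: $df_{G_0}(\tilde p)$ has $[e]$-component $\varphi_{[\tilde v],[\tilde w]}(x_{[\tilde v]}-x_{[\tilde w]})$ (using Lemma~\ref{lem:differential} applied to $(G_0,\tilde p)$, noting $\tilde p_{[\tilde v]}=p_{\tilde v}$), and then $T_\chi(\G)$ multiplies this by $\chi(\gamma)$, giving $\chi(\gamma)\,\varphi_{[\tilde v],[\tilde w]}(x_{[\tilde v]}-x_{[\tilde w]})$. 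Going down-then-right: $S_\chi(\G)$ sends $(x_{[v]})$ to the element of $X_\chi$ with $\gamma\tilde v$-component $\chi(\gamma)\tau(\gamma)x_{[\tilde v]}$ and $\gamma\tilde w$-component $\chi(\gamma)\tau(\gamma)x_{[\tilde w]}$; then $R_\chi(\G)$, being the relevant block of $df_G(p)$ (Lemma~\ref{lem:differential} again), has $e$-component $\varphi_{\gamma\tilde v,\gamma\tilde w}\bigl(\chi(\gamma)\tau(\gamma)(x_{[\tilde v]}-x_{[\tilde w]})\bigr)=\chi(\gamma)\,\varphi_{\gamma\tilde v,\gamma\tilde w}\bigl(\tau(\gamma)(x_{[\tilde v]}-x_{[\tilde w]})\bigr)$. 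Now Lemma~\ref{lem:SupportFunctionals} gives $\varphi_{\gamma\tilde v,\gamma\tilde w}=\varphi_{\tilde v,\tilde w}\circ\tau(\gamma^{-1})$, so $\varphi_{\gamma\tilde v,\gamma\tilde w}\circ\tau(\gamma)=\varphi_{\tilde v,\tilde w}=\varphi_{[\tilde v],[\tilde w]}$, and the two expressions agree. Since $e$ was arbitrary, $T_\chi(\G)\circ df_{G_0}(\tilde p)=R_\chi(\G)\circ S_\chi(\G)$.

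Finally, since $S_\chi(\G)$ and $T_\chi(\G)$ are linear isomorphisms, the commuting square rearranges to $R_\chi(\G)=T_\chi(\G)\circ df_{G_0}(\tilde p)\circ S_\chi(\G)^{-1}$, exhibiting $df_{G_0}(\tilde p)$ and $R_\chi(\G)$ as similar. I do not expect any serious obstacle here; the one point requiring care is the bookkeeping of the unique decomposition $v=\gamma\tilde v$ and $e=\gamma(\tilde v\tilde w)$ — in particular checking that the edge-side map $T_\chi(\G)$ is consistent, i.e.\ that reversing the orientation of $[e]$ (which replaces $\gamma$ by a different element and $\varphi_{[\tilde v],[\tilde w]}$ by $\varphi_{[\tilde w],[\tilde v]}$) does not change the output, and that when $[\tilde v]=[\tilde w]$ the loop case still behaves. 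These are all consequences of the balancedness/simplicity reductions already in place, so the verification is routine once the setup is spelled out carefully.
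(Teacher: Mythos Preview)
Your proof is correct and follows essentially the same approach as the paper: compute the $e$-component of each composite using Lemma~\ref{lem:differential} and reconcile the two expressions via Lemma~\ref{lem:SupportFunctionals}. Your additional remarks on the well-definedness and invertibility of $S_\chi(\G)$ and $T_\chi(\G)$ are useful supplements that the paper leaves implicit when asserting similarity.
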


\proof
Let $u=(u_{[v]})_{[v]\in V_0}\in (X_\bC)^{V_0}$ and let $e=vw\in E$. Then $v=\gamma\tilde{v}$ and $w=\gamma\tilde{w}$ for some unique vertex orbit representatives $\tilde{v},\tilde{w}\in \tilde{V}_0$ and some unique $\gamma\in\Gamma$.
Recall from Proposition  \ref{prop:block} that $R_\chi(\G)$ is the restriction of $df_G(p)$ to the subspace $X_\chi\subset (X_\bC)^V$. Thus, by Lemma \ref{lem:differential}, the $e$-component of $(R_\chi(\G)\circ S_\chi(\G))(u)$ is given by $\varphi_{v,w}( \chi(\gamma) \tau(\gamma)(u_{[v]}-u_{[w]}))$.
Also, by applying Lemma \ref{lem:differential} to the bar-joint framework $(G_0,\tilde{p})$, and using Lemma \ref{lem:SupportFunctionals}, we see that  
the $e$-component of $(T_\chi(\G) \circ df_{G_0}(\tilde{p}))(u)$ is given by 
$\chi(\gamma)\varphi_{\tilde{v},\tilde{w}}(u_{[v]} - u_{[w]})
= \varphi_{v,w}(\chi(\gamma)\tau(\gamma)(u_{[v]} - u_{[w]}))$.
\endproof

\subsection{Gain-sparsity}
Let $k\in\bN$, let $l\in\{0,1,\ldots, 2k-1\}$ and let $m\in \{0,1,\ldots, l\}$.

\begin{definition}
A gain graph $(G_0,\psi)$ is \emph{$(k,l,m)$-gain-sparse} if 
\begin{enumerate}[(a)]
\item $|F|\leq k|V(F)|-l$ for any nonempty balanced $F\subseteq E(G_0)$, and,
\item $|F|\leq k|V(F)|-m$ for all $F\subseteq E(G_0)$.
\end{enumerate}
Moreover, $(G_0,\psi)$ is \emph{$(k,l,m)$-gain-tight} if $|E(G_0)|=k|V(G_0)|-m$ 
and $(G_0,\psi)$ is $(k,l,m)$-gain-sparse.
\end{definition}

Consider again the multiplicative cyclic group 
$\bZ_n=\{\omega^k:k=0,1,\ldots,n-1\}$ with characters $\chi_j(\omega)=\omega^j$ for $j=0,1,\ldots, n-1$.
A $\bZ_n$-symmetric bar-joint framework $\G=(G,p,\theta,\tau)$ in $X$ is said to be {\em $\C_n$-symmetric} if $\tau(\omega)$ is an $n$-fold rotation of $X$.

\begin{corollary} \label{cor:neccounts}
Let $\G=(G,p,\theta,\tau)$ be a well-positioned and $\C_n$-symmetric bar-joint framework in $X$,  where $n\geq 2$, and let $d=\dim_\bR X$. Suppose, in addition, that 
$\T(X)$ is minimal and $\G$ is $\chi_j$-symmetrically isostatic.

\begin{enumerate}[(i)]
\item
Suppose $n=2$.
\begin{enumerate}[(a)]
\item If $j=0$ then $(G_0,\psi)$ is $(d,d,d-2)$-gain-tight.
\item If $j=1$ then, $(G_0,\psi)$ is $(d,d,2)$-gain-tight.
\end{enumerate}
\item 
Suppose $n\geq 3$. 
\begin{enumerate}[(a)]
\item If $j=0$ then $(G_0,\psi)$ is $(d,d,d-2)$-gain-tight.
\item If $j\in \{1,n-1\}$ then, $(G_0,\psi)$ is $(d,d,1)$-gain-tight.
\item If $j\notin \{0,1,n-1\}$ then, $(G_0,\psi)$ is $(d,d,0)$-gain-tight.
\end{enumerate}
\end{enumerate}
\end{corollary}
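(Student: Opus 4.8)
The plan is to combine the Maxwell-type counting inequality from Proposition~\ref{prop:counts} with the explicit dimension computations for $\T_{\chi_j}(X)$ in Lemma~\ref{lem:dimensionCn}, and then to translate the \emph{balanced} part of the gain-sparsity count into a statement about $\chi_0$-symmetric subframeworks. The equality in part (iii) of Proposition~\ref{prop:counts} (applicable since a $\chi_j$-symmetrically isostatic framework is in particular $\chi_j$-symmetrically full) gives at once $|E_0| = d|V_0| - \dim_\bC \T_{\chi_j}(X)$, and Lemma~\ref{lem:dimensionCn} evaluates $\dim_\bC \T_{\chi_j}(X)$ as $d-2$ when $j=0$, as $2$ when $n=2$ and $j=1$, as $1$ when $n\geq 3$ and $j\in\{1,n-1\}$, and as $0$ otherwise. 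So in every case we obtain the \emph{tightness} equality $|E_0| = d|V_0| - m$ with the stated value of $m$. It remains to verify the two gain-sparsity inequalities: the unbalanced/general bound $|F| \le d|V(F)| - m$ for all $F \subseteq E_0$, and the balanced bound $|F| \le d|V(F)| - d$ for all nonempty balanced $F$.

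For the general inequality, I would argue as follows. Given $F \subseteq E_0$, let $H_0$ be the subgraph of $G_0$ with edge set $F$ (on the vertices it spans), and let $\H = (H, p_H, \theta_H, \tau)$ be the corresponding $\Gamma$-symmetric subframework of $\G$; its quotient gain graph is $(H_0,\psi|_F)$. Since $\G$ is $\chi_j$-symmetrically isostatic, it is in particular $\chi_j$-symmetrically independent, so by Lemma~\ref{lem:blocksurjSym} the subframework $\H$ is $\chi_j$-symmetrically independent. We cannot directly apply Proposition~\ref{prop:counts}(ii) to $\H$ because $\H$ need not be $\chi_j$-symmetrically full; however, $\chi_j$-symmetric independence means $R_{\chi_j}(\H)$ is injective, so $\dim_\bC Y_{\chi_j}^{\H} \ge \dim_\bC X_{\chi_j}^{\H}$, i.e.\ $|F| = \dim_\bC Y_{\chi_j}^{\H} \ge d \, |V(H_0)| - \dim_\bC \F_{\chi_j}(\H) \ge d\,|V(H_0)| - \dim_\bC \T_{\chi_j}(X)$, since the space of trivial $\chi_j$-symmetric flexes of any framework sits inside (an isomorphic copy of) $\T_{\chi_j}(X)$. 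Plugging in the value $m = \dim_\bC \T_{\chi_j}(X)$ from Lemma~\ref{lem:dimensionCn} yields $|F| \le d\,|V(F)| - m$; wait, the inequality goes the wrong way. Instead I would run the independence count as: injectivity of $R_{\chi_j}(\H)$ forces $\dim_\bC \ker R_{\chi_j}(\H) = 0$, and since $\rho_{\chi_j}(\H)$ maps $\T_{\chi_j}(X)$ into this kernel, $\rho_{\chi_j}(\H)$ must be the zero map — but this is too strong in general, so the correct route is the rank–nullity count $|F| = \dim Y_{\chi_j}^{\H} \ge \rank R_{\chi_j}(\H) = \dim X_{\chi_j}^{\H} - \dim \F_{\chi_j}(\H)$, and since $\F_{\chi_j}(\H) \supseteq \T_{\chi_j}(\H)$ and independence forces $\F_{\chi_j}(\H) = \T_{\chi_j}(\H)$ exactly when $\H$ is also full, one still gets $|F| \ge d|V(F)| - \dim_\bC \T_{\chi_j}(X)$, which is again the reverse. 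The resolution is that independence gives the opposite estimate on $|F|$ than one wants; the correct tool for the upper bound is precisely that $\H$ being $\chi_j$-independent means the restriction of $R_{\chi_j}(\G)$ to the coordinates of $\H$ is injective, hence $|E(H_0)| \le \dim_\bC X_{\chi_j}^{\H} = d\,|V(H_0)| \cdot$ (something); I would make this precise by noting $\dim_\bC X_\chi^\H = (\dim_\bR X) |V(H_0)|$ only when $\H$ has trivial-gain quotient, and in general $\dim_\bC X_{\chi_j}^{\H} = d|V(H_0)|$ regardless since the defining relations of $X_\chi$ are indexed by $\gamma \ne 1$ acting freely — so injectivity of $R_{\chi_j}(\H)$ gives $|F| \le d|V(F)|$, and with the refinement that a nonzero trivial flex space subtracts, $|F| \le d|V(F)| - m$.

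For the balanced inequality, let $F$ be a nonempty balanced subset. By Lemma~\ref{switch}(ii) we may switch the gain assignment so that $\psi([e]) = 1$ for all $[e] \in F$; then the subframework $\H$ on $F$ has a trivial-gain quotient, so by the ``special case'' analysis (Lemma~\ref{lem:commutativediagram}) $R_{\chi_j}(\H)$ is similar to $df_{H_0}(\tilde p)$, the differential of the ordinary (unsymmetrised) quotient framework $(H_0,\tilde p)$. Independence of $R_{\chi_j}(\H)$ then says $(H_0,\tilde p)$ is an independent bar-joint framework in $X$, and since $\T(X)$ is minimal its space of trivial flexes has dimension $d$ (it consists of constant vector fields, $\dim_\bR X = d$); the ordinary Maxwell count for independent frameworks in a normed space with minimal $\T(X)$ gives $|E(H_0)| \le d|V(H_0)| - d$, which is exactly the balanced bound. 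The main obstacle, and the step I would be most careful about, is getting the direction of these rank inequalities right and correctly identifying $\dim_\bC X_{\chi_j}^{\H}$ for a subframework with possibly nontrivial gains — the cleanest fix is probably to deduce both sparsity inequalities uniformly from Lemma~\ref{lem:blocksurjSym} (independence passes to subframeworks) plus the observation that $\chi_j$-independence of a subframework forces $|E_0(\H)| \le \dim_\bC X_{\chi_j}^{\H} - \dim_\bC \T_{\chi_j}(X)$ in the balanced-quotient normalisation, and $\dim_\bC X_{\chi_j}^{\H} = d|V(\H)|$ always, with the balanced case getting the stronger $-d$ via Lemma~\ref{lem:commutativediagram} and the $\T(X)$-minimal constant-flex count.
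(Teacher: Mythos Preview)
Your overall architecture matches the paper's proof: tightness from Proposition~\ref{prop:counts}(iii), the unbalanced sparsity bound from Lemma~\ref{lem:blocksurjSym} plus Proposition~\ref{prop:counts}(ii), and the balanced bound via Lemma~\ref{lem:commutativediagram} reducing to an ordinary independent framework in $X$. The balanced case and the tightness equality are handled correctly.

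The genuine gap is in the unbalanced (general) sparsity bound, and it stems from a single missing observation that the paper states explicitly: \emph{since $\T(X)$ is minimal, every bar-joint framework in $X$ is full}. Indeed, if every $\eta\in\T(X)$ is a constant vector field $\eta(x)=x_0$, then $\rho_{(H,p_H)}(\eta)=(x_0)_{v\in V(H)}$ vanishes only when $x_0=0$, so $\rho$ is injective for any framework, including every $\C_n$-symmetric subframework $\H$. By Corollary~\ref{lem:SymRigid} it follows that $\H$ is $\chi_j$-symmetrically full, and then Proposition~\ref{prop:counts}(ii) applies directly to $\H$ to give $|F|\le d|V(F)|-\dim_\bC\T_{\chi_j}(X)$. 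Your worry that ``$\H$ need not be $\chi_j$-symmetrically full'' is unfounded under the hypothesis that $\T(X)$ is minimal, and once you drop that worry all of your contortions disappear.

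Two further slips in those contortions: $\chi$-symmetric \emph{independence} means $R_\chi$ is \emph{surjective} (exactness at $Y_\chi$), not injective; and the assertion ``independence forces $\F_\chi(\H)=\T_\chi(\H)$ exactly when $\H$ is also full'' is wrong --- that equality is the definition of $\chi$-symmetric infinitesimal \emph{rigidity}, which is a separate condition. What you actually need is surjectivity of $R_\chi(\H)$ (so $|F|=\rank R_\chi(\H)=d|V(F)|-\dim\F_\chi(\H)$), the inclusion $\F_\chi(\H)\supseteq\T_\chi(\H)$, and fullness (so $\dim\T_\chi(\H)=\dim\T_\chi(X)$), which together yield $|F|\le d|V(F)|-\dim\T_\chi(X)$.
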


\proof
Let $\chi=\chi_j$.
Note that since $\T(X)$ is minimal, every bar-joint framework in $X$ is full.  By Lemma \ref{lem:SymRigid}, $\G$, and every $\C_n$-symmetric subframework of $\G$, is $\chi$-symmetrically full. 
Also note that $\dim X_\chi = (\dim_\bR X)|V_0|$ and $\dim Y_\chi = |E_0|$.
Let $F\subseteq E(G_0)$, let $H_0$ be the subgraph of $G_0$ spanned by the edges in $F$ and let $H$ be the covering graph for $H_0$ in $G$.
By Lemma \ref{lem:blocksurjSym}, the $\C_n$-symmetric subframework $\H=(H,p_H,\theta_H,\tau_H)$ is $\chi$-symmetrically independent. Thus, by Proposition \ref{prop:counts}$(ii)$, \[|E(H_0)|\leq (\dim_\bR X)|V(H_0)| - \dim_\bC \T_\chi(X).\]
If $H_0$ is a balanced subgraph of $G_0$ then we may consider an associated bar-joint framework $(H_0,\tilde{p}_H)$, as described in Section \ref{s:specialcase}.
By Lemma \ref{lem:commutativediagram}, $df_{H_0}(\tilde{p}_{H})$ and $R_\chi(\H)$ are similar linear transformations. It follows that $(H_0,\tilde{p}_{H})$ is an independent subframework of $(G,p)$ and so, 
\begin{eqnarray*}
|E(H_0)|= \rank df_{H_0}(\tilde{p}_{H}) &=&(\dim_\bR X)|V(H_0)| - \dim_\bR \F(H_0,\tilde{p}_{H})\\
&\leq&(\dim_\bR X)|V(H_0)| - \dim_\bR \T(X).
\end{eqnarray*} 
Since $\T(X)$ is minimal, $\dim_\bR \T(X) = \dim_\bR X$. Thus the results now follow from Lemma \ref{lem:dimensionCn} and Proposition \ref{prop:counts}$(iii)$.
\endproof

\begin{remark}
Note that, by the above corollary, for two-dimensional $\chi_j$-symmetrically isostatic bar-joint frameworks with rotational symmetry, the associated gain graph must be either $(2,2,0)$-gain tight, $(2,2,1)$-gain-tight or $(2,2,2)$-gain-tight.
Inductive constructions for $(2,2,1)$- and $(2,2,2)$-gain-tight gain graphs are presented in \cite{NS} (see also \cite{NOP12,NO14}). In the next section we present an inductive construction for $(2,2,0)$-gain-tight gain graphs.
Also note that, in any dimension, the $(k,l,m)$-gain tight counts given by Corollary \ref{cor:neccounts} are the bases of a matroid as was observed in \cite{NS}. (Note however that this matroidal property does not hold for arbitrary triples $k,l,m\in \mathbb{N}$. Indeed it fails in some rigidity contexts \cite{iktan}.)
\end{remark}


\section{An inductive construction of $(2,2,0)$-gain tight $\mathbb{Z}_2$-gain graphs}
\label{Sect:Inductive}
Let $(G_0,\psi)$ be a $\mathbb{Z}_2$-gain graph with covering graph $G$. For simplicity, we will omit the square brackets in the notation of vertices and edges of $(G_0,\psi)$ in this section, and simply write $v$ for the vertex $[v]$, and $(uv,\alpha)$ for the edge $([u],[v])$ with gain $\alpha$. Note that the orientation of the edges of $(G_0,\psi)$ does not matter, since $(G_0,\psi)$ is a $\mathbb{Z}_2$-gain graph and $\mathbb{Z}_2$ is of order 2. 
For the remainder of this article we will only consider $\mathbb{Z}_2$-gain graphs and so from now on the term {\em gain graph} will be used to mean {\em $\mathbb{Z}_2$-gain graph}.

\subsection{Base graphs}

Let $\B$ denote the family of $(2,2,0)$-gain-tight \emph{base graphs} presented in Figure \ref{fig:gain_cov_graphs}.
It will be convenient to assign names to elements of $\B$.
Let $iK_j^\ell$ denote the complete graph on $j$ vertices, with $i$ copies of each edge and $\ell$ loops on each vertex. Then $2K_2^1$ and $K_3^1$ with a balanced $K_3$ are the gain graphs in Figures~\ref{fig:gain_cov_graphs}(a) and (b), respectively.
The graph formed from $2K_3$ by adding a loop and deleting an edge not incident with the vertex with the loop will be denoted by $R$. (See Figure~\ref{fig:gain_cov_graphs}(c).)
We denote by $K_4^+$ any $\mathbb{Z}_2$-gain graph formed from a balanced copy of $K_4$ by adding a single edge (subject to $(2,2,0)$-gain-sparsity).
We shall also use $K_4^{++}$ to denote any one of the non-isomorphic $(2,2,0)$-gain-tight gain graphs formed from $K_4$ by adding two edges. (See Figures \ref{fig:gain_cov_graphs}(d)-(h).)

\subsection{Preliminaries}

We first record two preliminary lemmas about gain graphs which go back to Zaslavsky \cite{zas}.

\begin{lemma}\label{lem:switchto-1}
Let $G_0$ be a (simple) cycle.
A $\mathbb{Z}_2$-gain graph $(G_0,\psi)$ is unbalanced if and only if the vertices in $V_0$ can be switched so that any one edge has non-identity gain and every other edge in the resulting $\mathbb{Z}_2$-gain graph $(G_0,\psi')$ has identity gain.
\end{lemma}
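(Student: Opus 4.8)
The plan is to prove both implications directly. The non-trivial direction is to show that if $(G_0,\psi)$ is unbalanced then we can switch the vertices so that exactly one edge carries non-identity gain. Since $G_0$ is a (simple) cycle on vertices $v_1, v_2, \ldots, v_n$ with edges $e_i = v_iv_{i+1}$ (indices mod $n$), the only cycle in $G_0$ is the whole graph, and being unbalanced means $\psi(e_1)\psi(e_2)\cdots\psi(e_n) = -1$ in $\mathbb{Z}_2$ (the sign conventions are irrelevant here since $\mathbb{Z}_2$ is abelian of exponent $2$, so every gain equals its inverse and orientation is immaterial). First I would fix the target edge, say $e_n = v_nv_1$, and process the path $v_1, e_1, v_2, e_2, \ldots, e_{n-1}, v_n$ one edge at a time: if $e_i$ has gain $-1$, switch at vertex $v_{i+1}$ (that is, replace the orbit representative $\tilde v_{i+1}$ by $\theta(-1)\tilde v_{i+1}$), which flips the gains of both edges incident with $v_{i+1}$ in the cycle, namely $e_i$ and $e_{i+1}$. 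Proceeding in order $i = 1, 2, \ldots, n-1$ ensures that once $e_i$ has been set to identity gain it is never altered again, so after this sweep all of $e_1, \ldots, e_{n-1}$ have identity gain.

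The key observation is then that the product of all gains around the cycle is a switching invariant (this is essentially Lemma~\ref{switch}(i), or one checks it directly: each switch flips exactly two gains, so the product is unchanged in $\mathbb{Z}_2$). Hence after the sweep the gain of the remaining edge $e_n$ equals the original product $\psi(e_1)\cdots\psi(e_n) = -1$, so $e_n$ has non-identity gain while every other edge has identity gain, as required. This gives the desired gain assignment $\psi'$. For the converse, if there exists such a switching $\psi'$ then the cycle $G_0$ has gain $-1$ under $\psi'$ (the product of one non-identity gain and several identity gains), so $G_0$ is unbalanced for $\psi'$; since balancedness of a subgraph is a switching invariant by Lemma~\ref{switch}(i), $(G_0,\psi)$ is unbalanced as well. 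Finally, it should be noted that the target edge in the statement is an arbitrary edge of $G_0$; by relabelling the cyclic order we may always take it to be $e_n$, so no generality is lost.

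I do not anticipate a serious obstacle here: the argument is a routine switching computation, and the only point requiring mild care is to verify that a switch at a single vertex of a cycle flips precisely the two incident cycle-edges (and nothing else, since the cycle is simple and each vertex has degree exactly $2$), together with the bookkeeping that ensures processing the edges in order does not undo earlier progress. The invariance of the total gain product under switching — which drives the whole argument — is immediate in $\mathbb{Z}_2$, so no delicate sign analysis is needed, in contrast to the situation for general gain groups.
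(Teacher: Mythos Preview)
Your proof is correct. Note, however, that the paper does not actually prove this lemma: it is stated without proof as one of two preliminary facts attributed to Zaslavsky. Your sweep argument---processing the path $e_1,\dots,e_{n-1}$ and switching at $v_{i+1}$ whenever $e_i$ has gain $-1$, then invoking the switching-invariance of the total gain product to force $e_n$ to carry the residual $-1$---is the standard elementary approach and would serve perfectly well as a self-contained proof. The only minor point worth tightening in a final write-up is the phrase ``any one edge'' in the statement: you handle this correctly by observing that the choice of target edge is arbitrary up to cyclic relabelling, but it might be worth saying explicitly that the argument yields, for \emph{each} prescribed edge $e$, a switching making $e$ the unique edge of gain $-1$.
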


\begin{lemma}\label{lem:balancedunion}
Let $(G_0,\psi)$ be a $\mathbb{Z}_2$-gain graph and let $A$ and $B$ be subgraphs of $(G_0,\psi)$. Suppose that $A\cap B$ is connected. If $A$ and $B$ are balanced then $A\cup B$ is also balanced.
\end{lemma}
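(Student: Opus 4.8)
The plan is to prove Lemma \ref{lem:balancedunion} by a direct cycle-chasing argument using the switching normal form from Lemma \ref{switch}(ii). First I would invoke Lemma \ref{switch}(ii) applied to the balanced subgraph $A$: since $A$ is balanced, there is an equivalent gain assignment $\psi'$ (obtained by switching at vertices) for which $\psi'([e]) = 1$ for every edge $[e] \in E(A)$. Because $A \cap B$ is connected and is a subgraph of $A$, it is in particular balanced under $\psi'$ with all its edges having trivial gain. Now I would like to argue that $B$, being balanced, must also have all its edges carrying trivial gain under $\psi'$; switching does not affect whether a subgraph is balanced (Lemma \ref{switch}(i)), so $B$ is still balanced, but one must check that a connected balanced subgraph whose gains are all trivial ``spreads'' triviality to an overlapping connected balanced subgraph.

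The key step is therefore the following: if $B$ is connected and balanced and shares a connected subgraph with trivial gains with $A$, then one can further switch \emph{only at vertices of $B$ not in $A\cap B$} to make all gains on $B$ trivial, without disturbing the already-trivial gains on $A$ (since no vertex of $A\cap B$ is touched, and $A\cap B$ being connected means $A$'s gains stay trivial). Concretely, pick a spanning tree $T$ of $B$ that contains a spanning tree of the connected graph $A\cap B$; since $A\cap B$'s edges already have trivial gain, and $B$ is connected, we can switch at vertices of $B \setminus (A\cap B)$ to trivialise the remaining tree edges of $T$. Then, because $B$ is balanced, every non-tree edge of $B$ lies on a cycle all of whose other edges are tree edges with trivial gain, forcing the non-tree edge to have gain $1$ as well (using the standard fact that in a balanced gain graph the gain of any cycle is the identity, hence the gain of any edge equals the product of gains along the tree path between its endpoints). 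After this second switching, every edge of $A \cup B$ has trivial gain, so $A \cup B$ contains no cycle of nontrivial gain, i.e.\ $A \cup B$ is balanced.

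I would present this cleanly by combining both switchings at once: apply Lemma \ref{switch}(ii) to the connected balanced subgraph $A \cup (A\cap B)$'s... actually more simply, first switch so $E(A)$ has trivial gains, then observe $A\cap B$ connected with trivial gains lets us extend the switching over $B$. The main obstacle — really the only subtlety — is verifying that the second round of switching, performed on vertices of $B$ outside $A\cap B$, genuinely leaves every gain on $A$ untouched; this hinges precisely on the connectedness hypothesis for $A\cap B$, because if $A\cap B$ were disconnected one could be forced to switch a vertex of one component of $A\cap B$ relative to another, re-introducing nontrivial gains on edges of $A$ joining those components. So the proof should emphasise that connectedness of $A \cap B$ is exactly what guarantees a compatible common switching trivialising $A \cup B$ simultaneously, and once such a switching exists, balancedness of $A\cup B$ is immediate since a cycle's gain is switching-invariant and here equals a product of $1$'s.
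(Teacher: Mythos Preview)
The paper does not actually supply a proof of this lemma; it is one of the two preliminary results attributed to Zaslavsky \cite{zas} and stated without argument. Your switching argument is precisely the standard one and is correct: trivialise the gains on $A$ via Lemma~\ref{switch}(ii), then use the connectedness of $A\cap B$ to extend a spanning tree of $A\cap B$ to a spanning tree of (each component of) $B$ and switch only at vertices of $B\setminus A$ to trivialise the rest; balancedness of $B$ forces the non-tree edges to have trivial gain as well, and the result follows.

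One minor point worth tightening: in your key step you assume $B$ is connected, which the lemma does not. This is harmless, since the connectedness of $A\cap B$ forces it to lie in a single component $B_0$ of $B$; the remaining components of $B$ are vertex-disjoint from $A$ and can be trivialised independently without touching any edge of $A$, after which your argument applies verbatim to $A$ and $B_0$. A single sentence to this effect would make the write-up complete.
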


We will also need some elementary results about sparse graphs which we record here for convenience. Let $f(G_0)=2|V_0|-|E_0|$. So, for example, any $(2,2,0)$-gain-tight gain graph $G_0$ satisfies $f(G_0)=0$ while any balanced subgraph $G_0'$ satisfies $f(G_0')\geq2$.

\begin{lemma}\label{lem:4regno0}
Let $G_0$ be connected and 4-regular. Then $f(G_0')\geq 1$ for any proper subgraph $G_0'\subset G_0$.
\end{lemma}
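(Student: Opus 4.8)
The plan is to argue by contradiction using a counting argument on the connected 4-regular graph $G_0$. Suppose $G_0'\subset G_0$ is a proper subgraph with $f(G_0')=2|V(G_0')|-|E(G_0')|\leq 0$. First I would reduce to the case where $G_0'$ is \emph{spanning}: if $G_0'$ omits some vertex, then since $G_0$ is connected and 4-regular, one can enlarge $G_0'$ by repeatedly adding an omitted vertex adjacent (in $G_0$) to the current subgraph together with at least one connecting edge, which does not increase $f$; iterating, we reach a spanning subgraph $G_0''$ with $f(G_0'')\leq f(G_0')\leq 0$ and $G_0''$ still proper (we never need to add all of the missing edges to connect everything up). Hence I may assume $V(G_0')=V_0$ and $E(G_0')\subsetneq E_0$, so $|E(G_0')|\leq |E_0|-1$.

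Now I would use 4-regularity to compute: $|E_0|=\tfrac{1}{2}\sum_{v\in V_0}\deg_{G_0}(v)=2|V_0|$. Combined with $|E(G_0')|\leq |E_0|-1=2|V_0|-1$, this gives $f(G_0')=2|V_0|-|E(G_0')|\geq 2|V_0|-(2|V_0|-1)=1$, contradicting $f(G_0')\leq 0$. So every proper subgraph satisfies $f(G_0')\geq 1$, as required.

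The main obstacle is the reduction to spanning subgraphs, where one has to be careful that the enlargement process keeps $G_0'$ \emph{proper} while not increasing $f$. The point is that adding a single new vertex $v$ incident to the current subgraph via exactly one edge changes $(|V|,|E|)$ by $(+1,+1)$, leaving $f$ unchanged; adding it via more than one edge decreases $f$. Since $G_0$ has more edges than vertices ($|E_0|=2|V_0|>|V_0|$ for $|V_0|\geq 1$, and indeed $|E_0|>|V_0|$ whenever $|V_0|\geq 1$ as $2|V_0|>|V_0|$), after absorbing every vertex we still have room to leave at least one edge out, so the resulting spanning subgraph is proper; alternatively, if at some stage the enlargement is forced to include all remaining edges, then $f$ has strictly dropped along the way, and the final spanning subgraph $G_0''$ satisfies $f(G_0'')<f(G_0')\leq 0$, so $|E(G_0'')|>2|V_0|=|E_0|$, which is impossible. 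Either way we land in the spanning proper case handled by the direct count above. One could alternatively phrase the whole argument without the reduction by observing directly that a proper subgraph is missing either a vertex or an edge, and handling the missing-vertex case by noting that such a vertex contributes its full degree $4$ worth of edge-endpoints to $G_0$ but at most $4$ (in fact at most its degree within the subgraph) to $G_0'$; I would pick whichever phrasing is cleanest once the surrounding notation is fixed.
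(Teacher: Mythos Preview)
Your reduction step contains an arithmetic slip that breaks the argument. You write that adding a vertex together with a single connecting edge ``changes $(|V|,|E|)$ by $(+1,+1)$, leaving $f$ unchanged''. But $f=2|V|-|E|$, so this change gives $\Delta f = 2\cdot 1 - 1 = +1$: adding a pendant vertex \emph{increases} $f$. More generally, adding a new vertex with $k$ edges to the current subgraph gives $\Delta f = 2-k$, which is nonpositive only for $k\geq 2$. Since a vertex adjacent to the current subgraph may well have only one neighbour there, you cannot guarantee that the enlargement keeps $f\leq 0$, and the reduction to a spanning proper subgraph collapses. The vague alternative you sketch at the end (``such a vertex contributes its full degree $4$ \ldots'') is not a proof as written.

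The paper's argument avoids any reduction and is a two-line degree count: if $f(G_0')\leq 0$ then $2|E(G_0')|\geq 4|V(G_0')|$, so the average degree in $G_0'$ is at least $4$; but every vertex of $G_0'$ has degree at most $4$ in $G_0$, hence at most $4$ in $G_0'$, forcing $G_0'$ to be $4$-regular. Then every edge of $G_0$ incident with $V(G_0')$ already lies in $G_0'$, so $G_0'$ is a union of components of $G_0$; connectedness of $G_0$ gives $G_0'=G_0$, contradicting properness. This handles the non-spanning and spanning cases uniformly and is the clean route here.
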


\begin{proof}
Suppose $G_0$ contains a subgraph $G_0'$ with $f(G_0')=0$. Then $G_0'$ has average degree 4, so $G_0'$ must be 4-regular (by the 4-regularity of $G_0$). Since $G_0$ is connected it follows that $G_0'=G_0$.
\end{proof}

For two disjoint vertex sets $A,B\subset V(G_0)$, we denote by $d(A,B)$ the number of edges between $A$ and $B$.

\begin{lemma}
\label{l:claim}
Let $H_0=(V_0',E_0')$ be a subgraph of $G_0$. 
If the degree of $v$ in $G_0$ is at least 4  for all $v\in V_0'$ then $d(V_0',V_0-V_0')\geq 2f(H_0)$.
\end{lemma}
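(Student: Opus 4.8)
The plan is to count edge-endpoints at the vertices of $H_0$ and compare the degree of each such vertex in $G_0$ with its degree in $H_0$. Write $V_0'=V(H_0)$ and $E_0'=E(H_0)$. Every vertex $v\in V_0'$ has $G_0$-degree $\deg_{G_0}(v)\geq 4$, so summing over $V_0'$ gives
\[
\sum_{v\in V_0'}\deg_{G_0}(v)\;\geq\;4|V_0'|.
\]
On the other hand, each edge contributes to this sum according to where its endpoints sit: an edge with both endpoints in $V_0'$ is counted twice, an edge with exactly one endpoint in $V_0'$ is counted once (and these are precisely the edges between $V_0'$ and $V_0-V_0'$, of which there are $d(V_0',V_0-V_0')$), and an edge with no endpoint in $V_0'$ is not counted. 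Edges with both endpoints in $V_0'$ include all of $E_0'$, but there may be additional $G_0$-edges inside $V_0'$ not lying in $H_0$; to get the inequality in the stated direction I only need the lower bound coming from $E_0'$ and the cut edges, so
\[
\sum_{v\in V_0'}\deg_{G_0}(v)\;\leq\;2|E(G_0[V_0'])|+d(V_0',V_0-V_0').
\]
Wait — this inequality goes the wrong way for a direct substitution, so instead I would argue more carefully: the sum of $G_0$-degrees over $V_0'$ equals exactly $2e_{\mathrm{in}}+d(V_0',V_0-V_0')$ where $e_{\mathrm{in}}=|E(G_0[V_0'])|\geq |E_0'|$. Hence
\[
4|V_0'|\;\leq\;2e_{\mathrm{in}}+d(V_0',V_0-V_0')\,,
\]
and since I need a bound involving $|E_0'|$ and the inequality $e_{\mathrm{in}}\geq|E_0'|$ pushes the right-hand side up, this alone is not enough.

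The correct route is to bound $e_{\mathrm{in}}$ from above using $(2,2,m)$-gain-sparsity (with $m=0$), which gives $e_{\mathrm{in}}=|E(G_0[V_0'])|\leq 2|V_0'|$, hence $2e_{\mathrm{in}}\leq 4|V_0'|$. Combining with the degree identity $4|V_0'|\leq 2e_{\mathrm{in}}+d(V_0',V_0-V_0')$ would again only recover something trivial. So let me reorganise: the genuinely useful identity is
\[
\sum_{v\in V_0'}\deg_{G_0}(v)=2e_{\mathrm{in}}+d(V_0',V_0-V_0'),
\]
and I want a lower bound on $d(V_0',V_0-V_0')$ of the form $2f(H_0)=4|V_0'|-2|E_0'|$. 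Rearranging the identity, $d(V_0',V_0-V_0')=\sum_{v\in V_0'}\deg_{G_0}(v)-2e_{\mathrm{in}}\geq 4|V_0'|-2e_{\mathrm{in}}$. Now the point is that $e_{\mathrm{in}}$ is the number of $G_0$-edges inside $V_0'$, and while $e_{\mathrm{in}}\geq|E_0'|$ makes this bound worse, the definition of $f$ here must be taken with respect to $H_0$ itself; but for the inequality $d(V_0',V_0-V_0')\geq 2f(H_0)=4|V_0'|-2|E_0'|$ to follow from $d(V_0',V_0-V_0')\geq 4|V_0'|-2e_{\mathrm{in}}$ I need $e_{\mathrm{in}}\leq|E_0'|$, i.e. $H_0$ is an induced subgraph, OR the lemma implicitly takes $H_0=G_0[V_0']$. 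Reading the hypotheses, I will assume $H_0$ is the induced subgraph on $V_0'$ (this is the only case used), so $e_{\mathrm{in}}=|E_0'|$ and the chain closes:
\[
d(V_0',V_0-V_0')\;=\;\sum_{v\in V_0'}\deg_{G_0}(v)-2|E_0'|\;\geq\;4|V_0'|-2|E_0'|\;=\;2f(H_0).
\]

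The only genuine content is the handshake-type identity $\sum_{v\in V_0'}\deg_{G_0}(v)=2|E_0'|+d(V_0',V_0-V_0')$, valid because each edge inside $V_0'$ contributes $2$ to the left side and each edge from $V_0'$ to its complement contributes $1$; edges entirely outside contribute nothing. Substituting the degree hypothesis $\deg_{G_0}(v)\geq 4$ finishes it. The main obstacle — if there is one — is purely a matter of bookkeeping about whether $H_0$ is assumed induced; if it is not, one replaces $|E_0'|$ by $e_{\mathrm{in}}=|E(G_0[V_0'])|\ge |E_0'|$, which only strengthens the conclusion $d(V_0',V_0-V_0')\ge 4|V_0'|-2e_{\mathrm{in}}$ but then no longer matches $2f(H_0)$ unless one also invokes $e_{\mathrm{in}}\le 2|V_0'|$ from sparsity; I expect the intended reading makes $H_0$ induced so that no sparsity input is needed at all and the proof is a one-line degree count.
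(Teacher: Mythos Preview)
Your final argument is exactly the paper's: the one-line degree count $4|V_0'|\leq \sum_{v\in V_0'}\deg_{G_0}(v)=2|E_0'|+d(V_0',V_0-V_0')$, rearranged to give $d(V_0',V_0-V_0')\geq 4|V_0'|-2|E_0'|=2f(H_0)$. Your caution about the induced-subgraph assumption is well placed --- the paper's proof writes the identity $\sum_{v\in V_0'}\deg_{G_0}(v)=2|E_0'|+d(V_0',V_0-V_0')$ without comment, which indeed only holds when $H_0=G_0[V_0']$; this is harmless since the lemma is only ever applied to induced subgraphs $G_0[U]$, but as literally stated for arbitrary subgraphs the inequality is false (take $V_0'=V_0$ in a $4$-regular $G_0$ with $E_0'=\emptyset$).
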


\proof
Since $|E_0'|=2|V_0'|-f(H_0)$  and every vertex in $V_0'$ has degree at least 4 in $G_0$ we have, 
\[4|V_0'|\leq \sum_{v\in V_0'} \deg_{G_0}(v) = 2|E_0'|+d(V_0',V_0-V_0')
= 4|V_0'| - 2f(H_0) +d(V_0',V_0-V_0').\]
\endproof

\begin{lemma}\label{lem:23con}
Let $(G_0,\psi)$ be $(2,2,0)$-gain-sparse, and let $G_0'$ be a balanced subgraph of $(G_0,\psi)$ with $f(G_0')\in \{2,3\}$. Then $G_0'$ is connected.
\end{lemma}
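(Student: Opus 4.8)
The statement to prove is: if $(G_0,\psi)$ is $(2,2,0)$-gain-sparse and $G_0'$ is a balanced subgraph with $f(G_0')\in\{2,3\}$, then $G_0'$ is connected. The plan is to argue by contradiction. Suppose $G_0'$ is disconnected, and write it as a disjoint union $G_0' = G_1 \cup G_2 \cup \cdots \cup G_r$ of its connected components, with $r\geq 2$. Since a disconnected graph has no cycles crossing between components, $G_0'$ being balanced is equivalent to each $G_i$ being balanced. Hence $f(G_i)\geq 2$ for every component $G_i$, because a nonempty balanced subgraph satisfies $|F|\leq 2|V(F)|-2$ (condition (a) of $(2,2,0)$-gain-sparsity with $l=2$), which is exactly $f(G_i)\geq 2$; and if a component is a single isolated vertex with no edges then $f=2$ as well, so in all cases $f(G_i)\geq 2$.

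The key observation is that $f$ is additive over disjoint unions: $f(G_0') = \sum_{i=1}^r f(G_i)$, since both $|V_0'|$ and $|E_0'|$ split as sums over the components. Therefore $f(G_0') = \sum_{i=1}^r f(G_i) \geq 2r \geq 4$, because $r\geq 2$. This directly contradicts the hypothesis that $f(G_0')\in\{2,3\}$, i.e. $f(G_0')\leq 3$. Hence $G_0'$ must be connected.

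I expect essentially no obstacle here: the only mild subtlety is making sure the bound $f\geq 2$ is available for every component, including the degenerate case of a component with a single vertex and no edges (for which $f=2$ trivially) and components consisting of a tree (which are balanced automatically since they contain no cycle, and satisfy $|E|=|V|-1 \leq 2|V|-2$ for $|V|\geq 1$). One should also note that a balanced subgraph in the sense used here includes forests, so condition (a) of the $(2,2,0)$-gain-sparsity definition applies to each nonempty component's edge set. Once additivity of $f$ and the per-component lower bound $f(G_i)\geq 2$ are in hand, the contradiction with $f(G_0')\leq 3$ is immediate. The argument does not use the value $2$ or $3$ individually, only that $f(G_0')<4$, and it does not use the ambient gain-sparsity beyond the balanced count, so it is quite robust.
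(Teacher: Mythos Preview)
Your proof is correct and follows essentially the same approach as the paper's: argue by contradiction, use that every component of a balanced subgraph is balanced and hence has $f\geq 2$, and conclude by additivity of $f$ over disjoint unions that $f(G_0')\geq 4$. The paper splits into just two pieces (one component and its complement) rather than all components, but the argument is the same.
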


\begin{proof}
Suppose $G_0'$ is disconnected. Let $A$ be a connected component of $G_0'$ and let $B=G_0'-A$. Since any subgraph of a balanced gain graph is also balanced, we have $f(A)\geq 2$ and $f(B)\geq 2$. Hence $f(G_0')=f(A)+f(B)\geq 4$, contradicting the hypothesis of the lemma.
\end{proof}

\begin{lemma}\label{lem:unionint}
Let $(G_0,\psi)$ be $(2,2,0)$-gain-tight.
Let $H_0'$ and $H_0''$ be balanced subgraphs of $(G_0,\psi)$ with $V(H_0')\cap V(H_0'')\neq \emptyset$ and $f(H_0')=2=f(H_0'')$. Then either
\begin{enumerate}[(i)]
\item $f(H_0'\cap H_0'')=4$ and $f(H_0'\cup H_0'')=0$, or,
\item $f(H_0'\cup H_0'')=f(H_0'\cap H_0'')=2$.
\end{enumerate}
Moreover, $(ii)$ holds if and only if $H_0'\cup H_0''$ is balanced.
\end{lemma}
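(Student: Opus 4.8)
The plan is to exploit submodularity of the count function $f$ together with the balanced-union lemma (Lemma~\ref{lem:balancedunion}). First I would record the standard inclusion-exclusion identity
\[
f(H_0'\cup H_0'') + f(H_0'\cap H_0'') = f(H_0') + f(H_0'') = 4,
\]
which holds because $|V(\cdot)|$ and $|E(\cdot)|$ are both modular (here I use that $E(H_0'\cap H_0'')$ consists exactly of the edges of $G_0$ with both endpoints in $V(H_0')\cap V(H_0'')$, since $H_0', H_0''$ are subgraphs of the simple gain graph $G_0$ and we may take induced-type subgraphs; if the $H_0^{(i)}$ are not induced one still has $f(H_0'\cup H_0'')+f(H_0'\cap H_0'')\leq 4$ with equality iff no edge of $G_0$ joins $V(H_0')\cap V(H_0'')$ to itself outside $E(H_0')\cup E(H_0'')$, which is automatic for induced subgraphs — I would state the lemma for induced subgraphs or simply note this). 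Since $V(H_0')\cap V(H_0'')\neq\emptyset$, the intersection has at least one vertex, so $f(H_0'\cap H_0'')\geq 2|V(H_0'\cap H_0'')| - |E(H_0'\cap H_0'')|$; more usefully, $f$ of any nonempty subgraph that is a subgraph of a balanced graph is $\geq 2$ (balanced subgraphs satisfy the $(2,2,2)$-type bound, as noted after Lemma~\ref{lem:4regno0}: $f(G_0')\geq 2$). Hence $f(H_0'\cap H_0'')\geq 2$, and by $(2,2,0)$-gain-tightness of the ambient graph every subgraph satisfies $f\geq 0$, so $f(H_0'\cup H_0'')\geq 0$. Combining with the identity $f(H_0'\cup H_0'')+f(H_0'\cap H_0'') = 4$ forces the pair $(f(H_0'\cup H_0''),\,f(H_0'\cap H_0''))$ to be one of $(0,4),(1,3),(2,2)$.

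Next I would rule out the case $(1,3)$. This is the main obstacle: a priori nothing in the modular identity excludes $f(H_0'\cap H_0'')=3$. The key point is that $H_0'\cap H_0''$ is a subgraph of the balanced graph $H_0'$, hence balanced, with $f(H_0'\cap H_0'')=3\in\{2,3\}$, so by Lemma~\ref{lem:23con} it is connected. Then $H_0'$ and $H_0''$ are both balanced with connected intersection, so by Lemma~\ref{lem:balancedunion} the union $H_0'\cup H_0''$ is balanced, whence $f(H_0'\cup H_0'')\geq 2$. But then $f(H_0'\cup H_0'')+f(H_0'\cap H_0'') \geq 2+3 = 5 > 4$, a contradiction. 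Therefore the case $(1,3)$ is impossible and we are left with $(0,4)$ and $(2,2)$, which are exactly conclusions $(i)$ and $(ii)$.

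Finally I would prove the "moreover" clause. If $(ii)$ holds, then $f(H_0'\cap H_0'')=2\in\{2,3\}$ and $H_0'\cap H_0''$ is balanced (being a subgraph of $H_0'$), so by Lemma~\ref{lem:23con} it is connected; then Lemma~\ref{lem:balancedunion} gives that $H_0'\cup H_0''$ is balanced. Conversely, suppose $H_0'\cup H_0''$ is balanced. Since $(G_0,\psi)$ is $(2,2,0)$-gain-sparse and $H_0'\cup H_0''$ is a nonempty balanced subgraph, condition (a) of $(2,2,0)$-gain-sparsity gives $|E(H_0'\cup H_0'')|\leq 2|V(H_0'\cup H_0'')| - 2$, i.e. $f(H_0'\cup H_0'')\geq 2$. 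This excludes $(i)$, so $(ii)$ must hold. This completes the argument; the only genuinely delicate step is the elimination of $f=3$ on the intersection, and it is handled precisely by the connectivity Lemma~\ref{lem:23con} feeding into the balanced-union Lemma~\ref{lem:balancedunion}.
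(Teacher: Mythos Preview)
Your argument is correct and follows essentially the same route as the paper: bound $f(H_0'\cap H_0'')\geq 2$ via balancedness, bound $f(H_0'\cup H_0'')\geq 0$ via sparsity, and use Lemma~\ref{lem:23con} together with Lemma~\ref{lem:balancedunion} to force the intersection value away from $3$ and to establish the ``moreover'' clause. One small clean-up: your parenthetical worry about induced subgraphs is unnecessary, since with the standard definitions $V(H_0'\cap H_0'')=V(H_0')\cap V(H_0'')$ and $E(H_0'\cap H_0'')=E(H_0')\cap E(H_0'')$ the identity $f(H_0'\cup H_0'')+f(H_0'\cap H_0'')=f(H_0')+f(H_0'')$ is exact for arbitrary subgraphs.
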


\begin{proof}
As $H_0'\cap H_0'' \subset H_0'$ we have $f(H_0'\cap H_0'')\geq 2$. If $f(H_0'\cap H_0'')\geq 4$ then $$0\leq  f(H_0'\cup H_0'')=f(H_0')+f(H_0'')-f(H_0'\cap H_0'')=
4-f(H_0'\cap H_0'')\leq 0$$ and so $(i)$ holds. If $f(H_0'\cap H_0'') \in \{2,3\}$ then $H_0'\cap H_0''$ is connected by Lemma \ref{lem:23con}. It follows that $H_0'\cup H_0''$ is balanced by Lemma \ref{lem:balancedunion} and hence $f(H_0'\cup H_0'')\geq 2$. Thus, $$2\leq f(H_0'\cup H_0'')=2+2-f(H_0'\cap H_0'')\leq 2$$ 
and so $(ii)$ holds.
\end{proof}

\subsection{Henneberg-type operations}

Now we define operations on $\mathbb{Z}_2$-gain graphs. The H1 operation (or Henneberg 1 move, or 0-extension) adds a new vertex of 3 possible types. In type 1a the new vertex has degree 2 and two distinct neighbours; in type 1b the new vertex has degree 2 and one neighbour with two parallel edges; and in type 1c the new vertex has degree 3 with one neighbour and a loop. (See Figure \ref{fig:2}.) The gains on the new edges are arbitrary subject to the condition that the covering graph is simple, i.e. parallel edges have different gains and a loop has gain $-1$.

\begin{figure}[ht]
\begin{center}
\begin{tikzpicture}[scale=0.6]

\draw (-6,0.1) circle (40pt);

\filldraw (-6.8,.75) circle (3pt);
\filldraw (-5.2,.75) circle (3pt);
\filldraw (-6,2.5) circle (3pt);

\draw[black]
(-6.8,.75) -- (-6,2.5) -- (-5.2,.75);


\draw (0,.1) circle (40pt);

\filldraw (0,.75) circle (3pt);
\filldraw (0,2.5) circle (3pt);

\draw[thick] plot[smooth, tension=1] coordinates{(0,.75) (-.25,1.6) (0,2.5)};
\draw[thick] plot[smooth, tension=1] coordinates{(0,.75) (.25,1.6) (0,2.5)};

\filldraw (-.25,1.6) circle (0pt) node[anchor=east]{};
\filldraw (.25,1.6) circle (0pt) node[anchor=west]{};


\draw (6,.1) circle (40pt);

\filldraw (6,.75) circle (3pt);
\filldraw (6,2.5) circle (3pt);

\draw[black]
(6,.75) -- (6,2.5);

\draw[thick] plot[smooth, tension=1] coordinates{(6,2.5) (5.7,3) (6.3,3) (6,2.5)};

\filldraw (6,3) circle (0pt) node[anchor=south]{};

\end{tikzpicture}
\end{center}
\caption{H1 a, b, c operations on gain graphs. Gain labels are omitted. }
\label{fig:2}
\end{figure}
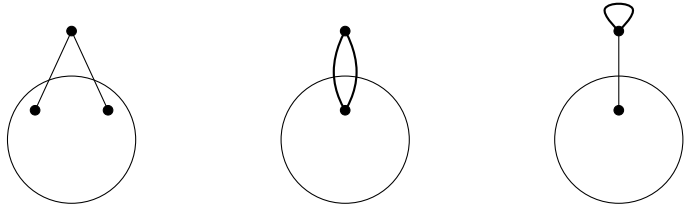

The H2 operation (or Henneberg 2 move, or 1-extension) deletes one edge $(xy,\alpha)$ and adds a new vertex $v$ adjacent to $x,y$ of five possible types. In type 2a, $v$ has degree 3 and 3 distinct neighbours with edges $(xv,\beta)$ and $(yv,\gamma)$ satisfying $\beta\gamma=\alpha$; 
in type 2b, $v$ has degree 3 and exactly 2 neighbours with edges $(xv,1), (xv,-1)$ and $(yv,\delta)$ with $\delta=\pm 1$; 
in type 2c, the deleted edge $xy$ is a loop $(xx,-1)$ and $v$ has degree 3 and exactly 2 neighbours with edges $(xv,1), (xv,-1)$ and $(yv,\delta)$ with $\delta=\pm 1$;
in type 2d, $v$ has degree 4 and exactly 2 neighbours with edges $(xv,\beta), (yv,\gamma)$ and $(vv,-1)$ satisfying $\alpha=\beta\gamma$; 
finally, in type 2e, the deleted edge $xy$ is a loop $(xx,-1)$, $v$ has degree 4 and exactly 1 neighbour with edges $(xv,1), (xv,-1)$ and $(vv,-1)$. (See Figure~\ref{fig:3b}.)

\begin{figure}[ht]
\begin{center}
\begin{tikzpicture}[scale=0.6]

\draw (-6,0) circle (40pt);

\filldraw (-6.8,.75) circle (3pt);
\filldraw (-5.2,.75) circle (3pt);
\filldraw (-6,2.5) circle (3pt);
\filldraw (-6,.1) circle (3pt);

\draw[black]
(-6.8,.75) -- (-6,2.5) -- (-5.2,.75);

\draw[black]
(-6,.1) -- (-6,2.5);

\draw[dashed]
(-6.8,.75) -- (-6,.1);
\end{tikzpicture}
\hspace{0.7cm}
\begin{tikzpicture}[scale=0.6]


\draw (0,0) circle (40pt);

\filldraw (-.8,.75) circle (3pt);
\filldraw (0,2.5) circle (3pt);
\filldraw (0.8,.75) circle (3pt);

\draw[thick] plot[smooth, tension=1] coordinates{(-.8,.75) (-.2,1.6) (0,2.5)};
\draw[thick] plot[smooth, tension=1] coordinates{(-.8,.75) (-.7,1.6) (0,2.5)};

\filldraw (-.5,1.6) circle (0pt) node[anchor=east]{};
\filldraw (-.4,1.6) circle (0pt) node[anchor=west]{};

\draw[black]
(.8,.75) -- (0,2.5);

\draw[dashed]
(-.8,.75) -- (0.8,.75);

\end{tikzpicture}
\hspace{0.7cm}
\begin{tikzpicture}[scale=0.6]

\draw (6,0) circle (40pt);

\filldraw (5.2,.75) circle (3pt);
\filldraw (6,2.5) circle (3pt);
\filldraw (6.8,.75) circle (3pt);

\draw[thick] plot[smooth, tension=1] coordinates{(5.2,.75) (5.8,1.6) (6,2.5)};
\draw[thick] plot[smooth, tension=1] coordinates{(5.2,.75) (5.3,1.6) (6,2.5)};

\filldraw (5.5,1.6) circle (0pt) node[anchor=east]{};
\filldraw (5.7,1.6) circle (0pt) node[anchor=west]{};

\draw[black]
(6.8,.75) -- (6,2.5);

\draw[dashed] plot[smooth, tension=1] coordinates{(5.2,.75) (4.9,.25) (5.5,.25) (5.2,.75)};

\end{tikzpicture}
\hspace{0.7cm}
\begin{tikzpicture}[scale=0.6]

\draw (0,0) circle (40pt);

\filldraw (-.8,.75) circle (3pt);
\filldraw (0,2.5) circle (3pt);
\filldraw (0.8,.75) circle (3pt);

\filldraw (-.5,1.6) circle (0pt) node[anchor=east]{};
\filldraw (0,1.6) circle (0pt) node[anchor=west]{};

\draw[black]
(.8,.75) -- (0,2.5) -- (-.8,.75);

\draw[dashed]
(-.8,.75) -- (0.8,.75);

\draw[thick] plot[smooth, tension=1] coordinates{(0,2.5) (-.3,2.9) (.3,2.9) (0,2.5)};

\filldraw (0,2.8) circle (0pt) node[anchor=south]{};
\end{tikzpicture}
\hspace{0.7cm}
\begin{tikzpicture}[scale=0.6]

\draw (6,0) circle (40pt);

\filldraw (6,.75) circle (3pt);
\filldraw (6,2.5) circle (3pt);

\draw[thick] plot[smooth, tension=1] coordinates{(6,.75) (5.8,1.6) (6,2.5)};
\draw[thick] plot[smooth, tension=1] coordinates{(6,.75) (6.2,1.6) (6,2.5)};

\filldraw (6,1.6) circle (0pt) node[anchor=east]{};
\filldraw (6,1.6) circle (0pt) node[anchor=west]{};
\filldraw (6,2.8) circle (0pt) node[anchor=south]{};

\draw[dashed] plot[smooth, tension=1] coordinates{(6,.75) (5.7,.25) (6.3,.25) (6,.75)};
\draw[thick] plot[smooth, tension=1] coordinates{(6,2.5) (5.7,2.9) (6.3,2.9) (6,2.5)};

\end{tikzpicture}
\end{center}
\caption{H2 a, b, c, d, e operations on gain graphs. Gain labels are omitted.}
\label{fig:3b}
\end{figure}
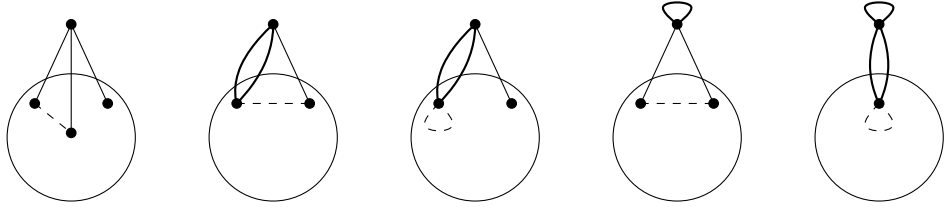

The H3 operation (or X-replacement, or 2-extension) deletes two edges $(xy,\alpha),(zw,\beta)$ and adds a new degree 4 vertex $v$ adjacent to $x,y,z,w$ of five possible types. 
In type 3a, $v$ has 4 distinct neighbours and edges $(xv,\gamma),(yv,\delta),(zv,\epsilon),(wv,\zeta)$ where $\alpha=\gamma\delta$ and $\beta=\epsilon\zeta$; 
in type 3b, $v$ has 3 distinct neighbours, $y=z$ and there are two parallel edges between $v$ and $y$, with edges $(xv,\gamma),(yv,1),(yv,-1),(wv,\zeta)$ where $\alpha=\gamma$ and $\beta=-\zeta$; 
in type 3c, $v$ has 3 distinct neighbours, $x=y$ so $\alpha=-1$ and there are two parallel edges between $v$ and $x$ with edges $(xv,-1),(xv,1),(zv,\epsilon),(wv,\zeta)$ and $\beta=\epsilon\zeta$; 
in type 3d, $v$ has 2 distinct neighbours, $x=y$ and $z=w$ so $\alpha=\beta=-1$ and there are two parallel edges between $v$ and $x$ and between $v$ and $z$ with edges $(xv,1),(xv,-1),(zv,1),(zv,-1)$.    
(See Figure~\ref{fig:3d}.)

\begin{figure}[ht]
\begin{center}
\begin{tikzpicture}[scale=0.6]

\draw (-6,0) circle (40pt);

\filldraw (-6.9,.75) circle (3pt);
\filldraw (-5.1,.75) circle (3pt);
\filldraw (-6,2.5) circle (3pt);
\filldraw (-6.3,.1) circle (3pt);
\filldraw (-5.7,.1) circle (3pt);

\draw[black]
(-6.9,.75) -- (-6,2.5) -- (-5.1,.75);

\draw[black]
(-6.3,.1) -- (-6,2.5);

\draw[black]
(-5.7,.1) -- (-6,2.5);

\draw[dashed]
(-6.8,.75) -- (-6.3,.1);

\draw[dashed]
(-5.1,.75) -- (-5.7,.1);
\end{tikzpicture}
\hspace{0.9cm}
\begin{tikzpicture}[scale=0.6]

\draw (0,0) circle (40pt);

\filldraw (-.8,.75) circle (3pt);
\filldraw (0,2.5) circle (3pt);
\filldraw (0.8,.75) circle (3pt);
\filldraw (0,.1) circle (3pt);

\draw[thick] plot[smooth, tension=1] coordinates{(-.8,.75) (-.2,1.6) (0,2.5)};
\draw[thick] plot[smooth, tension=1] coordinates{(-.8,.75) (-.7,1.6) (0,2.5)};

\filldraw (-.5,1.6) circle (0pt) node[anchor=east]{};
\filldraw (-.4,1.6) circle (0pt) node[anchor=west]{};

\draw[black]
(.8,.75) -- (0,2.5);

\draw[black]
(0,.1) -- (0,2.5);

\draw[dashed]
(-.8,.75) -- (0.8,.75);

\draw[dashed]
(-.8,.75) -- (0,.1);
\end{tikzpicture}
\hspace{0.9cm}
\begin{tikzpicture}[scale=0.6]


\draw (6,0) circle (40pt);

\filldraw (5.2,.75) circle (3pt);
\filldraw (6,2.5) circle (3pt);
\filldraw (6.8,.75) circle (3pt);
\filldraw (6,.1) circle (3pt);

\draw[thick] plot[smooth, tension=1] coordinates{(5.2,.75) (5.8,1.6) (6,2.5)};
\draw[thick] plot[smooth, tension=1] coordinates{(5.2,.75) (5.3,1.6) (6,2.5)};

\filldraw (5.5,1.6) circle (0pt) node[anchor=east]{};
\filldraw (5.7,1.6) circle (0pt) node[anchor=west]{};

\draw[black]
(6.8,.75) -- (6,2.5);

\draw[black]
(6,.1) -- (6,2.5);

\draw[dashed]
(6,.1) -- (6.8,.75);

\draw[dashed] plot[smooth, tension=1] coordinates{(5.2,.75) (4.9,.25) (5.5,.25) (5.2,.75)};

\end{tikzpicture}
\hspace{0.9cm}
\begin{tikzpicture}[scale=0.6]

\draw (0,0) circle (40pt);

\filldraw (-.8,.75) circle (3pt);
\filldraw (0,2.5) circle (3pt);
\filldraw (0.8,.75) circle (3pt);

\filldraw (-.5,1.6) circle (0pt) node[anchor=east]{};
\filldraw (-.4,1.6) circle (0pt) node[anchor=west]{};

\filldraw (.7,1.6) circle (0pt) node[anchor=east]{};
\filldraw (.7,1.6) circle (0pt) node[anchor=west]{};

\draw[thick] plot[smooth, tension=1] coordinates{(-.8,.75) (-.7,1.7) (0,2.5)};
\draw[thick] plot[smooth, tension=1] coordinates{(-.8,.75) (-.2,1.7) (0,2.5)};

\draw[thick] plot[smooth, tension=1] coordinates{(.8,.75) (.7,1.7) (0,2.5)};
\draw[thick] plot[smooth, tension=1] coordinates{(.8,.75) (.2,1.7) (0,2.5)};

\draw[dashed] plot[smooth, tension=1] coordinates{(.8,.75) (1.1,.4) (.5,.4) (.8,.75)};
\draw[dashed] plot[smooth, tension=1] coordinates{(-.8,.75) (-1.1,.4) (-.5,.4) (-.8,.75)};

\end{tikzpicture}
\end{center}
\caption{H3 a, b, c, d operations on gain graphs. Gain labels are omitted.}
\label{fig:3d}
\end{figure}
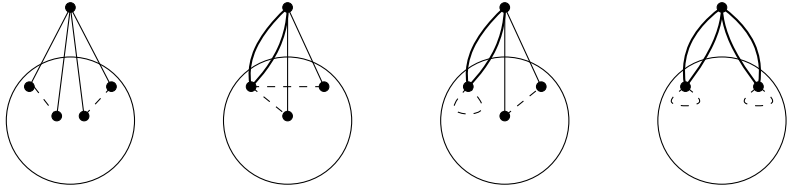

A \emph{vertex-to-$K_4$} operation removes a vertex $v$ (of arbitrary degree) and 
all the edges incident with $v$, and adds in a copy of $K_4$ with only trivial gains. 
Each removed edge $(xv,\gamma)$, where $x\not= v$, is replaced by an edge $(xy,\gamma)$ for some $y$ in the new $K_4$. If $x=v$ then the removed edge $(vv,-1)$ is replaced by an edge $(wz,-1)$ where $w,z$ are vertices of the new $K_4$. Note $w$ and $z$ need not be distinct. (See Figure \ref{fig:vk4}.)

\begin{center}
\begin{figure}[ht]
\centering
\begin{tikzpicture}[scale=0.9]
\draw (0,0) circle (27pt);
\draw (5,0) circle (27pt);
 
\filldraw (0,1.5) circle (3pt);
\filldraw (.2,0.5) circle (3pt);
\filldraw (-.2,0.5) circle (3pt);
\filldraw (0.6,0.35) circle (3pt);
\filldraw (-0.6,0.35) circle (3pt);

\filldraw (4.65,1.35) circle (3pt);
\filldraw (5.35,1.35) circle (3pt);
\filldraw (4.65,1.8) circle (3pt);
\filldraw (5.35,1.8) circle (3pt);

\filldraw (5.2,0.5) circle (3pt);
\filldraw (4.8,0.5) circle (3pt);
\filldraw (5.6,0.35) circle (3pt);
\filldraw (4.4,0.35) circle (3pt);

\draw[black]
(-.6,0.35) -- (0,1.5)  -- (-.2,.5) -- (0,1.5) -- (.2,.5);
\draw[black]
(.6,.35) -- (0,1.5);

\draw[black]
(2,0) -- (2.85,0);

\draw[black]
(2.85,0.15) -- (2.85,-.15) -- (3,0) -- (2.85,.15); 

\draw[black]
(4.4,0.35) -- (4.65,1.35) -- (5.35,1.35) -- (4.65,1.8) -- (5.35,1.8) -- (5.35,1.35) -- (5.6,.35);

\draw[black]
(5.2,0.5) -- (4.65,1.8) -- (4.65,1.35) -- (4.8,.5);

\draw[black]
(4.65,1.35) -- (5.35,1.8);

\end{tikzpicture}
\hspace{1cm}
\begin{tikzpicture}[scale=0.9]
\draw (0,0) circle (27pt);
\draw (5,0) circle (27pt);

\filldraw (-.6,-.1) circle (3pt);
\filldraw (.6,.1) circle (3pt);

\draw[black]
(-.8,-.1) -- (-.6,-.1) -- (.6,.1) -- (.8,.1);

\draw[black]
(-.6,-.3) -- (-.6,-.1) -- (-.6,.1);

\draw[black]
(.6,.3) -- (.6,.1) -- (.6,-.1);

\filldraw (4.4,-.1) circle (3pt);
\filldraw (5.6,-.2) circle (3pt);
\filldraw (5.6,.4) circle (3pt);

\draw[black]
(4.2,-.1) -- (4.4,-.1) -- (5.6,-.2) -- (5.8,-.2);

\draw[black]
(4.4,-.3) -- (4.4,-.1) -- (4.4,.1);

\draw[black]
(4.4,-.1) -- (5.6,.4) -- (5.6,-.2) -- (5.6,-.4);

\draw[black]
(5.6,.4) -- (5.6,.6);

\draw[black]
(2,0) -- (2.85,0);

\draw[black]
(2.85,0.15) -- (2.85,-.15) -- (3,0) -- (2.85,.15); 

\end{tikzpicture}
\caption{The vertex-to-$K_4$ operation and the vertex splitting operation. Gain labels are omitted.}
\label{fig:vk4}
\end{figure}
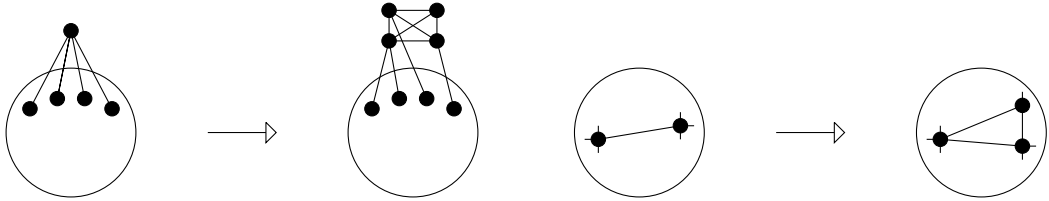
\end{center}

A \emph{vertex splitting} operation first chooses a vertex $v_1$, a neighbour $v_2$ of $v_1$, and a
partition $N_1,N_2$ of the remaining neighbours of $v_1$; it then deletes the edges from $v_1$ to vertices in $N_1$, adds a new vertex $v_0$ joined to vertices in $N_1$ and finally adds two new edges $v_0v_1,v_0v_2$.
If there is a loop at $v_1$ then it is either left unchanged or replaced by a loop at $v_0$. 
We specify that $v_0v_1$ is given gain 1 and $v_0v_2$ is given the same gain as $v_1v_2$. 
(See Figure \ref{fig:vk4}.) 

By construction we have the following.

\begin{lemma}\label{lem:moves}
Applying any of the above operations to a $(2,2,0)$-gain-tight gain graph results in a $(2,2,0)$-gain-tight gain graph.
\end{lemma}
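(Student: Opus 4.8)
The plan is to exploit the fact that a $(2,2,0)$-gain-tight graph is exactly a $(2,2,0)$-gain-sparse graph $(G_0,\psi)$ with $f(G_0)=2|V_0|-|E_0|=0$. For each operation in the list a direct count of the vertices and edges created and destroyed (a loop counted once) shows that $f$ is unchanged --- this is precisely why each operation is set up the way it is --- so tightness of the output follows as soon as $(2,2,0)$-gain-sparsity is verified. Hence the whole content of the lemma is: if $(G_0,\psi)$ is $(2,2,0)$-gain-sparse then so is the graph $(G_0',\psi')$ produced by the operation. I would argue this separately for the $0$-, $1$- and $2$-extensions, for the vertex-to-$K_4$ move, and for vertex splitting. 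A recurring observation is that a loop, or a pair of parallel edges, of a $\mathbb{Z}_2$-gain graph is never balanced, so the new vertex can carry a loop or parallel edges only inside a subgraph that is already unbalanced; this disposes of those features in the balanced count (a).

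For the $0$-extension H1 the new vertex $v$ carries at most two edges of $G_0'$, so for any $F'\subseteq E_0'$ containing $v$ the subgraph $F:=F'-v$ lies in $G_0$, has one fewer vertex and at most two fewer edges, and is balanced whenever $F'$ is; both sparsity inequalities for $F'$ then follow at once from those for $F$ (the degenerate case $F=\emptyset$ being immediate). For the $1$- and $2$-extensions H2 and H3, suppose $F'\subseteq E_0'$ contains $v$ and violates one of the sparsity inequalities; comparing $|F'|$ with $|F'-v|$ and using sparsity of $G_0$ forces $F'$ to contain every edge incident with $v$. One then forms $\tilde F$ from $F'-v$ by re-inserting an appropriate deleted edge of $G_0$ (for H3, when $v$ has degree three in $F'$, the one of the two deleted edges whose two half-edges at $v$ are both present); this produces a subgraph of $G_0$ with $|V(\tilde F)|=|V(F')|-1$ and $|\tilde F|=|F'|-2$, and the required inequality for $F'$ drops out of the corresponding inequality for $\tilde F$, provided $\tilde F$ is balanced whenever $F'$ is. The latter is where the gain constraints in the definitions of the moves enter: e.g.\ in type 3a with deleted edge $(xy,\alpha)$ and new edges $(xv,\gamma),(yv,\delta)$ satisfying $\gamma\delta=\alpha$, a hypothetical path $Q$ from $x$ to $y$ in $F'-v$ with $\psi(Q)\neq\alpha$ would close up with $(yv,\delta)$ and $(vx,\gamma)$ to a cycle of $F'$ of gain $\psi(Q)\gamma\delta=\psi(Q)\alpha\neq 1$ in $\mathbb{Z}_2$, contradicting balance of $F'$; so re-inserting $(xy,\alpha)$ creates no unbalanced cycle. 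The analogous check (using $\alpha^2=\beta^2=1$ in $\mathbb{Z}_2$) goes through for the remaining types, where a deleted edge may be a loop and the balanced case is vacuous whenever $v$ carries a loop or parallel edges.

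For the vertex-to-$K_4$ move let $K$ be the new copy of $K_4$, carrying its six trivial-gain edges together with, for the (at most one, since $\Gamma=\mathbb{Z}_2$) loop $(vv,-1)$ of $G_0$, one further gain-$(-1)$ edge inside $K$. Given $F'\subseteq E_0'$ meeting $K$ in $t\in\{1,2,3,4\}$ vertices, contract $V(F')\cap V(K)$ to a single vertex and discard the trivial $K$-edges of $F'$ (they become gain-$1$ loops); this produces $\hat F\subseteq G_0$ with $|V(\hat F)|=|V(F')|-t+1$ and $|\hat F|=|F'|-e$, where $e\le\binom{t}{2}$ is the number of discarded edges. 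Since $\binom{t}{2}\le 2t-2$ for all $t\le 4$, sparsity of $G_0$ applied to $\hat F$ yields both inequalities for $F'$; and if $F'$ is balanced it contains no gain-$(-1)$ $K$-edge (such an edge together with the connected $K_t$ on $V(F')\cap V(K)$ would form a $(-1)$-cycle), so $\hat F$ is again balanced. For vertex splitting, with split vertex $v_1$, retained neighbour $v_2$ and new vertex $v_0$, I would contract the edge $v_0v_1$ back onto $v_1$; if $v_0v_1\notin E(F')$ I first adjoin it with gain $1$, and when $F'$ is balanced and contains both $(v_0v_2,g)$ and $(v_1v_2,g)$ with $g=\psi(v_1v_2)$, balance of $F'$ forces every path from $v_0$ to $v_1$ in $F'$ to have gain $1$ (because the path $v_0v_2v_1$ is one of them), so the enlarged subgraph stays balanced. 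After contracting one deletes a possible duplicate $v_0v_2$/$v_1v_2$ edge to land inside $G_0$, and the counts close; the convention $\psi(v_0v_1)=1,\ \psi(v_0v_2)=\psi(v_1v_2)$ is exactly what makes the triangle $v_0v_1v_2$ balanced and keeps the contraction gain-consistent.

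The main obstacle is the balance bookkeeping rather than the counting: one must verify, for each type of H2 and H3, that re-inserting the chosen deleted edge introduces no unbalanced cycle, and in vertex splitting one must treat the case $v_0v_1\notin E(F')$ with care --- this is the only point where the precise gain assignment on the two new edges is used, and it is what guarantees that balance is transported correctly through the contraction. The remaining ingredients are the elementary inequality $\binom{t}{2}\le 2t-2$ for $t\le 4$ and the routine degree and count comparisons described above.
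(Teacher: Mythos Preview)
Your strategy matches the paper's (which defers H1--H3 to \cite{jkt} and uses the same contraction idea for vertex-to-$K_4$ and vertex splitting), and the plan is sound, but two cases are not fully handled. For H3 type~3b the assertion that ``the balanced case is vacuous whenever $v$ carries a loop or parallel edges'' is false: a balanced $F'$ may contain three of the four edges at $v$, namely $(xv,\gamma)$, exactly one of $(yv,\pm 1)$, and $(wv,\zeta)$. Both deleted edges $(xy,\alpha)$ and $(yw,\beta)$ then have their ``two half-edges at $v$'' present, so your selection rule is ambiguous---and only one of them is compatible with the balanced potential on $F'$, the choice depending on which $(yv,\pm 1)$ is present. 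The fix (re-insert $(xy,\alpha)$ if $(yv,1)\in F'$, and $(yw,\beta)$ if $(yv,-1)\in F'$) uses the specific constraints $\alpha=\gamma$, $\beta=-\zeta$ and cannot be read off from incidence alone.

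For vertex splitting you verify that adjoining $(v_0v_1,1)$ preserves balance only when both $(v_0v_2,g)$ and $(v_1v_2,g)$ lie in $F'$. If at most one does and some $v_0$--$v_1$ path in $F'$ has gain $-1$, the enlarged set is unbalanced and your balance argument stalls. The rescue is that in this sub-case no duplicate $v_2$-edge is deleted after contracting, so the identified graph $\hat F\subseteq G_0$ has $|V(\hat F)|=|V(F')|-1$ and $|\hat F|=|F'|$, giving $f(\hat F)=f(F')-2\le -1$; thus $\hat F$ already violates the unbalanced count~(b) and balance of $\hat F$ is irrelevant. This needs to be stated explicitly rather than absorbed into ``the counts close''.
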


\begin{proof}
When the operation is a H1, H2 or H3 operation then we may employ similar arguments to those in \cite[Lemma 4.1 and 7.6]{jkt}.

Suppose $(G,\psi)$ is obtained from $(H,\psi')$ by a vertex-to-$K_4$ operation at $v$. If $(G,\psi)$ is not $(2,2,0)$-gain-tight then there exists a vertex-induced subgraph $G_1$ of $(G,\psi)$ such that $f(G_1)<0$, or, $(G_1,\psi|_{G_1})$ is balanced and $f(G_1)\in \{0,1\}$.
Consider the subgraph $H_1$ of $H$ corresponding to $G_1$ obtained on contracting $G$ to $H$. Note that in our definition of $H_1$, if there is a loop at $v$ in $H$ then $H_1$ will contain this loop if and only if $G_1$ contains the extra edge $(wz,-1)$ in the copy of $K_4^+$. Note that $(H_1,\psi'|_{H_1})$ is balanced if and only if $(G_1,\psi|_{G_1})$ is balanced. 
There are two possibilities: either $|V(K_4\cap G_1)|\in\{1,4\}$, or, $|V(K_4\cap G_1)|\in \{2,3\}$.
In the first case, $f(H_1)=f(G_1)$ and in the second case $f(H_1)=f(G_1)-1$, contradicting  $(2,2,0)$-gain-sparsity.

Lastly, suppose $(G,\psi)$ is obtained from a $(2,2,0)$-gain-tight gain graph $(H,\psi')$ by a vertex splitting operation at the vertex $v_1$ which adjoins the new vertex  $v_0$. 
Suppose that $(G,\psi)$ is not $(2,2,0)$-gain-tight. Since $f(G)=0$ it follows that  there exists a vertex-induced subgraph $G_1$ of $(G,\psi)$ such that $f(G_1)<0$, or, $(G_1,\psi|_{G_1})$ is balanced and $f(G_1)\in \{0,1\}$. Consider the subgraph $H_1$ of $H$ corresponding to $G_1$ obtained on contracting $G$ to $H$. Note that if $G_1$ contains $v_0$ and $v_1$, then $f(H_1)=f(G_1)-1$. 
Otherwise, $f(H_1)=f(G_1)$. Hence $H_1$ violates $(2,2,0)$-gain-sparsity.
\end{proof}

\subsection{Reducing low-degree vertices via reverse Henneberg-type operations}

Note that if a gain graph $G_0'$ is obtained from a $(2,2,0)$-gain-tight gain graph by reversing any of the above operations then $f(G_0')=0$. Thus $G_0'$ is $(2,2,0)$-gain-tight if and only if each subgraph of $G_0'$ satisfies the $(2,2,0)$-sparsity counts.
A vertex $v$ in a $(2,2,0)$-gain-tight gain graph is \emph{admissible} if there is a reverse $H1$ operation, a reverse $H2$ operation or a reverse $H3$ operation removing $v$ which results in a $(2,2,0)$-gain-tight gain graph. Similarly a balanced subgraph isomorphic to $K_4$ or $K_3$ is \emph{admissible} if there is a $K_4$-contraction (i.e. a reverse vertex-to-$K_4$ operation) or edge contraction (i.e. a reverse vertex splitting operation) which results in a (2,2,0)-gain-tight gain graph.

Our first lemma is trivial and deals with all H1 moves.

\begin{lemma}\label{lem:deg2}
Let $(G_0,\psi)$ be a $(2,2,0)$-gain-tight gain graph. Suppose $v\in V_0$ has degree 2 or is incident to a loop and has degree 3. Then $v$ is admissible.
\end{lemma}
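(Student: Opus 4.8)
The plan is to reduce $(2,2,0)$-gain-tightness of the reduced graph to a subgraph-inheritance argument. As already observed just before the lemma, any gain graph obtained from a $(2,2,0)$-gain-tight gain graph by reversing one of the Henneberg-type operations has $f$-value $0$, and is therefore $(2,2,0)$-gain-tight precisely when all of its subgraphs satisfy the $(2,2,0)$-sparsity counts. So the only real work is to identify, in each case, which reverse H1 move applies and to note that no new subgraph is created.

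First I would dispose of the degenerate possibility that a degree-$2$ vertex carries a loop: a loop contributes $2$ to the degree, so such a $v$ would have the loop as its only incident edge, and then $G_0-v$ would be a subgraph of $G_0$ with $f(G_0-v)=f(G_0)-1=-1$, violating the sparsity count $|F|\le 2|V(F)|$. Hence, if $\deg v=2$ then $v$ has two non-loop edges, and either it has two distinct neighbours, in which case the reverse H1a move applies, or it has a single neighbour joined to it by two parallel edges, necessarily of distinct gains because the covering graph is simple, in which case the reverse H1b move applies. If $\deg v=3$ and $v$ is incident to a loop (of gain $-1$, again by simplicity), then $v$ has exactly one further edge, to a single neighbour, and the reverse H1c move applies.

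In every case set $G_0'=G_0-v$; this deletes one vertex and exactly two edges (the loop counting as a single edge in the H1c case), so $f(G_0')=f(G_0)=0$. Since deleting $v$ only removes edges, every $F\subseteq E(G_0')$ is also a subset of $E(G_0)$ and therefore satisfies both $(2,2,0)$-sparsity inequalities by hypothesis; hence $G_0'$ is $(2,2,0)$-gain-sparse, and combined with $f(G_0')=0$ it is $(2,2,0)$-gain-tight. Thus $v$ is admissible. I do not expect any genuine obstacle here: the statement is essentially bookkeeping, and the only point that needs a sentence of justification is ruling out the loop-only degree-$2$ vertex, which the sparsity count immediately handles.
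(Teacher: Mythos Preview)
Your proof is correct and matches the paper's approach: the paper declares the lemma ``trivial'' and omits any argument, and what you have written is exactly the routine verification one would supply. The only minor embellishment you add beyond the obvious is ruling out the isolated-loop degree-$2$ vertex, which is indeed handled immediately by the $(2,0)$-count as you say.
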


We now work through the H2 moves in turn. 

\begin{lemma}\label{lem:3threei}
Let $(G_0,\psi)$ be a $(2,2,0)$-gain-tight gain graph.
Suppose $v\in V_0$ has degree 3 with exactly three neighbours $a,b,c$. Then $v$ is admissible if and only if it is not contained in a balanced subgraph isomorphic to $K_4$.
\end{lemma}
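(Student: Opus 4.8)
The plan is to analyse the reverse $H2$ operation at the degree-$3$ vertex $v$ with neighbours $a,b,c$ and the edges $va,vb,vc$. A reverse $H2$ move here deletes $v$ (and its three incident edges) and adds back one edge on two of $\{a,b,c\}$; there are three choices of which pair receives the new edge, and for each pair there is (generically) a choice of gain for the new edge constrained by the gain-compatibility condition of the $H2a$ move (namely the new edge $xy$ must have gain $\beta\gamma$ where $\beta,\gamma$ are the gains on $vx,vy$). Since $f$ is unchanged by a reverse Henneberg move (as noted just before the statement), the resulting gain graph $G_0'$ is $(2,2,0)$-gain-tight if and only if it is $(2,2,0)$-gain-sparse, i.e.\ no new subgraph violates either sparsity count. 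The only subgraphs of $G_0'$ that are "new" relative to $G_0$ are those using the added edge; such a subgraph $H_0'$ plus the vertex $v$ and two of its edges is a subgraph of $G_0$, so $f_{G_0}(H_0'+v) = f_{G_0'}(H_0') + 1$ (one vertex, two edges added). Hence a violation in $G_0'$ corresponds in $G_0$ to a subgraph $H_0$ containing $v$ with $v$ of degree $2$ in $H_0$, with either $f(H_0)\le 1$ (general count) or $H_0$ balanced with $f(H_0)\le 2$ (balanced count) — and, crucially, with the gain of the would-be added edge matching. This is the standard "obstacle graph" bookkeeping, and I would carry it out exactly as in \cite[Lemma 4.1, 7.6]{jkt}.

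For the forward direction (contrapositive): if $v$ lies in a balanced copy $K$ of $K_4$, then $K$ has $f(K)=2$ and contains $v$ with degree $3$ in $K$; removing $v$ from $K$ leaves a triangle on $a,b,c$ which is balanced with $f=3$, and adding any of the three possible reverse-$H2$ edges (with any gain) to this triangle produces a balanced $K_4^{-}+e$-type subgraph on $\{a,b,c\}$ — more precisely, for whichever pair we pick, say $ab$, the new edge together with the existing balanced path structure forces a $3$-vertex, $3$-edge balanced subgraph once we also note the gain of the new edge is forced by the $H2a$ condition to coincide with the product of gains along the $v$-path, which in a balanced $K_4$ equals the gain of the existing $ab$ edge; this creates a multi-edge with matching gain — impossible, or equivalently yields $f=2$ on $3$ vertices, violating the balanced count for a balanced subgraph (which must have $f\ge 2$ with equality only in the sparse case, but here we'd exceed it). So no reverse $H2$ at $v$ succeeds, i.e.\ $v$ is not admissible. (One must check that in a balanced $K_4$ every pair genuinely carries an edge, which it does since $K_4$ is complete, so all three reverse moves are blocked.)

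For the converse (the substantive direction): suppose $v$ is \emph{not} admissible. Then for each of the three pairs in $\{a,b,c\}$, every allowed reverse-$H2$ choice produces a sparsity violation, hence yields an obstacle subgraph $H_0$ of $G_0$ as above — containing $v$ with $\deg_{H_0}(v)=2$, with $f(H_0)\le 2$, and in the "bad" cases $H_0$ balanced with $f(H_0)=2$ (the cases $f(H_0)\le 1$ or $f(H_0)=0$ with $H_0$ balanced would already contradict $(2,2,0)$-gain-tightness of $G_0$, since $H_0\subseteq G_0$). So for each pair we get a balanced subgraph $H_0^{(ab)}$, $H_0^{(ac)}$, $H_0^{(bc)}$ with $f=2$, each containing $v$, $a$, $b$, $c$ and the two relevant $v$-edges, and with the gain-matching obstruction in force (the added edge's forced gain would coincide with an existing edge or create a balanced cycle). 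I would then apply Lemma~\ref{lem:unionint} repeatedly to these balanced $f=2$ subgraphs: since they all contain the common vertices, their pairwise intersections are nonempty, so each union is either balanced with $f=2$ or has $f=0$; the latter would be a $(2,2,0)$-gain-tight proper subgraph which, combined with $v$ having further neighbours outside, has to be reconciled with $f(G_0)=0$ and the sparsity of $G_0$. Chasing these unions, and using Lemma~\ref{lem:23con} and Lemma~\ref{lem:balancedunion} to keep intersections connected and balanced, forces all three obstacle graphs to coincide (up to containing) in a single balanced subgraph $B$ with $f(B)=2$ containing $a,b,c,v$; a balanced subgraph on $4$ vertices with $f=2$ has exactly $6$ edges, i.e.\ $B\cong K_4$ (balanced), and it contains $v$. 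That is the desired conclusion.

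The main obstacle is this last step: extracting from "all three reverse moves fail" the existence of a \emph{single} balanced $K_4$ through $v$, rather than three separate small obstacle subgraphs. The delicate point is the interaction between the three pair-choices and the gain constraints — one has to verify that the forced gains of the hypothetical new edges are mutually consistent with a single balanced labelling, which is where Lemma~\ref{switch} (switching a balanced subgraph to all-identity gains) and Lemma~\ref{lem:unionint} do the real work. I expect the argument to split into a short case analysis according to which of $f(H_0^{(\cdot)})$ equal $2$ versus which reductions are blocked only by the gain condition (as opposed to the raw count), mirroring the treatment of the corresponding non-symmetric statement but with the $m=0$ count making the balanced $f=2$ threshold the tight one throughout.
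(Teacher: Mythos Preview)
Your forward direction is fine and matches the paper: in a balanced $K_4$ on $\{v,a,b,c\}$ the edge on each pair already exists with exactly the forced gain, so every reverse H2a duplicates an edge. The framework for the converse (translate a failed reverse move into an obstacle subgraph, track $f$-values, invoke Lemmas~\ref{lem:unionint}, \ref{lem:23con}, \ref{lem:balancedunion}) is also the right toolkit.

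The gap is in the final extraction of the $K_4$. Two concrete problems. First, you assert that each obstacle subgraph $H_0^{(xy)}$ contains all of $a,b,c$; it need not --- the obstacle for the pair $(a,b)$ contains $a,b$ (and, in your formulation, $v$) but there is no reason for $c$ to appear. Second, and more seriously, even if you succeed in producing a single balanced subgraph $B$ with $f(B)=2$ containing $\{v,a,b,c\}$ via repeated applications of Lemma~\ref{lem:unionint}, nothing forces $|V(B)|=4$: a balanced $f=2$ subgraph can be arbitrarily large, so the inference ``$f=2$ on four vertices hence $K_4$'' is unjustified. You also omit the case where a reverse move is blocked by an unbalanced $f=0$ obstacle (the $(2,0)$-count) rather than a balanced $f=2$ one; this is not automatically excluded by $(2,2,0)$-tightness of $G_0$ and must be handled separately.

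The paper circumvents these issues by arguing in the other direction: it assumes $v$ is \emph{not} in a balanced $K_4$ and exhibits an admissible reverse move. The crux is a careful case analysis on whether the third neighbour $c$ lies in the balanced obstacle $H_{ab}$, combined with gain switching (Lemma~\ref{switch}) to normalise the gains on $H_{ab}$ and on the edges at $v$. The key step your sketch is missing is this: once $H_{ab}$ is switched to all-identity gains and one shows $\alpha=\beta=1$, one separately treats $c\in V(H_{ab})$ (forcing $\gamma=-1$ and then finding a working pair) and $c\notin V(H_{ab})$ (switching $\gamma=1$, then showing a second obstacle $H_{ac}$ would make $H_{ab}\cup H_{ac}\cup\{v\}$ balanced with $f=1$, a contradiction). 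None of these steps produces a small $K_4$; they instead derive contradictions with $(2,2)$- or $(2,0)$-sparsity from the \emph{large} obstacle unions, which is precisely where your argument loses traction.
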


\begin{proof}
Suppose $v$ is admissible. Then there exists a $(2,2,0)$-gain-tight gain graph $(G_0',\psi')$ which is the result of a reverse H2a operation at $v$. If $v$ is contained in a balanced $K_4$ subgraph then the deleted edge in $(G_0',\psi')$ must be one of two parallel edges with equal gain, contradicting the simplicity of the covering graph for $(G_0',\psi')$. 

For the converse, suppose $v$ is not contained in a balanced subgraph isomorphic to $K_4$.
Then there exists a gain graph $(G_0',\psi')$ which is the result of a reverse H2a operation at $v$. Let $(av,\alpha)$, $(bv,\beta)$ and $(cv,\gamma)$ be in $E_0$.

Suppose that there exists a subgraph $H_{ab}$ of $G_0-v$ which contains $a,b$ with $f(H_{ab})=0$. We claim that $v$ must be admissible. 
To see this first note that $c\notin V(H_{ab})$ since otherwise $f(H_{ab}\cup v)<0$, which contradicts $(2,0)$-sparsity.
If the edges $(ac,\alpha\gamma),(bc,\beta\gamma)$ are in $G_0$ then the union of $H_{ab}$ with $v,c$ and the edges $ac,bc,va,vb,vc$ violates $(2,0)$-sparsity.
If $(ac,\alpha\gamma)\notin E(G_0)$ then there exists a subgraph $H_{ac}$ of $G_0-v$ containing $a,c$ such that $f(H_{ac})\leq 2$. In this case, $f(H_{ab}\cup H_{ac}\cup v)<0$, which is a contradiction.  A similar argument holds for the pairs $a,c$ and $b,c$. Thus we may assume for each pair of vertices $s,t\in \{a,b,c\}$ that there is no  subgraph $H_{st}$ of $G_0-v$ which contains $s,t$ and satisfies $f(H_{st})=0$.

Now assume, without loss of generality, that $(ab,\alpha\beta) \notin E_0$. 
Suppose there does not exist a balanced subgraph $H_{ab}$ of $G_0-v$ which contains $a,b$ with $f(H_{ab})=2$ and all paths from $a$ to $b$ having gain $\alpha\beta$.
Then $v$ is  admissible since adding the edge $(ab,\alpha\beta)$ will not violate $(2,2,0)$-gain-sparsity.

Suppose $G_0-v$ does contain a balanced subgraph $H_{ab}$ which contains $a,b$ with $f(H_{ab})=2$ and all paths from $a$ to $b$ having gain $\alpha\beta$.
We may assume by gain switching (Lemma \ref{switch}) that all edges of $H_{ab}$ have gain 1. In this case, all paths from $a$ to $b$ have gain $1$.
If $\alpha\not=\beta$ then we claim that $v$ is admissible. To see this note that if there exists a balanced subgraph $H_{ab}'$ which contains $a,b$ with $f(H_{ab}')=2$ and all paths from $a$ to $b$ having gain $-1$ then $H_{ab}\cap H_{ab}'$ is not connected. Thus, by Lemma \ref{lem:unionint}, $f(H_{ab}\cup H_{ab}')=0$, contradicting our assumption above. Thus $v$ is admissible since we may add the edge $(ab,-1)$. We may now assume $\alpha=\beta$. We may further assume, by gain switching at $v$, that $\alpha=\beta=1$. 

If $H_{ab}$ contained $c$ then $f(H_{ab}\cup v)=1$. If $vc$ has gain $1$ then we contradict $(2,2,0)$-gain-sparsity and so $\gamma=-1$. 
If $(ac,-1)$ and $(bc,-1)$ are both in $(G_0,\psi)$ then 
the induced subgraph on $V(H_{ab})\cup \{v\}$ violates $(2,0)$-sparsity.
Without loss of generality, suppose $(ac,-1)$ is not in $(G_0,\psi)$.
Then $v$ is  admissible unless there exists a balanced subgraph $H_{ac}$ of $G_0-v$ which contains $a,c$ with $f(H_{ac})=2$ and all paths from $a$ to $c$ having gain $-1$. 

Suppose $G_0-v$ does contain a balanced subgraph $H_{ac}$ which contains $a,c$ with $f(H_{ac})=2$ and all paths from $a$ to $c$ having gain $-1$.
Note that $f(H_{ab}\cup H_{ac})\geq 1$ (for otherwise adding $v$ and its three edges would contradict $(2,2,0)$-gain sparsity).
Thus, by Lemma  \ref{lem:unionint}, $H_{ab}\cup H_{ac}$ is balanced. 
By Lemma \ref{lem:23con}, $H_{ab}$ and $H_{ac}$ are connected and so it follows that $H_{ab}\cup H_{ac}$ contains an unbalanced cycle. This is a contradiction and 
so $v$ is admissible since adding the edge $(ac,-1)$ will not violate $(2,2,0)$-gain-sparsity. 

Now suppose $H_{ab}$ does not contain $c$.
In this case, by gain switching at the vertex $c$ (Lemma \ref{switch}) we can assume $\gamma=1$.
If $(ac,1)$ and $(bc,1)$ are both in $G_0$ then the union of $H_{ab}$ with vertices $v,c$ and the five edges $(ac,1),(bc,1),(va,1),(vb,1),(vc,1)$ is balanced and violates $(2,2)$-sparsity.
This is a contradiction and so, without loss of generality, we may assume $(ac,1)\notin G_0$. 
Then $v$ is  admissible unless there exists a balanced subgraph $H_{ac}$ of $G_0-v$ which contains $a,c$ with $f(H_{ac})=2$ and all paths from $a$ to $c$ having gain $1$.

Suppose there exists a balanced subgraph $H_{ac}$ of $G_0-v$ containing $a,c$ with $f(H_{ac})=2$, with all paths in $H_{ac}$ from $a$ to $c$ having gain $1$. 
By the previous argument we may assume $b$ is not in $H_{ac}$ since otherwise $v$ is admissible. 
Since $f(H_{ab}\cup H_{ac})\geq 1$ (for otherwise adding $v$ and its three edges would contradict $(2,2,0)$-gain sparsity), Lemma \ref{lem:unionint} may be applied, that is $H_{ab}\cup H_{ac}$ is balanced. 
By gain switching, we may assume the edges of $H_{ab}\cup H_{ac}$ all have gain $1$.
Let $\alpha',\beta',\gamma'$ be the resulting gains on the edges $va,vb,vc$ respectively.
If $\alpha'\not=\beta'$ then, by the above argument, $v$ is admissible and we may add the edge $(ab,-1)$. Similarly, $v$ is admissible if $\alpha'\not=\gamma'$. So now suppose $\alpha'=\beta'=\gamma'$. Then $H_{ab}\cup H_{ac}\cup v$ is balanced with $f(H_{ab}\cup H_{ac}\cup v)=1$. 
This contradicts $(2,2,0)$-sparsity. 
\end{proof}

\begin{lemma}\label{lem:3threeii}
Let $(G_0,\psi)$ be a $(2,2,0)$-gain-tight gain graph.
Suppose $v\in V_0$ has degree 3 with exactly two neighbours $a,b$. Then $v$ is admissible if and only if it is not contained in a subgraph isomorphic to $R$ (recall Fig.~\ref{fig:gain_cov_graphs}(c)).
\end{lemma}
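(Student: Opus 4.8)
The plan is to follow the template of the proof of Lemma~\ref{lem:3threei}. Write the two neighbours of $v$ as $a$ and $b$, where $v$ is joined to $a$ by two parallel edges---necessarily with gains $1$ and $-1$, since the covering graph is simple---and to $b$ by a single edge $(bv,\delta)$. Since $v$ has degree $3$, no loop at $v$, and exactly two neighbours, the only reverse Henneberg operations that can delete $v$ are the reverse H2b operation, which deletes $v$ and inserts an edge $(ab,\alpha)$ with $\alpha\in\{1,-1\}$, and the reverse H2c operation, which deletes $v$ and inserts a loop $(aa,-1)$. In each case the resulting gain graph automatically satisfies $f=0$, so admissibility of $v$ is equivalent to the existence of one of these operations whose output is $(2,2,0)$-gain-sparse.

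For the forward implication, suppose $v$ lies in a subgraph $S$ isomorphic to $R$. Since only three edges are incident to $v$, $S$ contains all of them, so $v$ is a degree-$3$ vertex of $R$ inside $S$; reading off Figure~\ref{fig:gain_cov_graphs}(c), the doubled neighbour $a$ then carries a loop $(aa,-1)$ in $S$ and $ab$ is a double edge in $S$, whence both are present in $G_0$. Consequently inserting any $(ab,\alpha)$ creates three parallel $ab$-edges (two with a common gain) and inserting the loop $(aa,-1)$ duplicates an existing loop; either way the covering graph ceases to be simple, so $v$ is not admissible.

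For the converse, assume $v$ is not contained in a copy of $R$, equivalently that $G_0$ does not simultaneously contain a loop $(aa,-1)$ and a double edge $ab$, and suppose for contradiction that $v$ is not admissible. I would first record the possible failure certificates: a reverse H2c can fail only because $G_0$ already has a loop at $a$, or because there is a subgraph $H_a\subseteq G_0-v$ with $a\in V(H_a)$ and $f(H_a)=0$; a reverse H2b of gain $\alpha$ can fail only because $(ab,\alpha)\in E(G_0)$, or because there is a connected balanced subgraph $H^\alpha\subseteq G_0-v$ containing $a$ and $b$, with $f(H^\alpha)=2$ and all paths from $a$ to $b$ in $H^\alpha$ of gain $\alpha$. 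No $f=0$ certificate with $a,b\in V(H)$ can occur, since reattaching $v$ and its three edges to such an $H$ gives a subgraph of $G_0$ with $f=-1$, contradicting $(2,0)$-sparsity; and the ``disconnected balanced'' possibilities are ruled out because they force $f\ge4$, using Lemma~\ref{lem:23con} to obtain connectedness from $f\in\{2,3\}$. Now I would case split on whether $G_0$ has a loop at $a$ and on how many $ab$-edges it has, combining certificates to produce a subgraph of $G_0$ with negative $f$. Two moves recur: (i) if an $H_a$ and an $H^\alpha$ both occur, then $H_a\cap H^\alpha$ is a nonempty subgraph of the balanced $H^\alpha$, hence balanced, hence has $f\ge2$; so $f(H_a\cup H^\alpha)=f(H_a)+f(H^\alpha)-f(H_a\cap H^\alpha)\le0$, whence it is $0$, and since $H_a\cup H^\alpha$ contains $a$ and $b$, reattaching $v$ gives $f=-1$; (ii) if $G_0$ has a loop at $a$ then $ab$ is not a double edge, so if there is no $ab$-edge then both $H^1$ and $H^{-1}$ occur and Lemma~\ref{lem:unionint} (together with the fact that a balanced union cannot carry paths from $a$ to $b$ of both gains) forces $f(H^1\cup H^{-1})=0$ and we finish as in (i), while if $ab$ is the single edge $(ab,\alpha_0)$ then only $H^{-\alpha_0}$ can occur and the subgraph of $G_0$ consisting of $H^{-\alpha_0}$, the edge $(ab,\alpha_0)$, the loop at $a$, and $v$ with its three edges has $f=f(H^{-\alpha_0})-3=-1$. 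When $G_0$ has no loop at $a$, an $H_a$ is forced and (i) applies whenever some $H^\alpha$ occurs, while if $ab$ is a double edge then $H_a$ alone---after adjoining $b$ and the two $ab$-edges if $b\notin V(H_a)$---already yields the contradiction.

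The main obstacle here is bookkeeping rather than a single hard step. One must correctly enumerate which reverse operations are available (a double edge $ab$ blocks both H2b choices, a loop at $a$ blocks H2c), verify in every count that the edge or loop being inserted is genuinely new in the relevant subgraph before including it, and check the hypotheses of Lemmas~\ref{lem:balancedunion}, \ref{lem:23con} and~\ref{lem:unionint}, in particular that an intersection taken inside a balanced certificate remains balanced. Pinning down the certificate dichotomy exactly---excluding $f=0$ certificates for H2b---is precisely what makes the subsequent $f$-counting collapse cleanly.
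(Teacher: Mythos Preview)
Your proposal is correct and follows essentially the same approach as the paper's proof: both identify the same failure certificates for the reverse H2b and H2c moves and combine them via inclusion--exclusion on $f$ together with Lemma~\ref{lem:unionint} to force a $(2,0)$-sparsity violation. The only difference is organisational---you case-split first on whether $a$ carries a loop, whereas the paper splits first on the number of $ab$-edges---but the underlying arguments (your moves (i) and (ii)) coincide with the paper's, and your abstraction of move~(i) arguably streamlines the no-loop cases.
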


\begin{proof}
If $v$ is contained in a subgraph isomorphic to $R$, then $v$ is clearly not admissible. For the converse, suppose that $v$ is not in a subgraph isomorphic to $R$.
Then there exists a gain graph $(G_0',\psi')$ which is the result of either a reverse H2b operation at $v$ or a reverse H2c operation at $v$.
Let $(av,1), (av,-1)$ and $(bv,\alpha)$ be in $E_0$. 

We first consider a reduction move at $v$ which adds an edge between $a$ and $b$.
Observe that any subgraph $H$ of $G_0-v$  containing $a$ and $b$ has $f(H)>0$ (otherwise $f(H\cup v)<0$ would hold) so we need only consider balanced subgraphs.

Suppose there is no edge $ab$. If there is no admissible reverse H2b move then there exist distinct balanced subgraphs $H_1, H_2$ of $G_0-v$ such that $a,b\in V(H_i)$, $f(H_i)=2$ for $i=1,2$ and all paths in $H_i$ from $a$ to $b$ have gain $(-1)^i$. 
Since $f(H_1\cap H_2)\geq 2$ either $H_1\cap H_2$ is not connected and Lemma \ref{lem:unionint} implies that $f(H_1\cap H_2)=0$ and adding $v$ violates $(2,0)$-sparsity or $H_1\cap H_2$ is connected.
Then Lemma \ref{lem:unionint} implies that $H_1\cup H_2$ is balanced. Hence all paths from $a$ to $b$ in $H_1\cap H_2$ have two distinct gains, a contradiction. Thus $v$ is admissible.

Secondly, suppose there is exactly one edge $(ab,\beta)$ in $E_0$. 
If there is no admissible reverse H2b move, then there exists a balanced subgraph $H_{ab}$ of $(G_0,\psi)$ containing $a,b$ but not $v$ with $f(H_{ab})=2$ such that all paths in $H_{ab}$ from $a$ to $b$ have gain $-\beta$. 
Note that $a$ does not have a loop (otherwise $(2,0)$-sparsity would be violated). 
Also, $a$ is not contained in a subgraph $H$ with $f(H)=0$ (otherwise adjoining the edge $(ab,\beta)$ to $H\cup H_{ab}$ will violate $(2,0)$-sparsity). Thus a reverse H2c move can be applied which preserves $(2,2,0)$-gain-sparsity. Thus $v$ is again admissible.

Finally, if both $(ab,1)$ and $(ab,-1)$ are in $E_0$ then the reverse H2c move adding the loop $(aa,-1)$ is non-admissible if and only if there is a subgraph $H$ of $G-v$ containing $a$ which has $f(H)=0$. Note that $H$ does not contain $b$ and so $f(H\cup \{v,b\})=f(H)-1$, giving a contradiction.
\end{proof}

\begin{lemma}\label{lem:4one}
Let $(G_0,\psi)$ be a $(2,2,0)$-gain-tight gain graph. 
 Suppose $v\in V_0$ has degree 4 with exactly one loop at $v$ and one neighbour $a$. 
Then $v$ is admissible if and only if $v$ is not contained in a subgraph isomorphic to $2K_2^1$.
\end{lemma}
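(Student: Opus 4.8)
\emph{Proof plan.} The plan is to identify the unique reverse Henneberg-type move available at such a vertex $v$ and to show it succeeds precisely when it does not clash with the simplicity of the covering graph. Since $G$ is simple, the loop at $v$ is $(vv,-1)$ and the remaining two edge-ends at $v$ form a pair of parallel edges $(av,1),(av,-1)$ to the unique neighbour $a$. The first observation is that $v$ lies in a subgraph of $(G_0,\psi)$ isomorphic to $2K_2^1$ if and only if $a$ also carries a loop $(aa,-1)$: a copy of $2K_2^1$ through $v$ has exactly two vertices, a loop at each of them and two parallel edges joining them, so its second vertex must be $a$ and $a$ must carry a loop; conversely, if $a$ has a loop then the subgraph induced on $\{v,a\}$ is a copy of $2K_2^1$.

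For necessity I would run through the list of reverse operations. A reverse H1 move requires $v$ to have degree $2$, or degree $3$ with a loop; a reverse H3 move, and the reverse versions of moves H2a, H2b, H2c, H2d, never remove a degree-$4$ vertex that simultaneously carries a loop and has only one neighbour (the closest one, reverse H2d, needs two neighbours). So the only reverse Henneberg-type move that can reduce $v$ is the reverse H2e move, which deletes $v$ together with its three incident edges $(vv,-1),(av,1),(av,-1)$ and inserts a loop $(aa,-1)$ at $a$. If $a$ already carries a loop $(aa,-1)$ then this produces two parallel loops at $a$, so the output is not a $\mathbb{Z}_2$-gain graph, and hence $v$ is not admissible.

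For sufficiency, suppose $a$ has no loop, so the reverse H2e move at $v$ produces a genuine $\mathbb{Z}_2$-gain graph $(G_0',\psi')$, and a direct count gives $f(G_0')=f(G_0)=0$. By the remark preceding Lemma~\ref{lem:deg2} it remains only to check that $(G_0',\psi')$ is $(2,2,0)$-gain-sparse. Suppose a subgraph $H'$ of $(G_0',\psi')$ violates this. Deleting the new loop from $(G_0',\psi')$ recovers the subgraph $G_0-v$ of $G_0$, which is gain-sparse, so $H'$ must contain the loop $(aa,-1)$; in particular $H'$ is unbalanced, so the only possible violation is $f(H')\le -1$. Removing that loop from $H'$ leaves a subgraph $H\subseteq G_0-v\subseteq G_0$ with $a\in V(H)$ and $f(H)=f(H')+1\le 0$, hence $f(H)=0$ by $(2,0)$-sparsity of $G_0$. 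But re-attaching $v$ together with its three edges $(vv,-1),(av,1),(av,-1)$ to $H$ yields a subgraph of $G_0$ whose $f$-value is $f(H)+2-3=-1<0$, contradicting $(2,0)$-sparsity of $G_0$. Hence $(G_0',\psi')$ is $(2,2,0)$-gain-tight and $v$ is admissible.

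The only genuinely delicate point is the enumeration in the necessity argument --- being sure that no reverse H2 or H3 move other than reverse H2e can act at such a $v$ --- together with carefully tracking that the simplicity of the covering graph is preserved (or obstructed) at each step; the counting steps themselves are routine.
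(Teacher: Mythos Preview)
Your proof is correct and follows the same approach as the paper's: both identify the reverse H2e move (adding a loop at $a$) as the relevant reduction and argue it succeeds exactly when $a$ is loop-free. The paper's own proof is only two sentences long, deferring both directions to ``easy to check'' and ``clearly''; you have filled in precisely the details the paper omits --- the enumeration showing H2e is the only candidate reverse move, and the counting argument ruling out any sparsity violation in $(G_0',\psi')$ --- so there is nothing substantively different, only a difference in level of detail.
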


\begin{proof}
If $v$ is not contained in a subgraph isomorphic to $2K_2^1$ then it is easy to check that $v$ is admissible for a reverse H2e move adding a loop on $a$. Conversely, if $v$ is contained in a subgraph isomorphic to $2K_2^1$ then $G_0$ contains a loop at $a$ and so $v$ is clearly not admissible.
\end{proof}

We now move on to H3 moves. 
First consider the reverse H3d move.

\begin{lemma}\label{lem:4twob}
Let $(G_0,\psi)$ be a $(2,2,0)$-gain-tight gain graph which is $4$-regular.
Suppose $v\in V_0$ has no loop and exactly two neighbours $a,b$ with a double edge to each. Then $v$ is admissible if and only if $G_0$ does not contain a loop at $a$ and does not contain a loop at $b$.
\end{lemma}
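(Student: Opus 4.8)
The plan is to identify the unique reverse Henneberg-type move that can remove $v$, and then to characterise exactly when it preserves $(2,2,0)$-gain-tightness. Since $v$ has degree $4$ and no loop, no reverse H1 move applies (these remove a vertex of degree $2$, or of degree $3$ incident to a loop), and no reverse H2 move applies (these remove a vertex of degree $3$, or a vertex of degree $4$ incident to a loop). Among the reverse H3 moves, the incidence pattern of $v$ --- exactly two neighbours $a,b$, joined to $v$ by double edges, which necessarily carry gains $1$ and $-1$ since the covering graph is simple --- matches only a reverse H3d move; this deletes $v$ together with its four incident edges and inserts the loops $(aa,-1)$ and $(bb,-1)$. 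Hence $v$ is admissible if and only if this single operation is defined and yields a $(2,2,0)$-gain-tight gain graph, which already settles the forward implication: if $G_0$ carries a loop at $a$ or at $b$, then inserting $(aa,-1)$ (respectively $(bb,-1)$) creates parallel loops of equal gain $-1$, so the result is not a simple covering graph, the move is undefined, and $v$ is not admissible.

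For the converse, assume $G_0$ has no loop at $a$ and no loop at $b$, so the reverse H3d move at $v$ is defined; write $(G_0',\psi')$ for the result. A reverse operation does not change the value of $f$, so $f(G_0')=0$, and it remains only to verify the $(2,2,0)$-sparsity counts. Suppose some subgraph $H'$ of $(G_0',\psi')$ violates them. If $H'$ uses neither new loop, then $H'$ is a subgraph of $G_0$ with the same gains and balance status, contradicting $(2,2,0)$-gain-sparsity of $G_0$. Otherwise $H'$ contains a new loop, which has gain $-1$, so $H'$ is unbalanced and its only possible violation is $f(H')\leq -1$. I would then pull $H'$ back to a subgraph $H$ of $G_0$ by adjoining $v$, deleting whichever of $(aa,-1),(bb,-1)$ occur in $H'$, and inserting the corresponding edges among $(va,1),(va,-1),(vb,1),(vb,-1)$. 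If $H'$ contains both new loops then $|V(H)|=|V(H')|+1$ and $|E(H)|=|E(H')|+2$, so $f(H)=f(H')<0$, contradicting $(2,2,0)$-gain-sparsity of $G_0$. If $H'$ contains exactly one new loop, say $(aa,-1)$, then $f(H)=f(H')+1\leq 0$; the subcase $f(H)<0$ again contradicts $G_0$, so the remaining subcase is $f(H)=0$.

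The crux, and the only place where $4$-regularity enters, is this last subcase. From $f(H)=0$ we have $|E(H)|=2|V(H)|$; applying $(2,2,0)$-gain-sparsity of $G_0$ to the vertex-induced subgraph on $V(H)$ gives $|E(G_0[V(H)])|\leq 2|V(H)|=|E(H)|$, and since $E(H)\subseteq E(G_0[V(H)])$ this forces $H=G_0[V(H)]$. Summing degrees over $V(H)$ in the $4$-regular graph $G_0$ then gives $4|V(H)|=2|E(G_0[V(H)])|+d(V(H),V_0\setminus V(H))=4|V(H)|+d(V(H),V_0\setminus V(H))$ (the identity behind Lemma~\ref{l:claim}), so no $G_0$-edge leaves $V(H)$; hence every vertex of $V(H)$ has all of its $G_0$-neighbours in $V(H)$, and in particular $\deg_H(v)=\deg_{G_0}(v)=4$. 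This contradicts $\deg_H(v)=2$, since the only $v$-edges in $H$ are $(va,1)$ and $(va,-1)$. Thus no subgraph of $(G_0',\psi')$ violates $(2,2,0)$-gain-sparsity, so $(G_0',\psi')$ is $(2,2,0)$-gain-tight and $v$ is admissible. I expect the case analysis over which new loops lie in $H'$ (and the corresponding changes in $f$) to be routine, with the degree-count above the only genuinely non-routine step.
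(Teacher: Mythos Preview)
Your proof is correct and follows essentially the same approach as the paper: both identify the reverse H3d move as the only candidate, dispose of the forward direction trivially, and for the converse use $4$-regularity together with a degree count to rule out a sparsity-violating subgraph. The only cosmetic difference is that you pull the violating subgraph back into $G_0$ (adjoining $v$) and argue via $H=G_0[V(H)]$ and $d(V(H),V_0\setminus V(H))=0$, whereas the paper works directly in $G_0-v$, noting that $a$ (and possibly $b$) has degree at most $2$ there and deriving the same contradiction from the average-degree condition; the two arguments are equivalent.
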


\begin{proof}
Clearly, if $G_0$ contains a loop at $a$ or $b$ then a reverse H3d move cannot be applied and so $v$ is not admissible. For the converse, suppose $G_0$ does not contain any loops at $a$ and $b$. If adding loops on $a$ and $b$ violates $(2,2,0)$-gain-sparsity then either there exists a subgraph $H_0$ of $G_0-v$ containing $a$ and $b$ with $f(H_0)= 1$, or there exists a subgraph $H_0$ of $G_0-v$ containing $a$ (or $b$) with $f(H_0)= 0$. In both cases we may use 4-regularity to get a contradiction. If $f(H_0)=0$ then $a$ must have degree 2 in $H_0$. Note that $H_0$ has average degree 4, giving a vertex $c\in H_0$ with degree greater than $4$ in $H_0$, and hence in $G_0$. This is a contradiction. If $H_0$ contains $a$ and $b$ and $f(H_0)= 1$, then $a$ and $b$ both have degree $2$ in $H_0$.
All other vertices in $H_0$ have degree at most $4$. 
In this case, since $|E(H_0)|=2|V(H_0)|-1$, we obtain the contradiction,
\[\sum_{v\in V(H_0)}\deg_{H_0}(v)
\leq 4(|V(H_0)|-2)+2+2
< 4|V(H_0)|-2=2|E(H_0)|.\]
\end{proof}

\begin{lemma}\label{lem:4threei}
Let $(G_0,\psi)$ be a $(2,2,0)$-gain-tight gain graph which is connected and $4$-regular.
Suppose $v\in V_0$ has exactly three neighbours $a,b,c$ and no loop. Suppose no neighbour of $v$ is admissible, then $v$ is admissible if and only if $v$ is not contained in a subgraph isomorphic to $K_4^+$.
\end{lemma}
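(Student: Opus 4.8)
The statement is an equivalence, and I would prove the forward direction (which is routine) and the converse (the substantive part) separately.

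For the forward direction, suppose $v$ lies in a subgraph isomorphic to $K_4^+$. Since both the admissibility of $v$ and the property of lying in a balanced $K_4$ together with one added edge are invariant under gain switching, I would first invoke Lemma~\ref{switch} to switch globally so that the relevant balanced $K_4$ carries only trivial gains. As $G_0$ is $4$-regular and $v$ has exactly three neighbours, all four edges at $v$ lie in this $K_4^+$ and $v$ is an endpoint of its extra (parallel) edge; hence $v$ is joined by a double edge to one neighbour, say $a$, and by single edges of gain $1$ to $b$ and $c$, while $ab$, $ac$, $bc$ are present with gain $1$. As $v$ has degree $4$ and no loop, the only reductions at $v$ are reverse H3 moves, and since $v$ has three neighbours with exactly one double edge these are reverse H3b moves (re-adding edges between $a,b$ and $a,c$, with gains equal to those of $vb$ and $vc$ up to a sign) or reverse H3c moves (adding a loop at $a$ together with an edge between $b$ and $c$ of gain equal to the product of the gains of $vb$ and $vc$, i.e.\ $(bc,1)$). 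Each such move re-introduces an edge already present in $G_0$ with the same gain, so the covering graph is no longer simple; hence $v$ is not admissible.

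For the converse, assume $v$ lies in no copy of $K_4^+$, and switch so that the edges at $v$ are $(va,1),(va,-1),(vb,1),(vc,1)$. The available reductions at $v$ are then exactly the reverse H3b moves M1 (delete $v$; add $(ab,1)$ and $(ac,-1)$) and M2 (delete $v$; add $(ab,-1)$ and $(ac,1)$), together with the reverse H3c move M3 (delete $v$; add the loop $(aa,-1)$ and the edge $(bc,1)$). Since reversing a Henneberg move restores $f=0$ automatically, it suffices to carry out one of these moves without creating a non-simple covering graph (a \emph{simplicity obstruction}) or a subgraph violating $(2,2,0)$-gain-sparsity (a \emph{sparsity obstruction}), and I would argue by contradiction, assuming all three moves are obstructed. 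I would first record the sparsity budget near $v$: by Lemma~\ref{lem:4regno0} every nonempty $H\subseteq G_0-v$ satisfies $f(H)\ge 1$ (these are proper subgraphs of the connected $4$-regular $G_0$), so $G_0-v$ has no subgraph with $f=0$; and re-inserting $v$ with its four edges shows $f(H)\ge 2$ whenever $\{a,b,c\}\subseteq V(H)$. Hence $(2,0)$-sparsity is automatically preserved by M1, M2 and M3, so each sparsity obstruction is in fact a \emph{balanced} subgraph of $G_0-v$ with $f\in\{2,3\}$ containing a prescribed pair of (or all of) $a,b,c$ and carrying prescribed path-gains. I would also rule out the loop in M3 as a simplicity obstruction: if $a$ already carried a loop, then by $4$-regularity its four edges would be this loop and the double edge to $v$, so (as $v$ has no loop) $a\notin 2K_2^1$, whence Lemma~\ref{lem:4one} makes $a$ admissible, contradicting the hypothesis that no neighbour of $v$ is admissible.

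The heart of the argument is a case analysis on whether each of M1, M2, M3 is simplicity- or sparsity-obstructed, driven by two mechanisms. First, if two of the moves are sparsity-obstructed by balanced subgraphs $H_1,H_2$, these share the vertex $a$; after reducing where necessary to balanced $f=2$ pieces and applying Lemma~\ref{lem:unionint}, one is forced either into $f(H_1\cup H_2)=0$ on a subgraph meeting $\{a,b,c\}$ in at least two vertices --- impossible by the budget above --- or into $H_1\cup H_2$ being balanced, in which case a path-gain among $a,b,c$ in this balanced graph would have to take two distinct values, again a contradiction; the edge $(bc,1)$ produced by M3 feeds into the same union argument. Second, if two of the moves are simplicity-obstructed, then either the present edges involve the same pair, say $(ab,1)$ and $(ab,-1)$, so $a$ is joined by double edges to both $v$ and $b$ with no loop at $v$ or $b$ (no room, by $4$-regularity), and Lemma~\ref{lem:4twob} makes $a$ admissible --- contradiction --- or they involve different pairs, say an $ab$-edge and an $ac$-edge, and then, when M3 is also simplicity-obstructed (so that $(bc,1)$ is present), the set $\{v,a,b,c\}$ carries a balanced $K_4$ (after a switch at $a$ if needed) together with the parallel edge $va$, i.e.\ a copy of $K_4^+$ through $v$, contradicting the assumption. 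The remaining combinations --- mixed simplicity/sparsity obstructions, and a sparsity obstruction to M3 --- are handled by feeding the relevant subgraph or edge back into the first mechanism or into the $K_4^+$-extraction of the second. I expect the main obstacle to be precisely this case analysis: tracking the prescribed path-gains in the sparsity-obstructing subgraphs (which are only defined up to switching, and whose switching interacts with the normalised gains at $v$), verifying in the second mechanism that the $K_4$ one builds on $\{v,a,b,c\}$ is genuinely balanced after switching, and disposing of the borderline case in which a single balanced $f=2$ subgraph already equals $G_0-v$, where the $K_4^+$ must be extracted directly from how $v$ and its double edge sit on top of that subgraph.
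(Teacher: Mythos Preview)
Your overall strategy matches the paper's: normalise the gains at $v$, enumerate the three reverse H3 moves, and argue by contradiction assuming all are obstructed. The forward direction is correct, and so are the preliminary reductions (no proper subgraph with $f=0$ by Lemma~\ref{lem:4regno0}; no loop at $a$ via Lemma~\ref{lem:4one}; not both $ab$-edges and not both $ac$-edges via Lemma~\ref{lem:4twob}).

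The converse, however, has a genuine gap. Your first mechanism asserts that when two moves are sparsity-obstructed by balanced subgraphs $H_1,H_2$ and their union is balanced with $f=2$, a path-gain among $a,b,c$ ``would have to take two distinct values.'' This is only true when $H_1$ and $H_2$ prescribe gains for the \emph{same} pair of vertices (e.g.\ $H_{ab}$ from M1 and $H'_{ab}$ from M2). When the obstructions concern different pairs --- for instance $H_{ab}$ with $a$--$b$ paths of gain $1$ together with the M3 obstruction $H_{bc}$ with $b$--$c$ paths of gain $1$ --- the union is a perfectly consistent balanced subgraph with $f=2$ containing all of $a,b,c$, and no gain clash arises. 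What the paper does in such situations is to re-insert $v$ together with the three compatible edges $(va,\epsilon),(vb,1),(vc,1)$ into this union, producing a balanced subgraph with $f=1$ that violates $(2,2)$-sparsity. This ``re-insert $v$ with three edges'' device is used repeatedly throughout the paper's proof and is the missing ingredient in your outline.

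The same omission undermines your handling of the mixed cases. If M1 and M2 are simplicity-obstructed by, say, $(ab,1)$ and $(ac,1)$ (edges on different pairs, so Lemma~\ref{lem:4twob} does not fire) while M3 is only sparsity-obstructed by $H_{bc}$, then neither of your two mechanisms applies: there is just one balanced $f=2$ subgraph available, and $(bc,1)$ is absent so no $K_4^+$ can be extracted. The paper handles this by observing that $H_{bc}\cup\{a,(ab,1),(ac,1)\}$ is balanced with $f=2$ and contains $a,b,c$, and then re-inserting $v$ as above. In a further branch (when $(bc,1)$ is present but the existing $ab$- and $ac$-edges carry opposite signs) the paper even exhibits an explicit admissible H3c reduction at the neighbour $a$, directly contradicting the hypothesis that no neighbour of $v$ is admissible --- another argument your ``feeding back'' sentence does not capture.
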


\begin{proof}
If $v$ is contained in a subgraph isomorphic to $K_4^+$, then $v$ is clearly not admissible. 
For the converse, suppose $v$ is not contained in a subgraph isomorphic to $K_4^+$.
Let $(av,1)$, $(av,-1)$, $(bv,\beta)$, $(cv,\gamma)$ be the edges incident to $v$.
By Lemma \ref{lem:4regno0}, $(G_0,\psi)$ does not contain any proper subgraph $H$ with $f(H)=0$. Note also that no subgraph $H$ of $G_0-v$ containing $N(v)=\{a,b,c\}$ can have $f(H)=1$. We may suppose there is no loop on $a$ (otherwise we can use Lemma \ref{lem:4one} to see that $a$ is admissible, which is a contradiction). 
Moreover, we may assume that one of the edges $(ac,1)$ and $(ac,-1)$ is not in $G_0$  (otherwise Lemma \ref{lem:4twob} would imply that $a$ is admissible).
Similarly, we may assume that one of the edges $(ab,1)$ and $(ab,-1)$ is not in $G_0$.

First observe that there are three possible reduction moves: a reverse H3c move which adds a loop at $a$ and the edge $(bc,\beta\gamma)$, a reverse H3b move which adds the edges $(ab,\beta),(ac,-\gamma)$ and a reverse H3b move which adds the edges $(ab,-\beta),(ac,\gamma)$.
The reverse H3c move is admissible unless:\\
(a) the edge $(bc,\beta\gamma)$ already exists in $G_0$; or \\
(b) there is a balanced subgraph $H_{bc}$ of $G_0-v$ containing $b,c$ with $f(H_{bc})=2$ in which all paths from $b$ to $c$ have gain $\beta\gamma$.

The reverse H3b move which adds the edges $(ab,\beta),(ac,-\gamma)$ is admissible unless: \\
(c) one of the edges $(ab,\beta),(ac,-\gamma)$ already exists in $G_0$; or\\
(d) there is a balanced subgraph $H_{ab}$ of $G_0-v$ containing $a,b$   with $f(H_{ab})=2$ in which all paths from $a$ to $b$ have gain $\beta$, or, there is a balanced subgraph $H_{ac}$ of $G_0-v$ containing $a,c$   with $f(H_{ac})=2$ in which all paths from $a$ to $c$ have gain $-\gamma$.

The reverse H3b move which adds the edges $(ab,-\beta),(ac,\gamma)$ is admissible unless: \\
(e) one of the edges $(ab,-\beta),(ac,\gamma)$ already exists in $G_0$; or\\
(f) there is a balanced subgraph $H_{ab}$ of $G_0-v$ containing $a,b$   with $f(H_{ab})=2$ in which all paths from $a$ to $b$ have gain $-\beta$, or, there is a balanced subgraph $H_{ac}$ of $G_0-v$ containing $a,c$   with $f(H_{ac})=2$ in which all paths from $a$ to $c$ have gain $\gamma$.

Suppose $(a)$ holds. Note that if $(ab,\beta)$ and $(ac,\gamma)$ both exist or $(ab,-\beta)$ and $(ac,-\gamma)$ both exist then there is a $K_4^+$ containing $v$, which is a contradiction. Thus we may suppose that either $(ab,\beta)$ and $(ac,-\gamma)$ do not exist, or, $(ab,-\beta)$ and $(ac,\gamma)$ do not exist.

Without loss of generality, we suppose that $(ab,\beta)$ and $(ac,-\gamma)$ do not exist. 
If $v$ is not admissible then there exists either a balanced subgraph $H_{ab}$ of $G_0-v$ containing $a,b$ with $f(H_{ab})=2$ and all paths from $a$ to $b$ have gain $\beta$ or a balanced subgraph $H_{ac}$ of $G_0-v$ containing $a,c$ with $f(H_{ac})=2$ and all paths from $a$ to $c$ having gain $-\gamma$. 
If both exist then $H=H_{ab}\cup H_{ac}$ is a proper subgraph of $G_0$ and hence, by Lemma  \ref{lem:4regno0}, $f(H)\geq 1$.
Thus, by Lemma \ref{lem:unionint}, $H$ is balanced and $f(H)=2$.
Note that $(bc,\beta\gamma)\in E(H)$ (otherwise adjoining this edge to $H\cup v$ violates $(2,0)$-sparsity). By switching, we may assume without loss of generality that all gains on $H$ are $1$. In this case note that $\beta=\gamma$. Adjoining $v$ and the edges $(va,\beta),(vb,\beta),(vc,\beta)$ to $H$ results in a balanced subgraph which violates $(2,2)$-sparsity.
This is a contradiction and so either $H_{ab}$ or $H_{ac}$ does not exist.  
Without loss of generality we assume $H_{ab}$ does not exist.
If $H_{ac}$ does not exist then $v$ is admissible. So suppose $H_{ac}$ does exist.

Suppose $(ab,-\beta)\in E(G_0)$. Then $H_{ac}$ contains the vertex $b$. (If not, then the vertex $a$ would have degree at most 1 in $H_{ac}$, a contradiction.)
By switching, we may assume all gains in $H_{ac}$ are $1$. Moreover, $(bc,\beta\gamma)\in E(H_{ac})$ for otherwise $(2,0)$-sparsity is violated.
Thus $\beta=\gamma$. Now adjoining $v$ together with the edges $(va,\beta),(vb,\beta),(vc,\beta)$ to $H_{ac}$ results in a balanced subgraph which violates $(2,2)$-sparsity. We conclude that $(ab,-\beta)\notin E(G_0)$. 
If  $(ac,\gamma)\in E(G_0)$ then $(ac,\gamma)\notin E(H_{ac})$, and hence $a$ has degree at most $1$ in $H_{ac}$, a contradiction.
Thus $(ac,\gamma)\notin E(G_0)$.

If $v$ is not admissible then there exists either a balanced subgraph $H_{ab}'$ of $G_0-v$ containing $a,b$ with $f(H_{ab}')=2$ and all paths from $a$ to $b$ have gain $-\beta$ or a balanced subgraph $H_{ac}'$ of $G_0-v$ containing $a,c$ with $f(H_{ac})=2$ and all paths from $a$ to $c$ having gain $\gamma$.
If $H_{ab}'$ exists then  $H=H_{ab}'\cup H_{ac}$ is a proper subgraph of $G_0$ and hence, by Lemma  \ref{lem:4regno0}, $f(H)\geq 1$.
Thus, by Lemma \ref{lem:unionint}, $H$ is balanced and $f(H)=2$. Note that the edge $(bc,\beta\gamma)$  is not in $H$, for otherwise we may assume, by switching, that all gains in $H$ are $1$, and hence $\beta=\gamma$. As above, this contradicts $(2,2)$-sparsity of $G$.
Now, adjoining the edge $(bc,\beta\gamma)$ to $H$ results in a subgraph of $G_0-v$ with $f(H)=1$ which contains $\{a,b,c\}$. This is a contradiction.
If $H_{ac}'$ exists then $H=H_{ac}\cup H_{ac}'$ is balanced. However, $H$ contains an unbalanced cycle obtained by concatenating a path from $a$ to $c$ in $H_{ac}$ and   a path from $a$ to $c$ in $H_{ac}'$. This is again a contradiction.
We conclude that $v$ is admissible since we may apply a reverse H3b move.

Now suppose $(a)$ does not hold.
Consider a reduction at $v$ adding a loop at $a$ and the edge $(bc,\beta\gamma)$. 
If this move is not admissible then there is a balanced subgraph $H_{bc}$ of $G_0-v$ containing $b,c$ with $f(H_{bc})=2$ in which all paths from $b$ to $c$ have gain $\beta\gamma$. 
By switching, we may assume without loss of generality that all gains of $H_{bc}$ equal  1.
In this case $\beta=\gamma$ since all paths in $H_{bc}$ from $b$ to $c$ have gain $1$. We may assume that $H_{bc}$ does not contain $a$ (otherwise adjoining $v$ to $H_{bc}$ results in a balanced subgraph of $(G_0,\psi)$ which violates $(2,2)$-sparsity). 

Suppose $ab,ac\in E(G_0)$ with arbitrary gains.
The subgraph $H$ obtained from $H_{bc}$ by adjoining the vertices $a,v$ together with the edges $(av,1),(av,-1)$ and all edges between $\{a,v\}$ and $H_{bc}$ has $f(H)=0$.
Hence $H=G_0$. 
If $ab$ and $ac$ have the same gain then note that $H'=H-(va,-\beta)$ is balanced and satisfies $f(H')=1$. This is a contradiction. 
If $ab$ and $ac$ have different gains then, without loss of generality, we can say $(ab,1)$ and $(ac,-1)$ are in $G_0$. Since $H=G_0$ and all gains on $H_{bc}$ are $1$, there is no path in $G_0-a$ between $b$ and $c$ with gain $-1$. It follows that there is an admissible reduction at $a$ adding a loop at $v$ and the edge $(bc,-1)$. 
This is a contradiction since no neighbour of $v$ is admissible.

Without loss of generality, we may now assume there is no edge between $a$ and $b$ in $G_0$. Consider the reduction at $v$ adding $(ab,1)$ and $(ac,-1)$. Suppose there is a balanced subgraph $H_{ab}$ of $G_0-v$ containing $a,b$ with $f(H_{ab})=2$.
Note that $c\notin V(H_{ab})$ since otherwise adjoining $v$ to $H_{ab}$ results in a balanced subgraph which violates $(2,2)$-sparsity.
The subgraph $H$ obtained from the union of $H_{ab}$ with $H_{bc}$ and the edges $(va,\beta),(vb,\beta),(vc,\beta)$ is balanced and violates $(2,2)$-sparsity.
This is a contradiction and so $H_{ab}$ does not exist.
Similarly, there is no balanced subgraph $H_{ac}$ of $G_0-v$ containing $a,c$ with $f(H_{ac})=2$. 
Hence if the reduction at $v$ adding $(ab,1)$ and $(ac,-1)$ is not admissible then $(ac,-1)\in E(G_0)$.   
By a similar argument, if the reduction at $v$ adding $(ab,-1)$ and $(ac,1)$ is not admissible then $(ac,1)\in E(G_0)$.
This is a contradiction since $G_0$ does not contain both $(ac,1)$ and $(ac,-1)$, for otherwise $a$ would be admissible.
\end{proof}

\begin{lemma}\label{lem:admblocks}
Let $(G_0,\psi)$ be a $(2,2,0)$-gain-tight gain graph. 
Suppose  $v\in V_0$ has degree 4 with exactly four neighbours $a,b,c,d$.
Let $(va,\alpha),(vb,\beta),(vc,\gamma)$ and $(vd,\delta)$ be the edges incident to $v$.
Then a reduction at $v$ adding $(ab,\alpha\beta),(cd,\gamma\delta)$ is non-admissible if and only if one of the following conditions holds:
\begin{enumerate}[(i)]
\item there is a subgraph $H_{ab}$ of $G_0$ containing $a,b$ but not $v$ with $f(H_{ab})=0$ or there is a subgraph $H_{cd}$ of $G_0$ containing $c,d$ but not $v$ with $f(H_{cd})=0$;
\item there is a balanced subgraph $H_{ab}$ of $(G_0,\psi)$ containing $a,b$ but not $v$ with $f(H_{ab})=2$ and every path from $a$ to $b$ in $H_{ab}$ has gain $\alpha\beta$ or there is a balanced subgraph $H_{cd}$ of $(G_0,\psi)$ containing $c,d$ but not $v$ with $f(H_{cd})=2$ and every path from $c$ to $d$ in $H_{cd}$ has gain $\gamma\delta$;
\item there is a balanced subgraph $H$ of $(G_0,\psi)$ containing $N(v)$ but not $v$ with $f(H)=3$ where every path from $a$ to $b$ in $H$ has gain $\alpha\beta$ and every path from $c$ to $d$ in $H$ has gain $\gamma\delta$; 
\item one of the edges $(ab,\alpha\beta),(cd,\gamma\delta)$ already exists in $(G_0,\psi)$.
\end{enumerate}
\end{lemma}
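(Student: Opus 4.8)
The plan is to translate the question into a sparsity statement about the gain graph $(G_0',\psi')$ produced by the move. Reversing the H3a operation at $v$ deletes $v$ together with the four edges $(va,\alpha),(vb,\beta),(vc,\gamma),(vd,\delta)$ and inserts $e_1=(ab,\alpha\beta)$ and $e_2=(cd,\gamma\delta)$; so $2|V|$ drops by $2$ and $|E|$ drops by $2$, whence $f(G_0')=f(G_0)=0$. Since reversing an H-operation preserves $f=0$, the move is admissible if and only if the covering graph of $(G_0',\psi')$ is simple and $(G_0',\psi')$ is $(2,2,0)$-gain-sparse (equivalently $(2,2,0)$-gain-tight, as $f(G_0')=0$). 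Consequently, the move is non-admissible precisely when either condition (iv) holds --- in which case inserting $e_1$ or $e_2$ creates a pair of parallel edges of equal gain --- or $(G_0',\psi')$ contains a nonempty subgraph $F'$ with $f(F')<0$, or a balanced subgraph $F'$ with $f(F')\in\{0,1\}$.

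For the ``if'' direction, suppose (iv) fails (otherwise there is nothing to prove) and one of (i)--(iii) holds; in each case I produce a subgraph of $(G_0',\psi')$ violating gain-sparsity. If (i) holds with witness $H_{ab}$ (the case $H_{cd}$ is symmetric), then since (iv) fails $e_1\notin E(H_{ab})$, and $H_{ab}+e_1$ is a subgraph of $(G_0',\psi')$ with $f=-1$. If (ii) holds with witness $H_{ab}$, then by Lemma~\ref{lem:23con} $H_{ab}$ is connected, hence carries a vertex potential $\lambda$ (by Lemma~\ref{switch}), and the path condition forces $\alpha\beta=\lambda(a)\lambda(b)$; thus $\lambda$ is still a vertex potential for $H_{ab}+e_1$, which is therefore a balanced subgraph of $(G_0',\psi')$ with $f=1$. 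If (iii) holds, the same argument applied to $H$ gives $\alpha\beta=\lambda(a)\lambda(b)$ and $\gamma\delta=\lambda(c)\lambda(d)$, so $H+e_1+e_2$ is a balanced subgraph of $(G_0',\psi')$ with $f=3-2=1$. In every case $(G_0',\psi')$ is not $(2,2,0)$-gain-sparse, so the move is non-admissible.

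For the ``only if'' direction, assume (iv) fails and the move is non-admissible, and fix a witnessing subgraph $F'$ of $(G_0',\psi')$ as above; note $v\notin V(F')$. Since the edges of $(G_0',\psi')$ other than $e_1,e_2$ all lie in $G_0-v$, and every subgraph of the $(2,2,0)$-gain-sparse graph $G_0$ is again $(2,2,0)$-gain-sparse, $F'$ must contain $e_1$ or $e_2$. Suppose first that $F'$ contains exactly one of them, say $e_1$, and let $F''=F'-e_1$ (keeping all vertices), a subgraph of $G_0-v$ containing $a$ and $b$. Sparsity of $G_0$ gives $f(F'')\ge 0$, and if $F''$ is balanced then $f(F'')\ge 2$; since $f(F'')=f(F')+1$, the only possibilities are $f(F'')=0$ with $f(F')=-1$ (which is case (i)), or, in the balanced case, $f(F'')=2$ with $f(F')=1$, and then balancedness of $F'=F''+e_1$ together with connectivity of $F''$ (Lemma~\ref{lem:23con}) shows every $a$--$b$ path in $F''$ has gain $\alpha\beta$ (case (ii)); the case of $e_2$ alone is symmetric.

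The remaining case, where $F'$ contains both $e_1$ and $e_2$, is the crux. Here $N(v)=\{a,b,c,d\}\subseteq V(F')$, and the key step is to reinstate $v$: the subgraph of $G_0$ obtained from $F''=F'-e_1-e_2$ by adding $v$ and the four edges $(va,\alpha),(vb,\beta),(vc,\gamma),(vd,\delta)$ has $f$-value $f(F'')-2$, so sparsity of $G_0$ forces $f(F'')\ge 2$. Since $f(F'')=f(F')+2$, this rules out $f(F')<0$ altogether and, in the balanced case, leaves $f(F'')\in\{2,3\}$ with $F''$ balanced and (by Lemma~\ref{lem:23con}) connected; moreover $F''+e_1\subseteq F'$ and $F''+e_2\subseteq F'$ are balanced, so every $a$--$b$ path in $F''$ has gain $\alpha\beta$ and every $c$--$d$ path has gain $\gamma\delta$. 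Then $f(F'')=2$ gives (ii) and $f(F'')=3$ gives (iii). I expect this last case to be the main obstacle: one must spot that the correct reduction reinstates $v$ with \emph{all four} of its edges, which at once excludes $f(F')<0$ and exhibits the balanced subgraph of $f$-value $3$ required by (iii); the rest is routine bookkeeping of the $f$-function and of the simplicity condition (iv).
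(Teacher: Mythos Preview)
Your proof is correct and follows essentially the same approach as the paper: translate admissibility into $(2,2,0)$-gain-sparsity of $(G_0',\psi')$, then classify the possible violating subgraphs according to how many of the new edges $e_1,e_2$ they contain. Your treatment is in fact slightly more careful than the paper's in two places: you spell out the ``if'' direction (which the paper dismisses as ``clear''), and in the two-edge balanced case you explicitly note that $f(F'')=2$ already yields (ii), whereas the paper's wording jumps straight to $f(H)=3$ and (iii).
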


\begin{proof}
If any one of these conditions holds then it is clear that $v$ is not admissible. Conversely, suppose that the result of a reduction at $v$ adding $(ab,\alpha\beta),(cd,\gamma\delta)$ is the gain graph $(G_0',\psi')$. If the reduction is non-admissible then $(G_0',\psi')$ is not $(2,2,0)$-gain-tight. It follows that either the covering graph of $(G_0',\psi')$ is not simple, there is a subgraph which violates $(2,0)$-sparsity or there is a subgraph which violates balanced $(2,2)$-sparsity. In the first case $(iv)$ holds. In the second case, suppose $H'$ is a subgraph of $(G_0',\psi')$ which violates $(2,0)$-sparsity. Either $H'$ received one edge in the reduction move and $(i)$ holds or $H'$ received two edges and there is a subgraph $H$ of $(G_0,\psi)$ containing $N(v)$ but not $v$ with $f(H)\leq 1$. However $f(H\cup v)<0$, contradicting $(2,2,0)$-gain-sparsity.

Hence it remains to consider the last case. There is a subgraph $H'$ of $(G_0',\psi')$ which violates balanced $(2,2)$-sparsity. Therefore $H'$ contains $(ab,\alpha\beta)$ or $(cd,\gamma\delta)$ or both. By symmetry we may suppose $(ab,\alpha\beta)\in H'$. If $H'$ does not contain $(cd,\gamma\delta)$ then this implies that there is a balanced subgraph $H$ of $(G_0,\psi)$ containing $a,b$ but not $v$ with $f(H)=2$ and every path from $a$ to $b$ in $H$ has gain $\alpha\beta$, giving $(ii)$. Similarly if $H'$ contains $(ab,\alpha\beta)$ and $(cd,\gamma\delta)$ then there is a balanced subgraph $H$ of $(G_0,\psi)$ containing $N(v)$ but not $v$ with $f(H)=3$ where every path from $a$ to $b$ in $H$ has gain $\alpha\beta$ and every path from $c$ to $d$ in $H$ has gain $\gamma\delta$, giving $(iii)$.
\end{proof}

\begin{lemma}\label{lem:4threeii}
Let $(G_0,\psi)$ be a $(2,2,0)$-gain-tight gain graph which is connected and $4$-regular. 
Suppose  $v\in V_0$ has exactly four neighbours $a,b,c,d$. Then $v$ is admissible if and only if 
the following conditions hold,
\begin{enumerate}[(i)]
\item $v$ is not contained in a balanced subgraph isomorphic to $K_4$, and,
\item $v$ is not contained in a balanced subgraph isomorphic $K_{1,1,3}$ with the property that $v$ has degree $4$ in this subgraph.
\end{enumerate}
\end{lemma}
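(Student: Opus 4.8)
The plan is to prove the two implications separately, with Lemma~\ref{lem:admblocks} doing the heavy lifting for sufficiency. Write $(va,\alpha),(vb,\beta),(vc,\gamma),(vd,\delta)$ for the four edges at $v$, and for a pair $xy\subseteq\{a,b,c,d\}$ let $e_{xy}$ be the edge $xy$ whose gain is the product of the $v$-gains of $x$ and $y$. Since $v$ has four distinct neighbours and $4$-regularity forces $v$ to carry no loop and no multi-edge, the only reverse Henneberg moves available at $v$ are the three reverse H3a moves, one for each partition $\{ab|cd\},\{ac|bd\},\{ad|bc\}$ of $N(v)$ into two pairs; the move for a partition $\{xy|zw\}$ adds the edges $e_{xy}$ and $e_{zw}$.

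For necessity I argue the contrapositive. If $v$ lies in a balanced $K_4$ on $\{v,x,y,z\}$, then for each partition of $N(v)$ the pair avoiding the fourth neighbour is some $\{s,t\}\subseteq\{x,y,z\}$; since the triangle $v,s,t$ of this $K_4$ is balanced, the edge $st$ already present has gain equal to the product of the $v$-gains of $s,t$, i.e.\ it equals $e_{st}$, so the reverse H3a would create a repeated edge with the same gain and hence fails. If instead $v$ lies as a degree-$4$ vertex in a balanced $K_{1,1,3}$, with $a\in N(v)$ the second apex and $b,c,d$ the base, then a partition pairing $a$ with a base vertex $t$ would add $e_{at}$, and again the triangle $v,a,t$ is balanced so $e_{at}$ is already present. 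In both cases every reverse H3a is blocked, so $v$ is not admissible.

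For sufficiency, suppose (i) and (ii) hold but, for contradiction, $v$ is not admissible; then each of the three partitions gives a non-admissible reverse H3a. Apply Lemma~\ref{lem:admblocks}: its condition (i) requires an $f=0$ subgraph omitting $v$, which is a proper subgraph, impossible by $4$-regularity and Lemma~\ref{lem:4regno0}; so each partition is blocked by (ii), (iii), or (iv). The structural pivot is: if all three product edges on some triple $\{x,y,z\}\subseteq N(v)$ are present, then a direct triangle-gain computation shows $\{v,x,y,z\}$ spans a balanced $K_4$, contradicting~(i); hence the graph $S$ on $N(v)$ of present product edges is triangle-free, which leaves only a short list of shapes for $S$. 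I then go through these shapes: a partition not blocked by (iv) must be blocked by (ii) or (iii), yielding a balanced subgraph $H$ omitting $v$ with $f(H)\in\{2,3\}$ and prescribed path-gains on the relevant pairs (such $H$ is connected by Lemma~\ref{lem:23con}); adjoining $v$ and its four edges to $H$, or to the union of two such blockers, and using $f(H+v)=f(H)-2$ against $(2,2,0)$-gain-tightness, together with Lemmas~\ref{lem:balancedunion}, \ref{lem:unionint} and~\ref{l:claim} (the last of which, by $4$-regularity, gives $d(V',V_0\setminus V')\ge 2f(H)$ for every subgraph $H$), forces either an outright sparsity/balance contradiction or the conclusion that $v$ lies in a balanced $K_4$ or in a balanced $K_{1,1,3}$ with $\deg v=4$ — again contradicting (i) or (ii). Hence $v$ is admissible.

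The main obstacle is this last case analysis: the delicate point is controlling the interaction of the path-gain conditions coming from the (up to) three blockers and ruling out every ``intermediate'' balanced obstruction — for instance a balanced $K_{1,2,2}$ through $v$, or a larger balanced subgraph spanning $N(v)$ — so that the only configurations surviving as obstructions are exactly $K_4$ and the degree-$4$ $K_{1,1,3}$. This is carried out by repeatedly replacing a blocker $H$ with $H+v$ and playing $(2,2,0)$-gain-tightness against the $4$-regular sparsity estimate of Lemma~\ref{l:claim}.
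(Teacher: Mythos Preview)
Your necessity argument is correct and matches the paper (which dispatches it in one line). Your sufficiency setup is also sound: ruling out condition~(i) of Lemma~\ref{lem:admblocks} via Lemma~\ref{lem:4regno0} is exactly what the paper does, and your observation that three product edges on a triple force a balanced $K_4$ is correct. However, the sufficiency argument has a real gap after that point.

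First, a small but telling oversight: your conclusion that $S$ is triangle-free is not the full constraint. If $S$ contains a star $K_{1,3}$ centred at some $x\in N(v)$ (which is triangle-free), then the edges $e_{xy},e_{xz},e_{xw}$ together with the four edges at $v$ form a balanced $K_{1,1,3}$ in which $v$ has degree~$4$, contradicting~(ii). So hypothesis~(ii) is what forces $S$ to have maximum degree at most~$2$; without this you cannot even guarantee that some partition escapes condition~(iv) of Lemma~\ref{lem:admblocks}. The paper uses both hypotheses here, whereas your write-up only uses~(i).

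Second, and more seriously, the sentence ``I then go through these shapes \ldots\ forces either an outright sparsity/balance contradiction or \ldots'' is precisely where the proof lives, and you have not done it. The paper's argument is long because the interaction between blockers of types~(ii) and~(iii) is genuinely delicate. A crucial step you do not mention is a preliminary reduction: the paper first proves that any type-(ii) blocker $H_{ab}$ containing $a,b$ can be assumed to contain \emph{neither} $c$ nor $d$ (this alone takes half a page and uses $4$-regularity in a specific way, exploiting that if $N(v)\subset V(H_{ab})$ then $H_{ab}\cup\{v\}$ exhausts $G_0$). Only after this reduction does the pairwise intersection machinery of Lemma~\ref{lem:unionint} become effective, because one can then argue that $H_{ab}\cap H_{ac}$ is connected or disconnected according to path-gain compatibility. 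Your proposed use of Lemma~\ref{l:claim} does not appear in the paper's proof of this lemma and it is not clear what it would buy you here: the bound $d(V',V_0\setminus V')\ge 2f(H)$ does not by itself distinguish between the many ways two or three blockers can overlap, which is exactly the issue. As written, your proposal is a plan rather than a proof; the missing case analysis is the substance of the lemma.
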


\begin{proof}
If (i) or (ii) hold then $v$ is clearly not admissible. For the converse, let $(va,\alpha),(vb,\beta),(vc,\gamma)$ and $(vd,\delta)$ be the edges incident to $v$.

We first show that if condition (ii) holds in Lemma \ref{lem:admblocks}, then  $c$ and $d$ are not in $H_{ab}$. Suppose $N(v)\subset V(H_{ab})$. Since $H_{ab}$ is balanced, we may assume, by switching, that all gains on $H_{ab}$ are $1$. Since all paths from $a$ to $b$  in $H_{ab}$ have gain $\alpha\beta$, it follows that  $\alpha=\beta$. If at least 3 edges incident to $v$ all have gain 1 or all have gain $-1$, then we contradict $(2,2)$-sparsity. Thus, without loss of generality, we may assume that $(va,1),(vb,1),(vc,-1),(vd,-1) \in E(G_0)$. Since $G_0$ is 4-regular, Lemma \ref{lem:4regno0} implies that $H_{ab}$ together with the edges incident to $v$ is all of $G_0$. In particular $H_{ab}$ was an induced subgraph of $G_0$. Thus it is easy to see that the reduction move at $v$ adding $(ad,-1),(bc,-1)$ is admissible. 

So without loss of generality, we may suppose that $d\notin V(H_{ab})$. We claim that $c\notin V(H_{ab})$. Suppose to the contrary that $c\in V(H_{ab})$. By the same argument as above, we may assume that $(va,1),(vb,1),(vc,-1) \in E(G_0)$. Notice that if we add the edges  $(va,1),(vb,1),(vc,-1)$ to $H_{ab}$, then we obtain a subgraph $H$ with $f(H)=1$. Thus, $H_{ab}$ is an induced subgraph of $G$, for otherwise the fact that $d\notin V(H_{ab})$ would give a contradiction, by Lemma \ref{lem:4regno0}. Consider the reduction move at $v$ adding $(bc,-1)$ and $(ad,\delta)$. If $v$ is not admissible then either the edge $(ad,\delta)$ is in $G_0$, or there is a balanced subgraph $H_{ad}$ of $G_0-v$  containing $a,d$ with $f(H_{ad})=2$ in which all paths from $a$ to $d$ have gain $\delta$.
Note first that if there is such a graph $H_{ad}$ then $H=H_{ab}\cup H_{ad}$ is balanced with $f(H)=2$ and $N(v)\subset V(H)$.  So we can repeat the argument used to show that $d\notin V(H_{ab})$ to obtain a contradiction. So suppose  $(ad,\delta)$ is in $G_0$. By switching at the vertex $d$, we may assume that $\delta=1$. We claim that the reduction move at $v$ adding $(ac,-1),(bd,1)$ is now admissible. If $(bd,1)$ is present in $G_0$, then $H_{ab}$ together with the 5 edges $(va,1),(vb,1),(vd,1),(ad,1),(bd,1)$ violates $(2,2)$-sparsity. So suppose there is a balanced  subgraph $H_{bd}$ of $G_0-v$  containing $b,d$ with $f(H_{bd})=2$ in which all paths from $b$ to $d$ have gain $1$. But then $H=H_{ab}\cup H_{bd}$ is balanced with $f(H)=2$ and $N(v)\subset V(H)$, and hence the same argument from above may be applied once again to obtain a contradiction. Thus, as claimed,  $c\notin V(H_{ab})$.

Now, suppose that conditions (i) and (ii) in the statement of this lemma do not hold. Then condition (iv) in Lemma \ref{lem:admblocks} cannot hold for all possible pairs of edges, so we may assume that the edges $(ab,\alpha\beta),(cd,\gamma\delta)$ do not exist in $(G_0,\psi)$. Since $G_0$ is $4$-regular, Lemma \ref{lem:4regno0} implies that condition (i) in Lemma \ref{lem:admblocks} does not hold. Suppose $H$ is a balanced subgraph of $(G_0,\psi)$ that satisfies condition (iii) in Lemma \ref{lem:admblocks}. By switching, we may assume that all gains on $H$ are $1$.  By the assumptions in condition  (iii) it follows that $\alpha\beta=\gamma\delta=1$, contradicting $(2,2)$-sparsity of $G_0$ if $\alpha=\beta=\gamma=\delta$. So suppose without loss of generality that $\alpha=\beta=1$ and $\gamma=\delta=-1$. Note that at most one of the edges $(ac,-1), (ad,-1), (bc,-1), (bd,-1)$ can be present in $G_0$, for otherwise the graph obtained from $H$ by adding two of those edges together with the four edges incident to $v$ violates $(2,0)$-sparsity. Hence $(ad,-1), (bc,-1)\notin E(G_0)$ or $(ac,-1), (bd,-1)\notin E(G_0)$. Without loss of generality we assume that $(ad,-1), (bc,-1)\notin E(G_0)$ and consider the reduction operation on $v$ that adds $(ad,-1)$ and $(bc,-1)$. 

Suppose there exists a balanced subgraph $H'$ of $(G_0,\psi)$ that satisfies condition (iii) in Lemma \ref{lem:admblocks} for the pair $(ad,-1)$, $(bc,-1)$, that is,  all paths from $a$ to $d$ and all paths from $b$ to $c$ in $H'$ have gain $-1$. Then $H\cap H'$ is disconnected (since there cannot be any path from $a$ to $d$ or from $b$ to $c$ in  $H\cap H'$). It follows that $f(H\cap H')\geq 2+2= 4$ and hence $f(H\cup H')\leq 3+3-4=2$. Thus, $f(H\cup H')= 2$, for otherwise the graph obtained from $H\cup H'$ by adding the four edges incident to $v$ would violate $(2,0)$-sparsity. This says that $H\cap H'$ has exactly two components $C_1$ and $C_2$, with either $a,b\in V(C_1)$ and $c,d\in V(C_2)$ or $a,c\in V(C_1)$ and $b,d\in V(C_2)$.

 Suppose first that $a,b\in V(C_1)$ and $c,d\in V(C_2)$. Then every path in $H'$ from $a$ to $b$ and from $c$ to $d$ has gain $1$. Since every path from $a$ to $d$ and from $b$ to $c$ has gain $-1$ in $H'$, and $H'$ is balanced, it follows that every path from $a$ to $c$ and from $b$ to $d$ must have gain $-1$. Therefore, $H'$ together with the four edges incident to $v$ is balanced and violates $(2,2)$-sparsity. 
 
Suppose next that $a,c\in V(C_1)$ and $b,d\in V(C_2)$. Then every path in $H'$ from $a$ to $c$ and from $b$ to $d$ has gain $1$. This implies that $(ac,-1)$ and $(bd,-1)$ cannot exist in $G_0$. (If one of those edges did exist in $G_0$, then it cannot be an edge of $H\cup H'$, and hence $H\cup H'$ together with this edge and the four edges incident to $v$ would violate $(2,0)$-sparsity.) We now consider the reduction operation on $v$ that adds $(ac,-1)$ and $(bd,-1)$. Suppose there exists a balanced subgraph $H''$ of $(G_0,\psi)$ that satisfies condition (iii) in Lemma \ref{lem:admblocks} for the pair $(ac,-1)$, $(bd,-1)$, that is,  all paths from $a$ to $c$ and all paths from $b$ to $d$ in $H''$ have gain $-1$. Note  that $(H\cup H')\cap H''$ is disconnected since there cannot be any path from $a$ to $c$ or from $b$ to $d$ in $(H\cup H')\cap H''$. Therefore, $f((H\cup H')\cap H'')\geq 2+2=4$ and hence $f((H\cup H')\cup H'')\leq 2+3-4=1$. So the subgraph obtained from $(H\cup H')\cup H''$ by adding the four edges incident to $v$ violates $(2,0)$-sparsity. So if the reduction operation on $v$ that adds $(ac,-1)$ and $(bd,-1)$ is not admissible, then condition (ii) in Lemma \ref{lem:admblocks} holds for $(ac,-1)$, $(bd,-1)$. As we have shown in the beginning of this proof, we may assume that there exists a balanced subgraph $H_{ac}$ of $(G_0,\psi)$ containing $a,c$ but not $b,d,v$ with $f(H_{ac})=2$ and every path from $a$ to $c$ in $H_{ac}$ has gain $-1$. Then $H\cap H_{ac}$ is disconnected (since there cannot be a path from $a$ to $c$ in  $H\cap H_{ac}$) and hence $f(H\cup H_{ac})\leq 3+2-4=1$. It follows that $H\cup H_{ac}$ together with the four edges incident to $v$ violates the $(2,0)$-sparsity of $G_0$.

Therefore, if  $v$  is not admissible, then condition (ii) in Lemma \ref{lem:admblocks} holds for $(ad,-1)$, $(bc,-1)$. As we have shown in the beginning of this proof, we may assume that there exists a balanced subgraph $H_{bc}$ of $(G_0,\psi)$ containing $b,c$ but not $a,d,v$ with $f(H_{bc})=2$ and every path from $b$ to $c$ in $H_{bc}$ has gain $-1$. But then, by the same argument as in the paragraph above, we have $f(H\cap H_{bc})\leq 3+2-4=1$, which contradicts the $(2,0)$-sparsity of $G_0$.

So if $v$ is not admissible, then condition (ii) in Lemma \ref{lem:admblocks} holds for $(ab,\alpha\beta),(cd,\gamma\delta)$. By using the argument in the  beginning of this proof again, we may assume that there exists a balanced subgraph $H_{ab}$ of $(G_0,\psi)$ containing $a,b$ but not $c,d,v$ with $f(H_{ab})=2$ and every path from $a$ to $b$ in $H_{ab}$ has gain $\alpha\beta$.
Consider the reduction operation on $v$ that adds $(ad,\alpha\delta)$ and $(bc,\beta\gamma)$.

If this move is not admissible then one of the conditions (ii), (iii) or (iv) in Lemma \ref{lem:admblocks} holds for $(ad,\alpha\delta),(bc,\beta\gamma)$. Suppose first that (iv) fails, that is, $(ad,\alpha\delta)$ and $(bc,\beta\gamma)$ are not edges of $(G_0,\psi)$. Suppose there exists a balanced subgraph $H$ of $(G_0,\psi)$ that satisfies condition (iii) for the pair $(ad,\alpha\delta),(bc,\beta\gamma)$, that is,  all paths from $a$ to $d$ in $H$ have gain $\alpha\delta$ and all paths from $b$ to $c$ in $H$ have gain $\beta\gamma$. If $H\cap H_{ab}$ is connected, then every path from $a$ to $b$ in $H$ has gain $\alpha\beta$. But since $H$ is balanced, this implies that the subgraph $H'$ of $G_0$ consisting of $H$ and the four edges incident to $v$ is balanced and satisfies $f(H')=1$, contradicting $(2,2)$-sparsity. Thus $H\cap H_{ab}$ is disconnected, and hence $f(H\cap H_{ab})\leq 3+2-4=1$. But since 
$H\cap H_{ab}$ contains all four neighbours of $v$, this contradicts $(2,0)$-sparsity. So we may assume that condition (ii) in Lemma \ref{lem:admblocks} holds for $(ab,\alpha\beta),(cd,\gamma\delta)$. Without loss of generality we may assume that there exists a balanced subgraph $H_{ad}$ of $(G_0,\psi)$ containing $a,d$ but not $b,c,v$ with $f(H_{ad})=2$ and every path from $a$ to $d$ in $H_{ad}$ has gain $\alpha\delta$. If $H_{ad}\cap H_{ab}$ is connected, then $H_{ad}\cup H_{ab}$ is balanced with $f(H_{ad}\cup H_{ab})=2$, and the subgraph $H'$ of $(G_0,\psi)$ consisting of $H_{ad}\cup H_{ab}$ and the three edges joining $v$ with $a,b$ and $d$ is also balanced with $f(H')=1$, contradicting $(2,2)$-sparsity. Thus $H_{ad}\cap H_{ab}$ is disconnected, and hence $f(H_{ad}\cup H_{ab})\leq 2+2-4=0$. But since 
$H_{ad}\cup H_{ab}$ contains three neighbours of $v$, this contradicts $(2,0)$-sparsity.

So without loss of generality we may assume that $(bc,\beta\gamma)$ is an edge of $(G_0,\psi)$. Then the edge $(bd,\beta\delta)$ cannot exist in $G_0$ (for otherwise $b$ would have degree at most 1 in $H_{ab}$ by $4$-regularity of $G_0$, a contradiction). The edge $(ac,\alpha\gamma)$ can also not exist in $G_0$, for otherwise the graph consisting of $H_{ab}$ and the edges $(va,\alpha), (vb,\beta), (vc,\gamma), (ac,\alpha\gamma), (bc,\beta\gamma)$ would be balanced and would violate $(2,2)$-sparsity. So consider the reduction operation on $v$ that adds $(ac,\alpha\gamma)$ and $(bd,\beta\delta)$. 
If there exists a balanced subgraph $H$ of $(G_0,\psi)$ that satisfies condition (iii) for the pair $(ac,\alpha\gamma),(bd,\beta\delta)$, then, by the same argument as in the paragraph above, $H\cap H_{ab}$ is disconnected, and hence $f(H\cap H_{ab})\leq 3+2-4=1$. But since $H\cap H_{ab}$ contains all four neighbours of $v$, this contradicts $(2,0)$-sparsity. So we may assume that condition (ii) in Lemma \ref{lem:admblocks} holds for $(ac,\alpha\gamma),(bd,\beta\delta)$. Without loss of generality we may assume that there exists a balanced subgraph $H_{ac}$ of $(G_0,\psi)$ containing $a,c$ but not $b,d,v$ with $f(H_{ac})=2$ and every path from $a$ to $c$ in $H_{ac}$ has gain $\alpha\gamma$. As in the paragraph above, we see that $H_{ac}\cap H_{ab}$ is disconnected, and hence $f(H_{ac}\cup H_{ab})\leq 2+2-4=0$. But since 
$H_{ac}\cup H_{ab}$ contains three neighbours of $v$, this contradicts $(2,0)$-sparsity.
  \end{proof}

\subsection{Graph contractions}

We now consider the existence of suitable triangles or $K_4$'s in order to apply the reverse vertex splitting move or the reverse vertex-to-$K_4$ move. After giving conditions on when they can be applied we will also prove a couple of technical lemmas needed in the next section.

\begin{lemma}\label{lem:k4contract}
Let $(G_0,\psi)$ be a $(2,2,0)$-gain-tight gain graph. Suppose $(G_0,\psi)$ contains a balanced subgraph $K$ isomorphic to $K_4$ which induces at most one additional edge. 
Then a reverse vertex-to-$K_4$ move at $K$ 
is admissible unless there is a vertex $x$ and edges $(xa,\alpha), (xb,\alpha)$ for some $a,b\in V(K)$.
\end{lemma}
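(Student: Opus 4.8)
The plan is to prove the stated equivalence in both directions. The forbidden configuration is easy to rule out: if there is a vertex $x\notin V(K)$ together with distinct edges $(xa,\alpha)$ and $(xb,\alpha)$ (distinctness forces $a\neq b$ since the covering graph of $(G_0,\psi)$ is simple), then contracting $K$ to a single vertex $v$ turns these into two parallel edges $(xv,\alpha)$ carrying equal gain, so the covering graph of the resulting gain graph fails to be simple and no reverse vertex-to-$K_4$ move at $K$ is admissible. For the converse, assume no such configuration exists and let $(G_0',\psi')$ be the gain graph obtained from $(G_0,\psi)$ by contracting $K$ to a new vertex $v$: the at most one additional edge induced by $K$, if present, becomes a loop $(vv,-1)$ at $v$, and every edge between $V(K)$ and the rest of $G_0$ becomes an edge between $v$ and that vertex carrying the same gain. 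Since $|\mathbb{Z}_2|=2$, the hypothesis says precisely that each $x\notin V(K)$ has at most one edge of each gain joining it to $V(K)$; this, together with the fact that any loop at $v$ has gain $-1$, shows the covering graph of $(G_0',\psi')$ is simple, so $(G_0',\psi')$ is a genuine gain graph.

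It then remains to verify $(2,2,0)$-gain-sparsity of $(G_0',\psi')$; since reversing one of the listed operations preserves $f$, we have $f(G_0')=f(G_0)=0$, so this also delivers $(2,2,0)$-gain-tightness. Let $H'$ be a subgraph of $(G_0',\psi')$. If $v\notin V(H')$ then $H'$ is a subgraph of $(G_0,\psi)$ and the sparsity inequalities are immediate. If $v\in V(H')$, lift $H'$ to the subgraph $H$ of $(G_0,\psi)$ with $V(H)=(V(H')\setminus\{v\})\cup V(K)$, obtained by adjoining the six $K_4$-edges of $K$, replacing each non-loop edge of $H'$ at $v$ by the edge of $(G_0,\psi)$ from which it arose under the contraction, and, if $H'$ contains the loop at $v$, adjoining the extra edge of $K$. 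Using the hypothesis, the non-loop edges of $H'$ at $v$ correspond bijectively to their lifts, so $|V(H)|=|V(H')|+3$ and $|E(H)|=|E(H')|+6$, whence $f(H)=f(H')$. Ordinary $(2,0)$-sparsity of $(G_0,\psi)$ then gives $f(H')=f(H)\geq 0$.

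For the balanced count, suppose $H'$ is balanced. Then $H'$ contains no loop of gain $-1$, hence not the loop at $v$, so among $V(K)$ the graph $H$ contains exactly the balanced copy of $K_4$ and nothing more. The key step is that $H$ is balanced: after switching $(G_0,\psi)$ so that every $K_4$-edge of $K$ has gain $1$ (possible by Lemma~\ref{switch}, since $K$ is balanced), any cycle of $H$ either lies inside $V(K)$, where it has gain $1$, or, on contracting each of its arcs inside $V(K)$ — each a path of $K_4$, hence of gain $1$ — to the vertex $v$, becomes a closed walk of $H'$ of the same gain; since $H'$ is balanced, every closed walk in $H'$ has gain $1$, so every cycle of $H$ has gain $1$. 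Balanced $(2,2)$-sparsity of $(G_0,\psi)$ then gives $f(H')=f(H)\geq 2$. Hence $(G_0',\psi')$ is $(2,2,0)$-gain-tight with simple covering graph, and the reverse vertex-to-$K_4$ move at $K$ is admissible.

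I expect the balanced case to be the main obstacle: one must make precise that contracting the connected balanced subgraph $K$ to a point preserves the balance of every subgraph (the content of the ``closed walk has gain $1$'' step, which goes back to Zaslavsky~\cite{zas}), and one must carefully track the single extra edge of $K$ — it becomes a loop of gain $-1$ at $v$ and therefore can never lie in a balanced $H'$, which is exactly what keeps the lift $H$ balanced. Everything else is routine edge-counting.
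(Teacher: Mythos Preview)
Your proof is correct and follows the same approach as the paper, which gives only a two-sentence sketch (noting $f(K^*)=f(K')$ for the induced subgraph $K^*$ on $V(K)$ and its contraction $K'$, then invoking a ``simple counting argument''). You have carefully filled in exactly the details the paper omits: simplicity of the covering graph of $(G_0',\psi')$, the edge-count showing $f(H)=f(H')$, and the balance-preservation step via contraction of gain-$1$ paths in $K$ to closed walks in $H'$.
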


\begin{proof}
Let $K'$ denote the contraction of the graph $K^*$ induced by $K$. (So $K'$ is either a single vertex or a single vertex with a loop). Then $f(K^*)=f(K')$ and hence the lemma follows from a simple counting argument.
\end{proof}

\begin{lemma}\label{lem:k3contract}
Let $(G_0,\psi)$ be a $(2,2,0)$-gain-tight gain graph. Suppose $(G_0,\psi)$ contains a  subgraph $K$ isomorphic to $K_3$ with $V(K)=\{a,b,c\}$ and with all three edges of $K$ having gain $1$. Suppose the edge $(ab,-1)$ does not exist in $G_0$. Then a reverse vertex-splitting at $K$ contracting the edge $(ab,1)$ is non-admissible  if and only if one of the following conditions holds: 
\begin{enumerate}
\item[(i)] there is a subgraph $H_0$ of $G_0$ containing $a,b$ and the edge $(ab,1)$, but not $c$, with $f(H_0)=0$;
\item[(ii)] there is a balanced copy of $K_3$ containing $a,b$ and some vertex $d\neq c$; 
\item[(iii)] there is a balanced subgraph $H_0$ of $G_0$ containing $a,b$ and the edge $(ab,1)$ with $f(H_0)=2$, and if $H_0$ contains $c$ then it does not contain the edges $(ca,1)$ and $(cb,1)$; 
\item[(iv)] there are loops incident to both $a$ and $b$, or both edges $(ac,-1)$ and $(bc,-1)$ exist.
\end{enumerate}
\end{lemma}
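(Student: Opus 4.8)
The plan is to determine exactly when the gain graph $(G_0',\psi')$ produced by the reverse vertex-splitting is $(2,2,0)$-gain-tight. A reverse vertex-splitting deletes one vertex and a net of two edges, so $f(G_0')=0$; as already noted before the lemma, $(G_0',\psi')$ is then $(2,2,0)$-gain-tight precisely when its covering graph is simple and it is $(2,2,0)$-gain-sparse. Hence the move is non-admissible exactly when one of the following holds: (A) the covering graph of $(G_0',\psi')$ is not simple; (B) some $H'\subseteq E(G_0')$ has $f(H')<0$; (C) some balanced $H'\subseteq E(G_0')$ has $f(H')\in\{0,1\}$. I will show that (A) is equivalent to the disjunction of $(ii)$ and $(iv)$, and that (B), (C) give rise to $(i)$, $(iii)$ respectively, together with all three converses.

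First I would describe the contraction explicitly: contracting $(ab,1)$ identifies $a$ and $b$ to a single vertex $*$, deletes $(ab,1)$, merges the two gain-$1$ edges $(ac,1)$ and $(bc,1)$ into one edge $(*c,1)$, reroutes each remaining edge $(ax,\gamma)$ to $(*x,\gamma)$, and reroutes a loop at $a$ or $b$ to a loop at $*$. Using that $(ab,-1)\notin E(G_0)$, the covering graph of $(G_0',\psi')$ fails to be simple if and only if there is a loop at both $a$ and $b$, or both $(ac,-1),(bc,-1)\in E(G_0)$, or there is a vertex $d\notin\{a,b,c\}$ and a gain $\gamma$ with $(ad,\gamma),(bd,\gamma)\in E(G_0)$. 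A one-line gain computation (the unique $ab$-edge has gain $1$, so a triangle on $\{a,b,d\}$ is balanced iff the $ad$- and $bd$-edges have equal gain) identifies the last condition with $(ii)$ and the first two with $(iv)$; this settles alternative (A) in both directions.

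For (B) and (C) I would assume the covering graph of $(G_0',\psi')$ is simple and take a violating subgraph $H'$. If $*\notin V(H')$ then $H'\subseteq G_0$, contradicting the $(2,2,0)$-gain-sparsity of $(G_0,\psi)$, so $*\in V(H')$. I then lift $H'$ to a subgraph $H_0\subseteq G_0$ by splitting $*$ back into $a,b$ joined by $(ab,1)$ and replacing each edge of $H'$ by its preimage in $G_0$; by simplicity every edge of $G_0'$ has a unique preimage except $(*c,1)$, whose preimages are $(ac,1)$ and $(bc,1)$ (both put into $H_0$ when $(*c,1)\in E(H')$). Contracting the gain-$1$ edge $(ab,1)$, and identifying the two parallel gain-$1$ edges to $c$ when present, preserves the gains of all cycles, so $H_0$ is balanced iff $H'$ is; moreover $f(H_0)=f(H')$ if $(*c,1)\in E(H')$ and $f(H_0)=f(H')+1$ otherwise. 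If $(*c,1)\in E(H')$ then $H_0\subseteq G_0$ would itself violate $(2,2,0)$-gain-sparsity, which is impossible; so $(*c,1)\notin E(H')$, hence $(ac,1),(bc,1)\notin E(H_0)$. Applying the $(2,2,0)$-gain-sparsity of $G_0$ to $H_0$ and the relation $f(H_0)=f(H')+1$ forces either $H'$ unbalanced with $f(H')=-1$ (so $f(H_0)=0$) or $H'$ balanced with $f(H')=1$ (so $f(H_0)=2$). In the first case, if $c\in V(H_0)$ then adjoining $(ac,1),(bc,1)$ gives a subgraph of $G_0$ with $f=-2$, contradicting $(2,0)$-sparsity; so $c\notin V(H_0)$ and $(i)$ holds. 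In the second case $(iii)$ holds, its proviso being automatic since $(ca,1),(cb,1)\notin E(H_0)$.

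The converses are short: $(ii)$ and $(iv)$ produce, after contraction, a pair of parallel same-gain edges or two loops at $*$, so the covering graph is non-simple; given $(i)$, contracting $(ab,1)$ carries the witness $H_0$ to a subgraph $H_0'\subseteq G_0'$ with $f(H_0')=f(H_0)-1=-1<0$; given $(iii)$, it carries $H_0$ to a balanced $H_0'$ with $f(H_0')=f(H_0)-1=1<2$; and in each case if the contraction already destroys simplicity we are done immediately. The main technical point I expect is the bookkeeping in the lifting step — keeping careful track of balance under the contraction and the extra parallel-edge identification to $c$, and cleanly separating the cases $(*c,1)\in E(H')$ and $(*c,1)\notin E(H')$ — after which the $(2,2,0)$-sparsity inequalities follow routinely; the case analyses in the preceding lemmas on the vertex-to-$K_4$ and $K_3$-contraction moves provide a template for the more delicate sub-arguments.
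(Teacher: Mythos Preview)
Your proposal is correct and follows essentially the same approach as the paper: both proofs split the failure of admissibility into (A) non-simplicity of the covering graph of $(G_0',\psi')$, (B) a $(2,0)$-sparsity violation, and (C) a balanced $(2,2)$-sparsity violation, then lift a violating subgraph $H'$ through the split $*\mapsto\{a,b\}$ and use the $f$-bookkeeping to land in conditions $(i)$--$(iv)$. Your treatment is slightly more explicit about the edge-preimage accounting (in particular the special role of $(*c,1)$) and you exclude $c$ in case (B) by adjoining both $(ac,1),(bc,1)$ rather than just one, but these are cosmetic differences; the argument and its structure match the paper's proof.
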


\begin{proof} If any one of these conditions holds then it is clear that $v$ is not admissible. Conversely, suppose the reverse vertex-splitting at $K$ contracting the edge $(ab,1)$ is non-admissible. Let $(G_0',\psi')$ be the gain graph  resulting from this reverse vertex-splitting move and let $\overline a$ be the vertex of $(G_0',\psi')$ corresponding to the vertex pair $a$ and $b$ in $(G_0,\psi)$. 
 Then either  the covering graph of  $(G_0',\psi')$ is not simple, or there is a subgraph $H_0'$ of  $(G_0',\psi')$  with $f(H_0')<0$, or there is a balanced subgraph of $(G_0',\psi')$ with $f(H_0')<2$. In the first case (ii) or (iv) holds. In the second case, $H_0'$ clearly contains $\overline{a}$, but it cannot contain $c$, for otherwise $f(H_0)=f(H_0')<0$ if $(c\overline{a},1)\in E(H_0')$ or $f(H_0\cup \{(ca,1)\})=f(H_0')<0$ if $(c\overline{a},1)\notin E(H_0')$, where $H_0$ is the subgraph of $(G_0,\psi)$ that is obtained from $H_0'$ by the vertex splitting move at $\overline{a}$. Thus, $H_0$ contains $a,b$ and the edge $(ab,1)$, but not $c$, and satisfies $f(H_0)=0$.
 In the third case, the balanced $H_0'$ again clearly contains $\overline{a}$. If it contains $c$ and $(c\overline{a},1)$ then  $f(H_0)=f(H_0')<2$, a contradiction.  So if $H_0'$  contains  $c$ then it does not contain $(c\overline{a},1)$. Thus, $H_0$ contains $a,b$ and the edge $(ab,1)$, and if $H_0$ contains $c$ then it does not contain the edges $ca$ and $cb$. Moreover,  $H_0$ is balanced with $f(H_0)=2$.
\end{proof}
 
We now follow the approach in \cite{NO14}. Define a \emph{triangle sequence} $T_1,T_2,\dots,T_n$ where $T_1$ is a triangle on vertices $a,b,c$ and $T_{i+1}$ is formed from $T_i$ by adding a vertex of degree 2 adjacent to two vertices $x,y$ of $T_i$ such that $xy\in E(T_i)$ and $x,y$ are in exactly one triangle in $T_i$. A triangle sequence is \emph{balanced} if each $T_i$ (or equivalently just $T_n$) is balanced. A \emph{maximal balanced triangle sequence} is a balanced triangle sequence that cannot be extended to a larger balanced triangle sequence.
A \emph{chord} of $T_n$ is an edge in the subgraph $G_0[V(T_n)]$ of $G_0$ induced by $V(T_n)$   which is not in $T_n$.

The following lemma is easy to deduce from the definitions.

\begin{lemma}\label{lem:chord}
Let $(G_0,\psi)$ be $(2,2,0)$-gain-tight gain graph and let $T_n$ be a balanced triangle sequence whose edges all have gain $1$.
Then $f(T_n)=3$ and $T_n$ has at most 3 chords. Moreover, if three chords exist, at least two of them have gain $-1$.
\end{lemma}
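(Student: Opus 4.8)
The plan is to read everything off the $(2,2,0)$-gain-sparsity counts. First I would check that the quantity $f$ is unchanged along a triangle sequence: $f(T_1)=2\cdot 3-3=3$, and the move producing $T_{i+1}$ from $T_i$ adds exactly one vertex and two edges, so $f(T_{i+1})=f(T_i)$. Hence $f(T_n)=3$, which equivalently records that $|E(T_n)|=2|V(T_n)|-3$.

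For the bound on the number of chords, let $c$ be the number of chords of $T_n$ and consider the induced subgraph $G_0[V(T_n)]$. By the definition of a chord, $G_0[V(T_n)]$ has precisely $|E(T_n)|+c=2|V(T_n)|-3+c$ edges. Applying condition (b) in the definition of $(2,2,0)$-gain-sparsity to $F=E(G_0[V(T_n)])$ (with $m=0$) gives $2|V(T_n)|-3+c\le 2|V(T_n)|$, hence $c\le 3$.

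For the last assertion, suppose $c=3$ and, for contradiction, that two of the three chords, say $e_1$ and $e_2$, both have gain $1$. Since $T_n$ is balanced with all edge gains equal to $1$ by hypothesis, every edge of the subgraph $T_n\cup\{e_1,e_2\}$ has gain $1$; in particular every cycle in it has gain $1$, so it is balanced. But it has $|E(T_n)|+2=2|V(T_n)|-1$ edges, which violates condition (a) of $(2,2,0)$-gain-sparsity, permitting at most $2|V|-2$ edges on a balanced subgraph. This contradiction shows that at most one chord has gain $1$, so at least two of the three chords have gain $-1$.

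There is no genuine obstacle here: the lemma is a direct consequence of the sparsity counts. The one point worth keeping in mind is that, because $T_n$ is given as balanced with all gains equal to $1$ (which is legitimate by the switching Lemma \ref{switch}), adjoining any gain-$1$ chord keeps the subgraph balanced, and it is exactly this that makes the stronger balanced count (a) applicable and forces most chords to carry gain $-1$.
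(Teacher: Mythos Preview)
Your proof is correct and is exactly the argument the paper has in mind; the paper does not spell out a proof, remarking only that the lemma ``is easy to deduce from the definitions,'' and your computation of $f(T_n)=3$, application of the $(2,0)$-count to $G_0[V(T_n)]$, and use of the balanced $(2,2)$-count on $T_n$ together with two gain-$1$ chords is precisely that deduction.
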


In the next lemma we show that there exists an edge for which conditions (ii) and (iv) of Lemma \ref{lem:k3contract} do not hold, provided that the maximal balanced triangle sequence has enough vertices.

\begin{lemma}\label{lem:bound}
Let $(G_0,\psi)$ be $(2,2,0)$-gain-tight and let $T_n$ be a maximal balanced triangle sequence. Suppose $|V(T_n)|\geq 6$. Then there exists an edge of $T_n$ with the properties that it does not have a parallel edge, its end vertices are not both incident to a loop, and it is contained in exactly one balanced triangle in $(G_0,\psi)$.
\end{lemma}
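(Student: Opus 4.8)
The plan is to argue by contradiction: assume that \emph{every} edge of $T_n$ that lies in exactly one triangle of $T_n$ is ``bad'' (fails at least one of the three conditions), and show there are too many such edges. After switching (Lemma~\ref{switch}) we may assume every edge of $T_n$ has gain $1$, so Lemma~\ref{lem:chord} applies: $f(T_n)=3$ and $T_n$ has at most $3$ chords. Write $m:=|V(T_n)|\ge 6$. An easy induction on the construction of a balanced triangle sequence shows that $T_n$ is a triangulated polygon (a maximal outerplanar graph): its edges split into $m$ \emph{boundary edges}, which form a Hamilton cycle and are precisely the edges lying in exactly one triangle of $T_n$, together with $m-3$ \emph{diagonals}, each lying in exactly two triangles; since $T_n$ is outerplanar it contains no $K_{2,3}$, so any two vertices have at most two common neighbours in $T_n$. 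I would take the $m$ boundary edges as the pool of candidates, since a boundary edge satisfying none of (A),(L),(C) below is exactly an edge as required by the lemma.

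Next I would record the scarcity facts. First, applying $(2,0)$-sparsity to the subgraph consisting of $T_n$ together with every loop at a vertex of $V(T_n)$ and every chord gives $L+c\le 3$, where $L$ is the number of loop-bearing vertices of $V(T_n)$ and $c\le 3$ the number of chords. Second, at most one chord has gain $1$: two distinct gain-$1$ chords would make $T_n$ together with both of them balanced (each cycle through a new edge has gain a product of gain-$1$ quantities) with $f=3-2=1<2$, violating balanced $(2,2)$-sparsity; note also that a parallel chord has gain $-1$, since its $T_n$-mate has gain $1$ and the covering graph is simple. Now classify the ways a boundary edge $e=uv$ can be bad: \textbf{(A)} there is a parallel chord $uv$; or \textbf{(L)} both $u$ and $v$ carry loops; or \textbf{(C)} $e$ lies in a second balanced triangle $uvw$. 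In case (C), maximality forces $w\in V(T_n)$ (otherwise $T_n+w$ would extend the sequence, as $uv$ is a boundary edge), and since $uvw\not\subseteq T_n$ at least one of $uw,vw$ is a chord; the balance condition $\psi(uw)\psi(vw)=1$ together with ``at most one gain-$1$ chord'' then splits (C) into two types: \textbf{(C$_1$)} $vw\in E(T_n)$ and $uw$ is the gain-$1$ chord, or \textbf{(C$_2$)} $uw,vw$ are two gain-$(-1)$ chords.

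Then I would prove three counting bounds. Bad edges of type (L) lie on the Hamilton cycle among the $L$ loop-vertices, hence number at most $\max(L-1,0)$. Two gain-$(-1)$ chords $e_1,e_2$ together spoil at most $2$ boundary edges via (A) and (C$_2$): the spoiled edges lie in $\{e_1,e_2\}\cup\{uv\}$, where $uv$ occurs only when $e_1,e_2$ share a vertex $w$ with $e_1=wu$, $e_2=wv$, and $wu,wv,uv$ cannot all be boundary edges since three boundary edges forming a triangle would force $m=3$. The unique gain-$1$ chord $pq$ spoils at most $3$ boundary edges via (C$_1$): each such edge is $pv$ or $qv$ for $v$ a common neighbour of $p,q$ in $T_n$ (of which there are at most two); if both $pv,qv$ were boundary edges then $p,q$ would be the two polygon-neighbours of $v$, and this cannot hold for two distinct common neighbours at once (it would force $m=4$), so one common neighbour contributes at most $2$ and the other at most $1$.

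Finally I would run a short case analysis on $(c,g)$, where $g\in\{0,1\}$ is the number of gain-$1$ chords, bounding $\max(L-1,0)$ via $L+c\le 3$, the gain-$(-1)$ chords' contribution via the second bound, and the gain-$1$ chord via the third. Every case gives at most $5$ bad boundary edges: for instance if $c=3$ and $g=1$ then $0+2+3=5$, if $c=3$ and $g=0$ a slightly more detailed look at the possible configurations of three gain-$(-1)$ chords (triangle, star, path, or fewer adjacent pairs) again yields $\le 5$, and smaller $c$ is easier. Since $m\ge 6>5$, some boundary edge is good. I expect the main obstacle to be exactly this counting — in particular establishing the sharp bound $3$ (not $4$) for the gain-$1$ chord and dispatching the case $c=3$, $g=0$ — both of which rely essentially on the polygon structure of $T_n$ and on $m\ge 6$; verifying the (C$_1$)/(C$_2$) dichotomy and the ``at most one gain-$1$ chord'' claim is routine but needs care with the balance conditions.
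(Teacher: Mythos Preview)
Your proposal is correct and follows essentially the same approach as the paper: both arguments switch so that all edges of $T_n$ have gain $1$, identify the edges of $T_n$ lying in exactly one $T_n$-triangle as a Hamilton cycle of length $m\ge 6$, use Lemma~\ref{lem:chord} (at most three chords, at most one of gain $1$) to bound the number of ``blocked'' boundary edges by $5$, and conclude. Your organisation is slightly different---you separate loops from non-loop chords and run the case analysis on $(c,g)$, whereas the paper treats loops as chords throughout and splits on how many chords are loops---but the underlying counts and the polygon-structure arguments (e.g.\ ruling out three boundary edges forming a triangle, or two vertices both having $p,q$ as cycle-neighbours) are the same.
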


\begin{proof}
Since $T_n$ is balanced we may assume that the gain on every edge of $T_n$ is 1.
Let $s_1,s_2,\dots, s_r$ be the edges of $T_n$ contained in exactly one triangle in $T_n$. 

We first show that the $s_i$ form a simple  cycle spanning $V(T_n)$. This can be verified by induction. It clearly holds for $n=1$. Suppose it holds for all $m<n$ and consider $T_n$. Let $T_n$ be formed from $T_{n-1}$ by adding a triangle on $a,b,c$ such that $a,b\in V(T_{n-1})$ and $c\notin V(T_{n-1})$. Then by induction there is a simple spanning cycle $C$ in $T_{n-1}$ with $ab\in C$. We construct the simple spanning cycle for $T_n$ by removing the edge $ab$ from $C$ and adding the edges $ac,cb$ to $C$.

By Lemma~\ref{lem:chord}, $T_n$ has at most $3$ chords, and  at most one them has gain 1. Let $C$ be the simple cycle spanning $V(T_n)$ consisting of the edges $s_1,s_2,\ldots, s_r$. 
Since $V(C)$ has at least 6 vertices, $C$ has at least $6$ edges. We say that an edge of $C$ is \emph{blocked} if it has a parallel edge, or its endvertices are both incident to a loop, or it is contained in more than one balanced triangle in $(G_0,\psi)$. We claim that $C$ contains at least one edge that is not blocked.

Note that a chord with gain $1$ can block at most three edges of $C$.
A chord with gain $-1$ can block at most one edge of $C$, since it cannot create a balanced triangle on its own, but it could be parallel to an edge of $C$. Further, two  non-loop chords with gain $-1$ only create a balanced triangle if they are of the form $(cd,-1)$ and $(ce,-1)$ for some $c,d,e\in V(T_n)$. In this case at most one edge of $C$ is contained in a triangle with these two chords. Thus, two non-loop chords with gain $-1$ can block at most two edges of $C$. Similarly, three non-loop chords with gain $-1$ can block at most three edges of $C$. Finally, two chords that are loops can block at most one edge of $C$, and three chords that are loops can block at most two edges of $C$.

So if at least two of the three chords are loops, then there are at most 4 blocked edges in $C$. So suppose exactly one of the chords is a loop. Since at most one of the remaining two chords can have gain $1$, it follows that  there are again at most 4 blocked edges in $C$. Finally, if none of the chords is a loop, then there are at most 5 blocked edges in $C$.
Thus there exists an edge of $C$ that is not blocked, as claimed.
\end{proof}

\subsection{The inductive construction}

We can now put together our results to prove the desired characterisation of $(2,2,0)$-gain-tight gain graphs.

\begin{theorem}\label{thm:recurse}
Let $(G_0,\psi)$ be a gain graph. Then $(G_0,\psi)$ is $(2,2,0)$-gain-tight if and only if $(G_0,\psi)$ can be generated from vertex disjoint copies of graphs in $\B$ by applying H1, H2, H3, vertex-to-$K_4$ and vertex splitting moves.
\end{theorem}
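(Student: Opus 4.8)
\emph{Outline of the proposed argument.} The ``if'' direction is immediate: every graph in $\B$ is $(2,2,0)$-gain-tight, a disjoint union of $(2,2,0)$-gain-tight gain graphs is $(2,2,0)$-gain-tight, and by Lemma~\ref{lem:moves} all five operations preserve $(2,2,0)$-gain-tightness. For the ``only if'' direction the plan is to induct on $|V(G_0)|$. The sparsity counts force each connected component of a $(2,2,0)$-gain-tight gain graph to be itself $(2,2,0)$-gain-tight, so it is enough to treat the connected case and then take disjoint unions of the resulting constructions. Thus, assuming $(G_0,\psi)$ connected, $(2,2,0)$-gain-tight and not in $\B$, I would exhibit a reverse H1, H2 or H3 move, a $K_4$-contraction, or an edge contraction whose result is a smaller $(2,2,0)$-gain-tight gain graph; inverting this move (via vertex-to-$K_4$ or vertex splitting in the last two cases) and applying the inductive hypothesis then closes the step.

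\emph{Phase 1: reduction by minimum degree.} Since $|E(G_0)|=2|V(G_0)|$, the average degree of $G_0$ is $4$, and no vertex has degree $\le 1$ (otherwise $G_0-v$ violates $(2,0)$-sparsity), so the minimum degree lies in $\{2,3,4\}$. If it is $2$, or some degree-$3$ vertex carries a loop, Lemma~\ref{lem:deg2} yields an admissible reverse H1 move. If it is $3$, a loopless degree-$3$ vertex $v$ is admissible by Lemma~\ref{lem:3threei} or~\ref{lem:3threeii} unless $v$ has three distinct neighbours and lies in a balanced $K_4$, or $v$ has two neighbours and lies in a copy of $R$. If it is $4$ then $G_0$ is $4$-regular; as $G_0\ne 2K_2^1$, connectedness excludes a vertex with a loop and a single neighbour, a vertex with two double-edge neighbours is handled by Lemma~\ref{lem:4twob} (or, if a neighbour carries a loop, by reducing at that neighbour via Lemma~\ref{lem:4one}), and otherwise Lemmas~\ref{lem:4threei} and~\ref{lem:4threeii} show a degree-$4$ vertex is admissible unless it lies in a $K_4^+$, a balanced $K_4$, or a balanced $K_{1,1,3}$ in which it has degree $4$. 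Hence, if no reverse H1/H2/H3 move is admissible, every vertex of $G_0$ lies in one of the blocking subgraphs $K_4$, $R$, $K_4^+$, $K_{1,1,3}$.

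\emph{Phase 2: producing a graph contraction.} Each blocking subgraph contains a balanced triangle (a triangular face of a balanced $K_4$, $K_4^+$ or $K_{1,1,3}$; for $R$, one chooses the parallel edges so that the triangle's gains multiply to $1$), so $G_0$ contains a balanced $K_3$. After a switching I may assume that a maximal balanced triangle sequence $T_n$ through it has all edges of gain $1$. If $|V(T_n)|\ge 6$, Lemma~\ref{lem:bound} supplies an edge $(ab,1)$ of $T_n$ lying in exactly one balanced triangle, having no parallel edge and not both ends looped; I would then rule out conditions~(i)--(iv) of Lemma~\ref{lem:k3contract} — any balanced subgraph with $f\in\{0,2\}$ containing $(ab,1)$, together with the third triangle vertex and one further vertex, would violate $(2,0)$- or $(2,2)$-sparsity, combining overlapping balanced blocks via Lemma~\ref{lem:unionint} as in the proofs of Lemmas~\ref{lem:3threei}--\ref{lem:4threeii} — so the reverse vertex splitting contracting $(ab,1)$ is admissible. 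If instead $|V(T_n)|\le 5$, then $G_0$ is small, and I would conclude from the maximality of $T_n$, the blocking structure on its vertices, and Lemma~\ref{lem:chord} (at most three chords, at most one of gain $1$) that $G_0$ is either a base graph in $\B$ or contains a balanced $K_4$ inducing at most one chord, to which Lemma~\ref{lem:k4contract} applies — the obstruction there (a vertex with two equal-gain edges into the $K_4$) being incompatible with $4$-regularity or with the blocking analysis.

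\emph{Expected main obstacle.} The hard part is Phase 2: eliminating, in every non-base situation, all the sparsity-based obstructions to a $K_4$-contraction or an edge contraction, and in particular organising the finite case analysis when the maximal balanced triangle sequence is small or when several blocking subgraphs overlap, so that precisely the graphs in $\B$ remain irreducible. The recurring technical device is Lemma~\ref{lem:unionint}, used to force that balanced subgraphs with $f=2$ through pairs of neighbours of a candidate vertex (or through the two ends of a candidate edge) either amalgamate into a balanced block of the same deficiency or produce a tight block containing the candidate, which is contradictory.
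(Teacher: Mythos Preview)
Your Phase 1 is broadly right and matches the paper's overall strategy, but Phase 2 has two genuine gaps.

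\textbf{The ``small $T_n$'' fallacy.} You write that if $|V(T_n)|\le 5$ then ``$G_0$ is small.'' This is false: $T_n$ is a \emph{local} object --- a maximal balanced triangle sequence through one chosen triangle --- and its having few vertices says nothing about $|V(G_0)|$. A small $T_n$ only tells you that outward propagation of balanced triangles from your starting $K_3$ stops quickly; $G_0$ can still be arbitrarily large. The paper uses the $|V(T_n)|\ge 6$ argument only in one specific sub-case (a degree-$3$ vertex with three neighbours lying in an induced balanced $K_4$, not in $K_4^+$ or $\hat{K_4}$), and the $|V(T_n)|=5$ case there is handled by direct inspection of the five possible configurations, not by bounding $G_0$.

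\textbf{The missing structural case.} More seriously, you have no argument for the case where the minimum degree is $3$ and \emph{every} degree-$3$ vertex is blocked, i.e.\ lies in an induced subgraph isomorphic to $R$, $K_4^+$, $K_4^{++}$ or $\hat{K_4}$. Your triangle-sequence argument does not apply here (there may be no balanced $K_4$ to start from, and the maximal triangle sequence through a face of an $R$ or a $K_4^+$ need not be large), nor does Lemma~\ref{lem:k4contract} directly (each blocking $K_4$ may induce extra edges or sit inside a $K_5-e$). The paper handles this with a completely different argument (its Case 2.b): it partitions the blocking subgraphs into ``type 1'' pieces $W_i$ with $f(W_i)=1$ and ``type 0'' pieces $Z_j$ with $f(Z_j)=0$, shows by sparsity that distinct pieces are essentially vertex-disjoint, builds an auxiliary multigraph $G_0^*$ recording edges between pieces and the residual vertex set $U$, and then uses global edge-counting (via Lemma~\ref{l:claim}) on $G_0^*$ to force either an admissible $K_4$-contraction on some $W_i$ or an admissible edge contraction. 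None of this machinery is present in your proposal, and I do not see how to avoid it: the obstruction in Lemma~\ref{lem:k4contract} (an external vertex with two equal-gain edges into the $K_4$) is \emph{not} incompatible with this case --- indeed that is exactly the configuration $\hat{K_4}$, which the paper must treat explicitly.

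Your sketch of ruling out conditions~(i)--(iv) of Lemma~\ref{lem:k3contract} via Lemma~\ref{lem:unionint} is also too optimistic; the paper's argument for condition~(iii) requires a careful iteration showing that no vertex of $T_n\setminus\{r,s\}$ can lie in the obstructing subgraph $H_0$, with a separate sub-case when the third triangle vertex $t$ is in $H_0$ but the edges $(tr,1),(ts,1)$ are not.
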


\begin{proof}
The easy direction is given by Lemma \ref{lem:moves}. For the converse, observe first that if $(G_0,\psi)$ is disconnected then every connected component of $(G_0,\psi)$ is $(2,2,0)$-gain-tight. So we will prove that an arbitrary \emph{connected} $(2,2,0)$-gain-tight gain graph  has an admissible reverse move which results in another (possibly disconnected) $(2,2,0)$-gain-tight gain graph with fewer vertices, or is one of the base graphs in $\B$. The theorem then follows by induction on $|V_0|$.

So from now on we will assume that $(G_0,\psi)$ is connected. Observe that $(G_0,\psi)$ is  either 4-regular or contains a vertex of degree $2$ or $3$. We consider the following cases.\\

\textbf{Case 1. $G_0$ contains a vertex with two incident edges or a degree 3 vertex with exactly two neighbours, which is not contained in a subgraph isomorphic to $R$.}\\

Lemmas \ref{lem:deg2} and \ref{lem:3threeii} show that there exists an admissible reverse move.\\

We can now assume that Case 1 does not hold. In the following, we let $\hat{K_4}$ be the $(2,2,0)$-gain-tight gain graph consisting of a balanced $K_4$ as well as one additional vertex $x$ and the four edges $(xa,1),(xa,-1), (xb,1), (xb,-1)$ where $a$ and $b$ are distinct vertices of the $K_4$.\\

\textbf{Case 2.a. $G_0$ contains a degree 3 vertex $v$ with exactly 3 neighbours and $v$ is not contained in a subgraph isomorphic to $K_4^+$ or $\hat{K_4}$.}\\

Lemma \ref{lem:3threei} implies $v$ is admissible or $v$ is contained in a balanced subgraph $K$ isomorphic to $K_4$.  
By Lemma \ref{switch} we may assume every edge of $K$ has gain $1$.
Note that $K$ is an induced subgraph of $G_0$. If $K$ is not admissible to contract then Lemma \ref{lem:k4contract} implies there is a vertex $x$ and edges $(xa,\alpha), (xb,\alpha)$ for $a,b \in V(K)$. 
We may apply Lemma \ref{switch} again at $x$ to make $\alpha=1$. 
Now we have a balanced $K_3$ on $x,a,b$. We will denote a balanced $K_3$ on vertices $r,s,t$ by  $K_3(r,s,t)$.

Consider a maximal balanced triangle sequence $T_1=K_3(v,a,c), T_2=T_1\cup K_3(a,b,c), T_3=T_2\cup K_3(a,b,x), \dots, T_n$ (where $c$ is the final vertex of $K$).
By Lemma \ref{switch} we may assume every edge of $T_n$ has gain $1$.
By Lemma \ref{lem:k3contract} an edge $(rs,1)$ in a $K_3(r,s,t)$ in $T_n$, with $(rs,-1)$ not an edge of $G_0$, is non-admissible in $G_0$ for a reverse vertex-splitting move if and only if condition (i), (ii), (iii) or (iv) holds. 

We claim that there exists an edge  in $T_n$ for which (ii) and (iv) does not hold. This follows from  Lemma \ref{lem:bound} if $|V(T_n)|\geq 6$. If $|V(T_n)|=5$, then $T_n$ is uniquely determined, and since $K$ is an induced subgraph of $G_0$, and $v$ is not contained in a subgraph isomorphic to  $\hat{K_4}$, the existence of such an edge in $T_n$ can easily be verified by inspection.

So let $(rs,1)$ be an edge of $T_n$ for which (ii) and (iv) do not hold. Let $t$ be the third vertex of the unique balanced triangle containing $(rs,1)$.
If there is a subgraph $H_0$ satisfying (i) then we 
contradict the $(2,0)$-sparsity of $G_0$ as follows. Since $t$ has two neighbours (namely $r$ and $s$) in $H_0$, clearly any other neighbour of $t$ in $T_n$ is not in $H_0$. Note that $T_n$ contains a balanced triangle of the form $K_3(r,t,w)$ or of the form $K_3(s,t,w')$ (or both). Suppose that $K_3(r,t,w)$ exists in $T_n$. 
Then we may repeat the above argument for  $w$  and  the subgraph $H_0'$ which is obtained from $H_0$ by adding $t$ and the edges $(tr,1)$ and $(ts,1)$ to see that any neighbour of $w$ in $T_n$ (other than $r$ and $t$) is not in $H_0$. By iterating this argument we conclude that none of the vertices of $V(T_n) \setminus \{r,s\}$ are in $H_0$. It now follows  that $H_0\cup T_n \cup {(vb,1)}$ violates $(2,0)$-sparsity. Thus, (i) also does not hold for $(rs,1)$.

Finally, we show that there is no  balanced subgraph $H_0$ satisfying (iii) for the edge $(rs,1)$. Suppose to the contrary that there does exist such a subgraph $H_0$.
If $t\notin V(H_0)$ then we may apply (a balanced version of) the argument above to show that none of the vertices of $V(T_n) \setminus \{r,s\}$ are in $H_0$. It now follows  that $H_0\cup T_n \cup {(vb,1)}$ violates $(2,2)$-sparsity.
So we may assume that $t\in V(H_0)$. Then, by condition (iii), the edges $(tr,1),(ts,1)$ are not in $H_0$. 

The maximal balanced triangle sequence $T_n$ contains a balanced triangle of the form $K_3(r,t,w)$ or of the form $K_3(s,t,w')$ (or both). As above, we assume that $K_3(r,t,w)$ exists.
Suppose first that $w\in V(H_0)$.
Then the edges $(wt,1)$ and $(wr,1)$ are also both in $H_0$. (If not, say $(wt,1)\notin E(H_0)$, then the graph  obtained from $H_0$ by adding  the edges $(tr,1), (ts,1), (wt,1)$ violates $(2,0)$-sparsity.) Since the edges $(wt,1)$ and $(wr,1)$ form a path of gain $1$ from $r$ to $t$ in $H_0$, and $H_0$ is balanced, every path from $r$ to $t$ in $H_0$ has gain $1$. Thus, if we add the edge $(tr,1)$ to $H_0$, then we obtain a balanced graph that violates the $(2,2)$-sparsity of $G_0$, a contradiction.

 So we may assume that $w\notin V(H_0)$. Let $H_0'$ be the graph obtained from $H_0$ by adding the edges $(tr,1)$ and $(ts,1)$. Then $f(H_0')=0$. Since $w$ has two neighbours (namely $r$ and $t$) in $H_0'$, clearly any other neighbour of $w$ in $T_n$ is not in $H_0'$. Using again the same iteration argument from above, we conclude that none of the vertices of $V(T_n) \setminus \{r,s,t\}$ are in $H_0'$. It now follows  that $H_0'\cup T_n \cup {(vb,1)}$ violates $(2,0)$-sparsity. 

Thus, in Case 2a there exists an admissible reverse move.\\

\textbf{Case 2.b.  $G_0$ contains a degree 3 vertex, and every degree 3 vertex in $G_0$ is contained in an induced subgraph isomorphic to 
$R$, $K_4^+$, $K_4^{++}$ or $\hat{K_4}$.}\\

Every vertex of degree 3 is contained in an induced subgraph $W_i$ isomorphic to $K_4^+$ or in an induced subgraph $Z_j$ isomorphic to either $R$, $\hat{K_4}$, or a graph $K_4^{++}$ from Figures~\ref{fig:gain_cov_graphs} (d)-(g). Since $f(K_4^+)=1$ and $f(R)=f(\hat{K_4})=f(K_4^{++})=0$ we say the former are type 1 and the latter are type 0.
Let $W_1,\dots, W_{r'}$ be all such type 1 induced subgraphs and $Z_1,\dots,Z_{s'}$ be all such type 0 induced subgraphs. 

Note that for all $1\leq i \leq s'$, any proper non-empty subgraph $H$ of $Z_i$ has $f(H)\geq 1$.
Now, for any pair of subgraphs $Z_i,Z_j$ we have that $Z_i$ and $Z_j$ are necessarily vertex disjoint. If not then $f(Z_i\cap Z_j)>0$ and hence $f(Z_i\cup Z_j)<0$, which would contradict the $(2,2,0)$-sparsity of $G_0$. 
Next, for all $1\leq j \leq r'$, any proper non-empty subgraph $Y$ of $W_j$ is either a loop or has $f(Y)\geq 2$. 
Thus, for any pair of subgraphs $Z_i,W_j$ we have that either the intersection is a loop,
 or 
$Z_i$ and $W_j$ are vertex disjoint. If not then $f(Z_i\cap W_j)\geq 2$ and hence $f(Z_i\cup W_j)=1-f(Z_i\cap W_j)<0$, contradicting $(2,2,0)$-sparsity. 
Lastly, for any pair of subgraphs $W_i,W_j$ with non-empty intersection, we must have $f(W_i\cap W_j)\in \{1,2\}$. This implies that $W_i\cap W_j$ is either  empty, a loop, or has $f(W_i\cap W_j)=2$. Moreover in the case when $f(W_i\cap W_j)=2$, then $W_i\cap W_j$ 
is either a double edge or a copy of $K_1$  (since the case when $W_i\cap W_j$ is a copy of $K_4$ would mean either $i=j$ or would contradict the fact that $W_i$ and $W_j$ are induced subgraphs of $G_0$). 

\begin{figure}[ht]
\begin{center}
\begin{tikzpicture}[scale=0.6]

\filldraw (0,0) circle (3pt);
\filldraw (0,2) circle (3pt);
\filldraw (2,0) circle (3pt);
\filldraw (2,2) circle (3pt);

\filldraw (-2,2) circle (3pt);
\filldraw (-2,4) circle (3pt);
\filldraw (0,4) circle (3pt);

\draw[black]
(0,4) -- (-2,4) -- (-2,2) -- (0,2) -- (0,4) -- (-2,2);

\draw[black]
(2,0) -- (0,0) -- (0,2) -- (2,2) -- (2,0) -- (0,2) -- (-2,4);

\draw[black]
(0,0) -- (2,2);

\draw[dotted] plot[smooth, tension=1] coordinates{(-2,2)(-1.4,.8)(0,0)};

\draw[thick] plot[smooth, tension=1] coordinates{(0,2) (.2,2.5)(.5,2.3) (0,2)};
\end{tikzpicture}
\end{center}
\caption{The graph consisting of some $k\geq 2$ copies of $K_4$ which all intersect in a single vertex and that vertex is incident with one loop. This family of graphs gives the only additional isomorphism classes that can occur in the modified list of $W_i$'s. Gain labels omitted.}
\label{fig:new0}
\end{figure}
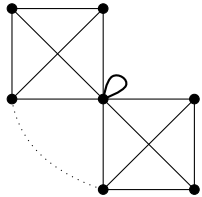

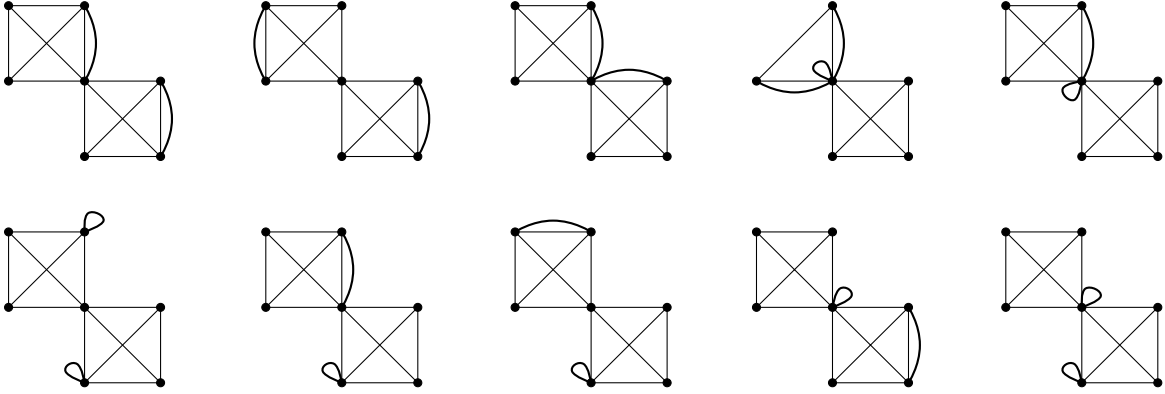
\begin{figure}[ht]
\begin{center}
\begin{tikzpicture}[scale=0.5]

\filldraw (0,0) circle (3pt);
\filldraw (0,2) circle (3pt);
\filldraw (2,0) circle (3pt);
\filldraw (2,2) circle (3pt);

\filldraw (-2,2) circle (3pt);
\filldraw (-2,4) circle (3pt);
\filldraw (0,4) circle (3pt);

\draw[black]
(0,4) -- (-2,4) -- (-2,2) -- (0,2) -- (0,4) -- (-2,2);

\draw[black]
(2,0) -- (0,0) -- (0,2) -- (2,2) -- (2,0) -- (0,2) -- (-2,4);

\draw[black]
(0,0) -- (2,2);

\draw[thick] plot[smooth, tension=1] coordinates{(0,4) (.3,3)(0,2)};
\draw[thick] plot[smooth, tension=1] coordinates{(2,2) (2.3,1)(2,0)};

\filldraw (0,-6) circle (3pt);
\filldraw (0,-4) circle (3pt);
\filldraw (2,-6) circle (3pt);
\filldraw (2,-4) circle (3pt);

\filldraw (-2,-4) circle (3pt);
\filldraw (-2,-2) circle (3pt);
\filldraw (0,-2) circle (3pt);

\draw[black]
(0,-2) -- (-2,-2) -- (-2,-4) -- (0,-4) -- (0,-2) -- (-2,-4);

\draw[black]
(2,-6) -- (0,-6) -- (0,-4) -- (2,-4) -- (2,-6) -- (0,-4) -- (-2,-2);

\draw[black]
(0,-6) -- (2,-4);

\draw[thick] plot[smooth, tension=1] coordinates{(0,-2) (.1,-1.5)(.5,-1.7) (0,-2)};
\draw[thick] plot[smooth, tension=1] coordinates{(0,-6) (-.2,-5.5)(-.5,-5.7) (0,-6)};

\end{tikzpicture}
\hspace{0.8cm}
\begin{tikzpicture}[scale=0.5]

\filldraw (0,0) circle (3pt);
\filldraw (0,2) circle (3pt);
\filldraw (2,0) circle (3pt);
\filldraw (2,2) circle (3pt);

\filldraw (-2,2) circle (3pt);
\filldraw (-2,4) circle (3pt);
\filldraw (0,4) circle (3pt);

\draw[black]
(0,4) -- (-2,4) -- (-2,2) -- (0,2) -- (0,4) -- (-2,2);

\draw[black]
(2,0) -- (0,0) -- (0,2) -- (2,2) -- (2,0) -- (0,2) -- (-2,4);

\draw[black]
(0,0) -- (2,2);

\draw[thick] plot[smooth, tension=1] coordinates{(-2,4) (-2.3,3)(-2,2)};
\draw[thick] plot[smooth, tension=1] coordinates{(2,2) (2.3,1)(2,0)};

\filldraw (0,-6) circle (3pt);
\filldraw (0,-4) circle (3pt);
\filldraw (2,-6) circle (3pt);
\filldraw (2,-4) circle (3pt);

\filldraw (-2,-4) circle (3pt);
\filldraw (-2,-2) circle (3pt);
\filldraw (0,-2) circle (3pt);

\draw[black]
(0,-2) -- (-2,-2) -- (-2,-4) -- (0,-4) -- (0,-2) -- (-2,-4);

\draw[black]
(2,-6) -- (0,-6) -- (0,-4) -- (2,-4) -- (2,-6) -- (0,-4) -- (-2,-2);

\draw[black]
(0,-6) -- (2,-4);

\draw[thick] plot[smooth, tension=1] coordinates{(0,-2) (.3,-3)(0,-4)};

\draw[thick] plot[smooth, tension=1] coordinates{(0,-6) (-.2,-5.5)(-.5,-5.7) (0,-6)};


\end{tikzpicture}
\hspace{0.8cm}
\begin{tikzpicture}[scale=0.5]

\filldraw (0,0) circle (3pt);
\filldraw (0,2) circle (3pt);
\filldraw (2,0) circle (3pt);
\filldraw (2,2) circle (3pt);

\filldraw (-2,2) circle (3pt);
\filldraw (-2,4) circle (3pt);
\filldraw (0,4) circle (3pt);

\draw[black]
(0,4) -- (-2,4) -- (-2,2) -- (0,2) -- (0,4) -- (-2,2);

\draw[black]
(2,0) -- (0,0) -- (0,2) -- (2,2) -- (2,0) -- (0,2) -- (-2,4);

\draw[black]
(0,0) -- (2,2);

\draw[thick] plot[smooth, tension=1] coordinates{(0,4) (.3,3)(0,2)};
\draw[thick] plot[smooth, tension=1] coordinates{(0,2) (1,2.3)(2,2)};

\filldraw (0,-6) circle (3pt);
\filldraw (0,-4) circle (3pt);
\filldraw (2,-6) circle (3pt);
\filldraw (2,-4) circle (3pt);

\filldraw (-2,-4) circle (3pt);
\filldraw (-2,-2) circle (3pt);
\filldraw (0,-2) circle (3pt);

\draw[black]
(0,-2) -- (-2,-2) -- (-2,-4) -- (0,-4) -- (0,-2) -- (-2,-4);

\draw[black]
(2,-6) -- (0,-6) -- (0,-4) -- (2,-4) -- (2,-6) -- (0,-4) -- (-2,-2);

\draw[black]
(0,-6) -- (2,-4);

\draw[thick] plot[smooth, tension=1] coordinates{(-2,-2) (-1,-1.7)(0,-2)};

\draw[thick] plot[smooth, tension=1] coordinates{(0,-6) (-.2,-5.5)(-.5,-5.7) (0,-6)};

\end{tikzpicture}
\hspace{0.8cm}
\begin{tikzpicture}[scale=0.5]

\filldraw (0,0) circle (3pt);
\filldraw (0,2) circle (3pt);
\filldraw (2,0) circle (3pt);
\filldraw (2,2) circle (3pt);

\filldraw (-2,2) circle (3pt);
\filldraw (0,4) circle (3pt);

\draw[black]
(-2,2) -- (0,2) -- (0,4) -- (-2,2);

\draw[black]
(2,0) -- (0,0) -- (0,2) -- (2,2) -- (2,0) -- (0,2);

\draw[black]
(0,0) -- (2,2);

\draw[thick] plot[smooth, tension=1] coordinates{(0,2) (-.2,2.5)(-.5,2.3) (0,2)};
\draw[thick] plot[smooth, tension=1] coordinates{(0,2) (-1,1.7) (-2,2)};
\draw[thick] plot[smooth, tension=1] coordinates{(0,2) (.3,3) (0,4)};

\filldraw (-2,-2) circle (3pt);
\filldraw (0,-2) circle (3pt);
\filldraw (-2,-4) circle (3pt);
\filldraw (2,-6) circle (3pt);
\filldraw (0,-6) circle (3pt);
\filldraw (0,-4) circle (3pt);
\filldraw (2,-4) circle (3pt);

\draw[black]
(-2,-2) -- (0,-2) -- (-2,-4) -- (-2,-2) -- (0,-4) -- (2,-4) -- (0,-6) -- (2,-6) -- (0,-4) -- (-2,-4);

\draw[black]
(0,-2) -- (0,-4) -- (0,-6);

\draw[black]
(2,-4) -- (2,-6);

\draw[thick] plot[smooth, tension=1] coordinates{(2,-4) (2.3,-5) (2,-6)};
\draw[thick] plot[smooth, tension=1] coordinates{(0,-4) (.2,-3.5)(.5,-3.7) (0,-4)};

\end{tikzpicture}
\hspace{0.8cm}
\begin{tikzpicture}[scale=0.5]

\filldraw (0,0) circle (3pt);
\filldraw (0,2) circle (3pt);
\filldraw (2,0) circle (3pt);
\filldraw (2,2) circle (3pt);

\filldraw (-2,2) circle (3pt);
\filldraw (-2,4) circle (3pt);
\filldraw (0,4) circle (3pt);

\draw[black]
(0,4) -- (-2,4) -- (-2,2) -- (0,2) -- (0,4) -- (-2,2);

\draw[black]
(2,0) -- (0,0) -- (0,2) -- (2,2) -- (2,0) -- (0,2) -- (-2,4);

\draw[black]
(0,0) -- (2,2);

\draw[thick] plot[smooth, tension=1] coordinates{(0,4) (.3,3)(0,2)};
\draw[thick] plot[smooth, tension=1] coordinates{(0,2)(-.2,1.5)(-.5,1.8)(0,2)};

\filldraw (0,-6) circle (3pt);
\filldraw (0,-4) circle (3pt);
\filldraw (2,-6) circle (3pt);
\filldraw (2,-4) circle (3pt);

\filldraw (-2,-4) circle (3pt);
\filldraw (-2,-2) circle (3pt);
\filldraw (0,-2) circle (3pt);

\draw[black]
(0,-2) -- (-2,-2) -- (-2,-4) -- (0,-4) -- (0,-2) -- (-2,-4);

\draw[black]
(2,-6) -- (0,-6) -- (0,-4) -- (2,-4) -- (2,-6) -- (0,-4) -- (-2,-2);

\draw[black]
(0,-6) -- (2,-4);

\draw[thick] plot[smooth, tension=1] coordinates{(0,-4) (.1,-3.5)(.5,-3.7) (0,-4)};
\draw[thick] plot[smooth, tension=1] coordinates{(0,-6) (-.2,-5.5)(-.5,-5.7) (0,-6)};

\end{tikzpicture}
\end{center}
\caption{Additional isomorphism classes that can occur in the first step of the modified list of $Z_j$'s. Gain labels omitted.}
\label{fig:new}
\end{figure}

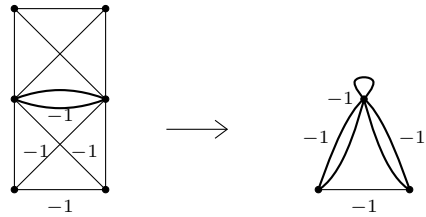
\begin{figure}[ht]
\begin{center}
\begin{tikzpicture}[scale=0.4]

\filldraw (0,0) circle (3pt);
\filldraw (3,0) circle (3pt);
\filldraw (0,3) circle (3pt);
\filldraw (3,3) circle (3pt);
\filldraw (0,-3) circle (3pt);
\filldraw (3,-3) circle (3pt);

\filldraw (1.5,-3) circle (0pt)node[anchor=north]{\tiny $-1$};
\filldraw (1.5,0) circle (0pt)node[anchor=north]{\tiny $-1$};
\filldraw (.7,-1.2) circle (0pt)node[anchor=north]{\tiny $-1$};
\filldraw (2.3,-1.2) circle (0pt)node[anchor=north]{\tiny $-1$};

\draw[black]
(3,-3) -- (0,0) -- (3,3);

\draw[black]
(0,-3) -- (3,0) -- (3,3) -- (0,3) -- (3,0) -- (3,-3) -- (0,-3) -- (0,0) -- (0,3);

\draw[thick] plot[smooth, tension=1] coordinates{(0,0) (1.5,.3)(3,0)};
\draw[thick] plot[smooth, tension=1] coordinates{(0,0) (1.5,-.3)(3,0)};

\draw[black]
(5,-1) -- (7,-1);

\draw[black]
(6.6,-1.3) -- (7,-1) -- (6.6,-.7);

\filldraw (11.5,0) circle (3pt);
\filldraw (10,-3) circle (3pt);
\filldraw (13,-3) circle (3pt);

\draw[black]
(10,-3) -- (13,-3);

\draw[thick] plot[smooth, tension=1] coordinates{(10,-3) (10.8,-1)(11.5,0)};
\draw[thick] plot[smooth, tension=1] coordinates{(10,-3) (10.8,-2)(11.5,0)};

\draw[thick] plot[smooth, tension=1] coordinates{(13,-3) (12.2,-1)(11.5,0)};
\draw[thick] plot[smooth, tension=1] coordinates{(13,-3) (12.2,-2)(11.5,0)};

\draw[thick] plot[smooth, tension=1] coordinates{(11.5,0)(11.2,.6)(11.8,.6)(11.5,0)};

\filldraw (11.5,0) circle (0pt)node[anchor=east]{\tiny $-1$};
\filldraw (11.5,-3) circle (0pt)node[anchor=north]{\tiny $-1$};
\filldraw (10.7,-1.3) circle (0pt)node[anchor=east]{\tiny $-1$};
\filldraw (12.3,-1.3) circle (0pt)node[anchor=west]{\tiny $-1$};

\end{tikzpicture}
\end{center}
\caption{An additional isomorphism class that can occur in the first step of the modified list of $Z_j$'s and the result, $R$, of a reverse vertex-to-$K_4$ move applied to this graph. Omitted gain labels are equal to $1$.}
\label{fig:newextra}
\end{figure}

We next modify our lists $W_1,\dots, W_r$ and $Z_1,\dots,Z_s$ until any pair $W_i,W_j$ and any pair $Z_i,W_j$ are vertex disjoint. This means that whenever $W_i$ and $W_j$ intersect in a loop, then we discard them and add the union of $W_i$ and $W_j$ as a new $W_\ell$, and whenever $W_i$ and $W_j$ intersect in a double edge or a copy of $K_1$, then we also discard them and add the union of $W_i$ and $W_j$ as a new $Z_m$. Moreover, whenever $Z_i$ and $W_j$ intersect in a loop then we discard them and add the union of $Z_i$ and $W_j$ as a new $Z_k$. This process is then iterated until the final list of all $W_i$'s and $Z_i$'s has the property that any two elements in the list are vertex disjoint. (The additional $W_i$'s at any step of the process and the additional $Z_j$'s from step 1 of this process are as depicted in Figures \ref{fig:new0} and Figures \ref{fig:new} and \ref{fig:newextra}.)

Let $U$ and $F$ be the sets of vertices and edges of $G_0$ which are in none of the $W_i$ and in none of the $Z_j$. Associate with $G_0$ an auxiliary (multi)graph $G_0^\ast$ which has a vertex for each $W_i$, a vertex for each $Z_i$ and a vertex for $U$ and has an edge corresponding to each edge of $G_0$ of the form $x_ix_j$, where $x_i,x_j$ are taken from distinct elements of $V(G_0^\ast)=\{W_1,\dots,W_r,Z_1,\dots, Z_s,U\}$. Also define $G_0^-$ to be the  simple graph which is obtained from  $G_0^\ast$ by removing any parallel edges.

The connectivity of $G_0$ implies that $G_0^-$ is connected. 
Suppose $|V(G_0^-)|=1$. Then $G_0$ is a copy of some $Z_j$. If $G_0=\hat{K_4}$ then we may apply a reverse H3d move. 
If $G_0$ is a union of two $K_4^+$ graphs which intersect in a double edge then we may apply a reverse vertex-to-$K_4$ move (see Figure \ref{fig:newextra}). If $G_0$ is not a base graph then by the iteration process above, it follows in all remaining cases that $G_0$ contains a cut-vertex that separates an induced subgraph isomorphic to $K_4^+$ from the rest of the graph. It is now clear that we may use Lemma \ref{lem:k4contract} to apply a reverse vertex-to-$K_4$ move to a $K_4^+$ subgraph of $G_0$. 
Thus, we may suppose that $|V(G_0^-)|>1$. 

Suppose $r=0$. 
Let $G_0[U]$ denote the subgraph of $G_0$ induced by $U$.
Note that since $f(Z_i)=0$ for each $i$, no two $Z_i$ can be adjacent (by the $(2,2,0)$-gain-sparsity of $G_0$). Since $G_0^-$ is connected, it follows that $G_0^-$ is the graph $K_{1,s}$ where $s\geq 1$.
Moreover, we have \[f(G_0[U]) - d(U,V_0-U)=\sum_{i=1}^s f(Z_i) +f(G_0[U]) - d(U,V_0-U) = f(G_0) = 0.\] 
Since every vertex in $U$ has degree at least $4$ in $G_0$, by Lemma \ref{l:claim} we have, 
\[d(U,V_0-U)\geq 2f(G_0[U])=2d(U,V_0-U).\] Thus $d(U,V_0-U)=0$, a contradiction. 

Now suppose $r>0$. 
Recall that each $W_i$ is a $K_4^+$ or is of the form illustrated in Figure \ref{fig:new0}.
Hence, if any $W_i$ is not incident to two parallel edges in $G_0^\ast$ then we may contract a copy of $K_4^+$ to a loop by Lemma \ref{lem:k4contract}. So we suppose that every $W_i$ is incident to two parallel edges in $G_0^\ast$.

We calculate
\begin{equation}\label{eqn:2ci}
f(G_0)=\sum_{i=1}^rf(W_i) + \sum_{j=1}^sf(Z_j)+2|U|-|F|,
\end{equation}
which implies that $|F|=2|U|+r.$ 

Suppose first that  every $W_i$ and every $Z_j$ is incident to at least two edges in $F$. Since each vertex in $U$ has degree at least $4$, there are at least $4|U|+2(r+s)$ edge/vertex incidences in $F$. This implies $|F|\geq 2|U|+r+s$, 
and hence $s=0$. By a similar counting argument, if some $W_i$ is incident to more than two edges in $F$ then there are at least $4|U|+2(r-1)+3$ edge/vertex incidences in $F$.
This implies $|F|> 2|U|+r$, a contradiction.
Thus each $W_i$ has degree exactly 2 in $G_0^\ast$. This implies that either $G_0$ is the disjoint union of $W_1$ and $W_2$ with two edges between them, or, $G_0^-$ is the graph $K_{1,r}$. 
In the former case, there is an admissible reverse vertex-to-$K_4$ move which contracts a copy of $K_4^+$ to a loop. So suppose  $G_0^-$ is $K_{1,r}$. In this case, every $v\in U$ has degree exactly 4.  
We may assume that every $W_i$ is a copy of $K_4^+$ and that for every $W_i$ there exists a vertex in $U$ that is joined to two vertices in $W_i$ by edges with identical gains. (Otherwise, there is an admissible reverse vertex-to-$K_4$ move.)

Let $v\in U$ be adjacent to two vertices in some $W_i$. Then there is balanced copy of $K_3$ containing $u$ and two vertices $a,b$ of $W_i$. We may assume the gains on this copy of $K_3$ are all 1. We now apply Lemma \ref{lem:k3contract} to show that there is an  admissible reverse vertex-splitting move that contracts either $(va,1)$ or $(vb,1)$. Since $W_i$ is a copy of $K_4^+$ at most one of $a,b$ can have a loop. We suppose there is no loop on $a$ and consider the contraction of $(va,1)$. It  follows from Lemma \ref{lem:k3contract} that if $(va,1)$ is non-admissible then there is either a subgraph $H_0$ containing $v,a$ and the edge $(va,1)$ but not $b$ with $f(H_0)=0$ or a balanced subgraph $H_0$ containing $v,a$ and the edge $(va,1)$ with $f(H_0)=2$ and if $H_0$ contains $b$ then it does not contain the edges $(ba,1)$ and $(vb,1)$. In both cases it is easy to deduce from the structure of $G_0$ that such an $H_0$ cannot exist.

So we suppose that there is a $Z_j$ which is incident to only one edge in $F$.
If a $W_i$ is joined to another $W_k$ by two edges, then we discard them and add the union of $W_i$ and $W_k$ together with the two extra edges as a new $Z_m$. Similarly, if a $W_i$ is joined to a $Z_k$ by one edge, then we discard them and add the union of $W_i$ and $Z_k$ together with the extra edge as a new $Z_\ell$. 
Then the graph $G_0^-$ corresponding to this new underlying structure of $G_0$ contains the graph $K_{1,t}$ as a spanning subgraph, where the vertices in the partite set of size $t=b+c$ correspond to the graphs  $W_1,\dots, W_b$ and $Z_1,\dots,Z_c$, and the vertex in the other partite set corresponds to $U$. If $G_0^-$ is not equal to $K_{1,t}$, then the additional edges must join vertices corresponding to the $W_i$, and these edges represent single edges among pairs of $W_i$ in $G_0^\ast$. We call this set of edges $A$.  Note that in $G_0^\ast$, for each $W_i$ there are two  parallel edges joining $W_i$ to $U$, and for each $Z_j$ there is  one edge joining $Z_j$ to $U$. There may be additional edges joining a $W_i$ with $U$ or a $Z_j$ with $U$, and we denote this set of additional edges by $B$. Hence we have  $f(G_0[U])= b+c+d$, where $d=|A|+|B|$. Thus, by Lemma \ref{l:claim}, we have $d(U,V_0-U)\geq 2(b+c+d)$. This is a contradiction since there are only exactly $2b+c+d-|A|$ edges incident to $U$. 

Thus, in Case 2b there exists an admissible reverse move.

Henceforth we may assume that $G_0$ is 4-regular. First let us deal with two special possible subgraphs of $G_0$.\\

\textbf{Case 3.a. There exists a balanced subgraph isomorphic to either a copy of $K_{1,1,3}$, or, to a copy of $K_4$ which neither induces additional edges nor is contained in a copy of $K_5-e$.}\\

Suppose that $G_0$ contains an induced balanced copy of $K_4$. By switching we may assume that all edges of the $K_4$ have gain $1$.  By Lemma \ref{lem:k4contract} and the assumptions of this case, if a reverse vertex-to-$K_4$ move is not admissible then there is a vertex $s$ adjacent to exactly two of the $K_4$ vertices such that $K_4\cup s$ is balanced.
Consider the vertex $s$ in $K_4\cup s$. Since $G_0$ is 4-regular  and $K_4$ is not contained in $K_5-e$, $s$ has either a neighbour not in the $K_4$ with two parallel edges to $s$, or two neighbours not in the $K_4$, or a loop on $s$. (These three possibilities are illustrated in Figure \ref{fig:n3a}(a).) In the first case we can use Lemma~\ref{lem:4threei} to conclude that $s$ is admissible and in the second case we can use Lemma~\ref{lem:4threeii} to conclude that $s$ is admissible. In the third case we may use Lemma \ref{lem:k3contract}. Let $r,t$ be the neighbours of $s$ in $G_0$. By switching, we may assume that $(sr,1)$ and $(st,1)$ are edges in $G_0$. Consider the contraction of the edge $(sr,1)$. Since $G_0$ is 4-regular, conditions (ii) and (iv) of Lemma~\ref{lem:k3contract} evidently fail. Condition (i) fails by Lemma \ref{lem:4regno0}. Finally, since $G_0$ is $4$-regular and any balanced subgraph $H_0$ with $f(H_0)=2$ cannot have a vertex of degree $1$, condition (iii) also fails.

\begin{figure}[ht]
\begin{center}
\begin{tikzpicture}[scale=0.7]

\filldraw (0,0) circle (3pt);
\filldraw (0,2) circle (3pt);
\filldraw (2,0) circle (3pt);
\filldraw (2,2) circle (3pt);
\filldraw (1,-1.5) circle (3pt) node[anchor=east]{$s$};
\filldraw (1,-3) circle (3pt);

\draw[black]
(0,2) -- (2,0) -- (1,-1.5) -- (0,0) -- (2,2) -- (2,0) -- (0,0) -- (0,2) -- (2,2);

\draw[thick] plot[smooth, tension=1] coordinates{(1,-3) (.7,-2.25) (1,-1.5)};
\draw[thick] plot[smooth, tension=1] coordinates{(1,-3) (1.3,-2.25) (1,-1.5)};

\filldraw (1.5,-4.5) circle (0pt);
\end{tikzpicture}
\hspace{1.1cm}
\begin{tikzpicture}[scale=0.7]

\filldraw (0,0) circle (3pt);
\filldraw (0,2) circle (3pt);
\filldraw (2,0) circle (3pt);
\filldraw (2,2) circle (3pt);
\filldraw (1,-1.5) circle (3pt) node[anchor=east]{$s$};

\filldraw (0,-3) circle (3pt);
\filldraw (2,-3) circle (3pt);

\draw[black]
(0,2) -- (2,0) -- (1,-1.5) -- (0,0) -- (2,2) -- (2,0) -- (0,0) -- (0,2) -- (2,2);

\draw[black]
(0,-3) -- (1,-1.5) -- (2,-3);

\filldraw (1.5,-4) circle (0pt)node[anchor=east]{(a)};

\end{tikzpicture}
\hspace{1.1cm}
\begin{tikzpicture}[scale=0.7]

\filldraw (0,0) circle (3pt);
\filldraw (0,2) circle (3pt);
\filldraw (2,0) circle (3pt);
\filldraw (2,2) circle (3pt);
\filldraw (1,-1) circle (3pt) node[anchor=east]{$s$};

\draw[black]
(0,2) -- (2,0) -- (1,-1) -- (0,0) -- (2,2) -- (2,0) -- (0,0) -- (0,2) -- (2,2);

\draw[thick] plot[smooth, tension=1] coordinates{(1,-1) (.8,-1.3)(1.2,-1.3) (1,-1)};

\filldraw (1.5,-4.5) circle (0pt);

\end{tikzpicture}
\hspace{1.1cm}
\begin{tikzpicture}[scale=0.7]

\filldraw (0,2) circle (3pt);
\filldraw (2,2) circle (3pt);

\filldraw (-1,0) circle (3pt);
\filldraw (1,0) circle (3pt);
\filldraw (3,0) circle (3pt);

\filldraw (-2,-2) circle (3pt);
\filldraw (0,-2) circle (3pt);
\filldraw (2,-2) circle (3pt);

\draw[black]
(0,2) -- (3,0) -- (2,2) -- (1,0) -- (0,2) -- (2,2) -- (-1,0) -- (0,2);

\draw[black]
(0,-2) -- (1,0) -- (2,-2);

\draw[thick] plot[smooth, tension=1] coordinates{(-1,0)(-1.8,-1)(-2,-2)};
\draw[thick] plot[smooth, tension=1] coordinates{(-1,0)(-1.2,-1)(-2,-2)};

\draw[thick] plot[smooth, tension=1] coordinates{(3,0)(2.8,-.3)(3.2,-.3)(3,0)};

\filldraw (1.5,-4) circle (0pt)node[anchor=east]{(b)};
\end{tikzpicture}
\end{center}
\caption{(a) The possibilities in Case 3.a when $G_0$ contains an induced balanced copy of $K_4$ and (b) the possibilities when $G_0$ contains an induced balanced copy of $K_{1,1,3}$.}
\label{fig:n3a}
\end{figure}
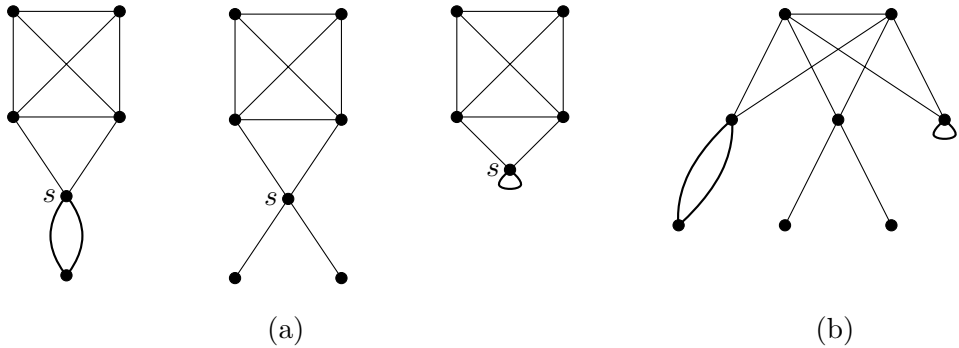

Suppose then that 
$G_0$ contains a  balanced copy of $K_{1,1,3}$ and does not contain an induced balanced copy of $K_4$. By switching, we may assume that all edges of $K_{1,1,3}$ have gain $1$. Then each degree 2 vertex in the $K_{1,1,3}$ is either incident to a double edge joining it to a third vertex, 
or to two single edges joining it to a third and fourth vertex, 
or to a loop. (These three possibilities are illustrated in Figure \ref{fig:n3a}(b).)  If there is a vertex of the first or the second type, then we may apply Lemma~\ref{lem:4threei} or
Lemma~\ref{lem:4threeii}, respectively, to show that there exists an admissible reverse move.  So suppose
each of the degree 2 vertices of $K_{1,1,3}$ is incident to a loop.
 Since $K_{1,1,3}$ with three loops is 4-regular, it must be equal to $G_0$. We show there is a contraction using Lemma \ref{lem:k3contract}. Let $r$ and $s$ be the vertices corresponding to the two partite sets of $K_{1,1,3}$ of size 1, and let $t$ be a vertex incident to a loop. We will contract the edge $(rt,1)$. Lemma \ref{lem:4regno0} implies that (i) fails and again (ii) and (iv) clearly fail. To see that (iii) fails note simply that every subgraph $H$ with no loops satisfies $f(H)\geq 3$.\\

\textbf{Case 3.b. There exists a vertex not contained in a balanced subgraph isomorphic to  $K_4$ or to $K_{1,1,3}$.}\\

We consider each of the possibilities for the neighbourhood of a given vertex $v$ in $G_0$ satisfying the hypotheses of this case. 

Suppose first that $v$ has exactly one neighbour. Since $G_0\neq 2K_2^1$ and $G_0$ is connected, Lemma \ref{lem:4one} implies that $v$ is admissible. Suppose next that $v$ has either exactly three or exactly four neighbours.
Then Lemmas \ref{lem:4threei} and \ref{lem:4threeii} imply that $v$ is admissible. 

So we suppose that $v$ has exactly two neighbours. Suppose first that there is no loop on $v$. Let $N(v)=\{x,y\}$. Lemma \ref{lem:4twob} implies that if $v$ is not admissible then there is a loop at $x$ or at $y$, say at $x$. Since $G_0$ is 4-regular we see that $x$ satisfies the condition of Lemma~\ref{lem:4one} and hence there is an admissible reverse move.

So suppose that $v$ has exactly two neighbours $x,y$ and a loop on $v$. By switching, we may assume that $(vx,1)$ and $(vy,1)$ are the non-loop edges incident to $v$. We consider a possible H2d-reduction at $v$.
If every vertex of $G_0$ is incident with a loop and has two neighbours then it is easy to see that $G_0$ is precisely a cycle with one loop on each vertex. If the cycle has length 3 then $G_0$ is a base graph. If the cycle has length at least 4 then it is easy to see that there is no balanced subgraph $H_0$ of $G_0-v$ containing $x,y$ with $f(H_0)=2$ such that all walks from $x$ to $y$ have gain $1$. 
Hence $v$ is admissible.

So we may suppose there is some vertex in $G_0$ which is not incident to a loop. By 4-regularity of $G_0$ and by relabelling if necessary, we may suppose that $x$ is such a vertex. Since $x$ is adjacent to $v$ with exactly one edge and $x$ does not have a loop, $x$ may have either 3 or 4 neighbours.

 If $x$ has 3 neighbours then 4-regularity implies that $x$ is not contained in a $K_4$. Hence Lemma \ref{lem:4threei} implies that $x$, or one of its neighbours, is admissible. 
 So we suppose that $x$ has 4 distinct neighbours.
If the edge $(xy,1)$ exists, then we may contract the edge $vx$ using Lemma \ref{lem:k3contract}. (The structure of $G_0$ makes all four conditions in that lemma easy to rule out.) So suppose $(xy,1)$ does not exist. If $(xy,-1)$ exists then we claim that $x$ is admissible.  If not, then it follows from Lemma~\ref{lem:4threeii} that $x$ must be in a balanced $K_4$. This $K_4$ must consist of the vertices $x,y,a,b$. By switching we may assume that the three edges joining $x$ with $a$, $b$ and $y$ all have gain $-1$ and that the remaining edges of the $K_4$ have gain $1$. Note that every path  from $x$ to $y$ within the $K_4$ has gain $-1$. This implies that there is no balanced subgraph $H_{xy}$ of $G_0-v$ containing $x$ and $y$ with $f(H_{xy})=2$ such that each path from $x$ to $y$ in $H_{xy}$ has gain $1$. Since there is clearly also no subgraph $H_0$ of $G_0-v$ containing $x$ and $y$ with $f(H_0)=0$, this implies that there is a H2d-reduction at $v$, contradicting our assumption that $v$ is non-admissible. 

So we may suppose that $x$ is not adjacent to $y$. By Lemma~\ref{lem:4threeii} $x$ is either in a balanced $K_{1,1,3}$, which gives an immediate contradiction, or $x$ is in a balanced $K_4$. In the latter case, since we are not in Case 3.a, either  the $K_4$ induces exactly one additional edge  and we can apply a reverse vertex-to-$K_4$ move by Lemma \ref{lem:k4contract} (since the additional edge cannot be a loop, by $4$-regularity of $G_0$), or the $K_4$ is contained in a copy of $K_5-e$. 

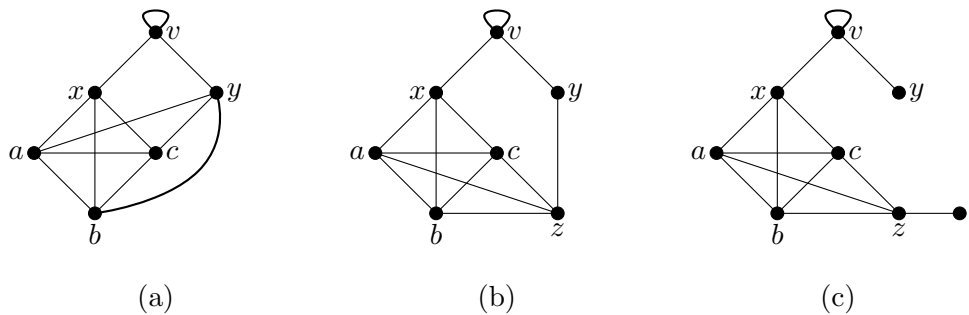
\begin{figure}[ht]
\begin{center}
\begin{tikzpicture}[scale=0.8]

\filldraw (0,6) circle (3pt)node[anchor=west]{$v$};
\filldraw (-1,5) circle (3pt)node[anchor=east]{$x$};
\filldraw (1,5) circle (3pt)node[anchor=west]{$y$};

\filldraw (-2,4) circle (3pt)node[anchor=east]{$a$};
\filldraw (-1,3) circle (3pt)node[anchor=north]{$b$};
\filldraw (0,4) circle (3pt)node[anchor=west]{$c$};

\draw[black]
(1,5) -- (0,6) -- (-1,5) -- (-2,4) -- (-1,3) -- (-1,5) -- (0,4) -- (-1,3);

\draw[black]
(1,5) -- (-2,4) -- (0,4) -- (1,5);

\draw[thick] plot[smooth, tension=1] coordinates{(-1,3)(.7,3.7)(1,5)};

\draw[thick] plot[smooth, tension=1] coordinates{(0,6)(-.2,6.3)(.2,6.3)(0,6)};

\filldraw (0,2) circle (0pt)node[anchor=north]{(a)};

\end{tikzpicture}
\hspace{0.9cm}
\begin{tikzpicture}[scale=0.8]

\filldraw (0,6) circle (3pt)node[anchor=west]{$v$};
\filldraw (-1,5) circle (3pt)node[anchor=east]{$x$};
\filldraw (1,5) circle (3pt)node[anchor=west]{$y$};

\filldraw (-2,4) circle (3pt)node[anchor=east]{$a$};
\filldraw (-1,3) circle (3pt)node[anchor=north]{$b$};
\filldraw (0,4) circle (3pt)node[anchor=west]{$c$};

\filldraw (1,3) circle (3pt)node[anchor=north]{$z$};

\draw[black]
(-1,3) -- (1,3) -- (1,5) -- (0,6) -- (-1,5) -- (-2,4) -- (-1,3) -- (-1,5) -- (0,4) -- (-1,3);

\draw[black]
(1,3) -- (-2,4) -- (0,4) -- (1,3);

\draw[thick] plot[smooth, tension=1] coordinates{(0,6)(-.2,6.3)(.2,6.3)(0,6)};

\filldraw (0,2) circle (0pt)node[anchor=north]{(b)};
\end{tikzpicture}
\hspace{0.9cm}
\begin{tikzpicture}[scale=0.8]

\filldraw (0,6) circle (3pt)node[anchor=west]{$v$};
\filldraw (-1,5) circle (3pt)node[anchor=east]{$x$};
\filldraw (1,5) circle (3pt)node[anchor=west]{$y$};

\filldraw (-2,4) circle (3pt)node[anchor=east]{$a$};
\filldraw (-1,3) circle (3pt)node[anchor=north]{$b$};
\filldraw (0,4) circle (3pt)node[anchor=west]{$c$};

\filldraw (1,3) circle (3pt)node[anchor=north]{$z$};
\filldraw (2,3) circle (3pt)node[anchor=west]{};

\draw[black]
(1,5) -- (0,6) -- (-1,5) -- (-2,4) -- (-1,3) -- (-1,5) -- (0,4) -- (-1,3) -- (1,3);

\draw[black]
(1,3) -- (-2,4) -- (0,4) -- (1,3) -- (2,3);

\draw[thick] plot[smooth, tension=1] coordinates{(0,6)(-.2,6.3)(.2,6.3)(0,6)};

\filldraw (0,2) circle (0pt)node[anchor=north]{(c)};
\end{tikzpicture}
\end{center}
\caption{Possible structures of $G$ when $v$ has two neighbours and a loop and one of its neighbours has 4 distinct neighbours.}
\label{fig:new1}
\end{figure}

Let the vertices of the $K_4$ be $a,b,c,x$ and suppose all edges of the $K_4$ have gain $1$. If the final vertex of the $K_5-e$ is $y$ then $V(G_0)=\{v,x,y,a,b,c\}$ (see Figure \ref{fig:new1}(a)) and we claim that $y$ is admissible.
To see this, note that exactly one or exactly two of the edges joining $y$ with $a,b,c$ have gain $-1$, for otherwise $K_4$ together with the three edges incident with $y$ would violate $(2,2)$-sparsity. Hence in both cases, $y$ is admissible by Lemma \ref{lem:4threeii}.
 
 So we may assume that the final vertex of the $K_5-e$ is $z\neq y$. Suppose first that $z$ is adjacent to $y$ (see Figure \ref{fig:new1}(b)). Then we claim that $y$ is still admissible. 
Clearly $y$ is adjacent to $v$ and $z$. If $y$ is  also incident to a loop, then $V(G_0)=\{v,x,y,a,b,c,z\}$ and it is easy to see that a H2d-reduction is possible at $y$. If $y$ is incident to two parallel edges joining it with a vertex $u\neq v,z$ then $y$, or one of its neighbours, is admissible  by Lemma~\ref{lem:4threei} and the $4$-regularity of $G_0$. If $y$ is incident to two single edges joining it with vertices $u$ and $w$ that are distinct from $v$ and $z$, then $y$ is admissible  by Lemma~\ref{lem:4threeii} and the $4$-regularity of $G_0$. So we may suppose that $z$ is not adjacent to $y$ (see Figure \ref{fig:new1}(c)). Then $z$ is admissible by 
Lemma~\ref{lem:4threeii} and the 4-regularity of $G_0$.\\

\textbf{Case 3.c. Every vertex is contained in a balanced subgraph isomorphic to $K_4$ plus either one or two additional edges or $K_5-e$.}\\

If $G_0$ contains a $K_4^{++}$ as a subgraph then $G_0=K_4^{++}$, by $4$-regularity of $G_0$. In this case $G_0$ is the base graph depicted in Figure \ref{fig:gain_cov_graphs}(h). So we may assume that every vertex of $G_0$ is contained in a balanced subgraph isomorphic to $K_4^+$ (which cannot have a loop) or $K_5-e$. Since $G_0$ is 4-regular any pair of copies of $K_4^+$ or $K_5-e$ are vertex disjoint, and each copy has exactly two edges incident to it. If $G_0$ contains a copy of $K_4^+$ then we can apply a reverse vertex-to-$K_4$ move by Lemma \ref{lem:k4contract}. (Note that since every vertex is in a $K_4^+$ or a $K_5-e$, and $G_0$ is 4-regular, there cannot exist a vertex outside of the $K_4^+$ that is adjacent to two of the vertices of the $K_4^+$.) 
Hence we may suppose that $G_0$ contains no copies of $K_4^+$.
Thus we may assume that we have a copy of $K_5-e$. Suppose $e=xy$ and note that if  $x$ (resp. $y$) is not admissible, then by Lemma \ref{lem:4threeii}  $x$ (resp. $y$) must be contained in a balanced $K_4$. However, if $x$ and $y$ are both contained in balanced $K_4$'s, then the $K_5-e$ is the union of two balanced subgraphs and is hence balanced by Lemma \ref{lem:balancedunion}, contradicting $(2,2)$-sparsity.
\end{proof}

\section{Application to $\C_2$-symmetric frameworks in the $\ell^1$ and $\ell^\infty$-plane}
\label{Sect:Gridlike}
Let $\|\cdot\|_\P$ be a norm on $\bR^2$ with the property that the closed unit ball $\P=\{x\in \bR^2: \|x\|_\P\leq 1\}$ is a quadrilateral (eg. the $\ell^1$ or $\ell^\infty$ norms).
We refer to bar-joint frameworks in this context as {\em grid-like}.
In this section, the results of the previous sections are combined to obtain geometric  and combinatorial characterisations of $\chi$-symmetric isostaticity and infinitesimal rigidity for $\C_2$-symmetric grid-like frameworks. 

\subsection{Framework colourings}
\label{Sect:FrameworkColours}
Let $(G,p)$ be a well-positioned grid-like bar-joint framework  and let $F\in\{\pm F_1,\pm F_2\}$ be one of the four facets of the quadrilateral $\P$. An edge $vw\in E$ is said to have {\em framework colour} $F$ (equivalently, $-F$) if either $p_v-p_w$ or $p_w-p_v$ lies in the cone $\{x\in\bR^2:\frac{x}{\|x\|_\P}\in F\}$. Recall that, since $(G,p)$ is well-positioned, each edge of $G$ has exactly one framework colour (see \cite{kit-pow}). Denote by $G_{F}$ the {\em monochrome subgraph} of $G$ spanned by edges with framework colour $F$.   

For each facet $F$ there exists a unique extreme point $\hat{F}$ of the polar set $\P^{\triangle}=\{y\in\bR^2:x\cdot y\leq 1,\,\,\,\forall\, x\in \P\}$ such that $F=\{x\in\P:x\cdot \hat{F}=1\}$. Define a linear functional $\varphi_F:\bR^2\to\bR$ by setting $\varphi(x)=x\cdot \hat{F}$, for all $x\in \bR^2$.
If $(G,p)$ is well-positioned and $vw\in G_F$ then it can be shown (see \cite{kit-pow}) that the linear functional $\varphi_{v,w}$ described in Lemma \ref{lem:differential} satisfies $\varphi_{v,w}= \varphi_F$. 

If $\G=(G,p,\theta,\tau)$ is a $\C_2$-symmetric grid-like bar-joint framework, then each edge $e\in E$ shares the same framework colour as its image $-e$. By assigning this common framework colour to the edge orbit $[e]=\{e,-e\}$ we induce a framework colouring on the edges of the quotient graph $G_0$. Denote by $G_{F,0}$ the {\em monochrome subgraph} of $G_0$ spanned by edges $[e]$ with framework colour $F$. 

\begin{example}
Consider the $\ell^\infty$ plane. The unit ball  $\P=\{x\in\bR^2:\|x\|_\infty\leq 1\}$ has four facets: $F_1=\{(x_1,x_2)\in\P:x_1=1\}$, $F_2=\{(x_1,x_2)\in\P:x_2=1\}$ and their negatives. The polar set of $\P$ is the $\ell^1$ unit ball $\P^\triangle=\{x\in\bR^2:\|x\|_1\leq 1\}$, and the extreme points of the polar set are $\hat{F}_1=(1,0)$, $\hat{F}_2=(0,1)$ and their negatives.
Figure \ref{fig:gain_cov_graphs} illustrates several examples of framework colourings for $\C_2$-symmetric bar-joint frameworks in the $\ell^\infty$-plane together with the induced framework colourings on their $\bZ_2$-gain graphs.
\end{example}

A {\em map graph} is a graph in which every connected component contains exactly one cycle. An \emph{unbalanced map graph} is a $\mathbb{Z}_2$-gain graph $(H,\psi)$ such that $H$ is a map graph, the covering graph is simple and every cycle is unbalanced.

\begin{theorem}
\label{thm:C2gridgeom}
Let $\G=(G,p,\theta,\tau)$ be a well-positioned and $\C_2$-symmetric bar-joint framework in $(\bR^2,\|\cdot\|_\P)$.
\begin{enumerate}[(A)]
\item
The following statements are equivalent.
\begin{enumerate}[(i)]
\item $\G$ is $\chi_0$-symmetrically isostatic.
\item $G_{F_1,0}$ and $G_{F_2,0}$ are edge-disjoint spanning unbalanced map graphs in $G_0$.
\end{enumerate}

\item
The following statements are equivalent.
\begin{enumerate}[(i)]
\item $\G$ is  $\chi_1$-symmetrically isostatic.
\item $G_{F_1,0}$ and $G_{F_2,0}$ are edge-disjoint spanning trees in $G_0$.
\end{enumerate}

\item
The following statements are equivalent.
\begin{enumerate}[(i)]
\item $\G$ is infinitesimally rigid.
\item $G_{F_1,0}$ and $G_{F_2,0}$ both contain connected spanning unbalanced map graphs.
\end{enumerate}
\end{enumerate}
\end{theorem}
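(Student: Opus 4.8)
The plan is to block-diagonalise the $\chi$-symmetric rigidity operator $R_\chi(\G)$ along the two families of parallel edges cut out by the quadrilateral $\P$, and to recognise the blocks as incidence matrices of the monochrome gain graphs $G_{F_1,0}$ and $G_{F_2,0}$. Since $\P$ is polyhedral, $\T(\bR^2,\|\cdot\|_\P)$ is minimal by \cite[Lemma 2.5]{kit-pow}, and $\C_2$-symmetry means $\tau(\omega)=-I$ is a $2$-fold rotation; hence Lemma \ref{lem:dimensionCn}(i) gives $\dim_\bC\T_{\chi_0}(X)=0$ and $\dim_\bC\T_{\chi_1}(X)=2$, while $\dim_\bC X_{\chi_j}=2|V_0|$ and $\dim_\bC Y_{\chi_j}=|E_0|$.

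First I would set up the block decomposition. The two opposite pairs of facets of $\P$ give two linearly independent functionals $\varphi_1=\varphi_{F_1}$ and $\varphi_2=\varphi_{F_2}$ (with $\varphi_{-F_i}=-\varphi_i$), and each edge $e$ of $G$ lying in $G_{F_i}$ satisfies $\varphi_e=\pm\varphi_i$; so in the coordinate system on $\bR^2$ dual to $\{\hat F_1,\hat F_2\}$ the colour-$i$ edges constrain only the $i$-th coordinate. Consequently $df_G(p)$, and with it every $R_{\chi_j}(\G)$, respects the colour partition $E_0=E_0^{(1)}\sqcup E_0^{(2)}$, giving $R_{\chi_j}(\G)=R_{\chi_j}^{(1)}\oplus R_{\chi_j}^{(2)}$ with $R_{\chi_j}^{(i)}\colon\bC^{V_0}\to\bC^{E_0^{(i)}}$. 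An element $x\in X_{\chi_j}$ satisfies $x_{\omega\tilde w}=\chi_j(\omega)\tau(\omega)x_{\tilde w}$, and $\chi_0(\omega)\tau(\omega)=-I$ while $\chi_1(\omega)\tau(\omega)=I$; so a direct computation with Lemma \ref{lem:differential} (in the spirit of Lemma \ref{lem:commutativediagram}) shows that, up to harmless row and column sign changes, $R_{\chi_0}^{(i)}$ is the $\bZ_2$-gain incidence matrix of $G_{F_i,0}$ (entry $a_{\tilde v}-\psi([vw])\,a_{\tilde w}$ on the edge $[vw]$, a loop of gain $-1$ giving a row with a single nonzero entry), whereas $R_{\chi_1}^{(i)}$ is the ordinary incidence matrix of the graph underlying $G_{F_i,0}$ (entry $a_{\tilde v}-a_{\tilde w}$; gains and loops disappear).

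Granting the standard rank formulas for these matrices (Zaslavsky \cite{zas}, also recoverable here from the switching operation of Lemma \ref{switch}) --- the $\bZ_2$-gain incidence matrix on $V_0$ has rank $|V_0|-b$, with $b$ the number of balanced connected components, and the ordinary incidence matrix has rank $|V_0|-c$, with $c$ the number of connected components --- the three parts follow by dimension counting. For (A): since $\T_{\chi_0}(X)=0$, the framework $\G$ is $\chi_0$-symmetrically isostatic iff $R_{\chi_0}(\G)$ is bijective, iff each block $R_{\chi_0}^{(i)}$ is bijective, iff $G_{F_i,0}$ has $|V_0|$ edges and no balanced component, iff each $G_{F_i,0}$ is a spanning unbalanced map graph (edge-disjointness is automatic, since each edge has a unique framework colour). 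For (B): $\rho_{\chi_1}$ is injective with $2$-dimensional image inside $\ker R_{\chi_1}(\G)$, so $\G$ is $\chi_1$-symmetrically isostatic iff $R_{\chi_1}(\G)$ is surjective with $2$-dimensional kernel, iff each $R_{\chi_1}^{(i)}$ is surjective with $1$-dimensional kernel, iff each $G_{F_i,0}$ is a spanning tree. For (C): by Corollary \ref{lem:SymRigid}, $(G,p)$ is infinitesimally rigid iff it is both $\chi_0$- and $\chi_1$-symmetrically infinitesimally rigid, i.e. iff $R_{\chi_0}(\G)$ is injective (each $G_{F_i,0}$ has no balanced component) and $\rank R_{\chi_1}(\G)=2|V_0|-2$ (forcing $c_1=c_2=1$, so each $G_{F_i,0}$ is connected and spanning); and a connected spanning $\bZ_2$-gain graph has no balanced component iff it contains a connected spanning unbalanced map graph --- namely a spanning tree together with a non-tree edge whose fundamental cycle is unbalanced, which exists by Lemma \ref{switch} unless the whole graph is balanced --- which is condition (C)(ii).

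The step I expect to be the main obstacle is the precise bookkeeping in the block decomposition: one must check that the sign ambiguity in $\varphi_e=\pm\varphi_i$, the freedom in the choice of orbit representatives, and the $\tau(\omega)=-I$ twist combine so that the $\chi_0$-block keeps exactly the $\bZ_2$-gains of $G_{F_i,0}$ while the $\chi_1$-block forgets them, with loops handled consistently in each case. Once this identification and the Zaslavsky rank formulas are in place, everything downstream is routine linear algebra.
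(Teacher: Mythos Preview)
Your approach is correct but genuinely different from the paper's. The paper argues each direction directly: for $(i)\Rightarrow(ii)$ it exhibits explicit $\chi$-symmetric flexes (supported on an uncovered vertex, or on a balanced component after switching gains to $1$) whenever the combinatorial condition fails, and then pins down the map-graph/tree structure by the global edge count from Corollary~\ref{cor:neccounts}; for $(ii)\Rightarrow(i)$ it uses path arguments in the covering graph --- a path from $v$ to $-v$ coming from an unbalanced cycle forces a $\chi_0$-flex to vanish, a path from $v$ to $w$ coming from a spanning tree forces a $\chi_1$-flex to be a translation. Part (C) is then assembled from (A) and (B) via Corollary~\ref{lem:SymRigid}, by passing first to $\chi_0$- and $\chi_1$-isostatic spanning subframeworks.

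Your route --- choose the basis dual to $\{\hat F_1,\hat F_2\}$, observe that the colour-$i$ rows only touch coordinate $i$, and identify $R_{\chi_0}^{(i)}$ with the signed incidence matrix of the gain graph $G_{F_i,0}$ and $R_{\chi_1}^{(i)}$ with the ordinary incidence matrix of its underlying graph --- is more structural and makes the dichotomy transparent: the $\chi_0$-block retains the gains because $\chi_0(\omega)\tau(\omega)=-I$, while the $\chi_1$-block forgets them because $\chi_1(\omega)\tau(\omega)=I$ (and loops become zero rows). Once the Zaslavsky rank formula is invoked, (A), (B), (C) are immediate dimension counts. The trade-off is that the paper's proof is self-contained (it effectively reproves the rank formula in situ via the explicit flexes), whereas yours imports it; on the other hand, your argument would generalise more readily to other polytopal norms or symmetry groups, since the identification with gain-graph incidence matrices is the reusable part.
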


\begin{proof}
$(A)$
$(i)\Rightarrow (ii)$
Suppose there exists a vertex $[v_0]\in V_0\setminus V(G_{F_1,0})$. 
Let $\tilde{v}_0$ be the representative vertex for $[v_0]$ in $G$.
Choose a non-zero vector $x\in \ker\varphi_{F_2}$ and for all $v\in V(G)$ define,
\[u_{v} = \left\{ \begin{array}{ll}
x & \mbox{ if }  v=\tilde{v}_0, \\
-x & \mbox{ if }  v=-\tilde{v}_0, \\
0 & \mbox{ otherwise. } \\
\end{array}\right.\]
Then $u$ is a non-trivial $\chi_0$-symmetric infinitesimal flex for $(G,p)$.
This is a contradiction since $(G,p)$ is $\chi_0$-symmetrically isostatic. Thus every vertex of $G_0$ must be incident to an edge of $G_{F_1,0}$.  By a similar argument every vertex of $G_0$ must  be incident to an edge of $G_{F_2,0}$.

Suppose $G_{F_1,0}$ has a connected component $H_0$ which is a balanced subgraph of $G_0$.
Then, by Lemma \ref{switch}, we may assume that each edge of $H_0$ has gain $1$. Thus if $H$ is the covering graph for $H_0$, then there is no edge $vw\in E(H)$ with $v\in \tilde{V}_0$ and $w\notin \tilde{V}_0$. (Recall Section~\ref{subsec:gaingraphs} for the definition of $\tilde{V}_0$). Choose a non-zero vector $x\in \ker\varphi_{F_2}$ and for all $v\in V(G)$ define,
\[u_{v} = \left\{ \begin{array}{ll}
x & \mbox{ if } [v]\in V(H_0) \mbox{ and } v\in\tilde{V}_0, \\
-x & \mbox{ if } [v]\in V(H_0) \mbox{ and } v\notin\tilde{V}_0, \\
0 & \mbox{ otherwise. }
\end{array}\right.
\]
Then $u$ is a non-trivial $\chi_0$-symmetric infinitesimal flex for $(G,p)$. This is a contradiction and so every connected component of $G_{F_1,0}$ must be an unbalanced subgraph of $G_0$.
Similarly, each connected component of $G_{F_2,0}$ is an unbalanced  subgraph of $G_0$.

By Corollary \ref{cor:neccounts},  we have $|E_0|=2|V_0|$.
Note that each connected component of $G_{F_1,0}$ must contain a cycle (since it is unbalanced) and so if $G_{F_1,0}$ has $n$ connected components, $H_1,H_2,\ldots, H_n$ say, then $|E(H_j)|\geq|V(H_j)|$ for each $j$ and,
\[|E(G_{F_1,0})|=\sum_{j=1}^n |E(H_j)|\geq\sum_{j=1}^n|V(H_j)|= |V_0|.\]
Similarly, $|E(G_{F_2,0})|\geq |V_0|$.
Now $|E(G_{F_1,0})|+|E(G_{F_2,0})|=|E_0|=2|V_0|$ and so
 $|E(G_{F_1,0})|= |V_0|=|E(G_{F_2,0})|$.
It follows that $|E(H_j)|=|V(H_j)|$  
for each $j$ and so the connected components of $G_{F_1,0}$  each contain exactly one cycle. By the same argument, the connected components of $G_{F_2,0}$ each contain exactly one cycle. Thus $G_{F_1,0}$ and $G_{F_2,0}$ are both unbalanced spanning mapping graphs in $G_0$.

$(ii)\Rightarrow (i)$
Suppose $(ii)$ holds and let $u$ be a $\chi_0$-symmetric infinitesimal flex of $(G,p)$. 
Then $u_{-v}=-u_v$ for all $v\in V$.
Let $v_0\in V$ and let $H_0^1$ and $H_0^2$ be the connected components of $G_{F_1,0}$ and $G_{F_2,0}$ respectively which contain $[v_0]\in V_0$. 
Since $H_0^i$ contains a unique unbalanced cycle, there exists a path in $G_{F_i}$ from $v_0$ to $-v_0$.
It follows that $u_{v_0} - u_{-v_0}\in \cap_{i=1,2}\ker \varphi_{F_i}=\{0\}$ and so $u_{v_0} = u_{-v_0} = -u_{v_0}$.
Thus $u_{v_0}=0$. Applying this argument to all $v\in V$, we have $u=0$ and so $(G,p,\theta,\tau)$ is $\chi_0$-symmetrically infinitesimally rigid. 
Note that $|E_0|=2|V_0|$ and so $\G$ is also $\chi_0$-symmetrically isostatic.

$(B)$
$(i)\Rightarrow (ii)$
Suppose  there exists a vertex $[v_0]\in V_0 \setminus V(G_{F_1,0})$.
Choose a non-zero vector $x\in \ker\varphi_{F_2}$. For all $v\in V$ define,
\[u_{v} = \left\{ \begin{array}{ll}
x & \mbox{ if } [v]=[v_0],\\
0 & \mbox{ otherwise. } \\
\end{array}\right.
\]
Then $u$ is a non-trivial  $\chi_1$-symmetric infinitesimal flex for $(G,p)$.
This is a contradiction and so $G_{F_1,0}$ is a spanning subgraph 
of $G_0$.
Similarly, $G_{F_2,0}$ is a spanning subgraph of $G_0$.

Suppose $G_{F_1,0}$ is not connected, 
and let $H_0$ be a connected component of $G_{F_1,0}$. Choose a non-zero vector $x\in \ker\varphi_{F_2}$ and for all 
$v\in V$ define, 
\[u_{v} = \left\{ \begin{array}{ll}
x & \mbox{ if } [v]\in V(H_0) ,\\
0 & \mbox{ otherwise. }
\end{array}\right.
\]
Then $u$ is a non-trivial $\chi_1$-symmetric infinitesimal flex for $(G,p)$, which is a contradiction. Thus $G_{F_1,0}$ is a connected  spanning  subgraph of $G_0$.
Similarly, $G_{F_2,0}$ is a connected  spanning  subgraph of $G_0$.
By Corollary \ref{cor:neccounts}, we have  $|E_0|=2|V_0|-2$.
Note that $|E(G_{F_1,0})|\geq |V_0|-1$ and 
$|E(G_{F_2,0})|\geq |V_0|-1$ and so $G_{F_1,0}$ and $G_{F_2,0}$ are both spanning trees in $G_0$.

$(ii)\Rightarrow (i)$
Suppose $(ii)$ holds and let $u$ be a $\chi_1$-symmetric infinitesimal flex for $\G$. 
Then $u_{-v}=u_v$ for all $v\in V$.
Fix $v,w\in V$.
Since $G_{F_1,0}$ is a spanning tree in $G_0$, there exists a path in $G_{F_1,0}$ from $[v]$ to $[w]$. Thus  there either exists a path $P$ in $G_{F_1}$ from $v$ to $w$ or there exists a path $P$ in $G_{F_1}$ from $v$ to $-w$. In the former case it follows directly that $u_v-u_w\in \ker\varphi_{F_1}$ while in the latter case it follows that $u_v-u_w=u_v-u_{-w}\in\ker\varphi_{F_1}$.
Similarly, $u_v-u_w\in \ker\varphi_{F_2}$ and so $u_v=u_w$ for all $v,w\in V$.
Thus $u$ is a trivial infinitesimal flex and so $\G$ is $\chi_1$-symmetrically infinitesimally rigid. Since $|E_0|=2|V_0|-2$, $\G$ is also $\chi_1$-symmetrically isostatic.

$(C)$
$(i)\Rightarrow (ii)$
If $(G,p)$ is infinitesimally rigid then it is both $\chi_0$ and $\chi_1$-symmetrically infinitesimally rigid. By removing edge orbits from $G$ we arrive at a spanning subgraph $A$ such that $(A,p)$ is $\chi_0$-symmetrically isostatic.
By (A),
$A_{F_1,0}$ and $A_{F_2,0}$ are unbalanced spanning map graphs in $G_0$ and so each contains an unbalanced cycle. Similarly, $(G,p)$ contains a spanning subgraph $B$ such that $(B,p)$ is $\chi_1$-symmetrically isostatic. By (B), $B_{F_1,0}$ and $B_{F_2,0}$ are spanning trees in $G_0$. Since $B_{F_i,0}$ is a spanning tree for $i=1,2$, there exists a set of edges in $B_{F_i,0}$ which, when added to $A_{F_i,0}$, form a connected unbalanced spanning map graph $H_{F_i,0}$. This gives the result.

$(ii)\Rightarrow (i)$
Suppose $(ii)$ holds. Then $(G_0,\psi)$ contains a spanning subgraph $H_0$ such that the induced monochrome subgraphs $H_{F_1,0}$ and $H_{F_2,0}$ are edge-disjoint connected unbalanced spanning map graphs. Let $H$ be the covering graph for $H_0$.
By (A), the $\C_2$-symmetric subframework $(H,p)$ is $\chi_0$-symmetrically infinitesimally rigid.
Similarly, note that $H_{F_1,0}$ and $H_{F_2,0}$ both contain  spanning trees in $H_0$ and so by (B), $(H,p)$ is $\chi_1$-symmetrically infinitesimally rigid.
It follows that $(H,p)$, and hence also $(G,p)$, is infinitesimally rigid. 
\end{proof}

\subsection{Existence of rigid grid-like placements with half-turn symmetry}
\label{Sect:Existence}

Recall from Corollary~\ref{cor:neccounts} that if $\G=(G,p,\theta,\tau)$ is a well-positioned, $\mathcal{C}_2$-symmetric and $\chi_0$-symmetrically isostatic bar-joint framework in $(\bR^2, \|\cdot \|_\P)$, where $\P$ is a quadrilateral, then the gain graph $(G_0,\psi)$ for $(G,\theta)$ is $(2,2,0)$-gain-tight. By Theorem~\ref{thm:recurse}, $(G_0,\psi)$ is $(2,2,0)$-gain-tight if it can be generated from vertex-disjoint copies of graphs in $\B$ by applying H1, H2, H3, vertex-to-$K_4$ and vertex splitting moves. We now show that if there exists such a recursive construction sequence, then there exists a half-turn symmetric realisation of $G$ that is   well-positioned and $\chi_0$-symmetrically isostatic in $(\bR^2,\|\cdot\|_\P)$. Overall, this yields the following main combinatorial result for $\chi_0$-symmetrically isostatic frameworks with half-turn symmetry in $(\bR^2, \|\cdot \|_\P)$.

\begin{theorem}  \label{thm:geom}
Let $\|\cdot\|_\P$ be a norm on $\bR^2$ for which $\P$ is a quadrilateral, and let $(G,\theta)$ be a $\mathbb{Z}_2$-symmetric graph. Further, let $(G_0,\psi)$ be  the gain  graph for $(G,\theta)$.
The following are equivalent.
\begin{enumerate}
\item[(i)] There exists a representation $\tau:\mathbb{Z}_2\to \Isom(\mathbb{R}^2)$ 
 and a realisation $p$ such that $\G=(G,p,\theta,\tau)$ is   well-positioned, $\C_2$-symmetric and $\chi_0$-symmetrically isostatic in $(\bR^2,\|\cdot\|_\P)$;
\item[(ii)] $(G_0,\psi)$  is $(2,2,0)$-gain tight;
\item[(iii)] $(G_0,\psi)$ can be constructed from disjoint copies of base graphs in Figure~\ref{fig:gain_cov_graphs} by a sequence of H1a,b,c moves, H2a,b,c,d,e moves, H3a,b,c,d moves, vertex-to-$K_4$ moves, and vertex splitting moves.
\end{enumerate}
\end{theorem}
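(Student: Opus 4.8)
The plan is to obtain (ii)$\Leftrightarrow$(iii) directly from Theorem~\ref{thm:recurse}, to obtain (i)$\Rightarrow$(ii) from Corollary~\ref{cor:neccounts}, and to spend the real effort on (iii)$\Rightarrow$(i). For (i)$\Rightarrow$(ii): a centrally symmetric quadrilateral is a parallelogram, so $\|\cdot\|_\P$ is polyhedral and $\T(\bR^2,\|\cdot\|_\P)$ is minimal by \cite[Lemma~2.5]{kit-pow}; since $\tau(\omega)=-I$ is a $2$-fold rotation, Corollary~\ref{cor:neccounts}(i)(a) with $d=2$, $n=2$, $j=0$ gives that $(G_0,\psi)$ is $(2,2,0)$-gain-tight.

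For (iii)$\Rightarrow$(i) I would first reduce to the $\ell^\infty$-plane. Choosing an invertible linear $T$ with $T\P=[-1,1]^2$ we have $\|x\|_\P=\|Tx\|_\infty$ and $T(-I)=(-I)T$, so $p\mapsto(T^{-1}p_v)_{v\in V}$ is a bijection commuting with the $\mathbb{Z}_2$-action and carrying well-positioned $\chi_0$-symmetrically isostatic realisations for $\|\cdot\|_\infty$ to such realisations for $\|\cdot\|_\P$ (the edge functions $f_G$ literally coincide). Thus it suffices to construct, for any $(2,2,0)$-gain-tight $(G_0,\psi)$ admitting a construction sequence from $\B$, a well-positioned, $\C_2$-symmetric, $\chi_0$-symmetrically isostatic realisation of the cover $G$ in $(\bR^2,\|\cdot\|_\infty)$. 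This I would prove by induction on the length of the construction sequence: the base case is the finite list in Figure~\ref{fig:gain_cov_graphs}, for each of which the depicted realisation has, by inspection of the indicated monochrome subgraphs, both $G_{F_1,0}$ and $G_{F_2,0}$ equal to edge-disjoint spanning unbalanced map graphs in $G_0$, hence is $\chi_0$-symmetrically isostatic by Theorem~\ref{thm:C2gridgeom}(A).

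For the inductive step, suppose $(G_0,\psi)$ arises from a $(2,2,0)$-gain-tight $(G_0',\psi')$ by one of the moves H1a--c, H2a--e, H3a--d, vertex-to-$K_4$, or vertex splitting, and that the cover $G'$ has a realisation as above; by Theorem~\ref{thm:C2gridgeom}(A), $G'_{F_1,0}$ and $G'_{F_2,0}$ are edge-disjoint spanning unbalanced map graphs. The key point is that, within a fixed assignment of framework colours to edges, the functionals $\varphi_{v,w}$ of Lemma~\ref{lem:differential} depend only on the colour, so $\chi_0$-symmetric isostaticity is a purely combinatorial property of the two colour classes. Hence it suffices to realise the enlarged cover by a $\C_2$-symmetric placement (so only the position of a representative of the new vertex orbit must be chosen, with the opposite position prescribed) that is well-positioned (a nonempty open set of conditions avoiding coincident joints and cone boundaries) and for which the new edge orbits receive a prescribed colour pattern while the surviving edges keep theirs, the prescribed pattern being the one dictated by the gains in the definition of each move so that $G_{F_1,0}$ and $G_{F_2,0}$ remain edge-disjoint spanning unbalanced map graphs. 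For instance an H1a move adds a pendant edge to each of the two map graphs; an H2a move, with the deleted orbit in colour $F_i$, reroutes that colour class through a two-edge monochrome path at the new vertex and adds a pendant edge of colour $F_{3-i}$; the H3 moves split the colours of the two deleted orbits among the four new edges accordingly; and the vertex-to-$K_4$ and vertex-splitting moves use that a balanced $K_4$ (resp.\ $K_3$) with trivial gains lifts to two vertex-disjoint copies in the cover, on which the two monochrome subgraphs can be prescribed freely. In each case Theorem~\ref{thm:C2gridgeom}(A) then yields that the new framework is $\chi_0$-symmetrically isostatic.

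The step I expect to be the main obstacle is exactly this per-move geometric realisation: for each of the roughly twelve move types one must verify that the required cones for the new edges are mutually compatible with a $\C_2$-symmetric, well-positioned placement — in particular that they do not force two joints to coincide, which may require re-positioning some nearby old joints rather than only placing the new one — and that the resulting colour classes are again spanning unbalanced map graphs. The edge-deleting moves H2 and H3 are the most delicate, because the deleted edge's colour class must be re-routed through the new joint; and the moves producing parallel edges or loops in the quotient graph (H1b,c, H2b--e, H3b--d) together with the $K_4$- and triangle-contraction moves require careful bookkeeping of how unbalanced $2$-cycles and loops sit inside the individual map-graph components. This analysis is an equivariant, map-graph refinement of the tree-based arguments for the non-symmetric grid-like case in \cite{kit-pow} and the reflection-symmetric case in \cite{kit-sch2}.
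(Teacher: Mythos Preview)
Your approach is essentially the paper's: (i)$\Rightarrow$(ii) via Corollary~\ref{cor:neccounts}, (ii)$\Leftrightarrow$(iii) via Theorem~\ref{thm:recurse}, and (iii)$\Rightarrow$(i) by reducing to $\ell^\infty$, invoking the base-graph realisations in Figure~\ref{fig:gain_cov_graphs}, and then inducting along the construction sequence using Theorem~\ref{thm:C2gridgeom}(A) to translate each move into a statement about preserving the two spanning unbalanced map graphs. The paper carries this out move-by-move in Lemmas~\ref{lemma:h1}--\ref{lemma:furthergeomoves}.

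One point where your sketch is underspecified and slightly misleading: your description of the H3 moves as ``splitting the colours of the two deleted orbits among the four new edges accordingly'' only covers the easy case where the two deleted edges lie in different monochrome classes. When both deleted edges lie in the \emph{same} class $G'_{F_i,0}$ (the paper's Case~B in Lemma~\ref{lemma:h3}), a $2$--$2$ split would leave the new vertex isolated in $G_{F_{3-i},0}$, so one needs a $3$--$1$ split; moreover, \emph{which} of the four new edges receives colour $F_{3-i}$ is forced by the cycle structure of $G'_{F_i,0}$ (whether the deleted edges lie on a common cycle, in the same component, on the unique cycle of their component, etc.), not by the gains in the move. The paper handles the geometric side with a separate claim (Claim~\ref{claimh3a}) showing that for any four points whose two prescribed segments share a colour, an open region exists in which a fifth point sees exactly one of the four in the opposite colour cone, with the distinguished edge freely selectable. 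You correctly flag H3 as the main obstacle, but your phrasing suggests the colour pattern is determined locally by the move, whereas it is in fact determined globally by the map-graph components; and no repositioning of old joints is needed---only the new vertex is placed.
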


To show that (iii) implies (i), we rely on Theorem~\ref{thm:C2gridgeom}$(A)$. We split the proof into a number of geometric lemmas. In these lemmas, we will use the notation 
of Section~\ref{subsec:gaingraphs} and write $[v]$ and $[e]$ for a vertex and an edge of the gain graph $(G_0,\psi)$ for a $\mathbb{Z}_2$-symmetric graph $(G,\theta)$, respectively. 
Moreover, we let $\tilde{V}_0=\{\tilde{v}_1,\ldots,\tilde{v}_n\}$ be a choice of representatives for the vertex orbits of $(G,\theta)$.

\begin{lemma} \label{lemma:h1} Let $(G_0,\psi)$ and $(G'_0,\psi')$ be the gain graphs of the $\mathbb{Z}_2$-symmetric graphs $(G,\theta)$ and $(G',\theta')$, respectively and suppose that   $(G_0,\psi)$ is obtained from  $(G'_0,\psi')$ by a H1a, H1b or H1c move. If for $(G'_0,\psi')$ there exists a representation $\tau:\mathbb{Z}_2\to \Isom(\mathbb{R}^2)$ 
 and a realisation $p'$ such that  $\G=(G',p',\theta',\tau)$ is   well-positioned, $\C_2$-symmetric and $\chi_0$-symmetrically isostatic in $(\bR^2,\|\cdot\|_\P)$, then the same is true for $(G_0,\psi)$.
\end{lemma}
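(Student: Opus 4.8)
The plan is to prove Lemma~\ref{lemma:h1} by inverting the H1 move geometrically: we start with the given $\chi_0$-symmetrically isostatic realisation of $(G',\theta')$ and extend it to a realisation of $(G,\theta)$ by placing the new symmetric pair of vertices (the preimages of the newly added vertex $[v_0]$ in the gain graph), choosing the placement so that the conditions of Theorem~\ref{thm:C2gridgeom}$(A)(ii)$ are preserved. Concretely, by hypothesis $(G',p',\theta',\tau)$ is well-positioned and $\chi_0$-symmetrically isostatic, so by Theorem~\ref{thm:C2gridgeom}$(A)$ the monochrome subgraphs $G'_{F_1,0}$ and $G'_{F_2,0}$ are edge-disjoint spanning unbalanced map graphs in $G'_0$. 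We must add one new vertex orbit $[v_0]$ together with the new edge orbits (two of them in cases H1a, H1b, and three in case H1c, counting the loop orbit) and produce a placement $p$ of $G$ extending $p'$ such that the new monochrome subgraphs $G_{F_1,0}$ and $G_{F_2,0}$ are again edge-disjoint spanning unbalanced map graphs in $G_0$.

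First I would fix $p_{\tilde v}=p'_{\tilde v}$ for all old representatives $\tilde v\in\tilde V_0'$, and $p_{-\tilde v}=\tau(\omega)p'_{\tilde v}$, so that the restriction to the old framework is unchanged. It then remains to place $p_{\tilde v_0}$ (and then $p_{-\tilde v_0}=\tau(\omega)p_{\tilde v_0}=-p_{\tilde v_0}$, since $\tau(\omega)$ is the half-turn). In each of the three cases the new vertex $[v_0]$ has exactly one new edge of framework colour $F_1$ and exactly one of framework colour $F_2$ incident with it in $G_0$ (in case H1c the two parallel-to-one-vertex edges plus the loop must be distributed among the colours; since a loop in $G_0$ lifts to an edge of $G$ joining $\tilde v_0$ to $-\tilde v_0=-\tilde v_0$, i.e.\ joining the pair $p_{\tilde v_0}$ and $-p_{\tilde v_0}$, it forces $p_{\tilde v_0}-(-p_{\tilde v_0})=2p_{\tilde v_0}$ to lie in one of the two facet cones, which is exactly what pins down one coordinate of $p_{\tilde v_0}$). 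The key observation is that each facet cone of the quadrilateral $\P$ is two-dimensional (it has nonempty interior), so the condition "$[v_0u]$ has framework colour $F_i$" is an open condition on $p_{\tilde v_0}$, as is the well-positioned condition (all edge difference vectors must avoid the finitely many facet boundaries); hence we may choose $p_{\tilde v_0}$ generically in the appropriate cone directions so that all the new edges acquire their prescribed colours, no two joints coincide, and the framework stays well-positioned. The gains on the new edges are assigned exactly as in the H1 move (parallel edges get distinct gains, loops get gain $-1$), which is consistent with a genuine $\mathbb{Z}_2$-symmetric realisation.

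Next I would verify that the resulting monochrome subgraphs are still spanning unbalanced map graphs. Adding a single vertex of degree $2$ to a spanning map graph by two edges, one into $G_{F_1,0}$ and one into $G_{F_2,0}$ (case H1a: two distinct old neighbours; case H1b: parallel edges to a single old neighbour, which simply adds a new edge-orbit in the covering graph), keeps each $G_{F_i,0}$ a spanning subgraph with $|E|=|V|$ per component, and adding a degree-$1$ pendant vertex to a map graph cannot create a new cycle nor destroy the existing one, so each component still has exactly one cycle, and it is still unbalanced because the unique cycle in the component of $[v_0]$ is the old cycle, which was unbalanced. In case H1c, the loop orbit contributes a further edge at $[v_0]$; whichever colour class receives the loop then has $[v_0]$ on a $1$-vertex unbalanced cycle (the loop has gain $-1$), so that component is an unbalanced map graph by itself, while the other colour class receives a single pendant edge at $[v_0]$ as before. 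In all cases $G_{F_1,0}$ and $G_{F_2,0}$ remain edge-disjoint (the new edges are split between the two colours by construction) and spanning (every old vertex was already covered and $[v_0]$ is now covered in both colours). By Theorem~\ref{thm:C2gridgeom}$(A)$, $(G,p,\theta,\tau)$ is $\chi_0$-symmetrically isostatic, completing the proof.

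The main obstacle I anticipate is bookkeeping rather than conceptual: one must carefully check, case by case (H1a, H1b, H1c, and within H1c the choice of which colour carries the loop), that the new edges really can be realised with the prescribed framework colours simultaneously — i.e.\ that the relevant facet cones are compatible with a single choice of $p_{\tilde v_0}$ — and that the map-graph structure (one cycle per component, all cycles unbalanced) is inherited. The cone-compatibility point is where a subtle error could hide: for instance in H1c one needs the direction forced by the loop (one coordinate of $p_{\tilde v_0}$ fixed up to sign by the requirement $2p_{\tilde v_0}\in F_i$-cone) to leave enough freedom in the other coordinate to also route the remaining edge(s) into their prescribed colour cones. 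Since the facet cones of a quadrilateral are genuinely $2$-dimensional and we only ever need finitely many open conditions satisfied at once, a generic choice works, but the write-up must make this explicit.
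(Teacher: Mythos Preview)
Your approach is essentially the paper's: reduce via Theorem~\ref{thm:C2gridgeom}(A) to showing the monochrome subgraphs remain spanning unbalanced map graphs, then place the new representative vertex so that one new edge lands in each colour class (H1a,b) or the loop lands in one colour and the non-loop edge in the other (H1c). The verification that appending a pendant edge to a map graph, or adding a new one-vertex loop component, preserves the spanning unbalanced map graph property is correct and matches the paper.

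Two points need repair. First, a bookkeeping slip: in H1c the new vertex has degree $3$ via one loop (contributing $2$ to the degree) and \emph{one} edge to its single neighbour---two edge orbits in total, not three, and there are no ``two parallel-to-one-vertex edges''. Your final paragraph handles H1c correctly (loop in one colour as a new unbalanced component, single pendant in the other), but the earlier description is inconsistent with this.

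Second, and more importantly: ``finitely many open conditions, so a generic choice works'' is not a valid inference on its own---finitely many nonempty open subsets of $\bR^2$ can have empty intersection, and the colour conditions here are not dense. You must actually exhibit a point where all prescribed colours occur simultaneously. The paper does this explicitly: fix $x_i$ in the relative interior of $F_i$; for H1a take $a$ to be the intersection of the lines $\{\tau(\gamma_1)p_{\tilde v_1}+tx_1:t\in\bR\}$ and $\{\tau(\gamma_2)p_{\tilde v_2}+tx_2:t\in\bR\}$ (which meet since $x_1,x_2$ are linearly independent), and choose $p_{\tilde v}$ in a small ball about $a$; for H1c intersect $\{\tau(\gamma)p_{\tilde z}+tx_2:t\in\bR\}$ with the line $\{tx_1:t\in\bR\}$ through the origin, the latter forcing the loop $2p_{\tilde v}$ into the $F_1$-cone. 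A further small perturbation within the ball then secures well-positionedness and distinct placements. You correctly flagged cone-compatibility as the obstacle; this two-line-intersection device is exactly the missing ingredient.
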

\begin{proof} By Theorem~\ref{thm:C2gridgeom}$(A)$, there exists a well-positioned $\C_2$-symmetric realisation $p'$ of  $(G',\theta')$   in $(\bR^2,\|\cdot\|_\P)$   so that the induced monochrome subgraphs $G'_{F_1,0}$ and $G'_{F_2,0}$ of $(G'_0,\psi')$ are both spanning unbalanced map graphs. By Theorem~\ref{thm:C2gridgeom}$(A)$, it now suffices to show that the vertex of $G_0\setminus G'_0$ can be placed in such a way that the corresponding framework $(G,p,\theta, \tau)$ is $\C_2$-symmetric and well-positioned, and the induced monochrome subgraphs $G_{F_1,0}$ and $G_{F_2,0}$ are both spanning unbalanced map graphs in $(G_0,\psi)$.

We fix two points $x_1$ and $x_2$ in the relative interiors of $F_1$ and $F_2$ respectively.

Suppose first that $(G_0,\psi)$ is obtained from $(G'_0,\psi')$ by a H1a move, where $[v]\in G_0\setminus G'_0$ is adjacent to the vertices $[v_1]$ and $[v_2]$ of $G'_0$ with respective gains $\gamma_1$ and $\gamma_2$. 
Set $p_w=p_w'$ for all vertices $w$ of $G$ with $[w]\neq[v]$.
Let $a\in \mathbb{R}^2$ be the point of intersection of the lines $L_1=\{\tau(\gamma_1)p_{\tv_1}+tx_1:t\in \mathbb{R}\}$ and $L_2=\{\tau(\gamma_2)p_{\tv_2}+tx_2:t\in \mathbb{R}\}$ and let $B(a,r)$ be an open ball with centre $a$ and radius $r>0$. Choose $p_{\tv}$ to be any point in $B(a,r)$ which is distinct from $\{p_w:w\in V(G')\}$ and which is not fixed by $\tau(-1)$. Set $p_{-\tv}=\tau(-1)p_{\tv}$.
Then $(G,p,\theta, \tau)$ is a $\C_2$-symmetric bar-joint framework  and, by  applying a small perturbation to $p_{\tilde{v}}$ if necessary, we may assume that $(G,p,\theta, \tau)$ is well-positioned. 
If $r$ is sufficiently small then the induced framework colours for $[v][v_1]$ and $[v][v_2]$ are $[F_1]$ and $[F_2]$ respectively. 
Thus, the induced monochrome subgraphs of $(G_0,\psi)$ are $G_{F_1,0}=G'_{F_1,0}\cup \{[v][v_1]\}$ and $G_{F_2,0}=G'_{F_2,0}\cup \{[v][v_2]\}$.  Clearly, $G_{F_1,0}$  and $G_{F_2,0}$ are spanning unbalanced map graphs of $(G_0,\psi)$.

If $(G_0,\psi)$ is obtained from $(G'_0,\psi)$ by a H1b move, then the proof is completely analogous to the proof above.

Suppose $(G_0,\psi)$ is obtained from $(G'_0,\psi')$ by a H1c move, where $[v]\in G_0\setminus G'_0$ is incident to the unbalanced loop $[e]$ and  adjacent to the vertex $[z]$ of $(G'_0,\psi')$ with gain $\gamma$. Let $a\in \mathbb{R}^2$ be the point of intersection of the lines $L_1=\{\tau(\gamma)p_{\tz}+tx_2:t\in \mathbb{R}\}$ and $L_2=\{tx_1:t\in \mathbb{R}\}$ and let $B(a,r)$ be an open ball with centre $a$ and radius $r>0$. Choose $p_{\tv}$ to be any point in $B(a,r)$ which is distinct from $\{p_w:w\in V(G')\}$ and which is not fixed by $\tau(-1)$. Set $p_{-\tv}=\tau(-1)p_{\tv}$. Then, by  applying a small perturbation to $p_{\tilde{v}}$ if necessary,  $(G,p,\theta, \tau)$ is well-positioned and $\C_2$-symmetric, and the induced monochrome subgraphs of $(G_0,\psi)$ are $G_{F_1,0}=G'_{F_1,0}\cup \{[e]\}$ and $G_{F_2,0}=G'_{F_2,0}\cup \{[v][z]\}$.  Clearly, $G_{F_1,0}$ and  $G_{F_2,0}$ are unbalanced spanning map graphs of $(G_0,\psi)$.
\end{proof}


\begin{lemma} \label{lemma:h2} Let $(G_0,\psi)$ and $(G'_0,\psi')$ be the gain graphs of the $\mathbb{Z}_2$-symmetric graphs $(G,\theta)$ and $(G',\theta')$, respectively and suppose that   $(G_0,\psi)$ is obtained from  $(G'_0,\psi')$ by a H2a, H2b, H2c, H2d, or H2e move. If for $(G'_0,\psi')$ there exists a representation $\tau:\mathbb{Z}_2\to \Isom(\mathbb{R}^2)$ and a realisation $p'$ such that $\G=(G',p',\theta',\tau)$ is   well-positioned, $\C_2$-symmetric and $\chi_0$-symmetrically isostatic in $(\bR^2,\|\cdot\|_\P)$, then the same is true for $(G_0,\psi)$.
\end{lemma}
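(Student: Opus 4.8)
The plan is to proceed exactly as in Lemma~\ref{lemma:h1}, relying on the geometric characterisation Theorem~\ref{thm:C2gridgeom}$(A)$: a well-positioned $\C_2$-symmetric framework in $(\bR^2,\|\cdot\|_\P)$ is $\chi_0$-symmetrically isostatic if and only if the induced monochrome subgraphs $G_{F_1,0}$ and $G_{F_2,0}$ are edge-disjoint spanning unbalanced map graphs in $G_0$. So by hypothesis we start with a well-positioned $\C_2$-symmetric realisation $p'$ of $(G',\theta')$ whose monochrome subgraphs $G'_{F_1,0}$ and $G'_{F_2,0}$ are spanning unbalanced map graphs, and we must place the new vertex orbit $[v]$ so that the resulting framework is still well-positioned and $\C_2$-symmetric, and the two new monochrome subgraphs remain edge-disjoint spanning unbalanced map graphs.

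First I would fix points $x_1,x_2$ in the relative interiors of the facets $F_1,F_2$, and treat each of the five H2 sub-types separately. In each case the H2 move deletes an edge $[e]=([x],[y])$ of gain $\alpha$ (or a loop $(xx,-1)$ in types 2c, 2e) and adds $[v]$ joined to $[x]$ and $[y]$; the key colour-bookkeeping observation is that the deleted edge $[e]$ carried one of the two framework colours, say $[F_1]$, so that $G'_{F_1,0}-[e]$ has exactly two components (each a tree or, if $[e]$ was in the cycle of its component, one of them a map graph and one a tree — but after deletion at least one component loses its cycle). We want to reinsert $[v]$ with two of its edges coloured $[F_1]$ so as to reconnect these pieces, while the remaining edge(s) of $[v]$ are coloured $[F_2]$ and attached to the (unchanged, unbalanced, spanning) map graph $G'_{F_2,0}$. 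Concretely: for type 2a, give $[vx]$ and $[vy]$ colour $[F_1]$ and there is no third neighbour-edge — wait, $v$ has degree $3$, so one of the three edges gets colour $[F_2]$; we choose the edge to $[y]$, say, to be coloured $[F_2]$ and the edges to $[x]$ and to the third neighbour $[z]$ (present via $\beta\gamma=\alpha$ relations in the H2a definition) coloured $[F_1]$, arranging by a line-intersection argument (as in Lemma~\ref{lemma:h1}) that $p_{\tv}$ lies near the intersection point of the appropriate lines $L_i=\{\tau(\gamma_i)p_{\tv_i}+tx_{k}:t\in\bR\}$. After a generic small perturbation inside a small ball $B(a,r)$ the framework is well-positioned, $p_{\tv}$ is not fixed by $\tau(-1)$, and the colours are exactly the prescribed ones. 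One then checks that $G_{F_1,0}=(G'_{F_1,0}-[e])\cup\{\text{two new }[F_1]\text{-edges}\}$ is again a spanning unbalanced map graph: deleting $[e]$ and reattaching $[v]$ by two edges to the endpoints of $[e]$ preserves the cycle-count, and since $[e]$ lay on the unbalanced cycle of its component (if it did) the new cycle through $[v]$ remains unbalanced by the gain conditions built into the H2 move ($\beta\gamma=\alpha$, etc.); if $[e]$ did not lie on the cycle, the component's original unbalanced cycle is untouched. Simultaneously $G_{F_2,0}=G'_{F_2,0}\cup\{\text{one new }[F_2]\text{-edge}\}$ just acquires a pendant edge at $[v]$, staying a spanning unbalanced map graph. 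Types 2b, 2c, 2d, 2e are handled the same way, where the extra parallel $[F_1]$-edges $(xv,1),(xv,-1)$ automatically form an unbalanced $2$-cycle at $[v]$ (so one must be careful to colour \emph{both} parallel edges $[F_1]$ — which is fine since they have the same endpoints hence the same framework colour — and the loop edges $(vv,-1)$ or the third single edge $[F_2]$), and in 2c, 2e the deleted loop $(xx,-1)$ was an $[F_i]$-coloured unbalanced loop that is replaced by the new structure at $[v]$.

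The main obstacle I anticipate is ensuring that the reinserted monochrome subgraph $G_{F_1,0}$ is \emph{unbalanced} on every component, not merely a spanning map graph — i.e. that no reinsertion of $[v]$ accidentally creates a balanced cycle or disconnects a component's cycle without supplying a replacement unbalanced cycle. This is exactly where the gain relations in the H2 definitions ($\beta\gamma=\alpha$ in 2a, $\delta=\pm1$ with the parallel pair in 2b–2c, $\alpha=\beta\gamma$ and $(vv,-1)$ in 2d, $(vv,-1)$ in 2e) do the work: the walk through $[v]$ that replaces the deleted edge/loop has the same gain as the deleted edge/loop, so the balance status of every cycle of $G_{F_1,0}$ is unchanged. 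Carefully tracking, for each of the five types, \emph{which} of the new edges is forced to have colour $[F_1]$ versus $[F_2]$ — and confirming that the line-intersection argument actually produces a point $a$ and an open ball $B(a,r)$ realising that colour pattern (which requires $x_1$ and $x_2$ to span $\bR^2$, true since they lie in distinct facets of a quadrilateral unit ball) — is the bulk of the verification, but each individual check is a short geometric argument entirely parallel to the H1 case already carried out in Lemma~\ref{lemma:h1}.
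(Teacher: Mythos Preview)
Your overall strategy is correct and matches the paper's: use Theorem~\ref{thm:C2gridgeom}$(A)$, assume the deleted edge $[e]$ has colour $[F_1]$, place the new vertex near an intersection of two lines so that the new edges receive prescribed colours, and check that both monochrome subgraphs remain spanning unbalanced map graphs. However, the specific colour assignments you write down are wrong, and one of your supporting claims is false.

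For H2a you settle on colouring $[v][x]$ and $[v][z]$ with $[F_1]$ and $[v][y]$ with $[F_2]$. This is backwards. The deleted edge is $[e]=[x][y]$; the gain relation $\beta\gamma=\alpha$ is precisely what makes the path $[x]\,[v]\,[y]$ a gain-preserving subdivision of $[e]$. So the correct choice (and the paper's) is to colour the two edges $[v][x]$ and $[v][y]$ with $[F_1]$ and the third edge $[v][z]$ with $[F_2]$: then $G_{F_1,0}=(G'_{F_1,0}\setminus\{[e]\})\cup\{[v][x],[v][y]\}$ is literally $G'_{F_1,0}$ with one edge subdivided, hence still a spanning unbalanced map graph, and $G_{F_2,0}=G'_{F_2,0}\cup\{[v][z]\}$ gains a pendant. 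Your later paragraph (``reattaching $[v]$ by two edges to the endpoints of $[e]$'') actually describes this correct scheme, so the error is in the explicit assignment, not in your understanding of why subdivision works.

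Your claim that parallel edges ``have the same endpoints hence the same framework colour'' is false. In the covering graph the edges $(xv,1)$ and $(xv,-1)$ are $\tilde v\,\tilde x$ and $\tilde v\,(-\tilde x)$, with direction vectors $p_{\tilde v}-p_{\tilde x}$ and $p_{\tilde v}+p_{\tilde x}$; these lie in different facet-cones in general. Consequently your plan for H2b --- colouring both parallel edges $[F_1]$ --- is not forced, and in fact would fail: it creates an unbalanced $2$-cycle at $[v]$ in $G_{F_1,0}$, so if the deleted edge $[e]=[x][y]$ was \emph{not} on the unique cycle of its component, that component of $G_{F_1,0}$ now has two cycles. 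The paper instead treats H2b and H2c exactly like H2a: one of the parallel edges together with $[v][y]$ forms a gain-preserving subdivision of the deleted edge (since one of $\pm 1$ times $\delta$ equals $\alpha$), and those two edges get colour $[F_1]$ while the other parallel edge gets $[F_2]$. The same subdivision principle governs H2d and H2e (the loop at $[v]$ is the ``third'' edge and takes colour $[F_2]$).
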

\begin{proof} By Theorem~\ref{thm:C2gridgeom}$(A)$, there exists a well-positioned $\C_2$-symmetric realisation $p'$ of $(G',\theta')$  in $(\bR^2,\|\cdot\|_\P)$  so that the induced monochrome subgraphs $G'_{F_1,0}$ and $G'_{F_2,0}$ of $(G'_0,\psi')$ are both spanning unbalanced map graphs. By Theorem~\ref{thm:C2gridgeom}$(A)$, it now suffices to show that the vertex of $G_0\setminus G'_0$ can be placed in such a way that the corresponding framework $(G,p,\theta, \tau)$ is $\C_2$-symmetric and well-positioned, and the induced monochrome subgraphs $G_{F_1,0}$ and $G_{F_2,0}$ are both spanning unbalanced map graphs in $(G_0,\psi)$.

We fix two points $x_1$ and $x_2$ in the relative interiors of $F_1$ and $F_2$ respectively.

Suppose first that $(G_0,\psi)$ is obtained from $(G'_0,\psi')$ by a H2a move where $[v]\in G_0\setminus G'_0$ subdivides the edge $[e]=[v_1][v_2]$ into the edges $[e_1]=[v][v_1]$ and $[e_2]=[v][v_2]$ with respective gains $\gamma_1$ and $\gamma_2$, and $[v]$ is also incident to the edge $[e_3]$ with end-vertex $[z]$ and gain $\gamma_3$. Without loss of generality we may assume that $[e]\in G'_{F_1,0}$. Let $a\in \bR^2$ be the point of intersection of the line $L_1$ which passes through the points $\tau(\gamma_1)p_{\tv_1}$ and $\tau(\gamma_2)p_{\tv_2}$, and the line $L_2=\{\tau(\gamma_3)p_{\tz}+tx_2:t\in\bR\}$.
Let $B(a,r)$ be the open ball with centre $a$ and radius $r>0$ and choose $p_{\tv}$ to be a point in $B(a,r)$ which is distinct from $\{p_w:w\in G'\}$ and which is not fixed by $\tau(-1)$.
Set $p_{-\tv}=\tau(-1)p_{\tv}$. Then $(G,p,\theta,\tau)$ is $\C_2$-symmetric and, by  applying a small perturbation to $p_{\tilde{v}}$ if necessary, we may assume it is well-positioned. If $r$ is sufficiently small then  $[e_1]$ and $[e_2]$ have induced framework colour $[F_1]$ and $[e_3]$ has framework colour $[F_2]$. The induced monochrome subgraphs of $(G_0,\psi)$ are $G_{F_1,0}=(G'_{F_1,0}\backslash\{[e]\})\cup \{[e_1],[e_2]\}$ and $G_{F_2,0}=G'_{F_2,0}\cup \{[e_3]\}$. Clearly, $G_{F_1,0}$ and $G_{F_2,0}$ are spanning unbalanced map graphs of $(G_0,\psi)$.

The cases where $(G_0,\psi)$ is obtained from $(G'_0,\psi')$ by a H2b or a H2c move may be proved completely  analogously to the case above for the H2a move.

Next, we suppose that $(G_0,\psi)$ is obtained from $(G'_0,\psi')$ by a H2d move where $[v]\in G_0\setminus G'_0$ subdivides the edge $[e]=[v_1][v_2]$ into the edges $[e_1]=[v][v_1]$ and $[e_2]=[v][v_2]$ with respective gains $\gamma_1$ and $\gamma_2$, and $[v]$ is also incident to the unbalanced loop $[e_3]$. Without loss of generality we may assume that $[e]\in G'_{F_1,0}$. Let $a\in \mathbb{R}^2$ be the point of intersection of the line $L_1$ which passes through the points $\tau(\gamma_1)p_{\tv_1}$ and $\tau(\gamma_2)p_{\tv_2}$ and the line $L_2=\{tx_2:t\in \mathbb{R}\}$, and let $B(a,r)$ be an open ball with centre $a$ and radius $r>0$. (Note that $a$ could possibly be the centre of the rotation $\tau(-1)$, i.e. the origin.) Choose $p_{\tv}$ to be a point in $B(a,r)$ which is distinct from $\{p_w:w\in G'\}$ and which is not fixed by $\tau(-1)$.
Set $p_{-\tv}=\tau(-1)p_{\tv}$. Then $(G,p,\theta,\tau)$ is $\C_2$-symmetric and if $r$ is sufficiently small then  $[e_1]$ and $[e_2]$ have induced framework colour $[F_1]$. Moreover, by  applying a perturbation   to $p_{\tilde{v}}$ within $B(a,r)$ if necessary, we may assume  that $[e_3]$ has framework colour $[F_2]$ and that $(G,p,\theta,\tau)$ is well-positioned. The induced monochrome subgraphs of $(G_0,\psi)$ are $G_{F_1,0}=(G'_{F_1,0}\backslash\{[e]\})\cup \{[e_1],[e_2]\}$ and $G_{F_2,0}=G'_{F_2,0}\cup \{[e_3]\}$. Clearly, $G_{F_1,0}$ and $G_{F_2,0}$ are spanning unbalanced map graphs of $(G_0,\psi)$.

The case where $(G_0,\psi)$ is obtained from $(G'_0,\psi')$ by a H2e move may be proved completely  analogously to the case above for the H2d move. Here $B(a,r)$ is in fact centred at the origin.
\end{proof}


\begin{lemma} \label{lemma:h3} Let $(G_0,\psi)$ and $(G'_0,\psi')$ be the gain graphs of the $\mathbb{Z}_2$-symmetric graphs $(G,\theta)$ and $(G',\theta')$, respectively and suppose that   $(G_0,\psi)$ is obtained from  $(G'_0,\psi')$ by a H3a, H3b, H3c, or H3d move. If for $(G'_0,\psi')$ there exists a representation $\tau:\mathbb{Z}_2\to \Isom(\mathbb{R}^2)$ and a realisation $p'$ such that $\G=(G',p',\theta',\tau)$ is   well-positioned, $\C_2$-symmetric and $\chi_0$-symmetrically isostatic in $(\bR^2,\|\cdot\|_\P)$, then the same is true for $(G_0,\psi)$.
\end{lemma}

\begin{proof} By Theorem~\ref{thm:C2gridgeom}$(A)$, there exists a well-positioned $\C_2$-symmetric realisation $p'$ of  $(G',\theta')$   in $(\bR^2,\|\cdot\|_\P)$   so that the induced monochrome subgraphs $G'_{F_1,0}$ and $G'_{F_2,0}$ of $(G'_0,\psi')$ are both spanning unbalanced map graphs. By Theorem~\ref{thm:C2gridgeom}$(A)$, it now suffices to show that the vertex of $G_0\setminus G'_0$ can be placed in such a way that the corresponding framework $(G,p,\theta, \tau)$ is $\C_2$-symmetric and well-positioned, and the induced monochrome subgraphs $G_{F_1,0}$ and $G_{F_2,0}$ are both spanning unbalanced map graphs in $(G_0,\psi)$.

First we suppose that $(G_0,\psi)$ is obtained from $(G'_0,\psi')$ by a H3a move where $[v]\in G_0\setminus G'_0$ subdivides the edge $[e]=[v_1][v_2]$ into the edges $[e_1]=[v][v_1]$ and $[e_2]=[v][v_2]$, and the edge  $[f]=[v_3][v_4]$  into the edges $[f_1]=[v][v_3]$ and $[f_2]=[v][v_4]$. By switching $[v_1]$ and $[v_3]$ if necessary, we may assume without loss of generality that $[e]$ and $[f]$ have both gain $1$. The edges $[e_1],[e_2],[f_1],[f_2]$ will then also be assigned gain $1$. (The proof for the case where they are all assigned gain $-1$ is analogous.) We distinguish two cases.

Case A: $[e]$ and $[f]$ belong to different induced monochrome subgraphs of $G'_0$, say $[e]\in G'_{F_1,0}$ and $[f]\in G'_{F_2,0}$.  Let $a\in \mathbb{R}^2$ be the point of intersection of the line $L_1$ which passes through the points $p_{\tv_1}$ and $p_{\tv_2}$, and the line $L_2$ which passes through the points $p_{\tv_3}$ and $p_{\tv_4}$. Let $B(a,r)$ be an open ball with centre $a$ and radius $r>0$, and choose $p_{\tv}$ to be a point in $B(a,r)$ which is distinct from $\{p_w:w\in G'\}$ and which is not fixed by $\tau(-1)$. Set $p_{-\tv}=\tau(-1)p_{\tv}$. Then $(G,p,\theta,\tau)$ is $\C_2$-symmetric and, by  applying a small perturbation to $p_{\tilde{v}}$ if necessary, we may assume it is well-positioned. If $r$ is sufficiently small then  $[e_1]$ and $[e_2]$ have induced framework colour $[F_1]$, and $[f_1]$ and $[f_2]$ have framework colour $[F_2]$. The induced monochrome subgraphs of $(G_0,\psi)$ are $G_{F_1,0}=(G'_{F_1,0}\backslash\{[e]\})\cup \{[e_1],[e_2]\}$ and $G_{F_2,0}=(G'_{F_2,0}\backslash\{[f]\})\cup \{[f_1],[f_2]\}$. Clearly, $G_{F_1,0}$ and $G_{F_2,0}$ are spanning unbalanced map graphs of $(G_0,\psi)$.

Case B: $[e]$ and $[f]$ belong to the same induced monochrome subgraph of $G'_0$, say $[e],[f]\in G'_{F_1,0}$. We need the following claim.

\begin{claim}\label{claimh3a} Let $p_1,p_2,p_3,p_4$ be four distinct points in $\mathbb{R}^2$ such that the line segments $p_1p_2$ and $p_3p_4$ both have framework colour $[F_1]$. Let $i\in\{1,2,3,4\}$. Then there exists an open set $N$ in $\mathbb{R}^2$ such that for every point $p_v\in N$, the line segment $p_vp_i$ has framework colour $[F_2]$ and the three line segments $p_vp_j$ with $j\in\{1,2,3,4\}, j\neq i$, have framework colour $[F_1]$.
\end{claim}
\begin{proof} Without loss of generality we may assume that $p_1$ lies to the left of $p_2$, and $p_3$ lies to the left of $p_4$
(see Figure~\ref{fig:h3aclaim} for an illustration). Moreover we may assume  that $i=4$.

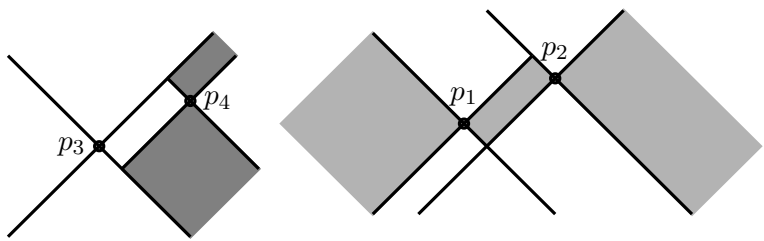
\begin{figure}[htp]
\begin{center}
\begin{tikzpicture}[very thick,scale=0.6]
\tikzstyle{every node}=[circle, draw=black, fill=white, inner sep=0pt, minimum width=5pt];

\filldraw[rectangle, gray](1.5,1.5)--(2,1)--(3,2)--(2.5,2.5)--(1.5,1.5);
\filldraw[rectangle, gray](0.5,-0.5)--(2,1)--(3.5,-0.5)--(2,-2)--(0.5,-0.5);

\filldraw[rectangle, white!70!black](8,0.5)--(6,2.5)--(4,0.5)--(6,-1.5)--(8,0.5);
\filldraw[rectangle, white!70!black](8,0.5)--(8.5,0)--(10,1.5)--(9.5,2)--(8,0.5);
\filldraw[rectangle, white!70!black](10,1.5)--(11.5,3)--(14.5,0)--(13,-1.5)--(10,1.5);

\node [draw=white, fill=white] (a) at (-0.6,0)  {$p_3$};
\node [draw=white, fill=white] (a) at (2.6,1)  {$p_4$};
\node [draw=white, fill=white] (a) at (8,1.1)  {$p_1$};
\node [draw=white, fill=white] (a) at (10,2.1)  {$p_2$};

\draw(0,0) circle(0.1cm);
\draw(2,1) circle(0.1cm);
\draw(8,0.5) circle(0.1cm);
\draw(10,1.5) circle(0.1cm);

\draw(-2,-2)--(2.5,2.5);
\draw(2,-2)--(-2,2);

\draw(2,1)--(0.5,-0.5);
\draw(2,1)--(1.5,1.5);

\draw(2,1)--(3,2);
\draw(2,1)--(3.5,-0.5);

\draw(8,0.5)--(6,2.5);
\draw(10,1.5)--(11.5,3);

\draw(8,0.5)--(8.5,0);
\draw(8.5,0)--(10,-1.5);

\draw(8.5,0)--(10,1.5);
\draw(9.5,2)--(10,1.5);

\draw(9.5,2)--(6,-1.5);
\draw(8.5,0)--(7,-1.5);

\draw(10,1.5)--(13,-1.5);
\draw(9.5,2)--(8.5,3);

\end{tikzpicture}
\end{center}
\vspace{-0.2cm}
\caption{Illustration of the proof of Claim~\ref{claimh3a}.}
\label{fig:h3aclaim}
\end{figure}

We need to find an open set $N$ which lies within the two shaded areas in Figure~\ref{fig:h3aclaim}. Note that the shaded area on the left hand side of Figure~\ref{fig:h3aclaim} is connected, and unbounded from below and above. The shaded area on the right hand side of Figure~\ref{fig:h3aclaim} is also connected, and unbounded from the left and right. Since $p_1,p_2,p_3,p_4$ are distinct points, the shaded areas will  always have a nontrivial intersection, and we may choose $N$ within that intersection.
\end{proof}

Suppose first that $[e]$ and $[f]$ lie on a common (unbalanced) cycle in $G'_{F_1,0}$. Without loss of generality, we may assume that this cycle consists of a path $P_1$ from $[v_2]$ to $[v_3]$ with an odd number of edges with gain $-1$, and a path $P_2$ from $[v_1]$ to $[v_4]$ with an even number of edges with gain $-1$. Then we choose $p_{\tv}$ to be a point  which is distinct from $\{p_w:w\in G'\}$, not fixed by $\tau(-1)$, and such that $(G,p,\theta,\tau)$ is well-positioned and in $(G_0,\psi)$ the edges $[e_1], [e_2], [f_1]$ have framework colour $[F_1]$, and $[f_2]$ has framework colour $[F_2]$. Such a position for $p_{\tv}$ exists by Claim~\ref{claimh3a}. The induced monochrome subgraphs of $(G_0,\psi)$ are $G_{F_1,0}=(G'_{F_1,0}\backslash\{[e],[f]\})\cup \{[e_1],[e_2],[f_1]\}$ and $G_{F_2,0}=(G'_{F_2,0})\cup \{[f_2]\}$. Clearly, $G_{F_2,0}$ is a spanning unbalanced map graph of $(G_0,\psi)$. As for $G_{F_1,0}$, note that the removal of $[e]$ and $[f]$ from $G'_{F_1,0}$ breaks the connected component of $G'_{F_1,0}$ containing $[e]$ and $[f]$ into the two disjoint trees. By adding the vertex $[v]$ and the edges $[e_1],[e_2],[f_1]$, these two trees are reconnected and a single unbalanced cycle (consisting of $P_1$, $[e_2]$ and $[f_1]$) is created in this connected component of $G_{F_1,0}$.

If $[e]$ and $[f]$ do not lie on a common cycle in $G'_{F_1,0}$, but they are still in the same connected component $K'$ of $G'_{F_1,0}$, then we may proceed as above. However, if either $[e]$ or $[f]$, say $[e]$, lies on the unique cycle $C'$ in $K'$, and without loss of generality there exists a path in $K'$ from a vertex in $C'$ to $[v_4]$ that does not include $[v_3]$, then we need to choose $p_{\tv}$ so that $[e_1]$ and $[e_2]$ both have framework colour $[F_1]$, and $[f_1]$ and $[f_2]$ have respective framework colours $[F_1]$ and $[F_2]$. This guarantees that the unbalanced cycle $C=C'\backslash\{[e]\}\cup\{[e_1],[e_2]\}$ in the corresponding component of $G_{F_1,0}$ is unique.

If $[e]$ and $[f]$ lie in different connected components $K'$ and $K''$ of $G'_{F_1,0}$, then we may again proceed as above. However, care needs to be taken in the case where either $[e]$ or $[f]$, say $[e]$, lies on the unique cycle $C'$ in $K'$, and $[f]$ does not lie on the cycle of $K''$. In this case we  choose $p_{\tv}$ so that $[e_1]$ and $[e_2]$ have framework colours $[F_1]$ and $[F_2]$, and  $[f_1]$ and $[f_2]$ both have framework colour $[F_1]$, so that $G_{F_1,0}$ will not have a connected component with two cycles. Similarly, if neither $[e]$ nor $[f]$ lie on the cycle in their respective connected components, then we need to choose $p_{\tv}$ so that the three new edges with framework colour $[F_1]$ do not give rise to a connected component of  $G_{F_1,0}$ that has two cycles.

Next, we suppose that $(G_0,\psi)$ is obtained from $(G'_0,\psi')$ by a H3b move where $[v]\in G_0\setminus G'_0$, and the H3b move deletes the edges $[e]=[v_1][v_2]$ and $[f]=[v_1][v_3]$ and adds the edges 
$[e_1]=[v][v_1]$ and $[e_1']=[v][v_1]$, and the edges  $[e_2]=[v][v_2]$ and $[e_3]=[v][v_3]$. By switching $[v_2]$ and $[v_3]$ if necessary, we may assume that $[e]$ and $[f]$ have gain $1$. The edges $[e_1]$ and $[e_1']$ are assigned the gains $1$ and $-1$, respectively, and the edges $[e_2],[e_3]$ are assigned the gains $1$ and $-1$, respectively. We distinguish two cases.

Case A: $[e]$ and $[f]$ belong to different induced monochrome subgraphs of $G'_0$, say $[e]\in G'_{F_1,0}$ and $[f]\in G'_{F_2,0}$. Let $a\in \bR^2$ be the point of intersection of the line $L_1$ which passes through the points $p_{\tv_1}$ and $p_{\tv_2}$, and the line $L_2$ which passes through the points $\tau(-1)p_{\tv_1}$ and $\tau(-1)p_{\tv_3}$. Let $B(a,r)$ be the open ball with centre $a$ and radius $r>0$ and choose $p_{\tv}$ to be a point in $B(a,r)$ which is distinct from $\{p_w:w\in G'\}$ and which is not fixed by $\tau(-1)$.
Set $p_{-\tv}=\tau(-1)p_{\tv}$. Then $(G,p,\theta, \tau)$ is $\C_2$-symmetric and, by  applying a small perturbation to $p_{\tilde{v}}$ if necessary, we may assume it is well-positioned. If $r$ is sufficiently small then  $[e_1]$ and $[e_2]$ have induced framework colour $[F_1]$, and $[e_1']$ and $[e_3]$ have framework colour $[F_2]$. The induced monochrome subgraphs of $(G_0,\psi)$ are $G_{F_1,0}=(G'_{F_1,0}\backslash\{[e]\})\cup \{[e_1],[e_2]\}$ and $G_{F_2,0}=(G'_{F_2,0}\backslash\{[f]\})\cup \{[e_1'],[e_3]\}$. Clearly, $G_{F_1,0}$ and $G_{F_2,0}$ are spanning unbalanced map graphs of $(G_0,\psi)$.

Case B: $[e]$ and $[f]$ belong to the same induced monochrome subgraph of $G'_0$, say $[e],[f]\in G'_{F_1,0}$.

 Suppose first that $[e]$ and $[f]$ lie on a common cycle in  $G'_{F_1,0}$. Then we may apply Claim~\ref{claimh3a} to the points $p_{\tv_1}, \tau(-1)p_{\tv_1},p_{\tv_2},\tau(-1)p_{\tv_3}$ to find a position for $p_{\tv}$ so that it is distinct from $\{p_w:w\in G'\}$, not fixed by $\tau(-1)$, and such that $(G,p,\theta,\tau)$ is well-positioned and in $(G_0,\psi)$ the edges $[e_1],[e_1']$ and $[e_2]$ have framework colour $[F_1]$, and  $[e_3]$ has framework colour $[F_2]$. The induced monochrome subgraphs of $(G_0,\psi)$ are $G_{F_1,0}=(G'_{F_1,0}\backslash\{[e],[f]\})\cup \{[e_1],[e_1'],[e_2]\}$ and $G_{F_2,0}=G'_{F_2,0}\cup \{[e_3]\}$. Clearly, $G_{F_1,0}$ and $G_{F_2,0}$ are spanning unbalanced map graphs of $(G_0,\psi)$.

Suppose next that $[e]$ and $[f]$ do not lie on a common cycle in  $G'_{F_1,0}$. If either $[e]$ or $[f]$, say $[e]$, lies on a cycle in $G'_{F_1,0}$, then we proceed as above, but we need to choose $p_{\tv}$ so that the edges $[e_1],[e_1']$ and $[e_3]$ have framework colour $[F_1]$, and  $[e_2]$ has framework colour $[F_2]$ to guarantee that $G_{F_1,0}$ is a spanning unbalanced map graph of $(G_0,\psi)$. If neither $[e]$ nor $[f]$ lie on a cycle in $G'_{F_1,0}$, then we need to distinguish two cases. Let $C$ be the cycle in the connected component of $G'_{F_1,0}$ containing the edges $[e],[f]$. If there exists a path in $G'_{F_1,0}$ from a vertex in $C$ to $[v_2]$ or $[v_3]$ that does not include $[v_1]$, then we may again proceed as above. Otherwise we choose a position for $p_{\tv}$ so that it is distinct from $\{p_w:w\in G'\}$, not fixed by $\tau(-1)$, and such that $(G,p,\theta,\tau)$ is well-positioned and in $(G_0,\psi)$ the edges $[e_1]$ and $[e_1']$ have respective framework colours $[F_1]$ and $[F_2]$, and  $[e_2]$ and $[e_3]$ both have framework colour $[F_1]$.

Suppose next that $(G_0,\psi)$ is obtained from $(G'_0,\psi')$ by a H3c move where $[v]\in G_0\setminus G'_0$, and the H3c move deletes the unbalanced loop $[e]=[v_1][v_1]$ and the edge $[f]=[v_2][v_3]$ and adds the edges $[e_1]=[v][v_1]$ and $[e_1']=[v][v_1]$ with respective gains $\gamma_1=1\neq -1= \gamma_1'$, and the edges  $[e_2]=[v][v_2]$ and $[e_3]=[v][v_3]$ with respective gains $\gamma_2$ and $\gamma_3$.  This case is completely analogous to the H3a case.
If $[e]$ and $[f]$ belong to different induced monochrome subgraphs of $G'_0$, say $[e]\in G'_{F_1,0}$ and $[f]\in G'_{F_2,0}$, then we may choose $p_{\tv}$ so that  $[e_1]$ and $[e_1']$ have induced framework colour $[F_1]$, and $[e_2]$ and $[e_3]$ have induced framework colour $[F_2]$. If $[e]$ and $[f]$ belong to the same induced monochrome subgraph of $G'_0$, say $[e],[f]\in G'_{F_1,0}$, then Claim~\ref{claimh3a} applies, and we may choose $p_{\tv}$ so that  $[e_1]$ and $[e_1']$ have induced framework colour $[F_1]$, and $[e_2]$ and $[e_3]$ have respective framework colours $[F_1]$ and $[F_2]$, so that $G_{F_1,0}$ and $G_{F_2,0}$ are spanning unbalanced map graphs of $(G_0,\psi)$. 

If $(G_0,\psi)$ is obtained from $(G'_0,\psi')$ by a H3d move, then we may again proceed analogously to the  H3a (or H3c) case. Note that if the loops $[e]$ and $[f]$ that are deleted in the H3d move are both in the same induced monochrome subgraph of $G'_0$, say $[e],[f]\in G'_{F_1,0}$, then they must lie in separate connected components of $G'_{F_1,0}$ (since they are both unbalanced cycles). So their removal results in two disjoint trees. With the addition of the vertex $[v]$ and the edges $[e_1],[e_1'],[e_2]$, these two trees are connected and a single (unbalanced) cycle is created in this connected component of $G_{F_1,0}$. 
\end{proof}


\begin{lemma} \label{lemma:furthergeomoves} Let $(G_0,\psi)$ and $(G'_0,\psi')$ be the gain graphs of the $\mathbb{Z}_2$-symmetric graphs $(G,\theta)$ and $(G',\theta')$, respectively and suppose that   $(G_0,\psi)$ is obtained from  $(G'_0,\psi')$ by a vertex-to-$K_4$ or vertex splitting move. If for $(G'_0,\psi')$ there exists a representation $\tau:\mathbb{Z}_2\to \Isom(\mathbb{R}^2)$ and a realisation $p'$ such that $\G=(G',p',\theta',\tau)$ is   well-positioned, $\C_2$-symmetric and $\chi_0$-symmetrically isostatic in $(\bR^2,\|\cdot\|_\P)$, then the same is true for $(G_0,\psi)$.
\end{lemma}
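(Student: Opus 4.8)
The plan is to follow the same strategy as in Lemmas~\ref{lemma:h1}, \ref{lemma:h2}, and \ref{lemma:h3}: by Theorem~\ref{thm:C2gridgeom}$(A)$ it suffices to produce a well-positioned $\C_2$-symmetric realisation of $(G,\theta)$ in which the two monochrome subgraphs $G_{F_1,0}$ and $G_{F_2,0}$ of $(G_0,\psi)$ are edge-disjoint spanning unbalanced map graphs, starting from such a realisation of $(G'_0,\psi')$. Again we fix points $x_1\in \operatorname{relint}F_1$ and $x_2\in\operatorname{relint}F_2$ and exploit the fact that, for a well-positioned framework, an edge $[e]$ gets framework colour $[F_i]$ precisely when the difference of the endpoint placements lies in the open cone over $F_i$; this is an open condition, so after an arbitrarily small generic perturbation we may always retain well-positionedness while controlling colours.

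First I would handle the vertex splitting move. Here $(G_0,\psi)$ is obtained from $(G'_0,\psi')$ by choosing a vertex $[v_1]$, a neighbour $[v_2]$, and a partition $N_1,N_2$ of the remaining neighbours of $[v_1]$, deleting the edges from $[v_1]$ to $N_1$, adding a new vertex $[v_0]$ joined to all of $N_1$, and adding edges $[v_0][v_1]$ (gain $1$) and $[v_0][v_2]$ (same gain as $[v_1][v_2]$). I would place $p_{\tv_0}$ very close to $p_{\tv_1}$ — inside a small ball $B(p_{\tv_1},r)$, distinct from all existing placements and not fixed by $\tau(-1)$, setting $p_{-\tv_0}=\tau(-1)p_{\tv_0}$. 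For $r$ small enough, each re-routed edge $[v_0][z]$ with $[z]\in N_1$ inherits the framework colour that $[v_1][z]$ had, the new edge $[v_0][v_2]$ inherits the colour of $[v_1][v_2]$, and the short edge $[v_0][v_1]$ can be made to have whichever of the two colours we wish by a further perturbation of $p_{\tv_0}$ within $B$ (since $p_{\tv_0}-p_{\tv_1}$ can be steered into either open cone). Choosing that colour to be $[F_j]$ where $[F_j]$ is the colour \emph{not} assigned to $[v_0][v_2]$ keeps both $G_{F_1,0}$ and $G_{F_2,0}$ spanning; and since no new cycle is closed in the monochrome graph containing $[v_0][v_1]$ except through $[v_0][v_2]$ and the path back through $[v_1]$, one checks that each monochrome component still contains exactly one (necessarily unbalanced) cycle. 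The case of a loop at $[v_1]$ being transferred to $[v_0]$ is handled identically, the loop being an edge of whichever monochrome subgraph it was originally in.

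Next I would treat the vertex-to-$K_4$ move: $(G_0,\psi)$ is obtained by deleting a vertex $[v]$ and all incident edges, inserting a balanced $K_4$ on new vertices $y_1,y_2,y_3,y_4$ (all gains $1$), and re-attaching each old edge $([x][v],\gamma)$ as $([x]y_k,\gamma)$ for some $k$, with a former loop $([v][v],-1)$ becoming $(y_ky_\ell,-1)$. I would place the four $K_4$ vertices in a small ball $B(p_{\tv},r)$, again distinct from existing placements and not fixed by $\tau(-1)$, with $p_{-y_k}=\tau(-1)p_{y_k}$. For $r$ small each re-attached external edge keeps the framework colour of the corresponding edge formerly incident with $[v]$; the six internal $K_4$ edges can be given colours by a generic choice of the $p_{y_k}$ inside $B$, and the combinatorial content of the lemma is that these six internal edges can be \emph{$3$-coloured-by-two-colours} so that $G_{F_1,0}$ and $G_{F_2,0}$ each pick up a connected spanning subgraph of $K_4$ containing exactly one unbalanced cycle (and possibly the extra $(-1)$-loop edge). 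Explicitly: since $K_4$ has $6$ edges, one monochrome class should pick up a spanning unicyclic subgraph on $\{y_1,\dots,y_4\}$ and the other the complementary spanning unicyclic subgraph — e.g. two edge-disjoint $4$-cycles of $K_4$ — and, because $K_4$ with all gains $1$ is balanced, a $4$-cycle in it is balanced, which is \emph{wrong}; so the correct bookkeeping must route the switching so that the attaching realisation makes the relevant cycles unbalanced. This is where the real care is needed: one must verify, using Lemma~\ref{lem:k4contract} and $(2,2,0)$-gain-tightness (so that the $K_4$ induces at most one further edge and the covering graph stays simple), that a valid assignment of framework colours to the $K_4$ edges and external edges always exists making both monochrome subgraphs unbalanced spanning map graphs; one argues by cases on how many external edges of each colour attach to the $K_4$ and whether a $(-1)$-loop edge is present, just as the monochrome count $|E(G_{F_1,0})|=|V_0|=|E(G_{F_2,0})|$ forces in the proof of Theorem~\ref{thm:C2gridgeom}$(A)$. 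The main obstacle is precisely this combinatorial colour-assignment inside the new $K_4$ (and the loop case), showing it can always be realised geometrically; the geometric perturbation arguments themselves are routine and parallel to Lemmas~\ref{lemma:h1}--\ref{lemma:h3}.
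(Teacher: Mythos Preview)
Your treatment of the vertex splitting move is essentially the same as the paper's and is fine: place $p_{\tv_0}$ close to $p_{\tv_1}$, force the short edge $[v_0][v_1]$ into the colour opposite to that of $[v_0][v_2]$, and note that the re-routed edges retain their colours. The paper does exactly this (with $[u]$ playing the role of your $[v_2]$), and your informal justification of why the monochrome subgraphs stay spanning unbalanced map graphs, while a bit loose, is on the right track.

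The vertex-to-$K_4$ case, however, has a genuine gap. You propose splitting the six internal $K_4$ edges so that each monochrome class acquires a \emph{spanning unicyclic} subgraph on $\{y_1,\dots,y_4\}$, and you even note yourself that this fails: the $K_4$ is balanced (all gains $1$), so any cycle inside it is balanced, which destroys the unbalanced-map-graph property. Your suggested fix --- a case analysis on external attachments together with Lemma~\ref{lem:k4contract} --- does not rescue this, because no assignment of colours to the six internal edges can produce an \emph{unbalanced} cycle inside a balanced $K_4$. (There is also a counting slip: a spanning unicyclic graph on four vertices has four edges, so two edge-disjoint such subgraphs would need eight edges, not six.)

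The paper's resolution is different and much simpler: realise the small $K_4$ so that its two monochrome subgraphs are each spanning \emph{trees} (three edges apiece; see Figure~\ref{fig:gain_cov_graphs}). Then replacing $[v]$ by this $K_4$ creates \emph{no} new cycle in either $G_{F_i,0}$. The external edges keep their colours, so in each $G_{F_i,0}$ the single vertex $[v]$ is replaced by four vertices linked by a tree, and the unique unbalanced cycle already present in the corresponding component of $G'_{F_i,0}$ survives unchanged. If the old $[v]$ carried a loop $[e]$, its replacement $[f]$ (with gain $-1$) retains the colour of $[e]$ and plays exactly the role that $[e]$ did. This is the key idea you are missing.
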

\begin{proof} By Theorem~\ref{thm:C2gridgeom}$(A)$, there exists a well-positioned $\C_2$-symmetric realisation $p'$ of  $(G',\theta')$   in $(\bR^2,\|\cdot\|_\P)$   so that the induced monochrome subgraphs $G'_{F_1,0}$ and $G'_{F_2,0}$ of $(G'_0,\psi')$ are both spanning unbalanced map graphs. By Theorem~\ref{thm:C2gridgeom}$(A)$, it now suffices to show that the vertex (or vertices) of $G_0\setminus G'_0$ can be placed in such a way that the corresponding framework $(G,p,\theta, \tau)$ is $\C_2$-symmetric and well-positioned, and the induced monochrome subgraphs $G_{F_1,0}$ and $G_{F_2,0}$ are both spanning unbalanced map graphs in $(G_0,\psi)$.

We fix two points $x_1$ and $x_2$ in the relative interiors of $F_1$ and $F_2$ respectively.

First we suppose that $(G_0,\psi)$ is obtained from $(G'_0,\psi')$ by a  vertex-to-$K_4$-move, where the vertex $[v]$ of  $(G'_0,\psi')$ (which may be incident to an unbalanced loop $[e]$) is replaced by a copy of $K_4$ with a trivial gain labelling (and $[e]$ is replaced by the edge $[f]$ with gain $-1$). Suppose without loss of generality that the loop $[e]$ (if present) has framework colour $[F_2]$. As shown in Figure~\ref{fig:gain_cov_graphs}, $K_4$ has a well-positioned  placement in $(\bR^2,\|\cdot\|_\P)$ where the two monochrome subgraphs are both trees. Moreover, we may scale this realisation  so that all of the vertices of the $K_4$ lie in a ball of arbitrarily small radius.  
Let $B(p_{\tv},r)$ be the open  ball with centre $p_{\tv}$ and radius $r>0$. Choose a placement of the representative vertices of the new $K_4$ to lie within $B(p_{\tv},r)$ such that the vertices are distinct from $\{p_w:w\in V(G')\backslash\{\tv\}\}$, none of the vertex placements are fixed by $\tau(-1)$ and the resulting placement of the new $K_4$ is such that the monochrome subgraphs are both trees. If $r$ is sufficiently small then the edge  $[f]$ (if present) has the induced framework colour $[F_2]$.
It can be assumed that the corresponding $\C_2$-symmetric placement of $G$ is well-positioned. Moreover, the induced monochrome subgraphs $G_{F_1,0}$ and $G_{F_2,0}$ of $G_0$  are clearly spanning unbalanced  map graphs  of $(G_0,\psi)$. 

Finally, we suppose that $(G_0,\psi)$ is obtained from $(G'_0,\psi')$ by a vertex split, where the vertex $[v]$ of  $(G'_0,\psi')$ (which  is replaced by the vertices $[v_0]$ and $[v_1]$) is incident to the edge $[v][u]$ with trivial gain and the edges $[v][u_i]$, $i=1,\ldots, t$, in $G'_0$. Without loss of generality we may assume that $[v][u]\in G'_{F_1,0}$. If we choose $p_{\tv_0}=p_{\tv}$ and $p_{\tv_1}$ to be a point on the line $L=\{p_{\tv}+tx_2:t\in \bR\}$ which is sufficiently close to $p_{\tv}$, then  the induced framework colour for $[v_0][v_1]$ is $[F_2]$ and the induced framework colour for $[v_0][u]$ and $[v_1][u]$ is $[F_1]$. (Again we may assume the framework is well-positioned). Moreover, all other edges of $(G'_0,\psi')$ which have been replaced by new edges in $(G_0,\psi)$ clearly retain their induced framework colouring if $p_{\tv_1}$   is chosen sufficiently close to $p_{\tv}$.   It is now easy to see that for such a  placement of $\tv_0$ and $\tv_1$, both  $G_{F_1,0}$ and $G_{F_2,0}$ are spanning unbalanced  map graphs  of $(G_0,\psi)$. 
\end{proof}


We are now ready to prove Theorem~\ref{thm:geom}.

\begin{proof}
As mentioned earlier, (i) $\Rightarrow$ (ii) follows from Corollary~\ref{cor:neccounts}, and (ii) $\Rightarrow$ (iii) follows from Theorem~\ref{thm:recurse}.

(iii) $\Rightarrow$ (i): We employ induction on the number of vertices of $G_0$. By Theorem~\ref{thm:C2gridgeom}$(A)$, for each of the  base gain graphs  there exists  a representation $\tau:\mathbb{Z}_2\to \Isom(\mathbb{R}^2,\|\cdot\|_\infty)$ and a realisation $p$ such that  $\G=(G,p,\theta, \tau)$ is  well-positioned, $\C_2$-symmetric and $\chi_0$-symmetrically isostatic in $(\bR^2,\|\cdot\|_\infty)$, as indicated in Figure~\ref{fig:gain_cov_graphs}. (The two induced spanning map graphs  $G_{F_1,0}$ and $G_{F_2,0}$ are shown in gray and black colour, respectively.) Since $(\bR^2,\|\cdot\|_\P)$ is isometrically isomorphic to $(\bR^2,\|\cdot\|_\infty)$, there also exists a well-positioned, $\C_2$-symmetric and $\chi_0$-symmetrically isostatic realisation for each of the base graphs in $(\bR^2,\|\cdot\|_\P)$.

Let $n\geq 5$ and suppose (i) holds for all gain   graphs satisfying (iii) with at most $n-1$ vertices. Let $(G_0,\psi)$ have $n$ vertices, and let $(G'_0,\psi')$ be the penultimate graph in the construction sequence of $(G_0,\psi)$. By the induction hypothesis, there exists a realisation $p'$ of the covering graph $G'$ of $(G'_0,\psi')$  in $(\bR^2,\|\cdot\|_\P)$  so that $(G',p',\theta',\tau)$ is   well-positioned, $\C_2$-symmetric and $\chi_0$-symmetrically isostatic in $(\bR^2,\|\cdot\|_\P)$.

If $(G_0,\psi)$ is obtained from $(G'_0,\psi')$ by a H1a, H1b, or H1c move, then the result follows from Lemma~\ref{lemma:h1}. If $(G_0,\psi)$ is obtained from $(G'_0,\psi')$ by a H2a, H2b, H2c, H2d or H2e move, then the result follows from Lemma~\ref{lemma:h2}. If $(G_0,\psi)$ is obtained from $(G'_0,\psi')$ by a H3a, H3b, H3c, or H3d move, then the result follows from Lemma~\ref{lemma:h3}. Finally, if $(G_0,\psi)$ is obtained from $(G'_0,\psi')$ by a vertex-to-$K_4$ or vertex splitting  
move, then the result follows from Lemma~\ref{lemma:furthergeomoves}.

\end{proof}

Next we establish the $\chi_1$-symmetric counterpart to Theorem~\ref{thm:geom}. The proof of this result is much simpler than the proof of Theorem~\ref{thm:geom} since the characterisation of $(2,2,2)$-gain-tight gain graphs in terms of a recursive construction sequence is significantly less complex than the one for $(2,2,0)$-gain-tight gain graphs.

\begin{theorem}  \label{thm:geom2}
Let $\|\cdot\|_\P$ be a norm on $\bR^2$ for which $\P$ is a quadrilateral, and let $(G,\theta)$ be a $\mathbb{Z}_2$-symmetric graph.  Further, let $(G_0,\psi)$ be  the gain graph for $(G,\theta)$.
The following are equivalent.
\begin{enumerate}
\item[(i)] There exists a representation $\tau:\mathbb{Z}_2\to \Isom(\mathbb{R}^2)$ and a realisation $p$ such that $\G=(G,p,\theta,\tau)$ is   well-positioned, $\C_2$-symmetric and $\chi_1$-symmetrically isostatic in $(\bR^2,\|\cdot\|_\P)$;
\item[(ii)] $(G_0,\psi)$  is $(2,2,2)$-gain tight;
\item[(iii)] $(G_0,\psi)$ can be constructed from $K_1$ by a sequence of H1a,b moves, H2a,b moves, vertex-to-$K_4$ moves, and vertex splitting moves.
\end{enumerate}
\end{theorem}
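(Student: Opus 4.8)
The plan is to follow the three-step template used for Theorem~\ref{thm:geom}, exploiting that the combinatorics of the $(2,2,2)$-case is far simpler. For (i) $\Rightarrow$ (ii), I would simply apply Corollary~\ref{cor:neccounts}(i)(b): since $\P$ is a quadrilateral the norm $\|\cdot\|_\P$ is not derived from an inner product, so $\T(\bR^2,\|\cdot\|_\P)$ is minimal, and a well-positioned $\C_2$-symmetric framework that is $\chi_1$-symmetrically isostatic therefore has a $(d,d,2)=(2,2,2)$-gain-tight quotient gain graph. For (ii) $\Rightarrow$ (iii), I would invoke the recursive characterisation of $(2,2,2)$-gain-tight $\mathbb{Z}_2$-gain graphs from \cite{NS} (see also \cite{NOP12,NO14}); one observes here that $(2,2,2)$-gain-tightness forces $|F|\le 2|V(F)|-2$ for \emph{every} nonempty $F\subseteq E_0$, so such a graph has no loops and minimum degree at least $2$, which is exactly why the moves H1c, H2c, H2d, H2e and the H3 moves are not required and the construction can be carried out using only H1a, H1b, H2a, H2b, vertex-to-$K_4$ and vertex splitting.

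For (iii) $\Rightarrow$ (i) I would argue by induction on $|V_0|$, following the proof of Theorem~\ref{thm:geom} but using part $(B)$ of Theorem~\ref{thm:C2gridgeom} in place of part $(A)$. The base case is $K_1$: its covering graph admits any well-positioned $\C_2$-symmetric placement, and both monochrome subgraphs $G_{F_1,0},G_{F_2,0}$ are the (edgeless) spanning tree on a single vertex, so Theorem~\ref{thm:C2gridgeom}$(B)$ gives $\chi_1$-symmetric isostaticity, and the same works in $(\bR^2,\|\cdot\|_\P)$ as this plane is isometrically isomorphic to $(\bR^2,\|\cdot\|_\infty)$. For the inductive step I would establish $\chi_1$-analogues of Lemmas~\ref{lemma:h1}, \ref{lemma:h2} and \ref{lemma:furthergeomoves}, restricted to H1a, H1b, H2a, H2b, vertex-to-$K_4$ and vertex splitting. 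Each is proved by the very same construction as the corresponding lemma above: starting from a realisation of the smaller gain graph in which $G'_{F_1,0},G'_{F_2,0}$ are edge-disjoint \emph{spanning trees}, one places the new vertex (and, for the vertex-to-$K_4$ move, a small scaled copy of the placement of $K_4$ in Figure~\ref{fig:gain_cov_graphs} whose two monochrome subgraphs are edge-disjoint spanning trees) by intersecting the relevant lines through the $\tau$-images of the adjacent vertices and perturbing to ensure well-positionedness, so that the newly created edges acquire the prescribed framework colours. A new degree-$2$ vertex carrying one edge of each colour attaches a leaf to each tree; subdividing a monochrome edge and adding the third edge in the other colour keeps the first subgraph a tree and attaches a leaf to the second; and the vertex-to-$K_4$ and vertex-splitting moves replace a single vertex of each tree by a small subtree. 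Hence $G_{F_1,0}$ and $G_{F_2,0}$ remain edge-disjoint spanning trees, and Theorem~\ref{thm:C2gridgeom}$(B)$ supplies the required realisation.

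I do not anticipate a genuine obstacle, and this is exactly why the proof is much shorter than that of Theorem~\ref{thm:geom}: in the $\chi_0$ setting one must carefully control \emph{where} new cycles are created inside the unbalanced map graphs — the source of the numerous sub-cases in Lemma~\ref{lemma:h3} — whereas in the $\chi_1$ setting the monochrome subgraphs stay forests throughout, so the only thing to verify is that none of the moves ever produces a cycle, which is immediate from the description of H1a, H1b, H2a, H2b and the two contraction-type moves. The mildest bit of bookkeeping is to record once and for all a concrete well-positioned placement of $K_4$ in $(\bR^2,\|\cdot\|_\infty)$ (equivalently $(\bR^2,\|\cdot\|_\P)$) whose two monochrome subgraphs are edge-disjoint spanning trees, which is already visible in Figure~\ref{fig:gain_cov_graphs}; everything else is routine.
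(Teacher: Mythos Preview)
Your proposal is correct and follows essentially the same three-step template as the paper: (i) $\Rightarrow$ (ii) via Corollary~\ref{cor:neccounts}, (ii) $\Rightarrow$ (iii) via a recursive characterisation, and (iii) $\Rightarrow$ (i) by induction using Theorem~\ref{thm:C2gridgeom}$(B)$ in place of $(A)$, exactly as in the proof of Theorem~\ref{thm:geom}. The one difference is that for (ii) $\Rightarrow$ (iii) you appeal to \cite{NS}, whereas the paper gives a short self-contained argument (no loops, reduce degree-$2$ vertices by inverse H1a/b, degree-$3$ vertices by inverse H2a/b unless inside a balanced $K_4$, then apply an inverse vertex-to-$K_4$ or build an increasing chain of balanced $(2,2)$-tight subgraphs to find an admissible inverse vertex split); since the precise move-set needed here must match statement (iii), the paper's direct proof avoids any doubt about whether the construction in \cite{NS} uses exactly this list.
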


\begin{proof} (i) $\Rightarrow$ (ii): This follows again from Corollary~\ref{cor:neccounts}.

 (ii) $\Rightarrow$ (iii): The proof proceeds by induction. Since $(G_0,\psi)$ is $(2,2,2)$-gain-tight, it has no loops. If $G_0$ has a vertex $[v]$ of degree $2$, then it is clearly admissible (via an inverse H1a or H1b move). So suppose $G_0$ has no degree 2 vertices. By $(2,2,2)$-gain-tightness,  $G_0$ has a vertex $[v]$ of degree 3 with at least two neighbours. If $[v]$ has exactly two neighbours, then it is admissible (via an inverse H2b move, see also Lemma~\ref{lem:3threeii}). Thus we may assume that every degree 3 vertex $[v]$ of $G_0$ has exactly three neighbours. It is easy to see that $[v]$ is admissible (via an inverse H2a move) unless it is contained in a balanced copy of $K_4$. (See also Lemma~\ref{lem:3threei}).  The vertices of this $K_4$ cannot induce any additional edges since $(G_0,\psi)$ is $(2,2,2)$-gain-tight. Denote this copy of $K_4$ as $K$. We may apply an inverse vertex-to-$K_4$ move unless there is a vertex $[x]\notin K$ and edges $[x][a]$ and $[x][b]$ with equal gains, where $[a],[b]\in K$. By switching, we may assume that both gains are $1$. We may now apply an inverse vertex splitting move, contracting either $[x][a]$ and $[x][b]$, unless there exist vertices $[y]$ and $[z]$ that are distinct from the vertices of $K$ and $[x]$ so that $[y][x]$ and $[y][a]$ are edges in $(G_0,\psi)$ with the same gain, and $[z][x]$ and $[z][b]$ are edges in $(G_0,\psi)$ with the same gain. By switching, we may again assume that the gains of these edges are all $1$. We continue in this fashion, thereby constructing an increasing chain of subgraphs of $(G_0,\psi)$ which are all $(2,2,2)$-gain tight and whose edges have all gain $1$. (Note that at each step a new vertex is introduced for otherwise $(2,2,2)$-gain-sparsity is violated.) This sequence terminates after finitely many steps at which point there will be an admissible inverse vertex splitting move.

(iii) $\Rightarrow$ (i): Using Theorem~\ref{thm:C2gridgeom}$(B)$, this result may be proved completely analogously to Theorem~\ref{thm:geom} (iii) $\Rightarrow$ (i).
\end{proof}

\section{Concluding remarks}

One may be tempted to try to combine Theorems \ref{thm:geom} and \ref{thm:geom2} to combinatorially characterise infinitesimal rigidity for half-turn symmetric frameworks. However this seems to be non-trivial. In particular, given a gain graph which contains a spanning $(2,2,2)$-gain-tight subgraph and a spanning $(2,2,0)$-gain-tight subgraph it is not clear that a placement exists that preserves both the colourings needed to apply Theorem~\ref{thm:C2gridgeom}$(C)$.

It is also natural to try to extend Theorems~\ref{thm:geom} and \ref{thm:geom2} to higher-order groups, such as the cyclic group $\C_4$ generated by a 4-fold rotation in the $\ell^1$- or $\ell^\infty$-plane. In this case, Corollary~\ref{cor:neccounts} provides necessary gain-sparsity conditions for $\chi$-symmetric infinitesimal rigidity. However, we are currently lacking analogues of Theorems~\ref{thm:C2gridgeom}$(A)$ and \ref{thm:C2gridgeom}$(B)$ to prove the sufficiency of these counts.

There is a second form of vertex splitting, known as the vertex-to-4-cycle move \cite{LMW,NOP14}, which one may use instead of vertex splitting to give analogous inductive constructions to Theorem \ref{thm:recurse} and Theorem \ref{thm:geom2} (ii) $\Leftrightarrow$ (iii). In fact, in the case of $(2,2,0)$-gain-tight gain graphs, this alternative gives a non-trivial simplification to the proof, replacing the maximal balanced triangle sequence considerations with a direct counting argument. However in both the symmetric and anti-symmetric contexts this construction operation does not seem to be amenable to finding appropriate rigid placements.

In \cite{NS} symmetric rigidity is considered for frameworks in Euclidean space that are restricted to move on a fixed surface. In particular the matroidal classes of $(2,2,2), (2,2,1)$ and $(2,2,0)$-gain-tight gain graphs are the relevant sparsity types for frameworks restricted to an infinite circular cylinder. Hence our recursive construction of $(2,2,0)$-gain-tight gain graphs may be useful in establishing an analogue of Theorem \ref{thm:geom} for  the appropriate symmetry group, that is for half-turn symmetric frameworks on the cylinder with rotation axis perpendicular to the axis of the cylinder.


\vspace{0.2cm}

\end{document}